\newcommand{\SELF}[1]{\todo[color=green!30]{#1}} 
\newtheorem{theorem}{Theorem}[section]
\newtheorem{proposition}[theorem]{Proposition}
\newtheorem{corollary}[theorem]{Corollary}
\newtheorem{lemma}[theorem]{Lemma}
\theoremstyle{definition}
\newtheorem*{definition}{Definition} 
\newtheorem{example}[theorem]{Example}
\declaretheoremstyle[
spaceabove=6pt, spacebelow=6pt,
headfont=\normalfont\bfseries,
notefont=\normalfont\bfseries, 
notebraces={}{},
bodyfont=\normalfont\itshape
]{Estilo1}
\newcommand\inner[1] 		{\langle #1 \rangle}
\def\Proj			{P} 
\def\N{\mathds{N}}
\def\R{\mathds{R}}
\def\C{\mathds{C}}
\def\P{P} 
\def\PP{\mathds{P}}
\def\I{\mathcal{I}}
\def\im	 				{\mathrm{i}}
\def\Plucker		{Pl\"{u}cker}
\def\pperp		{\simperp}
\DeclareMathOperator{\Span}{span}
\DeclareMathOperator{\Ann}{Ann}
\def\lcontr		{\lrcorner\,}				
\def\ganu			{\nu}		
\def\gaomega	{\omega}		
\def\ii			{\mathbf{i}}		
\def\jj			{\mathbf{j}}		
\def\kk			{\mathbf{k}}		
\begin{document}

\title{Grassmann angles between real or complex subspaces}

\author{Andr\'e L. G. Mandolesi 
               \thanks{Instituto de Matemática e Estatística, Universidade Federal da Bahia, Av. Adhemar de Barros s/n, 40170-110, Salvador - BA, Brazil. E-mail: \texttt{andre.mandolesi@ufba.br}}}
               
\date{\today \SELF{v6 - com Identities, mas menos propriedades} }

\maketitle

\abstract{
The Grassmann angle improves upon similar angles between subspaces that measure volume contraction in orthogonal projections.
It works in real or complex spaces, with important differences, and is asymmetric, what makes it more efficient when dimensions are distinct. 
It can be seen as an angle in Grassmann algebra, being related to its products and those of Clifford algebra, and gives the Fubini-Study metric on Grassmannians, an asymmetric metric on the full Grassmannian, and Hausdorff distances between full sub-Grassmannians.
We give formulas for computing it in arbitrary bases, and identities for angles with certain families of subspaces, some of which are linked to real and complex Pythagorean theorems for volumes and quantum probabilities.
Unusual features of the angle with an orthogonal complement, or the angle in complex spaces, are examined.
	
\vspace{.5em}
\noindent
{\bf Keywords:} angle between subspaces, Grassmann angle, exterior algebra, Grassmann algebra, Grassmannian, Grassmann manifold.

\vspace{3pt}

\noindent
{\bf MSC:} 51M05, 15A75, 51F99
}

\section{Introduction}

Measuring the separation between subspaces is important in many areas, from geometry and linear algebra to functional analysis \cite{Knyazev2010}, statistics \cite{Hotelling1936}, data mining \cite{Jiao2018}, etc.
In high dimensions no single number fully describes this separation, and different concepts are used, each suitable for certain purposes: Jordan's principal or canonical angles \cite{Jordan1875}, Dixmier's minimal angle \cite{Dixmier1949}, Friedrichs angle \cite{Friedrichs1937}, maximal angle \cite{Krein1948}, gap \cite{Kato1995}, and others. Some important references on the subject are \cite{Afriat1957,Deutsch1995,Miao1992,Miao1996,Wedin1983}, and recent works can be found in \cite{Baksalary2009,Galantai2008,Galantai2006,Gunawan2005,Hitzer2010a,Knyazev2010}.

The literature on angles between subspaces can be confusing to the uninitiated. Many authors call their favorite concept \emph{the angle}, as if the term had a clear and unique meaning, and present it without mention of its peculiarities, limitations, or alternatives.
Equivalent concepts are given different names, or presented in ways which seem unrelated.
All this can lead even experienced researchers to error (for an example, see the discussion of Dorst et al. \cite{Dorst2007} in \cref{sc:Related angles}). 

Certain \emph{volume projection angles} describe how volumes contract in orthogonal projections between subspaces \cite{Gluck1967,Gunawan2005}. They keep reappearing  under various names and more or less equivalent definitions, in terms of principal angles \cite{Galantai2006,Hitzer2010a}, determinants \cite{Risteski2001}, Grassmann \cite{Gorski1968,Jiang1996,Venticos1956,Wedin1983} or Clifford algebras \cite{Dorst2007}, and sometimes lie behind other formalisms \cite{Afriat1957,Miao1992}. 
They are often defined, or results proven, only for real spaces, but not all concepts adapt well to complex spaces, whose geometry is important in applications like quantum information and computation \cite{Bengtsson2017,Nielsen2006quantum,Ortega2002}.

We organize and extend the theory on such angles, refining and unifying them in a \emph{Grassmann angle}.
Its properties are systematically developed, and results scattered in the literature are brought together and generalized, in a formalism that makes them clearer and proofs simpler.

A subtly important alteration makes the angle asymmetric for subspaces of distinct dimensions, reflecting their dimensional asymmetry. This leads to better properties and more general results, like a triangle inequality for angles between subspaces of different dimensions.

Our angle works well in complex spaces, and most results remain valid, though it is not the same as the angle in the underlying real spaces. Its relation to volume projection changes in a crucial way, which is related to complex Pythagorean theorems for volumes \cite{Mandolesi_Pythagorean}, with important consequences for quantum theory \cite{Mandolesi_Born}. 

Grassmann angles correspond to angles in Grassmann algebra. They give the Fubini-Study metric in Grassmannians, an asymmetric metric in the full Grassmannian, making it more than a disjoint union of Grassmannians, and Hausdorff distances between full sub-Grassmannians. 

The angle with a subspace and with its orthogonal complement are linked to products of cosines and sines of principal angles studied by some authors \cite{Afriat1957,Miao1992,Miao1996}.
The relation between these angles is not that of an usual angle complement, being quite intricate. Its analysis gives an obstruction on complex structures for pairs of real subspaces. 

In \cite{Mandolesi_Products} we relate Grassmann angles to various products of Grassmann and Clifford algebras (inner and exterior products, contractions, Clifford geometric product, etc.), and use them to get geometric interpretations for the Clifford product and some of its properties.

Here we use the products to get formulas for computing the angles in arbitrary bases, and identities for the angles with certain families of subspaces, some of which are related to generalized Py\-thag\-o\-re\-an theorems \cite{Mandolesi_Pythagorean} and quantum probabilities.

\Cref{sc:preliminaries} has concepts and results we will need. 
\Cref{sc:Grassmann angle} introduces the Grassmann angle and its basic properties, and \cref{sc:metric} describes its metric properties. 
\Cref{sc:Complementary Grassmann angle} studies the angle with an orthogonal complement. 
\Cref{sc:Formulas} gives formulas for computing the angles.
\Cref{sc:Pythagorean trigonometric,sc:other identities} give various identities, and \cref{sc:conclusion} closes with a few remarks.
\Cref{sc:Related angles} reviews similar angles.

\section{Preliminaries}\label{sc:preliminaries}

In this article $X$ is a $n$-dimensional vector space over $\R$ (real case) or $\C$ (complex case), with an inner product  $\inner{\cdot,\cdot}$ (Hermitian product in the complex case, with conjugate linearity in the left  entry).
In the complex case, $V_\R$ denotes the underlying real space of a complex subspace $V$, with inner product $\operatorname{Re}\inner{\cdot,\cdot}$.

A \emph{$k$-subspace} is a $k$-dimensional subspace, and a \emph{line} is a 1-subspace. Given $v\in X$, the sets $\R v=\{cv:c\in\R\}$ and, in the complex case, $\C v=\{cv:c\in\C\}$ are, respectively, the \emph{real} and \emph{complex lines} of $v$, if $v\neq 0$. 
In the complex case, $\R v$ is to be understood as a real line in $X_\R$.

$\Proj_W$ and $\Proj^V_W$ denote, respectively, the orthogonal projections $X\rightarrow W$ and $V\rightarrow W$,  for subspaces $V,W\subset X$.

\subsection{Angles between real or complex vectors}

We review definitions of angles between vectors, as the complex case has different angles \cite{Scharnhorst2001}. We also include the $0$ vector.

\begin{definition}
	For nonzero $v,w\in X$, the \emph{(Euclidean) angle} $\theta_{v,w}\in[0,\pi]$, the \emph{Hermitian angle} $\gamma_{v,w}\in[0,\frac{\pi}{2}]$, the \emph{complex angle} $\zeta_{v,w}\in\C$ and, if $v\not\perp w$, the \emph{phase difference} $\phi_{v,w}\in\R$ from $v$ to $w$, are given by
	\begin{equation}\label{eq:angles vectors}
	\begin{aligned}
		\cos\theta_{v,w} &= \frac{\operatorname{Re}\inner{v,w}}{\|v\| \|w\|}, \hspace{40pt}&
		\cos\gamma_{v,w} &= \frac{|\langle v,w \rangle|}{\|v\| \|w\|}, \\
		\cos\zeta_{v,w} &= \frac{\inner{v,w}}{\|v\| \|w\|}, &
		e^{i\phi_{v,w}} &= \frac{\inner{v,w}}{|\inner{v,w}|}. 
	\end{aligned}
	\end{equation}
	We also define $\theta_{0,0} = \gamma_{0,0} = \zeta_{0,0} = \theta_{0,v} =\gamma_{0,v} = \zeta_{0,v} =0$ and $\theta_{v,0} = \gamma_{v,0}  = \zeta_{v,0} = \frac{\pi}{2}$. 
\end{definition}

Angles with $0$ are unusual, and have the asymmetry $\theta_{0,v}\neq \theta_{v,0}$. We will find this to be helpful, being due to $\dim \R 0\neq\dim \R v$.

In the real case, $\zeta_{v,w} = \theta_{v,w}$ is the usual angle between vectors, $\gamma_{v,w} = \min\{\theta_{v,w},\pi-\theta_{v,w}\}$, and $\phi_{v,w} = 0$ or $\pi$.
In the complex case (\cref{fig:angulos vetores complexos}), $\theta_{v,w}$ is the usual angle in $X_\R$, $\gamma_{v,w} = \theta_{v,Pv}$ for $P=P_{\C w}$, and $\phi_{v,w}$ is the oriented angle from $Pv$ to $w$ (positive in the orientation given by the complex structure). If $v\not\perp w$ then $\cos\zeta_{v,w} = e^{i\phi_{v,w}} \cos\gamma_{v,w}$ and
\begin{equation}\label{eq:spherical Pythag vectors}
	\cos\theta_{v,w} = \cos\phi_{v,w} \cos\gamma_{v,w}.
\end{equation} 

\begin{figure}
	\centering
	\includegraphics[width=0.7\linewidth]{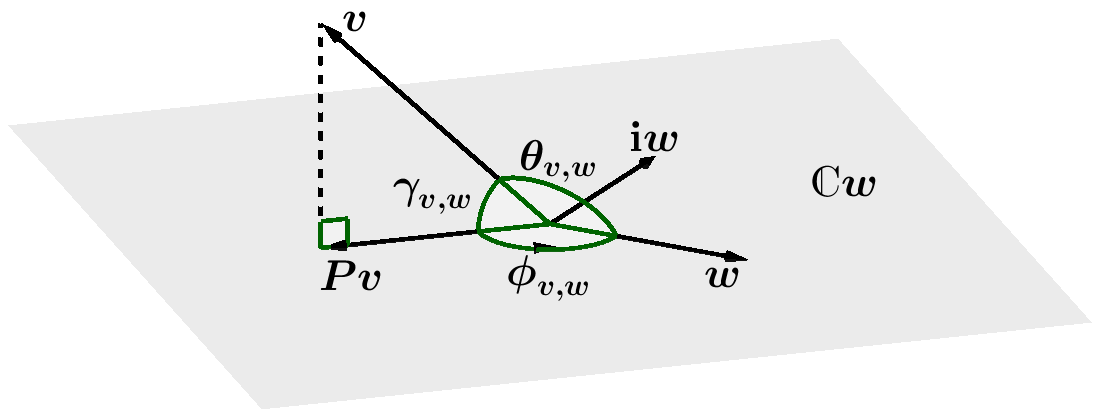}
	\caption{Angles for complex vectors.}
	\label{fig:angulos vetores complexos}
\end{figure}

\begin{definition}
	In the complex case, $v,w\in X$, $v\not\perp w$, are \emph{aligned} if $\phi_{v,w}=0$.
\end{definition}

This happens when $\inner{v,w}>0$, and means $Pv=cw$ for some $c>0$.
The phase difference is such that $e^{i\phi_{v,w}}v$ gets aligned with $w$.

\begin{example}
	In $\C^2$, for $v=(1,1)$ and $w=(i-1,0)$ we have $\theta_{v,w}=120^\circ$, $\gamma_{v,w}=45^\circ$, $\phi_{v,w}=135^\circ$ and $\zeta_{v,w}= \arccos(\frac{\sqrt{2}}{2} e^{i\frac{3\pi}{4}})$. 
\end{example}

%
%
%
%

\subsection{Angles between subspaces}

In high dimensions the relative position of subspaces can be more complicated than in $\R^3$, and many angle concepts are used to describe it.
We review some of them, and specify simple cases in which we drop the qualifiers and just talk about \emph{the} angle between subspaces.

\subsubsection{Minimal and directed maximal angles}

Let $V, W\subset X$ be nonzero subspaces.

\begin{definition}
The \emph{minimal angle} $\theta^{\min}_{V,W}\in[0,\frac{\pi}{2}]$ between  $V$ and $W$ is
$ \theta^{\min}_{V,W} = \min\left\{ \theta_{v,w} : v\in V, w\in W, v\neq 0, w\neq 0 \right\}$. 
\end{definition} 

In the complex case, using $\gamma_{v,w}$ instead of $\theta_{v,w}$ gives the same result. 

Even if the minimal angle is useful at times, the information it provides is limited. For example, $\theta^{\min}_{V,W} =0$ if, and only if, $V\cap W\neq\{0\}$, in which case it tells us nothing else about the relative position of $V$ and $W$.

\begin{definition}
The \emph{directed maximal angle} $\theta^{\max}_{V,W}\in[0,\frac{\pi}{2}]$ from $V$ to $W$ is
$ \displaystyle\theta^{\max}_{V,W} = \max_{v\in V}\min_{w\in W} \theta_{v,w}$.
\end{definition} 

If $\dim V\leq \dim W$ then $\theta^{\max}_{V,W}$ is their largest principal angle, described below, otherwise $\theta^{\max}_{V,W}=\frac{\pi}{2}$.

\subsubsection{Principal angles}

Detailed information about the relative position of subspaces requires a list of principal angles \cite{Galantai2006,Golub2013,Jordan1875}.

\begin{definition}
	Let $V,W\subset X$ be nonzero subspaces, $p=\dim V$, $q=\dim W$ and $m=\min\{p,q\}$.
	Orthonormal bases $(e_1,\ldots,e_p)$ of $V$ and $(f_1,\ldots,f_q)$ of $W$ are associated \emph{principal bases}, formed by \emph{principal vectors}, with \emph{principal angles} $0\leq \theta_1\leq\ldots\leq\theta_m\leq\frac \pi 2$, if
	\begin{equation}\label{eq:inner ei fj}
		\inner{e_i,f_j} = \delta_{ij}\cos\theta_i.
	\end{equation}
\end{definition}

Note that $\theta_i = \theta_{e_i,f_i}$ ($=\gamma_{e_i,f_i}$ in the complex case) for $1\leq i\leq m$.
The number of null principal angles equals $\dim V\cap W$.

Principal bases can be obtained via a singular value decomposition: for $P=\Proj^V_W$, the $e_i$'s and $f_i$'s are orthonormal eigenvectors of $P^*P$ and $PP^*$, respectively, and the eigenvalues of $P^*P$, if $p\leq q$, or $PP^*$ otherwise, are the $\cos^2\theta_i$'s.
The $\theta_i$'s are uniquely defined, but the $e_i$'s and $f_i$'s are not  (e.g., $-e_i$ and $-f_i$ are alternative principal vectors).

They can also be described recursively: $e_1$ and $f_1$ form the minimal angle $\theta_1$ of $V$ and $W$; in their orthogonal complements we obtain $e_2$, $f_2$ and $\theta_2$ in the same way; and so on.
For $i>m$ other vectors are chosen to complete an orthonormal basis.

A geometric interpretation is that the unit sphere of $V$ projects to an ellipsoid in $W$ (\cref{fig:principal vectors}).  In the real case, for $1\leq i\leq m$, the $e_i$'s project onto its semi-axes, of lengths $\cos\theta_i$, and the $f_i$'s point along them. In the complex case, for each $1\leq i\leq m$ there are two semi-axes of equal lengths, corresponding to projections of $e_i$ and $\im e_i$. In particular, this means each principal angle will be twice repeated in the underlying real spaces.

\begin{figure}
	\centering
	\includegraphics[width=0.7\linewidth]{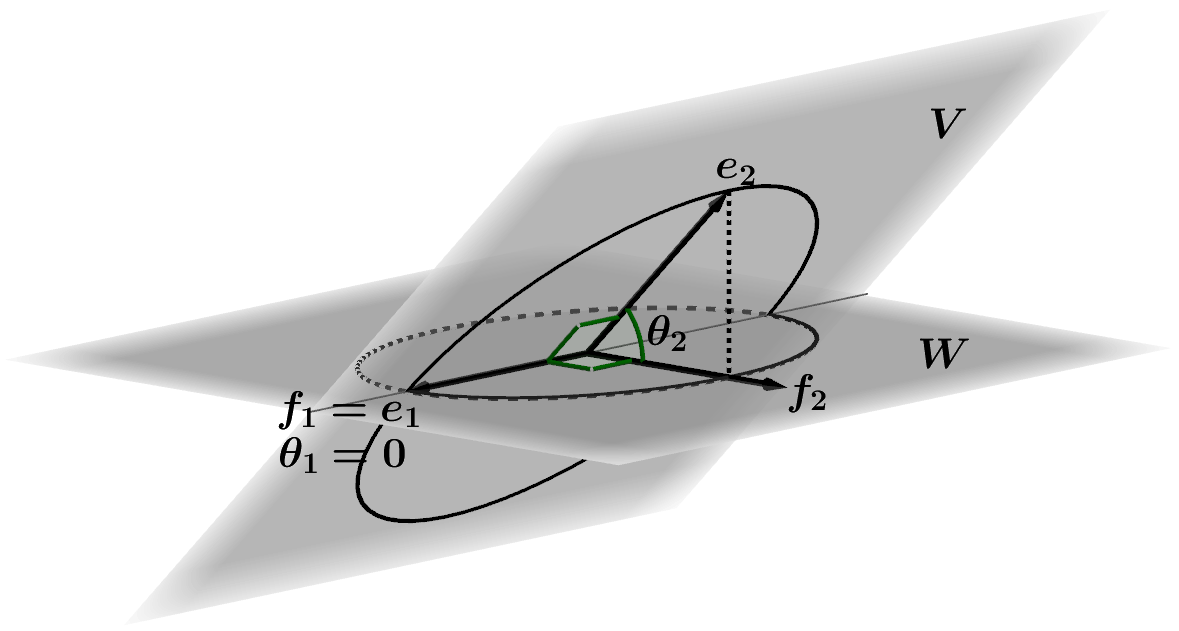}
	\caption{Principal vectors and angles for two planes in $\R^3$.}
	\label{fig:principal vectors}
\end{figure}

\begin{proposition}\label{pr:Proj diagonal}
$P=\Proj^V_W$ is represented in associated principal bases by a $q\times p$ diagonal matrix formed with the $\cos\theta_i$'s, and 
\begin{equation}\label{eq:Pei}
	Pe_i=\begin{cases}
		f_i\,\cos\theta_i \ \text{ if } 1\leq i\leq m, \\
		0 \hspace{31pt}\text{ if } i>m.
	\end{cases}
\end{equation}
\end{proposition}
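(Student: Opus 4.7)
The plan is to read off the action of $P = P^V_W$ on each basis vector $e_i$ directly from the defining relation \eqref{eq:inner ei fj}, using only the characterization of orthogonal projection onto $W$: since $v - Pv \perp W$ for every $v \in V$, one has $\langle f, Pv\rangle = \langle f, v\rangle$ for all $f \in W$.

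Given this, I would expand $Pe_i$ in the orthonormal basis $(f_1,\ldots,f_q)$ of $W$, writing $Pe_i = \sum_{j=1}^q \langle f_j, Pe_i\rangle\, f_j$. Applying the projection identity to replace $Pe_i$ by $e_i$ inside the inner product, the coefficient of $f_j$ becomes $\langle f_j, e_i\rangle = \overline{\langle e_i, f_j\rangle}$; since the right-hand side of \eqref{eq:inner ei fj} is real, the conjugation is trivial, and we read off $\langle f_j, e_i\rangle = \delta_{ij}\cos\theta_i$ for $1 \le i \le m$. For $i > m$ (which can occur only when $p > q$), the index $i$ exceeds every $j \le q$, so $\delta_{ij}=0$ for each such $j$, yielding $\langle f_j, e_i\rangle = 0$ and hence $Pe_i = 0$. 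This establishes \eqref{eq:Pei}.

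The matrix statement is then immediate: the $(j,i)$-entry of $P$ in the principal bases is the coefficient of $f_j$ in $Pe_i$, which is $\delta_{ij}\cos\theta_i$ for $i \le m$ and vanishes otherwise, giving the claimed $q \times p$ diagonal matrix with entries $\cos\theta_1,\ldots,\cos\theta_m$. There is no real obstacle here — the proposition is essentially a restatement of \eqref{eq:inner ei fj} via the adjoint property of orthogonal projection — and the only delicate point is keeping track of the conjugate-linear-left Hermitian convention in the complex case, which turns out to be harmless because every inner product appearing is real.
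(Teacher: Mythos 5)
Your proof is correct. The paper states this proposition without proof (treating it as an immediate consequence of the definition of principal bases), and your argument --- expanding $Pe_i$ in the orthonormal basis of $W$ and using $\langle f_j, Pe_i\rangle=\langle f_j,e_i\rangle=\delta_{ij}\cos\theta_i$, with the observation that $i>m$ forces $\delta_{ij}=0$ for all $j\le q$ --- is exactly the standard verification the paper implicitly relies on, including the correct handling of the conjugate-linear-left convention.
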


\begin{example}\label{ex:real principal angles}
In $\R^4$,  $e_1=(1,0,1,0)/\sqrt{2}$, $e_2=(0,1,0,1)/\sqrt{2}$, $f_1=(1,0,0,0)$ and $f_2=(0,1,0,0)$ are principal vectors for $V=\Span(e_1,e_2)$ and $W=\Span(f_1,f_2)$, with principal angles $\theta_1=\theta_2=45^\circ$.
\end{example}

\begin{example}\label{ex:complex principal angles}
In $\C^4$, $e_1=(1,1,0,0)/\sqrt{2}$, $e_2=(0,0,\im ,\sqrt{3})/2$, $f_1=(1+\im ,1-\im,0,0)/2$ and $f_2=(0,0,\im ,0)$ are principal vectors for $V=\Span_\C(e_1,e_2)$ and $W=\Span_\C(f_1,f_2)$, with principal angles $\theta_1=45^\circ$ and $\theta_2=60^\circ$.
In the underlying $\R^8$, these subspaces have as principal vectors
\[\begin{aligned}
e_1 &= (1,0,1,0,0,0,0,0)/\sqrt{2}, &f_1 &= (1,1,1,-1,0,0,0,0)/2, \\
\tilde{e}_1 & = \im e_1 = (0,1,0,1,0,0,0,0)/\sqrt{2}, &\tilde{f}_1 &= \im f_1 = (-1,1,1,1,0,0,0,0)/2, \\
e_2 &= (0,0,0,0,0,1,\sqrt{3},0) /2, &f_2 &= (0,0,0,0,0,1,0,0),\\
\tilde{e}_2 &= \im e_2 = (0,0,0,0,-1,0,0,\sqrt{3})/2, &\tilde{f}_2 &= \im f_2 = (0,0,0,0,-1,0,0,0), \end{aligned}\]
with principal angles $\theta_1=\tilde{\theta}_1=45^\circ$ and $\theta_2=\tilde{\theta}_2=60^\circ$. 
\end{example}

Principal angles fully describe the relative position of two subspaces.
The following result is proven in \cite{Wong1967} for the real case, but the proof also works for the complex one.

\begin{proposition}\label{pr:relative position}
Given two pairs  $(V,W)$ and $(V',W')$ of subspaces of $X$, with $\dim V'=\dim V$ and $\dim W'=\dim W$, there is an orthogonal transformation (unitary, in the complex case) taking $V$ to $V'$ and $W$ to $W'$ if, and only if, both pairs have the same principal angles.
\end{proposition}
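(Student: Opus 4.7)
The plan is to prove the two directions separately. The forward implication is essentially immediate: if $U$ is an orthogonal (respectively unitary) transformation of $X$ with $U(V)=V'$ and $U(W)=W'$, then for any associated principal bases $(e_1,\ldots,e_p)$ of $V$ and $(f_1,\ldots,f_q)$ of $W$, the images $(U(e_i))$ and $(U(f_j))$ are orthonormal bases of $V'$ and $W'$. Since $U$ preserves the inner product, they satisfy \eqref{eq:inner ei fj} with the same $\theta_i$, so they are associated principal bases of $(V',W')$ and the two pairs have the same principal angles.

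For the converse, I would construct the desired isometry $U$ by matching carefully chosen orthonormal bases of $V+W$ and $V'+W'$. Start from principal bases $(e_i),(f_j)$ and $(e'_i),(f'_j)$ realizing the common angles $\theta_1,\ldots,\theta_m$. Since $e_i$ and $f_i$ are not orthogonal in general, one cannot simply set $e_i\mapsto e'_i$ and $f_j\mapsto f'_j$ and expect a well-defined linear map on $V+W$. Instead, for each $1\le i\le m$ with $\theta_i\neq 0$, define the Gram--Schmidt vector
\[
  h_i=\frac{f_i-\cos\theta_i\, e_i}{\sin\theta_i},
\]
so that $f_i=\cos\theta_i\, e_i+\sin\theta_i\, h_i$. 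A short computation from \eqref{eq:inner ei fj} shows the $h_i$ are unit vectors, mutually orthogonal, and orthogonal to every $e_j$ as well as to the unmatched $f_j$ (those with $j>m$, occurring only when $q>p$). Hence the family $\mathcal{B}$ consisting of all $e_i$, all such $h_i$, and the unmatched $f_j$'s is an orthonormal basis of $V+W$.

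Performing the same construction on the primed side gives an orthonormal basis $\mathcal{B}'$ of $V'+W'$. Since $\dim(V\cap W)$ equals the number of zero principal angles, we have $\dim(V+W)=\dim(V'+W')$, so $|\mathcal{B}|=|\mathcal{B}'|$. Define $U$ on $V+W$ by sending each vector of $\mathcal{B}$ to its primed counterpart in $\mathcal{B}'$, and extend it to $X$ via any isometry between the orthogonal complements $(V+W)^\perp$ and $(V'+W')^\perp$, which have the same dimension. Being a bijective correspondence of orthonormal bases with real coefficients $\cos\theta_i,\sin\theta_i$, $U$ is automatically orthogonal, or complex-linear and unitary in the complex case. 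By construction $U(V)=V'$, and the relation $f_i=\cos\theta_i\, e_i+\sin\theta_i\, h_i$ forces $U(f_i)=f'_i$ for each $i$, whence $U(W)=W'$. The only real obstacle is the verification of the orthogonality relations among the $h_i$'s and the unmatched vectors, but each is a one-line computation from \eqref{eq:inner ei fj} that goes through identically in the real and complex cases.
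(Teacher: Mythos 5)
Your proof is correct. Note that the paper does not actually prove this proposition: it defers to Wong's 1967 paper for the real case and merely remarks that the argument carries over to the complex case, so your self-contained construction fills in what the paper leaves to a citation. The argument you give is the standard one and all the claimed orthogonality relations do check out from \eqref{eq:inner ei fj}; the one point worth making explicit is that the whole computation goes through verbatim in the complex case precisely because $\inner{e_i,f_j}=\delta_{ij}\cos\theta_i$ is \emph{real} by the definition of principal bases, so the Gram--Schmidt coefficients $\cos\theta_i$, $\sin\theta_i$ are real and $\|h_i\|^2=\frac{1-2\cos^2\theta_i+\cos^2\theta_i}{\sin^2\theta_i}=1$ without any conjugation subtleties. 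Two minor loose ends you handle implicitly but could state: (a) when $\theta_i=0$ one has $\|f_i-e_i\|^2=2-2\operatorname{Re}\inner{e_i,f_i}=0$, so $f_i=e_i$ and no $h_i$ is needed, which is why $U(f_i)=f_i'$ still holds for those indices; (b) the dimension count $|\mathcal{B}|=p+q-\#\{i:\theta_i=0\}=\dim(V+W)$ uses the paper's remark that the number of null principal angles equals $\dim V\cap W$, which guarantees both that $\mathcal{B}$ spans $V+W$ and that the orthogonal complements $(V+W)^\perp$ and $(V'+W')^\perp$ have equal dimension so the extension of $U$ exists.
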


Dealing with a list of angles can be cumbersome, and in applications one often uses  whichever angle better describes the properties of interest.
This result shows any good angle concept must be a function of principal angles, and several concepts of distance between subspaces, or metrics in Grassmannians, are also given in terms of them \cite{Edelman1999,Hamm2008,Ye2016}.
\SELF{Other refs: Qiu2005,Wong1967, Neretin2001,Wedin1983} 

But even if an angle captures some important relation between the subspaces, it can miss on other information. Unfortunately, angles between subspaces are often presented without an explanation of their shortcomings, and this can lead to misunderstandings.

\subsubsection{The angle in simple cases}

Some cases are simple enough that the relation between the subspaces can be expressed in easier terms, without risking ambiguity.

\begin{definition}
We define \emph{the angle} $\theta_{V,W}$ between subspaces $V,W\subset X$ only in the following cases:
\begin{enumerate}[i)]
\item  $\theta_{\{0\},V}=0$ and, if $V\neq\{0\}$, $\theta_{V,\{0\}}=\frac{\pi}{2}$.
\item If their principal angles are $(0,\ldots,0,\theta_m)$ then $\theta_{V,W}=\theta_m$.
\item If their principal angles are $(0,\ldots,0,\frac{\pi}{2},\ldots,\frac{\pi}{2})$ then $\theta_{V,W}=\frac{\pi}{2}$.
\end{enumerate}
\end{definition}

Case (i) is for convenience. In (ii) there might be no 0's, and $\theta_m$ can be 0. In (iii) there might be no 0's, but there must be at least one $\frac{\pi}{2}$.

In particular, $\theta_{V,W}$ is defined if $V$ or $W$ is a line, or if $V\cap W$ has codimension 1 in $V$ or $W$, coinciding with the usual angle between line and subspace, or the dihedral angle between hyperplanes. 



\begin{proposition}\label{pr:the angle lines}
Let $v,w\in X$, $V=\Span(v)$ and $W=\Span(w)$.
\begin{enumerate}[i)]
\item $\theta_{\R v,\R w} = \min\{\theta_{v,w}, \pi-\theta_{v,w}\}$.\label{it:angle real lines}
\item $\theta_{\C v,\C w} =\gamma_{v,w}$, in the complex case.\label{it:angle complex lines}
\item $|\langle v,w \rangle| = \|v\| \|w\|\cos \theta_{V,W}$. \label{it:theta span v w}
\end{enumerate}
\end{proposition}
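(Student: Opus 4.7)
The plan is to treat (i) and (ii) directly from the definition of principal angles applied to one-dimensional subspaces, and then obtain (iii) as an immediate consequence. In both cases the key observation is that a line admits essentially a unique unit vector up to the scalar phase group of the base field ($\{\pm 1\}$ in the real case, $\{e^{\im\alpha}\}$ in the complex case), so the single principal angle is forced by \eqref{eq:inner ei fj}.

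First I would dispose of the degenerate cases where $v=0$ or $w=0$ by appealing to part (i) of the definition of \emph{the} angle $\theta_{V,W}$; both sides of (iii) vanish or give $\tfrac{\pi}{2}$ as required. Assume then $v,w\neq 0$.

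For (i), take the real case: $e_1 = v/\|v\|$ is an orthonormal basis of $\R v$, and the orthonormal bases of $\R w$ are $\pm w/\|w\|$. Choosing the sign so that the inner product is non-negative gives an associated principal basis with $\cos\theta_1 = |\langle v,w\rangle|/(\|v\|\|w\|) = |\cos\theta_{v,w}|$. Since the range of $\theta_1$ is $[0,\tfrac{\pi}{2}]$, this yields $\theta_{\R v,\R w} = \arccos|\cos\theta_{v,w}| = \min\{\theta_{v,w},\pi-\theta_{v,w}\}$.

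For (ii), in the complex case, the orthonormal bases of $\C w$ are $e^{\im\alpha}w/\|w\|$ for $\alpha\in\R$. Pick $\alpha = -\phi_{v,w}$ (if $v\not\perp w$, otherwise any $\alpha$), so that $\langle e_1, f_1\rangle = |\langle v,w\rangle|/(\|v\|\|w\|) \geq 0$. This gives an associated principal basis with $\cos\theta_1 = \cos\gamma_{v,w}$, so $\theta_{\C v,\C w} = \gamma_{v,w}$. Finally, (iii) follows immediately: in the real case $\cos\theta_{V,W} = |\cos\theta_{v,w}| = |\langle v,w\rangle|/(\|v\|\|w\|)$ by (i), and in the complex case $\cos\theta_{V,W} = \cos\gamma_{v,w} = |\langle v,w\rangle|/(\|v\|\|w\|)$ by (ii). No step is really an obstacle; the only mild subtlety is remembering to absorb the phase (or sign) into the choice of principal vectors so that \eqref{eq:inner ei fj} is satisfied with a non-negative cosine.
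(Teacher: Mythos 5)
Your proof is correct: the paper states this proposition without proof, treating it as immediate from the definitions, and your verification (reducing to the single principal angle of a pair of lines and absorbing the sign or phase into the choice of principal vectors so that \eqref{eq:inner ei fj} holds with a non-negative cosine) is exactly the routine argument being left implicit. The degenerate cases and the derivation of (iii) from (i) and (ii) are also handled correctly.
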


\subsection{Grassmann algebra}

In Grassmann algebra $\Lambda X$ \cite{Marcus1975,Yokonuma1992}, \emph{$p$-blade} is a simple multivector $\nu=v_1\wedge\ldots\wedge v_p \in\Lambda^p X$ of grade $p$, with $v_1,\ldots,v_p\in X$. If $\nu\neq 0$, it \emph{represents} a $p$-subspace $V=\Span(v_1,\ldots,v_p) =\{x\in X:x\wedge\nu=0\}$, and $\Lambda^p V =\Span(\nu)$.
A nonzero
\SELF{Melhor excluir $\nu=0$, pois $\Lambda^0 V=\R=\Span(\nu)$ and $V=\Ann(\nu)$ only for $\nu\neq 0$
}
scalar $\nu\in\Lambda^0 X$ is a $0$-blade, representing $\{0\}$. 

The inner product of $\nu=v_1\wedge\ldots\wedge v_p$ and $\omega=w_1\wedge\ldots\wedge w_p$ is $\inner{\nu,\omega} = \det\!\big(\inner{v_i,w_j}\big)$. It is extended linearly (sesquilinearly, in the complex case), with distinct $\Lambda^p X$'s being  orthogonal, and $\inner{\nu,\omega}=\bar{\nu}\omega$ for  $\nu,\omega\in\Lambda^0 X$.

The norm $\|\nu\|=\sqrt{\inner{\nu,\nu}}$ of $\nu=v_1\wedge\ldots\wedge v_p$ gives, in the real case, the $p$-dimensional volume of the parallelotope spanned by $v_1,\ldots,v_p$.  

Given a subspace $W\subset X$, $P=\Proj_W:X\rightarrow W$ extends naturally to $P=\Proj_{\Lambda W}:\Lambda X \rightarrow \Lambda W$, and 
$P(v_1\wedge\ldots\wedge v_p) = Pv_1\wedge\ldots\wedge Pv_p$.

For $\nu\in\Lambda^p X$, the \emph{(left) contraction} $\nu\lcontr$ is the adjoint of the (left) exterior product $\nu\wedge$, i.e. for any $\omega\in\Lambda^q X$ and $\mu\in\Lambda^{q-p} X$,
\begin{equation}\label{eq:contraction}
	\inner{\mu,\nu \lcontr  \omega} = \inner{\nu\wedge\mu,\omega}.
\end{equation}
This operation is asymmetric, with $\nu\lcontr\omega=0$ if $p>q$, and generalizes the inner product, as $\nu\lcontr\omega = \inner{\nu,\omega}$ if $p=q$.

\subsection{Orientation}

In the real case, there are various ways to specify one of the 2 possible orientations of a subspace $V$.
In the complex case, the complex structure naturally picks a preferred orientation in $V_\R$, and this usually ends all talk of orientations in complex spaces. 

We adopt a definition of orientation that is common in the real case, but also works in the complex one, with the difference that it gives a continuum of complex orientations instead of only 2 alternatives.

\begin{definition}
	An \emph{orientation} of a $p$-subspace $V$ is an unit $\nu\in\Lambda^p V$, or, equivalently, the equivalence class $\{c\,\nu:c>0\}$ of a nonzero $\nu\in\Lambda^p V$.
	An \emph{oriented subspace} is a subspace with a choice of orientation.
	A blade \emph{orients} $V$ if it represents $V$ and gives its orientation.
\end{definition}

The set of orientations of $V$ is the unit circle in $\Lambda^p V$ (in the real case, 2 points).
In the complex case $U(V)/SU(V)$ gives the same result, generalizing the usual definition $O(V)/SO(V)$ of the set of real orientations.

\subsection{Grassmannians}

In a complex projective space, the Fubini-Study distance  \cite{Goldman1999} 
between lines is their angle, as in \cref{pr:the angle lines}\emph{\ref{it:angle complex lines}}. 
In a real projective space, with the round metric (i.e. as a quotient of the unit sphere), the distance between  lines is also their angle, now as in \cref{pr:the angle lines}\emph{\ref{it:angle real lines}}. 
In both cases we call the angle between lines their \emph{Fubini-Study distance} $d_{FS}$, 
\[ d_{FS}\big(\!\Span(v),\Span(w)\big) = \arccos \frac{|\inner{v,w}|}{\|v\| \|w\|}. \]
Geodesics for the Fubini-Study metric are, in $\R\PP^n$, quotients of great circles of the sphere $S^{n}$, and in  $\C\PP^n$ they are great circles in the complex projective line $\C\PP^1\cong S^2$ determined by any two points.

For a a $p$-subspace $V\subset X$, the \emph{Grassmannian} $G_k(V)$ is the set of its $k$-subspaces. With an appropriate differential structure, it is a compact manifold \cite{Griffiths1994,Kobayashi1996}.
\SELF{Compact since the compact group $O(V)$ or $U(V)$ acts transitively}
The \emph{full Grassmannian}  of all subspaces of $V$ is $G(V)=\cup_{k=0}^p G_k(V)$.
Its \emph{\Plucker\  embedding} into the projective space $\PP(\Lambda V)$ maps each $k$-subspace $U\subset V$ to its line $\Lambda^k U$.
Identifying $G(V)$ with its image,  $d_{FS}$ gives it a metric.
As distinct $\Lambda^k V$'s are mutually orthogonal, points in different $G_k(V)$'s are at  distance $\frac{\pi}{2}$.


We denote by $\tilde{G}_k(V)$ the Grassmannian of oriented k-subspaces of $V$, and $\tilde{G}(V)=\cup_{k=0}^p \tilde{G}_k(V)$ is the full Grassmannian of oriented subspaces.
Identifying each oriented subspace  with the unit blade orienting it, we embed $\tilde{G}(V)$ in the unit sphere $S(\Lambda V)$ of $\Lambda V$, as $\tilde{G}(V) = \{$unit blades in $\Lambda V\}$. 
The orthogonal group (unitary, in the complex case) of $V$ acts transitively on each $\tilde{G}_k(V)$, hence $\tilde{G}(V)$ is compact.
\SELF{Compact groups}

\subsection{Coordinate blades and subspaces}\label{sc:Coordinate decomposition}

\begin{definition}
	Let 
	$ \I_p^q= \{ (i_1,\ldots,i_p)\in\N^p : 1\leq i_1 < \ldots<i_p\leq q \}$, for integers $1\leq p\leq q$. 
	The \emph{norm} of a multi-index $\ii=(i_1,\ldots,i_p)\in\I_p^q$ is $\|\ii\|=i_1+\ldots+i_p$, and its \emph{complement} is $\ii'=(1,\ldots,\hat{i_1},\ldots,\hat{i_p},\ldots,q)\in\I_{q-p}^q$, where $\hat{i_k}$ indicates that index is removed.
	Also, let $\I_0^q=\{0\}$, with $\|0\|=0$ and $0'=(1,\ldots,q)$, and for $\ii\in\I_q^q$ let $\ii'=0$.
\end{definition}

\begin{definition}
	Given a basis $\beta=(w_1,\ldots,w_q)$ of $W\subset X$, and $1\leq p\leq q$, the \emph{coordinate $p$-subspaces} of $\beta$ are the $\binom{q}{p}$ subspaces given by 
	\begin{equation}\label{eq:coordinate subspaces}
		W_\ii = \Span(w_{ i_1},\ldots,w_{ i_p}),
	\end{equation}
	for $\ii=(i_1,\ldots,i_p)\in\I_p^q$,
	and represented by the \emph{coordinate $p$-blades} of $\beta$,
	\begin{equation}\label{eq:coordinate blades}
		\omega_\ii = w_{ i_1}\wedge\ldots\wedge w_{ i_p}\in \Lambda^p W_\ii.
	\end{equation}
	The coordinate $0$-subspace and $0$-blade of $\beta$ are $W_0=\{0\}$ and $\omega_0=1$. 
\end{definition}

When $\beta$ is orthonormal, $\{\omega_\ii\}_{\ii\in\I_p^q}$ is an orthonormal basis of $\Lambda^p W$.

\begin{definition}
	Given a decomposed nonzero blade $\omega= w_1\wedge\ldots\wedge w_q\in\Lambda^q X$, take the basis $\beta=(w_1,\ldots,w_q)$ of its subspace. For $0\leq p\leq q$ and $\ii\in \I_p^q$, the \emph{coordinate decomposition} of $\omega$ (w.r.t. $\ii$ and $\beta$) is
	\begin{equation}\label{eq:multiindex decomposition}
		\omega = \varepsilon_\ii \,\omega_\ii\wedge\omega_{\ii'},
	\end{equation}
	with $\omega_\ii$ and $\omega_{\ii'}$ as in \eqref{eq:coordinate blades}, and
	$\varepsilon_\ii = (-1)^{\|\ii\|+\frac{p(p+1)}{2}}$.
	\SELF{Tanto faz ser $(-1)^{\|\ii\|\pm \frac{p(p+1)}{2}}$}
\end{definition}

The exponent in $\varepsilon_\ii$ is due to the $(i_1-1)+\ldots+(i_p-p)$ transpositions needed to reorder $(\ii,\ii')$ as $(1,\ldots,q)$.
With this decomposition we can write an explicit formula for the contraction \cite{Mandolesi_Products}.

\begin{proposition}
	Given $\nu\in\Lambda^p X$ and a blade $\omega= w_1\wedge\ldots\wedge w_q\in\Lambda^q X$, with $p\leq q$, 
	\begin{equation}\label{eq:contraction coordinate decomposition} 
		\nu \lcontr  \omega = \sum_{\ii\in\I_p^q} \varepsilon_\ii \,\inner{\nu,\omega_\ii}\, \omega_{\ii'}.
	\end{equation}
\end{proposition}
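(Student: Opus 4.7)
The plan is to verify the claimed formula by testing it against the defining property \eqref{eq:contraction} of the contraction. Both sides are conjugate-linear in $\nu$ (for the left-hand side this follows from \eqref{eq:contraction} together with conjugate-linearity of $\inner{\cdot,\cdot}$ in its first slot; for the right-hand side it is immediate), so it suffices to establish the identity for $\nu$ a simple $p$-blade $v_1 \wedge \ldots \wedge v_p$. Also, since simple $(q-p)$-blades span $\Lambda^{q-p}X$ and the inner product is non-degenerate, it is enough to show that $\inner{\mu, \nu \lcontr \omega}$ equals $\inner{\mu, \sum_\ii \varepsilon_\ii \inner{\nu,\omega_\ii}\omega_{\ii'}}$ for every simple $\mu = u_1\wedge\ldots\wedge u_{q-p}$.

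By the definition of contraction, $\inner{\mu,\nu\lcontr\omega} = \inner{\nu\wedge\mu,\omega}$, and the right side equals $\det(M)$ where $M$ is the $q\times q$ matrix with entries $M_{rs} = \inner{z_r,w_s}$ and $(z_1,\ldots,z_q) = (v_1,\ldots,v_p,u_1,\ldots,u_{q-p})$. The key step is to apply Laplace expansion to $\det(M)$ along its first $p$ rows:
\[
\det(M) = \sum_{\ii\in\I_p^q} (-1)^{(1+\ldots+p)+\|\ii\|}\, \det(A_\ii)\, \det(B_{\ii'}),
\]
where $A_\ii$ is the $p\times p$ submatrix with entries $\inner{v_r,w_{i_s}}$ and $B_{\ii'}$ is the $(q-p)\times(q-p)$ submatrix with entries $\inner{u_r,w_{i'_s}}$. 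By the definition of the inner product on blades, $\det(A_\ii) = \inner{\nu,\omega_\ii}$ and $\det(B_{\ii'}) = \inner{\mu,\omega_{\ii'}}$, while the Laplace sign $(-1)^{p(p+1)/2+\|\ii\|}$ is exactly $\varepsilon_\ii$. Hence
\[
\inner{\nu\wedge\mu,\omega} \;=\; \sum_{\ii\in\I_p^q} \varepsilon_\ii\, \inner{\nu,\omega_\ii}\, \inner{\mu,\omega_{\ii'}}.
\]

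On the other hand, pulling the scalar $\varepsilon_\ii\inner{\nu,\omega_\ii}$ out of the right slot of the Hermitian product (which is linear there) gives
\[
\inner*{\mu,\, \sum_{\ii\in\I_p^q}\varepsilon_\ii \inner{\nu,\omega_\ii}\,\omega_{\ii'}} \;=\; \sum_{\ii\in\I_p^q} \varepsilon_\ii \inner{\nu,\omega_\ii}\inner{\mu,\omega_{\ii'}},
\]
which matches the previous expression. Non-degeneracy of $\inner{\cdot,\cdot}$ on $\Lambda^{q-p}X$ then yields \eqref{eq:contraction coordinate decomposition} for simple $\nu$, and conjugate-linearity extends it to all $\nu\in\Lambda^p X$.

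The only delicate point is the bookkeeping of signs: one has to confirm that the Laplace-expansion sign $(-1)^{(1+\ldots+p)+\|\ii\|}$ coincides with the exponent $\|\ii\|+\tfrac{p(p+1)}{2}$ appearing in $\varepsilon_\ii$, which is precisely the parity of the permutation reordering $(\ii,\ii')$ as $(1,\ldots,q)$. Once this is checked, the rest is routine sesquilinearity and the Cauchy-Binet-style identification of minors with blade inner products.
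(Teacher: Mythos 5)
Your proof is correct. Note that the paper does not actually prove this proposition here---it is quoted from the companion reference \cite{Mandolesi_Products}---so there is no in-paper argument to compare against; but your route (reduce to simple $\nu$ by conjugate-linearity, test against simple $\mu$ via the adjoint property \eqref{eq:contraction}, and identify $\inner{\nu\wedge\mu,\omega}$ with a $q\times q$ determinant expanded by Laplace along the first $p$ rows) is the natural one, and it is exactly the technique the paper itself deploys later in the proof of \cref{pr:formula any base dimension}, where the same Laplace expansion and the same sign bookkeeping $(-1)^{\|\ii\|+p(p+1)/2}=\varepsilon_\ii$ appear. All the delicate points check out: the Laplace sign for rows $1,\dots,p$ is $(-1)^{\frac{p(p+1)}{2}+\|\ii\|}$, the minors are $\inner{\nu,\omega_\ii}$ and $\inner{\mu,\omega_{\ii'}}$ by the Gram-determinant definition of the blade inner product, and positive-definiteness of the induced inner product on $\Lambda^{q-p}X$ justifies the final cancellation. (Your closing reference to ``Cauchy--Binet'' is a slight misnomer---the identification of minors with blade inner products is just the definition of $\inner{\cdot,\cdot}$ on decomposable elements---but this is cosmetic.)
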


\subsection{Principal partitions}

Let $V,W\subset X$ be nonzero subspaces and $P=\Proj_W$.

\begin{definition}
	A \emph{principal coordinate subspace}\footnote{Not to be confused with the term `principal subspace' used by some authors for $\Span(e_i,f_i)$, where $e_i\in V$ and $f_i\in W$ are corresponding principal vectors.} 
	of $V$ w.r.t. $W$ is a coordinate subspace of a principal basis of $V$ w.r.t. $W$. 
	Principal coordinate subspaces of the same basis are \emph{coprincipal}.
	A partition $V=\bigoplus_i V_i$ is \emph{principal} w.r.t. $W$ if the $V_i$'s are coprincipal subspaces of $V$ w.r.t. $W$.
\end{definition}

Note that $\{0\}$ is always principal, and principal partitions are orthogonal. 
For simplicity, we omit the reference to $W$ when there is no ambiguity.


\begin{proposition}\label{pr:principal blades proj orth}
	If $V_1,V_2\subset V$ are distinct coprincipal $r$-subspaces, represented by blades $\nu_1$ and $\nu_2$, then $\inner{\nu_1,\nu_2}=\inner{P\nu_1,P\nu_2}=0$. \SELF{Usa p/\cref{pr:identity Theta principal subspaces}.} 
\end{proposition}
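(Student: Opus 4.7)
The plan is to reduce the statement to the orthonormality of coordinate blades in an orthonormal basis, using \cref{pr:Proj diagonal} to transport the situation from $V$ to $W$.

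First, by the definition of coprincipal subspaces, there is a single principal basis $\beta = (e_1, \ldots, e_p)$ of $V$ w.r.t. $W$ (with associated principal basis $(f_1, \ldots, f_q)$ of $W$) and multi-indices $\ii, \jj \in \I_r^p$ with $\ii \neq \jj$ such that $V_1 = V_\ii$ and $V_2 = V_\jj$. Since $\nu_1, \nu_2$ represent these subspaces, $\nu_1 = c_1 \, e_{i_1} \wedge \cdots \wedge e_{i_r}$ and $\nu_2 = c_2 \, e_{j_1} \wedge \cdots \wedge e_{j_r}$ for some scalars $c_1, c_2$.

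For $\inner{\nu_1, \nu_2}$, I would just invoke orthonormality of $\beta$: by the determinantal formula for inner products of blades, $\inner{e_{i_1} \wedge \cdots \wedge e_{i_r}, e_{j_1} \wedge \cdots \wedge e_{j_r}} = \det\!\big(\inner{e_{i_a}, e_{j_b}}\big)$, and since $\ii \neq \jj$ at least one row of this matrix has only zero entries (there is some $i_a$ appearing in $\ii$ but not in $\jj$), so the determinant vanishes.

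For $\inner{P\nu_1, P\nu_2}$, apply \cref{pr:Proj diagonal}: $Pe_i = f_i \cos\theta_i$ for $i \leq m$ and $Pe_i = 0$ for $i > m$. Thus either some index in $\ii$ or $\jj$ exceeds $m$, in which case $P\nu_1 = 0$ or $P\nu_2 = 0$ and the inner product is trivially zero, or all indices lie in $\{1, \ldots, m\}$, in which case
\[ P\nu_1 = c_1 \Big(\prod_{a} \cos\theta_{i_a}\Big) f_{i_1} \wedge \cdots \wedge f_{i_r}, \qquad P\nu_2 = c_2 \Big(\prod_{b} \cos\theta_{j_b}\Big) f_{j_1} \wedge \cdots \wedge f_{j_r}. \]
Now the same argument as before applies, with $(f_1, \ldots, f_q)$ orthonormal and $\ii \neq \jj$: the determinant $\det\!\big(\inner{f_{i_a}, f_{j_b}}\big)$ vanishes.

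There is no real obstacle — the whole proof is two short invocations of orthonormality sandwiching a use of \cref{pr:Proj diagonal}. The only detail worth a line is noting that the conclusion holds vacuously when a projected blade happens to be zero.
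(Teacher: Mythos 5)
Your proof is correct and follows essentially the same route as the paper: both arguments boil down to observing that distinct coprincipal $r$-subspaces each contain a principal vector absent from (hence orthogonal to) the other, and then using \eqref{eq:Pei} to see that the same holds after projection. The paper phrases this via a single pair of vectors $e_1\perp V_2$, $e_2\perp V_1$ rather than writing out the determinant with a zero row, but the content is identical.
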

\begin{proof}
	The principal basis has elements $e_1\in V_1$ and $e_2\in V_2$ such that $e_1\perp V_2$ and $e_2\perp V_1$, and therefore $\inner{\nu_1,\nu_2}=0$. 
	By \eqref{eq:Pei}, $Pe_1\perp P(V_2)$ and $Pe_2\perp P(V_1)$, and so $\inner{P\nu_1,P\nu_2}=0$.
\end{proof}

\begin{proposition}\label{pr:principal partition}
	Let $V=\bigoplus_i V_i$ be  an orthogonal partition. The following are equivalent:\SELF{Usa p/ \cref{pr:converse Theta partition}}
	\begin{enumerate}[i)]
		\item $V=\bigoplus_i V_i$ is a principal partition w.r.t. $W$.\label{it:sum Vi principal}
		\item $P(V)=\bigoplus_i P(V_i)$ is a principal partition w.r.t. $V$.\label{it:sum PVi principal}
		\item $P(V)=\bigoplus_i P(V_i)$ is an orthogonal partition.\label{it:sum PVi orthogonal}
	\end{enumerate}
\end{proposition}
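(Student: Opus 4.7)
The plan is to prove the cycle (i) $\Rightarrow$ (ii) $\Rightarrow$ (iii) $\Rightarrow$ (i). The easy implication is (ii) $\Rightarrow$ (iii), which follows from the remark right after the definition, stating that principal partitions are orthogonal. So the real work lies in (i) $\Rightarrow$ (ii) and (iii) $\Rightarrow$ (i).

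For (i) $\Rightarrow$ (ii), I would start from a principal basis $(e_1,\ldots,e_p)$ of $V$ and $(f_1,\ldots,f_q)$ of $W$ with principal angles $\theta_1,\ldots,\theta_m$, such that each $V_i$ is the span of a subset $S_i\subset\{e_1,\ldots,e_p\}$ of principal vectors. Using \cref{pr:Proj diagonal}, $Pe_j = f_j\cos\theta_j$ for $j\le m$ and $Pe_j=0$ otherwise, so $P(V_i)$ is spanned by the subset $T_i = \{f_j : e_j\in S_i,\ j\le m,\ \theta_j<\tfrac{\pi}{2}\}$. The $T_i$'s are pairwise disjoint subsets of the orthonormal family $(f_1,\ldots,f_q)$, hence $P(V) = \bigoplus_i P(V_i)$. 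To show this is a principal partition with respect to $V$, I swap the roles: the same $f_j$'s and $e_j$'s (restricted to indices in $\cup_i T_i$) serve as principal vectors for $P(V)$ and $V$, as $\langle f_j,e_k\rangle = \delta_{jk}\cos\theta_j$ still holds, and the partition $\{T_i\}$ of the $f_j$'s in $P(V)$ comes from a single principal basis.

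For (iii) $\Rightarrow$ (i), assume the orthogonal partitions $V = \bigoplus_i V_i$ and $P(V)=\bigoplus_i P(V_i)$. The idea is to assemble a global principal basis of $V$ and $W$ from singular value decompositions of each restriction $P|_{V_i}\colon V_i\to P(V_i)$. For each $i$, pick orthonormal bases $(e^i_j)$ of $V_i$ and $(f^i_j)$ of $P(V_i)$ with $Pe^i_j = f^i_j\cos\theta^i_j$ (taking $f^i_j$ arbitrary in a complement when $\cos\theta^i_j=0$). Then $\{e^i_j\}_{i,j}$ is orthonormal in $V$ (since the $V_i$'s are orthogonal) and the nonzero $f^i_j$'s are orthonormal in $P(V)\subset W$ (since the $P(V_i)$'s are orthogonal, by hypothesis). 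The key computation is, for $w\in W$, $\langle v,w\rangle = \langle Pv,w\rangle$; applying this gives
\[ \langle e^i_j, f^{i'}_{j'}\rangle = \langle Pe^i_j, f^{i'}_{j'}\rangle = \cos\theta^i_j\,\langle f^i_j, f^{i'}_{j'}\rangle, \]
which is $0$ when $i\neq i'$ (by orthogonality of $P(V_i)$'s) and $\delta_{jj'}\cos\theta^i_j$ when $i=i'$. After completing to orthonormal bases of $V$ and $W$ and reordering by angle, this is a principal basis pair, and each $V_i$ is a coordinate subspace of it, so the $V_i$'s are coprincipal.

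The main obstacle is (iii) $\Rightarrow$ (i): it requires producing principal bases from scratch, and the crucial step is verifying the principal-basis relation $\langle e^i_j, f^{i'}_{j'}\rangle = \delta_{(i,j),(i',j')}\cos\theta^i_j$ \emph{across different blocks}; this is precisely where the orthogonality assumption on $\{P(V_i)\}$ is used, and without it the cross-block inner products could be nonzero.
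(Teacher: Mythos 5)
Your proposal is correct and follows essentially the same route as the paper: (i)$\Rightarrow$(ii) via \eqref{eq:Pei} and \eqref{eq:inner ei fj}, (ii)$\Rightarrow$(iii) trivially, and (iii)$\Rightarrow$(i) by observing that orthogonality of the $P(V_i)$'s forces $V_i\perp P(V_j)$ for $i\neq j$ (your computation $\inner{e^i_j,f^{i'}_{j'}}=\inner{Pe^i_j,f^{i'}_{j'}}$ is exactly this), so that the union of per-block principal bases is a principal basis of $V$ w.r.t.\ $W$. You have merely written out in detail the steps the paper compresses into two sentences.
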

\begin{proof}
	\emph{(i\,$\Rightarrow$\,ii)} The $P(V_i)$'s are pairwise disjoint by \eqref{eq:Pei}, coprincipal by \eqref{eq:inner ei fj}. 
	\emph{(ii\,$\Rightarrow$\,iii)} Immediate.
	\emph{(iii\,$\Rightarrow$\,i)} $P(V_i) \perp P(V_j)$ for $i\neq j$ implies $V_i \perp P(V_j)$. By \eqref{eq:inner ei fj}, the union of principal bases of the $V_i$'s is a principal basis of $V$.
\end{proof}

\subsection{Partial orthogonality}

We define a weaker concept of orthogonality for subspaces $V,W\subset X$.

\begin{definition}
	$V$ is \emph{partially orthogonal} to $W$ ($V\pperp W$) if there is a nonzero $v\in V$ such that $\inner{v,w}=0$ for all $w\in W$, i.e. if $V\cap W^\perp\neq\{0\}$. 
\end{definition}

If $\dim V=\dim W$ then $V \pperp W  \Leftrightarrow W\pperp V$, but in general this relation is asymmetric.
Even partial orthogonality both ways does not  imply $V\perp W$.
Some authors \cite{Afriat1957,Baksalary2009} say $V$ is \emph{completely inclined} to $W$ if $V\not\pperp W$, and if $W\not\pperp V$ as well they are \emph{totally inclined}.

\begin{proposition}\label{pr:partial orthogonality}
$V \pperp W \Leftrightarrow \dim P(V)<\dim V \Leftrightarrow \dim W < \dim V$ or a principal angle is $\frac{\pi}{2}$.
\end{proposition}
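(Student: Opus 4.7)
The plan is to chain the two claimed equivalences by analyzing the restriction $P|_V$, where $P=\Proj_W$, and reading off its behavior in principal bases.

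For the first equivalence $V\pperp W \Leftrightarrow \dim P(V)<\dim V$, the key observation is that $Pv=0$ iff $v\in W^\perp$, so $\ker(P|_V) = V\cap W^\perp$. Thus $V\pperp W$, defined as the existence of a nonzero $v\in V\cap W^\perp$, is equivalent to $P|_V$ not being injective. By rank–nullity applied to $P|_V: V\to W$, this is equivalent to $\dim P(V)<\dim V$. This step is essentially a tautology once the definitions are unpacked.

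For the second equivalence, I would invoke \cref{pr:Proj diagonal} applied to $P^V_W$ (which agrees with $P|_V$ viewed as a map to $W$). Choose associated principal bases $(e_1,\ldots,e_p)$ of $V$ and $(f_1,\ldots,f_q)$ of $W$, with $m=\min\{p,q\}$ and principal angles $\theta_1\le\ldots\le\theta_m$. Formula \eqref{eq:Pei} gives $Pe_i = f_i\cos\theta_i$ for $i\le m$ and $Pe_i = 0$ for $i>m$. Hence the image $P(V)$ is spanned by the $f_i$ with $i\le m$ and $\cos\theta_i\ne 0$, so
\[
	\dim P(V) = m - \#\{i\le m : \theta_i = \tfrac{\pi}{2}\}.
\]
Therefore $\dim P(V)<p$ iff either $m<p$ (equivalently $q=\dim W<\dim V=p$) or at least one principal angle equals $\frac{\pi}{2}$, which is exactly the right-hand side.

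I do not anticipate any real obstacle: the only conceptual content is the identification $\ker(P|_V) = V\cap W^\perp$, and everything else is bookkeeping from the diagonal form of $P$ in principal bases. I would just need to be careful that the statement covers the degenerate cases (e.g. $p=0$, or $V\subset W$ where no principal angle is $\frac{\pi}{2}$ and $q\ge p$), which are all handled uniformly by the formula above.
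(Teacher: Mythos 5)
Your proof is correct, and since the paper states this proposition without proof, your argument supplies exactly the intended justification: the identification $\ker(P|_V)=V\cap W^\perp$ plus rank--nullity gives the first equivalence, and the diagonal form of $\Proj^V_W$ in principal bases (\cref{pr:Proj diagonal}, equation \eqref{eq:Pei}) gives the second. This is the same mechanism the paper itself uses in nearby proofs (e.g.\ of \cref{pr:partial orth Lambda orth} and \cref{pr:P(V)}), so there is nothing to add.
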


Partial orthogonality in $X$ corresponds to orthogonality in $\Lambda^p X$.

\begin{proposition}\label{pr:partial orth Lambda orth}
	$V\pperp W \Leftrightarrow \Lambda^p V \perp \Lambda^p W$, where $p=\dim V$.
\end{proposition}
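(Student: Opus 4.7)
My plan is to reduce both sides to a statement about the projection $P = \Proj_W$ and then invoke the preceding Proposition~\ref{pr:partial orthogonality}, which already provides the bridge $V\pperp W \Leftrightarrow \dim P(V) < \dim V$.

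First I would fix a blade $\nu = v_1\wedge\cdots\wedge v_p$ representing $V$, so that $\Lambda^p V = \Span(\nu)$ is one-dimensional. The condition $\Lambda^p V \perp \Lambda^p W$ is then equivalent to $\inner{\nu,\omega}=0$ for every $\omega\in\Lambda^p W$. The key computational input is the fact, recalled in the preliminaries, that the extension of $P$ to $\Lambda X$ satisfies $P(v_1\wedge\cdots\wedge v_p)=Pv_1\wedge\cdots\wedge Pv_p$, and that $P_{\Lambda W}$ is the orthogonal projection onto $\Lambda W$. In particular, for $\omega\in\Lambda^p W$ one has $\inner{\nu,\omega} = \inner{P\nu,\omega}$.

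Next I would prove both implications through the intermediate statement $P\nu = 0$. If $\Lambda^p V \perp \Lambda^p W$, then $\inner{P\nu,\omega}=0$ for all $\omega\in\Lambda^p W$; since $P\nu\in\Lambda^p W$, taking $\omega = P\nu$ forces $P\nu=0$. Conversely, if $P\nu=0$ then $\inner{\nu,\omega}=\inner{P\nu,\omega}=0$ for all $\omega\in\Lambda^p W$. Now $P\nu = Pv_1\wedge\cdots\wedge Pv_p = 0$ is exactly the statement that $Pv_1,\ldots,Pv_p$ are linearly dependent in $W$, i.e.\ $\dim P(V) < p = \dim V$. By Proposition~\ref{pr:partial orthogonality} this is equivalent to $V\pperp W$, closing the loop.

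I would also make one brief remark on the degenerate case $p > q := \dim W$: then $\Lambda^p W = \{0\}$ so $\Lambda^p V \perp \Lambda^p W$ holds trivially, and simultaneously $V\pperp W$ holds by the dimension criterion in Proposition~\ref{pr:partial orthogonality}, so the equivalence is vacuous on that side. No real obstacle is expected; the only subtlety is recognizing that the single ``universal witness'' $\omega = P\nu$ suffices to pass from orthogonality of the whole space $\Lambda^p W$ against $\nu$ to the vanishing of $P\nu$, which is what makes the argument cleaner than a principal-basis computation (although the latter, expanding $\inner{\omega_V,\omega_{\ii}}$ in a principal basis as $\prod\cos\theta_i$, would give an alternative proof via Proposition~\ref{pr:partial orthogonality}).
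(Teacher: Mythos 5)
Your proof is correct, and it takes a genuinely different (though closely related) route from the paper's. Both arguments pivot on the intermediate statement $P\nu=0$: the paper chooses $\nu$ to be the wedge of a principal basis of $V$ w.r.t.\ $W$ and computes $\|P\nu\|=\prod_{i}\cos\theta_i$ explicitly via \eqref{eq:Pei}, concluding $P\nu=0\Leftrightarrow\theta_p=\tfrac{\pi}{2}\Leftrightarrow V\pperp W$ through the principal-angle criterion of \cref{pr:partial orthogonality}. You instead take an arbitrary blade, justify the passage to $P\nu=0$ by self-adjointness of the extended projection together with the witness $\omega=P\nu$, and then connect $P\nu=0$ to partial orthogonality through the elementary fact that $Pv_1\wedge\cdots\wedge Pv_p=0$ exactly when the $Pv_i$ are linearly dependent, i.e.\ $\dim P(V)<\dim V$ --- the rank criterion of the same proposition. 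Your version is basis-free and avoids principal angles entirely, which makes it more elementary; the paper's version, at the cost of invoking a principal basis, yields the quantitative identity $\|P\nu\|=\prod_i\cos\theta_i$ as a by-product, which is essentially the computation reused for \cref{pr:Theta Pnu}. The only loose end in your write-up is the case $p=0$: there $\Lambda^0V$ and $\Lambda^0W$ are both the scalars, so they are never orthogonal, while $\{0\}$ is never partially orthogonal to $W$, so the equivalence holds trivially --- as trivially as the $p>\dim W$ case you do handle.
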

\begin{proof}
	We can assume $V,W\neq\{0\}$ and $p\leq\dim W$.
\SELF{$V=0 \Rightarrow V\not\perp_p W$ and $\Lambda^0 V=\Lambda^0 W=\R$. \\ 
	  $V\neq0,W=0 \Rightarrow V\perp_p W$ and $\Lambda^p V\perp\Lambda^p 0=0$. \\
	  $V>W \Rightarrow V\perp_p W$ and $\Lambda^p W=0$
}
	Let $P=\Proj_W$ and $\nu=e_1\wedge\ldots\wedge e_p$ for a principal basis $(e_1,\ldots,e_p)$ of $V$ w.r.t. $W$.
	Then $\Lambda^p V=\Span(\nu)$ and $\|P\nu\| = \|Pe_1\wedge\ldots\wedge Pe_p\| =\prod_{i=1}^{p}\cos\theta_i$, by \eqref{eq:Pei}, so $\Lambda^p V \perp \Lambda^p W \Leftrightarrow P\nu=0 \Leftrightarrow \theta_p=\frac{\pi}{2} \Leftrightarrow V\pperp W$.
\end{proof}

\begin{proposition}\label{pr:P(V)}
	Let $(f_1,\ldots,f_q)$ be a principal basis of $W$ w.r.t. $V$,
	\SELF{fica implícito que $V,W\neq 0$}
	$\theta_1\leq\ldots\leq\theta_m$ be the principal angles, $r=\max\{i:\theta_i\neq\frac{\pi}{2}\}$, and $P=\Proj_W$. 
	If $V\not\perp W$ then $P(V)=\Span(f_1,\ldots, f_r)$ and the principal angles of $V$ and $P(V)$ are $\theta_1,\ldots,\theta_r$.
	If $V\not\pperp W$ then $r=\dim V$ and $P(V)$ is represented by $P\nu$, where $\nu$ is a blade representing $V$.
\end{proposition}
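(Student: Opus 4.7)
The plan is to work directly from an associated principal basis. Let $(e_1,\ldots,e_p)$ be the principal basis of $V$ paired with $(f_1,\ldots,f_q)$, so by \cref{pr:Proj diagonal}, $Pe_i = f_i\cos\theta_i$ for $1\leq i\leq m$ and $Pe_i = 0$ for $i>m$. The hypothesis $V\not\perp W$ ensures some $\theta_i\neq\frac{\pi}{2}$, so $r\geq 1$ is well-defined.

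For the first claim, I would compute $P(V)=\Span(Pe_1,\ldots,Pe_p)$ directly from these formulas. Splitting the indices into three ranges: for $i\leq r$, $Pe_i = f_i\cos\theta_i$ is a nonzero multiple of $f_i$; for $r<i\leq m$, $\cos\theta_i=0$; for $i>m$, $Pe_i=0$ by the second case of \eqref{eq:Pei}. Hence $P(V)=\Span(f_1,\ldots,f_r)$. To identify the principal angles of $V$ and $P(V)$, I would verify that the truncated pair $(e_1,\ldots,e_p)$ and $(f_1,\ldots,f_r)$ already satisfies the definition of associated principal bases: both are orthonormal and $\inner{e_i,f_j}=\delta_{ij}\cos\theta_i$ for all $i\leq p$, $j\leq r$. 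Since principal angles are uniquely determined, this identifies them as $\theta_1,\ldots,\theta_r$.

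For the second claim, \cref{pr:partial orthogonality} gives $V\not\pperp W \Rightarrow \dim P(V)=\dim V=p$. Combined with $\dim P(V)=r$ from the first claim, this forces $r=p=\dim V$ (in particular $p\leq q$, so $m=p$, and every $\theta_i$ is strictly less than $\frac{\pi}{2}$). Taking $\nu=e_1\wedge\ldots\wedge e_p$ as a representative blade of $V$, the fact that $P$ extends multiplicatively on blades yields
\[ P\nu = Pe_1\wedge\ldots\wedge Pe_p = \Big(\prod_{i=1}^p \cos\theta_i\Big)\, f_1\wedge\ldots\wedge f_p, \]
which is nonzero and represents $\Span(f_1,\ldots,f_p)=P(V)$. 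Any other blade representing $V$ is a nonzero scalar multiple of this $\nu$, and its projection is the corresponding multiple of $P\nu$, hence still represents $P(V)$.

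I do not expect a serious obstacle; the whole argument is essentially a careful bookkeeping exercise with the principal basis formula \eqref{eq:Pei}, the only delicate point being to distinguish the two reasons an index can give $Pe_i=0$ (namely $i>m$ versus $r<i\leq m$) and to invoke \cref{pr:partial orthogonality} at exactly the right moment to collapse $r$ to $p$.
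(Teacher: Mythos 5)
Your argument is correct and is essentially the paper's own proof, which simply cites \eqref{eq:Pei}, \cref{pr:partial orthogonality}, and the choice $\nu=e_1\wedge\ldots\wedge e_p$ for the associated principal basis; you have merely written out the bookkeeping the paper leaves implicit. The index-splitting and the verification that $(e_1,\ldots,e_p)$ and $(f_1,\ldots,f_r)$ remain associated principal bases are exactly the right details to supply.
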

\begin{proof}
	Follows from \eqref{eq:Pei} and \cref{pr:partial orthogonality}, taking $\nu=e_1\wedge\ldots\wedge e_p$ for the associated principal basis $(e_1,\ldots,e_p)$ of $V$.
\end{proof}

\section{Grassmann angle}\label{sc:Grassmann angle}

The following angle is based on similar volume projection angles found in the literature (see \cref{sc:Related angles} for a review), unifying and extending them, and also introducing a small, yet important, modification.

\begin{definition}
Let $V,W\subset X$ be nonzero subspaces, with principal angles $\theta_1,\ldots,\theta_m$, where $m=\min\{\dim V,\dim W\}$.
The \emph{Grassmann angle} $\Theta_{V,W}\in[0,\frac{\pi}{2}]$ of $V$ with $W$ is 
\[ \Theta_{V,W}=\begin{cases}
\arccos(\cos\theta_1\cdot\ldots\cdot\cos\theta_m) \ \text{ if } \dim V\leq \dim W, \\
\frac{\pi}{2} \hspace{100pt} \text{ if } \dim V> \dim W.
\end{cases}\]
We also define $\Theta_{\{0\},\{0\}}=\Theta_{\{0\},V}=0$ and $\Theta_{V,\{0\}}=\frac{\pi}{2}$.
\end{definition}

Besides being defined for real or complex subspaces of same or distinct dimensions, what sets this angle apart from similar ones is its unusual asymmetry: in general, $\Theta_{V,W}\neq \Theta_{W,V}$ when dimensions are different.
It reflects the dimensional asymmetry between the subspaces, leading to better and more general results (see \cref{sc:Asymmetry}). 
Such feature is so relevant, in fact, that we used the above definition just to make it explicit, even though there are simpler alternatives (e.g. \cref{pr:Theta Pnu}).

Anyway, as principal angles are symmetric with respect to interchange of $V$ and $W$, so is $\Theta_{V,W}$ when dimensions are equal.

\begin{proposition}
If $\dim V=\dim W$ then $\Theta_{V,W}=\Theta_{W,V}$. 
\end{proposition}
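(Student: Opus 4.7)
The plan is to reduce the claim to the symmetry of the list of principal angles. Since $\dim V = \dim W$, the condition $\dim V > \dim W$ fails for both orderings, so $\Theta_{V,W}$ and $\Theta_{W,V}$ are both computed by the first branch of the definition, with $m = \dim V = \dim W$ in each case. It therefore suffices to show that $V$ with $W$ and $W$ with $V$ have the same principal angles $\theta_1 \leq \ldots \leq \theta_m$.

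For this I would appeal to the defining condition \eqref{eq:inner ei fj}, namely $\inner{e_i,f_j} = \delta_{ij}\cos\theta_i$, which is manifestly symmetric in the two bases once one uses that $\inner{\cdot,\cdot}$ is Hermitian (the real case being a fortiori): if $(e_1,\ldots,e_p)$ and $(f_1,\ldots,f_p)$ are associated principal bases of $V$ and $W$, then $(f_1,\ldots,f_p)$ and $(e_1,\ldots,e_p)$ are associated principal bases of $W$ and $V$ with the same list $\theta_1,\ldots,\theta_m$, since $\overline{\inner{e_i,f_j}} = \inner{f_j,e_i}$ and $\cos\theta_i$ is real. Hence $\cos\theta_1\cdots\cos\theta_m$ is the same in both orderings, and applying $\arccos$ gives $\Theta_{V,W} = \Theta_{W,V}$. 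There is no real obstacle here; the statement is essentially a bookkeeping remark meant to flag that the asymmetry of $\Theta$ is strictly a phenomenon of unequal dimensions.
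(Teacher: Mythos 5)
Your proposal is correct and follows essentially the same route as the paper, which justifies the proposition by the one-line observation that principal angles are symmetric under interchange of $V$ and $W$; you merely make explicit the (easy) reason for that symmetry, namely that the defining relation $\inner{e_i,f_j}=\delta_{ij}\cos\theta_i$ survives swapping the bases because $\overline{\inner{e_i,f_j}}=\inner{f_j,e_i}$ and $\cos\theta_i$ is real. No gap.
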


In simple cases, where there is an unambiguous concept of angle between subspaces, $\Theta_{V,W}$ coincides with it, as when $V$ is a line, or $V$ and $W$ are planes in $\R^3$.
It also has many usual angle properties. 

\begin{proposition}\label{pr:properties Grassmann}
Let $U,V,W\subset X$ be subspaces and $P=\Proj_W$. 
\begin{enumerate}[i)]
\item $\Theta_{V,W}=0 \ \Leftrightarrow\ V\subset W$.\label{it:Theta zero}
\item $\Theta_{V,W}=\frac{\pi}{2} \ \Leftrightarrow\ V \pperp W$. \label{it:Theta pi2}
\item If  $\theta_{V,W}$ is defined and $\dim V\leq\dim W$ then $\Theta_{V,W}=\theta_{V,W}$. \label{it:Theta theta}
\item If $L\subset X$ is a line and $v\in L$ then $\|Pv\|=\|v\|\cos\Theta_{L,W}$. \label{it:Pv}
\item $\Theta_{V,W}=\Theta_{V,P(V)}$.\label{it:Theta P(V)}
\item If $U\perp V+W$ then $\Theta_{V,W}=\Theta_{V,W\oplus U}$.\label{it:Theta W+U} 
\item If $V'$ and $W'$ are the orthogonal complements of $V\cap W$ in $V$ and $W$, respectively, then $\Theta_{V,W}=\Theta_{V',W'}$. \label{it:orth complem inter} 
\item $\Theta_{T(V),T(W)} = \Theta_{V,W}$ for any orthogonal transformation $T:X\rightarrow X$ (unitary, in the complex case). \label{it:transformation}
\end{enumerate}
\end{proposition}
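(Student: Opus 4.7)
The plan is to handle each item by unwinding the definition of $\Theta_{V,W}$ and invoking the preliminary results on principal angles. Items \emph{(i)} and \emph{(ii)} fall straight from the definition: $\Theta_{V,W}=0$ forces $\dim V\leq\dim W$ and $\prod_i\cos\theta_i=1$, whence every $\theta_i=0$; since the number of null principal angles equals $\dim(V\cap W)$, this is equivalent to $V\subset W$. Similarly, $\Theta_{V,W}=\pi/2$ holds iff $\dim V>\dim W$ or some $\theta_i=\pi/2$, which is exactly the characterization of $V\pperp W$ supplied by \cref{pr:partial orthogonality}. For \emph{(iii)} I would just run through the three defining clauses of $\theta_{V,W}$: in the first, both sides are $0$; in the second, $\Theta_{V,W}=\arccos\cos\theta_m=\theta_m$; in the third there is a $\pi/2$ among the principal angles, so both $\Theta_{V,W}$ and $\theta_{V,W}$ equal $\pi/2$.

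For \emph{(iv)} and \emph{(v)} the main tool is \cref{pr:P(V)}. Item \emph{(iv)} is the one-dimensional case: assuming $L\neq\{0\}$ and $W\neq\{0\}$, pick a unit principal vector $e_1\in L$ and write $v=c\,e_1$; then \eqref{eq:Pei} gives $\|Pv\|=|c|\cos\theta_1=\|v\|\cos\Theta_{L,W}$, while the degenerate cases are absorbed by the separate clauses of the definition. For \emph{(v)} I would split on whether $V\pperp W$: in the negative case, $\dim P(V)=\dim V$ and \cref{pr:P(V)} shows $V$ and $P(V)$ have exactly the (all non-$\pi/2$) principal angles of $V$ with $W$, so the products of cosines agree; in the affirmative, $\dim P(V)<\dim V$ forces both $\Theta_{V,W}$ and $\Theta_{V,P(V)}$ to equal $\pi/2$.

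Item \emph{(vi)} is the step I expect to need the most care, because the asymmetric definition of $\Theta$ forces a case split. The hypothesis $U\perp V+W$ yields $U\perp V$, so $\Proj_{W\oplus U}|_V=\Proj_W|_V$ as linear maps on $V$. When $\dim V\leq\dim W$ the maps have identical singular values, hence identical principal angles and the same Grassmann angle. When $\dim V>\dim W$ the map $\Proj_W|_V$ is rank-deficient, so the shared image $\Proj_{W\oplus U}(V)=\Proj_W(V)$ has dimension strictly less than $\dim V$; thus $V\pperp W\oplus U$ and, by \emph{(ii)}, both sides equal $\pi/2$.

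Items \emph{(vii)} and \emph{(viii)} close out easily. For \emph{(vii)}, take associated principal bases of $V$ and $W$ in which the first $k=\dim(V\cap W)$ pairs of principal vectors coincide (these correspond to the zero principal angles); the remaining principal vectors orthogonal to $V\cap W$ form principal bases of $V'$ and $W'$ with the surviving, nonzero principal angles, and the product of cosines is unchanged since the discarded cosines equal $1$. A short check confirms $\dim V\leq\dim W$ iff $\dim V'\leq\dim W'$, so the two branches of the definition match. Item \emph{(viii)} is immediate: $T$ preserves inner products, hence sends associated principal bases of $(V,W)$ to associated principal bases of $(T(V),T(W))$ with identical principal angles.
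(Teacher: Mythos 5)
The paper states this proposition without proof, so there is nothing to compare against directly; your argument correctly supplies the missing details, and it follows exactly the route the paper's preliminaries set up (unwinding the definition of $\Theta_{V,W}$, using \eqref{eq:Pei}, \cref{pr:partial orthogonality} for item \emph{(ii)}, and \cref{pr:P(V)} for item \emph{(v)}). All eight items check out, including the case split in \emph{(vi)} forced by the asymmetry and the handling of the degenerate $\{0\}$ clauses.
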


Still, interpreting $\Theta_{V,W}$ requires some care, specially in the complex case.
The next examples reveal strange features that we discuss later. 

\begin{example}\label{ex:Theta>thetas}
	In \cref{ex:real principal angles}, $\Theta_{V,W}= \arccos(\frac{\sqrt{2}}{2}\cdot\frac{\sqrt{2}}{2}) =60^\circ$, despite the fact that all lines in $V$ make a $45^\circ$ angle with $W$.
\end{example}

\begin{example}\label{ex:complex Grassmann angle}
	In \cref{ex:complex principal angles}, $\Theta_{V,W}=\arccos(\frac{\sqrt{2}}{2}\cdot\frac{1}{2})\cong 69.3^\circ$, but for the underlying real spaces $\Theta_{V_\R,W_\R}=\arccos(\frac{\sqrt{2}}{2}\cdot\frac{\sqrt{2}}{2}\cdot\frac{1}{2}\cdot\frac{1}{2}) \cong 82.8^\circ$. 
\end{example}

The angle can be computed via projection matrices, as follows. We give more general formulas in \cref{sc:Formulas}.

\begin{proposition}\label{pr:determinant projection}
If $P$ is a matrix representing $\Proj^V_W$ in orthonormal bases of $V$ and $W$ then $\cos^2\Theta_{V,W}=\det(\bar{P}^T P)$.
If $\dim V=\dim W$ then $\cos\Theta_{V,W}=|\det P|$.
\end{proposition}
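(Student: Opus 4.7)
The natural plan is to reduce to principal bases, where the matrix of $\Proj^V_W$ is diagonal, and then show that the quantity $\det(\bar P^T P)$ does not depend on the choice of orthonormal bases.

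First I would verify the invariance under change of orthonormal basis. If $(e_1,\ldots,e_p)$ and $(e_1',\ldots,e_p')$ are two orthonormal bases of $V$ related by a unitary matrix $U$ (so $e_i'=\sum_k U_{ki}e_k$), and similarly $(f_1,\ldots,f_q)$ and $(f_1',\ldots,f_q')$ are orthonormal bases of $W$ related by a unitary matrix $Q$, then a short computation shows the matrix representing $\Proj^V_W$ in the primed bases is $P'=Q^*PU$. Hence $\bar{P'}^T P'=U^*P^*QQ^*PU=U^*(\bar P^T P)U$, and taking determinants kills the unitary factors since $|\det U|^2=1$, so $\det(\bar{P'}^T P')=\det(\bar P^T P)$.

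Next I would compute in principal bases. By \cref{pr:Proj diagonal}, in associated principal bases $P$ is the $q\times p$ diagonal matrix with $\cos\theta_i$ on the diagonal for $1\le i\le m=\min\{p,q\}$, and zero otherwise. If $\dim V\le\dim W$, then $\bar P^T P$ is the $p\times p$ diagonal matrix with entries $\cos^2\theta_i$, so
\[
\det(\bar P^T P)=\prod_{i=1}^{p}\cos^2\theta_i=\cos^2\Theta_{V,W}.
\]
If $\dim V>\dim W$, then $P$ has at least one zero column (the last $p-q$ columns vanish by \eqref{eq:Pei}), hence $\bar P^T P$ is singular and $\det(\bar P^T P)=0=\cos^2\frac{\pi}{2}=\cos^2\Theta_{V,W}$.

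For the second statement, when $\dim V=\dim W$ the matrix $P$ is square, so $\det(\bar P^T P)=|\det P|^2$, and taking the non-negative square root gives $|\det P|=\cos\Theta_{V,W}$ since $\cos\Theta_{V,W}\ge 0$. The only mild obstacle is bookkeeping the conjugate-transpose convention in the complex case when deriving the change-of-basis formula $P'=Q^*PU$; once that is correct, everything else is immediate from \cref{pr:Proj diagonal} and the definition of $\Theta_{V,W}$.
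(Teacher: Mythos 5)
Your proof is correct and follows essentially the same route as the paper: reduce to associated principal bases via invariance of $\det(\bar P^T P)$ under unitary changes of orthonormal bases, then read off the result from the diagonal form in \cref{pr:Proj diagonal}. The paper simply states that it suffices to consider principal bases; you have spelled out the change-of-basis computation that justifies this, which is a welcome addition but not a different argument.
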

\begin{proof}
It is enough to consider principal bases of $V$ and $W$, for which the result follows from \cref{pr:Proj diagonal}. 
\end{proof}

Grassmann angles satisfy a spherical Pythagorean theorem  (\cref{fig:spherical Pythagorean}), which is valid without dimensional conditions thanks to the asymmetry (to see why, consider planes $V,W\subset \R^3$ and $U=V\cap W$).

\begin{theorem}\label{pr:spherical Pythagorean theorem}
Let $V,W\subset X$ and $U\subset W$ be subspaces, and $P=\Proj_W$. Then
$\cos\Theta_{V,U} = \cos\Theta_{V,P(V)}\cos\Theta_{P(V),U}$.
\end{theorem}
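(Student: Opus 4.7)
My plan is to derive the identity from a factorization of orthogonal projections and then apply \cref{pr:determinant projection}. The crucial observation is that since $U \subset W$, projection onto $U$ factors through $W$: because $W^\perp \subset U^\perp$, the component of any $v \in X$ lying in $W^\perp$ is killed by $\Proj_U$, so $\Proj_U \circ P = \Proj_U$ on all of $X$. Moreover, for $v \in V$ one has $v - P(v) \in W^\perp \subset P(V)^\perp$, so $P(v)$ is in fact the orthogonal projection of $v$ onto $P(V)$. Combining these gives the factorization $\Proj^V_U = \Proj^{P(V)}_U \circ \Proj^V_{P(V)}$ as linear maps $V \to U$.

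Assume first the non-degenerate case $V \not\pperp W$, so that $\dim P(V) = \dim V$ by \cref{pr:partial orthogonality}. Choose orthonormal bases of $V$, $P(V)$, $U$, and let $A$, $C$, $B$ be the matrices of $\Proj^V_U$, $\Proj^V_{P(V)}$, $\Proj^{P(V)}_U$ respectively, so that $A = BC$ with $C$ square. Then
\[
\det(\bar{A}^T A) = \det(\bar{C}^T \bar{B}^T B C) = |\det C|^2 \det(\bar{B}^T B),
\]
and \cref{pr:determinant projection} translates this into
\[
\cos^2\Theta_{V,U} = \cos^2\Theta_{V,P(V)} \cdot \cos^2\Theta_{P(V),U}.
\]
Since all three angles lie in $[0,\tfrac{\pi}{2}]$, taking positive square roots yields the claim.

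In the partially orthogonal case $V \pperp W$, both sides vanish. For the left-hand side, $V \cap W^\perp \neq \{0\}$ combined with $W^\perp \subset U^\perp$ gives $V \pperp U$, whence $\Theta_{V,U} = \tfrac{\pi}{2}$ by \cref{pr:properties Grassmann}\emph{\ref{it:Theta pi2}}. For the right-hand side, $\dim V > \dim P(V)$ forces $V \pperp P(V)$ and so $\Theta_{V,P(V)} = \tfrac{\pi}{2}$. The trivial edge cases ($V=\{0\}$, $U=\{0\}$, or $P(V)=\{0\}$) reduce immediately to the conventions built into the definition of $\Theta$.

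The only delicate point is verifying the projection factorization itself and checking that the argument degenerates consistently on both sides when $V \pperp W$; once that is in place, the multiplicativity of determinants does the rest.
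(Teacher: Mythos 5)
Your proof is correct and takes essentially the same route as the paper's: factor $\Proj^V_U = \Proj^{P(V)}_U \circ \Proj^V_{P(V)}$ with the middle factor represented by a square matrix, and conclude by multiplicativity of determinants via \cref{pr:determinant projection}. The only difference is that you spell out the degenerate case $V \pperp W$ and the projection factorization in detail, which the paper leaves to a remark; both checks are accurate.
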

\begin{proof}
Assume $V\not\pperp W$, so $\dim P(V)=\dim V$. If $P_1$, $P_2$, $P_3$ are matrices representing $\Proj^V_U$, $\Proj^{P(V)}_U$, $\Proj^V_{P(V)}$, respectively, in orthonormal bases, then $P_1=P_2P_3$
\SELF{$v=P_Wv+b$, $b\perp W$, so $P_U^V v = P_U v = P_U P_W v+0 = P_U^{PV} P_{PV}^V v$}
and $P_3$ is square, so  $\det(\bar{P}_1^T P_1) = |\det P_3|^2 \det(\bar{P}_2^TP_2)$.
\SELF{$= \det(\bar{P}_3^T\bar{P}_2^TP_2P_3) = \det \bar{P}_3^T \det(\bar{P}_2^TP_2)\det P_3$}
The result follows from \cref{pr:determinant projection}.
\end{proof}

\begin{figure}
\centering
\includegraphics[width=0.8\linewidth]{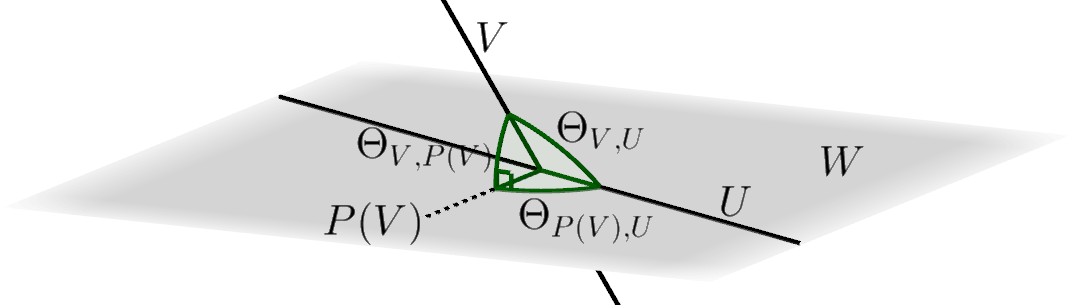}
\caption{$\cos\Theta_{V,U} = \cos\Theta_{V,P(V)}\cos\Theta_{P(V),U}$.}
\label{fig:spherical Pythagorean}
\end{figure}

\begin{corollary}\label{pr:W' sub W}
$\Theta_{V,W} \leq \Theta_{V,W'}$ for any subspace $W'\subset W$, with equality if, and only if, $V\pperp W$ or $P(V)\subset W'$, where $P=\Proj_W$. 
\end{corollary}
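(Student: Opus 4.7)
The plan is to reduce everything to a direct application of the spherical Pythagorean theorem (\cref{pr:spherical Pythagorean theorem}) combined with \cref{pr:properties Grassmann}\emph{\ref{it:Theta P(V)}}. First I would apply the theorem with $U=W'\subset W$, obtaining
\[
\cos\Theta_{V,W'} \;=\; \cos\Theta_{V,P(V)}\,\cos\Theta_{P(V),W'}.
\]
Since $\cos\Theta_{V,P(V)}=\cos\Theta_{V,W}$ by \cref{pr:properties Grassmann}\emph{\ref{it:Theta P(V)}}, this becomes the key identity
\[
\cos\Theta_{V,W'} \;=\; \cos\Theta_{V,W}\,\cos\Theta_{P(V),W'}.
\]
Because $\cos\Theta_{P(V),W'}\in[0,1]$, the inequality $\cos\Theta_{V,W'}\leq\cos\Theta_{V,W}$ is immediate, which is the desired $\Theta_{V,W}\leq\Theta_{V,W'}$.

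For the equality case, I read off the identity: equality holds if and only if either $\cos\Theta_{V,W}=0$ or $\cos\Theta_{P(V),W'}=1$. The first condition is $\Theta_{V,W}=\frac{\pi}{2}$, which by \cref{pr:properties Grassmann}\emph{\ref{it:Theta pi2}} is equivalent to $V\pperp W$. The second condition is $\Theta_{P(V),W'}=0$, which by \cref{pr:properties Grassmann}\emph{\ref{it:Theta zero}} is equivalent to $P(V)\subset W'$. Conversely, each of these conditions trivially forces equality: if $V\pperp W$, then since $W'\subset W$ we also have $V\pperp W'$, so both sides equal $\frac{\pi}{2}$; and if $P(V)\subset W'$, applying \cref{pr:properties Grassmann}\emph{\ref{it:Theta P(V)}} again with $W'$ in place of $W$ gives $\Theta_{V,W'}=\Theta_{V,P(V)}=\Theta_{V,W}$.

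I do not foresee a real obstacle here, but the only subtlety worth checking is that the edge cases covered by the definition (rather than by principal angles) do not spoil the argument: if $\dim V>\dim W$ then $\dim V>\dim W'$ as well, so both angles are $\frac{\pi}{2}$ by definition, and in this regime $V\pperp W$ holds by \cref{pr:partial orthogonality}; the trivial cases $V=\{0\}$ or $W'=\{0\}$ are handled analogously. Thus the argument collapses to a one-line computation with the Pythagorean identity, plus a bookkeeping of when each of its two factors degenerates.
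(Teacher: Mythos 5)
Your proof is correct and is precisely the intended derivation: the paper states this as an immediate corollary of \cref{pr:spherical Pythagorean theorem} (with $U=W'$) combined with $\Theta_{V,W}=\Theta_{V,P(V)}$, which is exactly your key identity. The equality analysis via the factorization $\cos\Theta_{V,W'}=\cos\Theta_{V,W}\cos\Theta_{P(V),W'}$ and your bookkeeping of the degenerate cases are both sound.
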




\begin{proposition}\label{pr:V' sub V}
$\Theta_{V,W} \geq \Theta_{V',W}$ for any subspace $V'\subset V$, with equality if, and only if, $V'\pperp W$ or $V'^\perp \cap V \subset W$. 
\end{proposition}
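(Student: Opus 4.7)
The plan is to reduce both angles to Gram determinants via \cref{pr:determinant projection} and compare them through a standard factorization of wedge products, treating the degenerate and generic cases separately.

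First I would dispose of the boundary case $V\pperp W$: here $\Theta_{V,W}=\frac{\pi}{2}$, so the inequality is immediate, and equality requires $\Theta_{V',W}=\frac{\pi}{2}$, i.e.\ $V'\pperp W$. If instead the alternate condition $V'^\perp\cap V\subset W$ is assumed, I would show it already forces $V'\pperp W$: any $v\in V\cap W^\perp$ splits orthogonally as $v'+u$ with $v'\in V'$ and $u\in U:=V'^\perp\cap V\subset W$; then $0=\langle v,u\rangle=\|u\|^2$, so $u=0$ and $v=v'\in V'\cap W^\perp$. Hence in this case the two advertised conditions coincide.

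For the generic case $V\not\pperp W$ (which by \cref{pr:partial orthogonality} yields $\dim V\leq\dim W$ and also $V'\not\pperp W$), I would pick an orthonormal basis $(e_1,\ldots,e_{p'})$ of $V'$ and extend it by an orthonormal basis $e_{p'+1},\ldots,e_p$ of $U$ to one of $V$. With $P=\Proj_W$, \cref{pr:determinant projection} gives
\[\cos^2\Theta_{V,W}=\|Pe_1\wedge\cdots\wedge Pe_p\|^2,\qquad \cos^2\Theta_{V',W}=\|Pe_1\wedge\cdots\wedge Pe_{p'}\|^2.\]
Decomposing each $Pe_i$ for $i>p'$ as $u_i+v_i$ with $u_i\in P(V')$ and $v_i\in W\cap P(V')^\perp$, the $u_i$'s fall out of the longer wedge (each is a combination of $Pe_1,\ldots,Pe_{p'}$), and orthogonality $v_j\perp Pe_i$ block-diagonalises the Gram matrix, so
\[\cos^2\Theta_{V,W}=\cos^2\Theta_{V',W}\cdot\|v_{p'+1}\wedge\cdots\wedge v_p\|^2.\]
Since $v_i$ equals the projection of $e_i$ onto $W\cap P(V')^\perp$, the last factor is $\cos^2\Theta_{U,W\cap P(V')^\perp}\leq 1$ by another use of \cref{pr:determinant projection}, which proves the inequality. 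For equality, by \cref{pr:properties Grassmann}\emph{\ref{it:Theta zero}} this factor is $1$ iff $U\subset W\cap P(V')^\perp$; I would then simplify this to $U\subset W$, since for $u\in U\cap W$ one has $\langle u,Pv'\rangle=\langle u,v'\rangle=0$ for every $v'\in V'$, so $u\in P(V')^\perp$ automatically. This gives equality iff $V'^\perp\cap V\subset W$.

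The main obstacle is the uniform handling of the equality conditions across the two cases: one must verify that the stated disjunction $V'\pperp W$ or $V'^\perp\cap V\subset W$ captures exactly the equality cases in both the boundary and generic subcases, which rests on the observation above that in the boundary case the second condition already implies the first, so the two clauses never give conflicting verdicts.
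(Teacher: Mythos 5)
Your proof is correct and takes essentially the same route as the paper's: both extend an orthonormal basis of $V'$ to one of $V$, factor the relevant determinant into $\cos^2\Theta_{V',W}$ times a correction factor identified as the squared cosine of an angle of $V'^\perp\cap V$ with a complementary subspace (hence $\leq 1$, with equality precisely when $V'^\perp\cap V\subset W$), and handle the degenerate case by noting that $V'^\perp\cap V\subset W$ forces partial orthogonality of $V'$ to $W$ there. The only difference is presentational: the paper works with a block upper-triangular matrix for $\Proj^V_{P(V)}$, while you factor the Gram determinant of the projected basis vectors directly.
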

\begin{proof}
If  $V\pperp W$ the inequality is trivial, as $\Theta_{V,W}=\frac{\pi}{2}$, and equality is equivalent to $V'\pperp W$, which also happens if $V'^\perp \cap V \subset W$, as in this case the nonzero $v\in V$ which is orthogonal to $W$ must be in $V'$. 

If $V\not\pperp W$ then $V'\not\pperp W$, so  $\dim P(V) = \dim V$ and $\dim P(V') = \dim V'$, where $P=\Proj_W$. Complete orthonormal bases of $V'$ and $P(V')$ to orthonormal bases of $V$ and $P(V)$. If $P_1$, $P_2$ and $P_3$ are matrices for $\Proj^V_{P(V)}$, $\Proj^{V'}_{P(V')}$ and $\Proj^{V'^\perp\cap V}_{P(V')^\perp\cap P(V)}$, respectively, in these bases, then $P_1=\begin{psmallmatrix}
P_2 & B \\ 
0 & P_3
\end{psmallmatrix}$ for some matrix $B$. Thus
$ \cos\Theta_{V,P(V)} = |\det P_1| = |\det P_2| \cdot |\det P_3| \leq |\det P_2|  = \cos\Theta_{V',P(V')}$. We have equality if, and only if, $V'^\perp\cap V \subset P(V')^\perp\cap P(V)$, which happens if, and only if, $V'^\perp \cap V \subset W$. 
\end{proof}


Asymmetry keeps us from interchanging $V$ and $W$ in $\Theta_{V,W}$, but there is a workaround (\cref{fig:angle VWperp}).
If $\dim V=p<q=\dim W$, decompose $W = W_P \oplus W_\perp$, where $W_P=\Span(f_1,\ldots,f_p)$ and $W_\perp=\Span(f_{p+1},\ldots,f_q)$ for a principal basis $(f_1,\ldots,f_q)$ of $W$ w.r.t. $V$.
By \cref{pr:P(V)}, if $V\not\pperp W$ then $W_P=\Proj_W(V)$ and $W_\perp = W\cap V^\perp$, hence the notation.

\begin{proposition}
	If $\dim V<\dim W$ then $\Theta_{V,W} = \Theta_{W,V\oplus W_\perp}$.
\end{proposition}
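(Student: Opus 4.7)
The plan is to compute $\Theta_{W, V \oplus W_\perp}$ directly from a projection matrix by choosing adapted orthonormal bases on both sides.

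First I would set up principal bases: let $(f_1, \ldots, f_q)$ be the principal basis of $W$ with respect to $V$ used to define $W_\perp$, and let $(e_1, \ldots, e_p)$ be the associated principal basis of $V$, so that $\inner{e_i, f_j} = \delta_{ij}\cos\theta_i$ for $1 \leq i \leq p$ and $1 \leq j \leq q$. Reading this relation for $j > p$ gives $\inner{e_i, f_j} = 0$, hence $W_\perp \subset V^\perp$. Therefore $V \cap W_\perp = \{0\}$, the direct sum $V \oplus W_\perp$ is \emph{orthogonal}, and $\dim(V \oplus W_\perp) = p + (q-p) = q = \dim W$. This equality of dimensions is the crucial structural point: it lets me invoke the shorter $\cos\Theta = |\det P|$ form of \cref{pr:determinant projection}.

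Next I would compute $P = \Proj^W_{V \oplus W_\perp}$ in the orthonormal bases $(f_1, \ldots, f_q)$ of $W$ and $(e_1, \ldots, e_p, f_{p+1}, \ldots, f_q)$ of $V \oplus W_\perp$. Expanding $Pf_j$ in the target basis via the orthonormal-projection formula gives $Pf_j = \sum_{i=1}^p \inner{e_i, f_j}\, e_i + \sum_{k=p+1}^q \inner{f_k, f_j}\, f_k$, which collapses to $Pf_j = \cos\theta_j\, e_j$ for $j \leq p$ and $Pf_j = f_j$ for $j > p$. So $P$ is represented by the diagonal matrix $\operatorname{diag}(\cos\theta_1, \ldots, \cos\theta_p, 1, \ldots, 1)$, with $|\det P| = \cos\theta_1 \cdots \cos\theta_p = \cos\Theta_{V, W}$. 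By \cref{pr:determinant projection} this equals $\cos\Theta_{W, V\oplus W_\perp}$, yielding the claimed identity.

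I do not anticipate a real obstacle. The only delicate point is verifying $W_\perp \subset V^\perp$, which is immediate from the principal-basis relations but is precisely what makes $V \oplus W_\perp$ an equidimensional partner for $W$ whose principal angles are those of $V$ and $W$ padded with $q-p$ zeros. The argument is also uniform across the $V \pperp W$ case: some $\cos\theta_i$ then vanish, the diagonal matrix is singular, and both sides simply collapse to $\pi/2$ as required.
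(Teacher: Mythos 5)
Your proof is correct and essentially the same as the paper's: both hinge on the observation that $(e_1,\ldots,e_p,f_{p+1},\ldots,f_q)$ is an orthonormal basis of $V\oplus W_\perp$ whose inner products with $(f_1,\ldots,f_q)$ are $\delta_{ij}$ times $\cos\theta_1,\ldots,\cos\theta_p,1,\ldots,1$. The paper phrases the conclusion by reading these bases off as principal bases of $W$ and $V\oplus W_\perp$ with principal angles $0,\ldots,0,\theta_1,\ldots,\theta_p$, while you route the same diagonal matrix through \cref{pr:determinant projection}; this is only a cosmetic difference.
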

\begin{proof}
	If $(e_1,\ldots,e_p)$ is the associated principal basis of $V$, and $\theta_1,\ldots,\theta_p$ are the principal angles, then, by \eqref{eq:inner ei fj}, $W$ and $W_\perp\oplus V$ have principal bases
	$(f_{p+1},\ldots,f_q,f_1,\ldots,f_p)$ and $(f_{p+1},\ldots,f_q,e_1,\ldots,e_p)$, respectively, with principal angles $0,\ldots,0,\theta_1,\ldots,\theta_p$.
\end{proof} 

\begin{figure}
	\centering
	\includegraphics[width=0.7\linewidth]{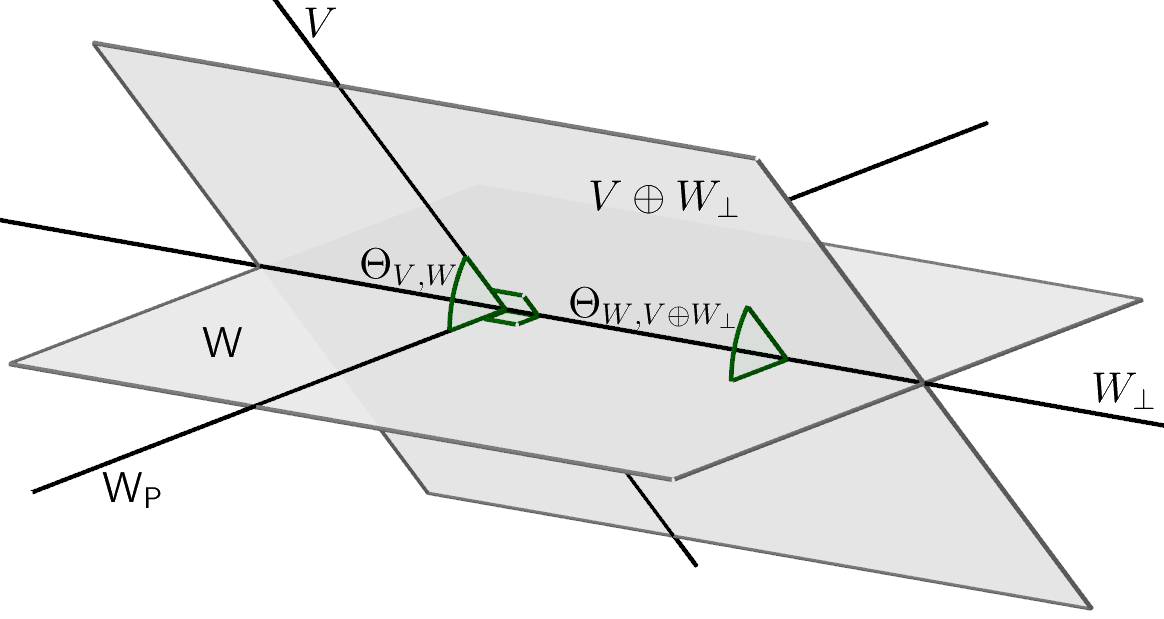}
	\caption{$\Theta_{V,W} = \Theta_{W,V\oplus W_\perp}$, if $\dim V<\dim W$.}
	\label{fig:angle VWperp}
\end{figure}

\subsection{An angle in the Grassmann algebra}\label{sc:relation Grassmann algebra}

We show $\Theta_{V,W}$ can be seen as an angle in the Grassmann algebra $\Lambda X$.
Let $V,W\subset X$ be subspaces represented by blades $\nu,\omega\in\Lambda X$, respectively.



\begin{proposition}\label{pr:Theta Pnu}
	$\|\P\nu\| = \|\nu\| \cos\Theta_{V,W}$, where $P=\Proj_W$.
\end{proposition}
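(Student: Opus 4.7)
The plan is to reduce everything to a principal basis, where the extended projection $P:\Lambda X\to\Lambda W$ is essentially diagonal, and the Grassmann angle becomes a product of cosines of principal angles by definition.

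First, I would dispose of the trivial case $V=\{0\}$: then $\nu$ is a nonzero scalar, $P\nu=\nu$ (the induced map on $\Lambda^0$ is the identity), and $\cos\Theta_{\{0\},W}=1$, so both sides agree. From here on assume $V\neq\{0\}$ and set $p=\dim V$, $q=\dim W$, $m=\min\{p,q\}$. Choose associated principal bases $(e_1,\ldots,e_p)$ of $V$ and $(f_1,\ldots,f_q)$ of $W$, with principal angles $\theta_1,\ldots,\theta_m$. Since $\Lambda^p V=\Span(e_1\wedge\cdots\wedge e_p)$, we can write $\nu=c\,e_1\wedge\cdots\wedge e_p$ with $|c|=\|\nu\|$, because the $e_i$ are orthonormal.

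Next I would apply the extended projection using the fact that $P$ commutes with wedge products of vectors, so
\[
P\nu \;=\; c\,(Pe_1)\wedge\cdots\wedge(Pe_p).
\]
Now invoke Proposition \ref{pr:Proj diagonal}. If $p\leq q$, then $Pe_i=f_i\cos\theta_i$ for every $1\leq i\leq p$, hence
\[
P\nu \;=\; c\Big(\prod_{i=1}^{p}\cos\theta_i\Big)\, f_1\wedge\cdots\wedge f_p,
\]
and since $(f_1,\ldots,f_p)$ is orthonormal the wedge has unit norm, giving $\|P\nu\|=|c|\prod_i\cos\theta_i=\|\nu\|\cos\Theta_{V,W}$ by definition of the Grassmann angle. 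If $p>q$, then $Pe_{q+1}=0$ by the same proposition, so one of the wedge factors vanishes, forcing $P\nu=0$; this matches $\cos\Theta_{V,W}=\cos\frac{\pi}{2}=0$.

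There is essentially no obstacle here once one observes that the principal basis diagonalizes $P$ restricted to $V$; the only thing worth being careful about is that the Grassmann angle is defined piecewise according to whether $\dim V\leq\dim W$ or not, and that both pieces are caught by the same argument via Proposition \ref{pr:Proj diagonal}. The scalar $c$ can be complex in the complex case, but only its modulus enters, so sesquilinearity of the inner product on $\Lambda^p X$ causes no trouble.
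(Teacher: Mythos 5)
Your proof is correct and follows essentially the same route as the paper's: reduce to a principal basis of $V$, apply \eqref{eq:Pei} to get $P\nu$ as a scalar multiple of $f_1\wedge\cdots\wedge f_p$ (or zero when $p>q$), and read off the product of cosines. The extra bookkeeping of the scalar $c$ and the explicit treatment of the $p>q$ case via a vanishing wedge factor are harmless elaborations of what the paper does.
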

\begin{proof}
	Let $p=\dim V$ and $q=\dim W$.
	If $p=0$ then $P\nu=\nu$ and $\Theta_{V,W}=0$, and if $p>q$ then $P\nu=0$ and $\Theta_{V,W}=\frac{\pi}{2}$.
	If $0<p\leq q$, we can take $\nu=e_1\wedge\ldots\wedge e_p$ for a principal basis $(e_1,\ldots,e_p)$ of $V$ w.r.t. $W$, and \eqref{eq:Pei} gives $\|\P\nu\| = \|Pe_1\wedge\ldots\wedge Pe_p\|= \prod_{i=1}^p\cos\theta_i = \cos\Theta_{V,W}$.
\end{proof}

Note the role the asymmetry plays in this formula's full generality. 

\begin{theorem}\label{pr:angle external powers}
$\Theta_{V,W}= \theta_{\Lambda^p V,\Lambda^p W}$, where $p = \dim V$.
\end{theorem}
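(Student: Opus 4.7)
The plan is to identify the angle between the lines (or more generally, the line and subspace) $\Lambda^p V$ and $\Lambda^p W$ with the projection formula in Proposition \ref{pr:Theta Pnu}.

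First I would observe that $\Lambda^p V$ is a line (the 1-subspace $\Span(\nu)$, for any blade $\nu$ representing $V$), while $\Lambda^p W$ has dimension $\binom{q}{p}$ when $p\le q=\dim W$ and equals $\{0\}$ when $p>q$. So $\theta_{\Lambda^p V,\Lambda^p W}$ falls under the line-versus-subspace case of ``the angle'', and by Proposition~\ref{pr:the angle lines}\emph{\ref{it:theta span v w}} it satisfies
\[
\cos\theta_{\Lambda^p V,\Lambda^p W} \;=\; \frac{\|\Proj_{\Lambda^p W}\nu\|}{\|\nu\|}
\]
for any nonzero $\nu\in\Lambda^p V$ (with the appropriate convention in the degenerate cases).

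The key auxiliary step is to show that the extension $\P=\Proj_W:\Lambda X\to\Lambda W$ of the orthogonal projection on $X$, defined by $\P(v_1\wedge\cdots\wedge v_p)=\P v_1\wedge\cdots\wedge\P v_p$, coincides on $\Lambda^p X$ with the orthogonal projection $\Proj_{\Lambda^p W}$. I would prove this by picking an orthonormal basis $(w_1,\ldots,w_q)$ of $W$ extended to $(w_1,\ldots,w_n)$ of $X$: the corresponding coordinate $p$-blades form an orthonormal basis of $\Lambda^p X$, and $\P$ sends $w_{i_1}\wedge\cdots\wedge w_{i_p}$ to itself if all $i_k\le q$ and to $0$ otherwise, which is exactly the action of $\Proj_{\Lambda^p W}$.

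Combining these two observations with Proposition~\ref{pr:Theta Pnu} finishes the proof: for a nonzero blade $\nu$ representing $V$,
\[
\cos\theta_{\Lambda^p V,\Lambda^p W} \;=\; \frac{\|\Proj_{\Lambda^p W}\nu\|}{\|\nu\|} \;=\; \frac{\|\P\nu\|}{\|\nu\|} \;=\; \cos\Theta_{V,W}.
\]
Finally, I would sweep up the edge cases to confirm both sides agree: if $V=\{0\}$ then $\Lambda^0 V=\Lambda^0 W$ (the scalars), giving angle $0=\Theta_{\{0\},W}$; if $W=\{0\}$ with $V\neq\{0\}$, or more generally if $p>\dim W$, then $\Lambda^p W=\{0\}$ and $\theta_{\Lambda^p V,\Lambda^p W}=\tfrac{\pi}{2}=\Theta_{V,W}$; and if $V\pperp W$ then $\Lambda^p V\perp\Lambda^p W$ by Proposition~\ref{pr:partial orth Lambda orth}, again matching $\Theta_{V,W}=\tfrac{\pi}{2}$. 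The only mildly delicate point is verifying that the multiplicative extension $\P$ to $\Lambda X$ really is orthogonal projection onto $\Lambda W$; everything else is bookkeeping once Proposition~\ref{pr:Theta Pnu} is in hand.
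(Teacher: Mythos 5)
Your proposal is correct and follows essentially the same route as the paper: both reduce the statement to Proposition~\ref{pr:Theta Pnu} together with the fact that the multiplicative extension of $\Proj_W$ to $\Lambda^p X$ is the orthogonal projection onto $\Lambda^p W$ (a fact the paper records in its preliminaries and you verify explicitly via an adapted orthonormal basis). The edge-case checks are consistent with the paper's conventions, so nothing further is needed.
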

\begin{proof}
By the above result, 
\SELF{\ref{pr:Theta Pnu}}
$\Theta_{V,W}=\theta_{\nu,P\nu}=\theta_{\Lambda^p V,\Lambda^p W}$, since $\Lambda^p V=\Span(\nu)$ and $P\nu$ is the orthogonal projection of $\nu$ on $\Lambda^p W$.
\end{proof}

So $\Theta_{V,W}$ is the angle,  in $\Lambda X$, between the line $\Lambda^p V$ and the subspace $\Lambda^p W$. 
If $\dim V=\dim W$, it is the angle (as in \cref{pr:the angle lines}) between the lines of $\nu$ and $\omega$ (images of $V$ and $W$ in the embedding $G(X)\hookrightarrow\PP(\Lambda X)$).
As an angle between subspaces becomes an angle with a line, we get a link with elliptic geometry, which, ultimately, is behind \cref{pr:spherical Pythagorean theorem}.


\begin{corollary}\label{pr:Theta inner blades}
$|\inner{\nu,\omega}| = \|\nu\|\|\omega\|\cos \Theta_{V,W}$, for same grade blades.
\end{corollary}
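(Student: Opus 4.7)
The statement is essentially a rewriting of Theorem 3.10 in terms of the inner product on $\Lambda^p X$, so my plan is to deduce it directly from the already-established results rather than go back to principal angles.

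First, I would reduce to the case of nonzero blades: if $\nu=0$ or $\omega=0$ then both sides vanish (with the convention $V=\{0\}$ or $W=\{0\}$ giving $\cos\Theta_{V,W}=0$ on the right, matching the zero left-hand side), so assume $\nu,\omega\neq 0$ and both of grade $p=\dim V=\dim W$. Then $\Lambda^p V=\Span(\nu)$ and $\Lambda^p W=\Span(\omega)$ are lines in $\Lambda^p X$.

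Next, apply Theorem 3.10, which gives $\Theta_{V,W}=\theta_{\Lambda^p V,\Lambda^p W}$. Since both $\Lambda^p V$ and $\Lambda^p W$ are lines, this is an angle between two lines, and Proposition 2.9\emph{\ref{it:theta span v w}} (applied inside the inner product space $\Lambda^p X$ rather than $X$) yields $|\inner{\nu,\omega}|=\|\nu\|\|\omega\|\cos\theta_{\Lambda^p V,\Lambda^p W}$. Combining the two equalities gives the claim.

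An alternative route, which I would mention only if the editor wanted a more self-contained argument, is to use Proposition 3.9 directly: writing $P$ for the orthogonal projection $\Lambda X\to\Lambda W$, one has $\inner{\nu,\omega}=\inner{P\nu,\omega}$ because $\omega\in\Lambda^p W$ and $\nu-P\nu\perp\Lambda^p W$; since $P\nu$ and $\omega$ both lie on the line $\Lambda^p W$, Cauchy–Schwarz is an equality, giving $|\inner{\nu,\omega}|=\|P\nu\|\|\omega\|$, and then Proposition 3.9 converts $\|P\nu\|$ into $\|\nu\|\cos\Theta_{V,W}$. There is no real obstacle here; the only care needed is the zero-blade boundary case, which is handled by the definition conventions for $\Theta$ and $\theta$ on the zero subspace.
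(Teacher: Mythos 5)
Your primary argument is exactly the paper's (implicit) derivation: the statement is presented as a corollary of \cref{pr:angle external powers}, obtained by applying \cref{pr:the angle lines}\emph{\ref{it:theta span v w}} to the lines $\Lambda^p V=\Span(\nu)$ and $\Lambda^p W=\Span(\omega)$ in $\Lambda^p X$, and your alternative via \cref{pr:Theta Pnu} is also sound. The only superfluous step is the ``zero blade'' case: in the paper's conventions a blade representing a subspace is nonzero (a $0$-blade is a nonzero scalar representing $\{0\}$), so that boundary case does not arise.
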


A generalization for the contraction of blades can be obtained by decomposing them in terms of principal vectors and using \eqref{eq:contraction coordinate decomposition} (for details, see \cite{Mandolesi_Products}).

\begin{proposition}\label{pr:Theta norm contraction}
	$\|\nu\lcontr\omega\| = \|\nu\|\|\omega\| \cos \Theta_{V,W}$, for any blades.
\end{proposition}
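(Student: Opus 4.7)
The plan is to reduce to principal bases and apply the coordinate decomposition formula \eqref{eq:contraction coordinate decomposition}, using the diagonal orthogonality \eqref{eq:inner ei fj} of principal vectors to collapse the resulting sum to a single term.

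First I would dispose of the trivial case $p = \dim V > q = \dim W$: by definition $\nu \lcontr \omega = 0$ and $\Theta_{V,W} = \tfrac{\pi}{2}$, so both sides vanish. The cases where $V$ or $W$ is zero-dimensional are also immediate. Assume therefore $0 < p \leq q$, and pick associated principal bases $(e_1,\ldots,e_p)$ of $V$ and $(f_1,\ldots,f_q)$ of $W$ with principal angles $\theta_1,\ldots,\theta_p$. Then $\nu = c\,e_1\wedge\cdots\wedge e_p$ and $\omega = d\,f_1\wedge\cdots\wedge f_q$ for scalars with $|c|=\|\nu\|$ and $|d|=\|\omega\|$. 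Using the sesquilinearity of the contraction (conjugate-linear in its first entry, linear in its second, as follows from its defining property \eqref{eq:contraction}), the problem reduces to computing $\tilde\nu\lcontr\tilde\omega$, where $\tilde\nu = e_1\wedge\cdots\wedge e_p$ and $\tilde\omega = f_1\wedge\cdots\wedge f_q$.

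Next I would apply \eqref{eq:contraction coordinate decomposition} with $\beta = (f_1,\ldots,f_q)$, giving
\[
	\tilde\nu\lcontr\tilde\omega = \sum_{\ii\in\I_p^q}\varepsilon_\ii\,\inner{\tilde\nu,\omega_\ii}\,\omega_{\ii'}.
\]
The key observation is that $\inner{\tilde\nu,\omega_\ii} = \det\!\big(\inner{e_i,f_{i_j}}\big)_{1\le i,j\le p}$, and by \eqref{eq:inner ei fj} the entry $\inner{e_i,f_{i_j}}$ vanishes unless $i_j=i$. Hence this determinant is zero except when $\ii = \ii_0 := (1,\ldots,p)$, in which case it equals $\prod_{i=1}^p\cos\theta_i = \cos\Theta_{V,W}$. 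Since $\varepsilon_{\ii_0} = 1$ and $\omega_{\ii_0'} = f_{p+1}\wedge\cdots\wedge f_q$ is a unit blade (by orthonormality of the $f_i$'s), we obtain $\tilde\nu\lcontr\tilde\omega = \cos\Theta_{V,W}\,(f_{p+1}\wedge\cdots\wedge f_q)$, whose norm is $\cos\Theta_{V,W}$. Multiplying by $|c||d|=\|\nu\|\|\omega\|$ gives the claim.

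There is no real obstacle here: the proof is essentially a direct computation once the right bases are chosen. The only mildly delicate points are the bookkeeping of the sesquilinear scaling $(c\nu)\lcontr(d\omega)=\bar c d(\nu\lcontr\omega)$ in the complex case, and checking consistency with Corollary~\ref{pr:Theta inner blades} when $p=q$ (where $\nu\lcontr\omega = \inner{\nu,\omega}$ and the factor $f_{p+1}\wedge\cdots\wedge f_q$ degenerates to the scalar $1$).
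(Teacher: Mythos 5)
Your proof is correct and follows exactly the route the paper indicates: it only sketches this argument ("decomposing them in terms of principal vectors and using \eqref{eq:contraction coordinate decomposition}", with details deferred to \cite{Mandolesi_Products}), and your reduction to principal bases, the collapse of the sum via \eqref{eq:inner ei fj} to the single term $\ii=(1,\ldots,p)$, and the handling of the degenerate and $p>q$ cases fill in that sketch faithfully.
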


This formula holds without any grade conditions because the Grassmann angle asymmetry matches that of the contraction.

\subsection{Grassmann angles for oriented subspaces}

For oriented subspaces of same dimension, it is convenient to define angles that also describe relative orientations. 
Let $V,W\subset X$ be $p$-subspaces, oriented by $\nu,\omega\in\Lambda^p X$, respectively.

\begin{definition}
	If $V\not\pperp W$, the \emph{phase difference} $\phi_{V,W}$ from $V$ to $W$ is $\phi_{V,W} = \phi_{\nu,\omega}$ (as in \eqref{eq:angles vectors}).
	Orientations are \emph{aligned} if $\phi_{V,W}=0$.
\end{definition}

Orientations are aligned if $\Proj_W\nu$ gives $W$ the same orientation as $\omega$.

By \cref{pr:angle external powers} and \eqref{eq:spherical Pythag vectors}, $\cos \theta_{\nu,\omega} = \cos \phi_{V,W} \cdot \cos\Theta_{V,W}$, so $\theta_{\nu,\omega}$ reflects not only the separation between $V$ and $W$, but also their lack of alignment.
The complex angle \eqref{eq:angles vectors} does the same, but keeps these pieces of information separate. This and \cref{pr:Theta inner blades} suggest a generalization of $\Theta_{V,W}$.

\begin{definition}
The Grassmann angle $\mathbf{\Theta}_{V,W}$ for oriented subspaces of same dimension is given by 
\begin{equation}\label{eq:oriented Theta inner blades}
	\cos \mathbf{\Theta}_{V,W} = \frac{\inner{\nu,\omega}}{\|\nu\|\|\omega\|}.
\end{equation}
\end{definition}

In the real case $\mathbf{\Theta}_{V,W}\in[0,\pi]$, but $\mathbf{\Theta}_{V,W}\in\C$ in the complex one.

\begin{proposition}
This angle satisfies:
\begin{enumerate}[i)]
\item $\mathbf{\Theta}_{W,V} = \bar{\mathbf{\Theta}}_{V,W}$.
\item $\mathbf{\Theta}_{V,W} = \zeta_{\nu,\omega} \ (=\theta_{\nu,\omega}$ in the real case). \label{it:oriented angle blades}
\item If $V\not\pperp W$ then $\cos \mathbf{\Theta}_{V,W} =  e^{i\phi_{V,W}}\cos\Theta_{V,W}$.
\end{enumerate}
\end{proposition}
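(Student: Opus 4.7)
The plan is to treat the three items in order, reading each as an identity at the level of cosines (which is where the definition \eqref{eq:oriented Theta inner blades} lives) and invoking the inner-product formula \cref{pr:Theta inner blades} together with the basic properties of the Hermitian product on $\Lambda X$.

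For (ii), I would simply match definitions: by \eqref{eq:oriented Theta inner blades} and the definition of $\zeta_{v,w}$ in \eqref{eq:angles vectors},
\[ \cos\mathbf{\Theta}_{V,W} = \frac{\inner{\nu,\omega}}{\|\nu\|\|\omega\|} = \cos\zeta_{\nu,\omega}, \]
so $\mathbf{\Theta}_{V,W} = \zeta_{\nu,\omega}$. In the real case $\inner{\nu,\omega}\in\R$, so $\zeta_{\nu,\omega}=\theta_{\nu,\omega}\in[0,\pi]$ by the same definition.

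For (i), I would use conjugate symmetry of the Hermitian product on $\Lambda X$, i.e.\ $\inner{\omega,\nu} = \overline{\inner{\nu,\omega}}$, which gives
\[ \cos\mathbf{\Theta}_{W,V} = \frac{\inner{\omega,\nu}}{\|\omega\|\|\nu\|} = \overline{\left(\frac{\inner{\nu,\omega}}{\|\nu\|\|\omega\|}\right)} = \overline{\cos\mathbf{\Theta}_{V,W}} = \cos \bar{\mathbf{\Theta}}_{V,W}, \]
where the last equality uses the fact that $\cos$ has real Taylor coefficients, so $\overline{\cos z}=\cos\bar z$. In the real case both sides are equal as real angles; in the complex case the identity is read at the level of the defining cosine equation (just as $\zeta_{v,w}$ itself is determined by its cosine).

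For (iii), the assumption $V\not\pperp W$ gives $\Lambda^p V\not\perp\Lambda^p W$ by \cref{pr:partial orth Lambda orth}, hence $\inner{\nu,\omega}\neq 0$, so $\phi_{V,W}=\phi_{\nu,\omega}$ is well defined. Writing $\inner{\nu,\omega} = e^{i\phi_{\nu,\omega}}\,|\inner{\nu,\omega}|$ from \eqref{eq:angles vectors} and combining with \cref{pr:Theta inner blades} yields
\[ \cos\mathbf{\Theta}_{V,W} = \frac{\inner{\nu,\omega}}{\|\nu\|\|\omega\|} = e^{i\phi_{V,W}}\,\frac{|\inner{\nu,\omega}|}{\|\nu\|\|\omega\|} = e^{i\phi_{V,W}}\cos\Theta_{V,W}. \]

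The only subtle point is the interpretation of (i) when $\mathbf{\Theta}_{V,W}\in\C$: arccos is multivalued, so the cleanest way is to treat the identity as the corresponding identity between cosines, exactly as $\zeta_{v,w}$ is handled in the preliminaries. Beyond that, the proof is a direct unpacking of \eqref{eq:oriented Theta inner blades}, \cref{pr:Theta inner blades}, and the polar decomposition of $\inner{\nu,\omega}$.
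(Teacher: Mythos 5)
Your proof is correct and is exactly the direct unpacking of the definitions that the paper intends — indeed the paper omits the proof entirely, treating all three items as immediate from \eqref{eq:oriented Theta inner blades}, the conjugate symmetry of the Hermitian product, and \cref{pr:Theta inner blades}, just as you do (mirroring the vector-level identity $\cos\zeta_{v,w}=e^{i\phi_{v,w}}\cos\gamma_{v,w}$ from the preliminaries). Your remark that (i) should be read as an identity of cosines, since $\arccos$ of a complex number is multivalued, is a sensible clarification the paper leaves implicit.
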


Note that \emph{(\ref{it:oriented angle blades})} is similar to \cref{pr:angle external powers}, but now we have an angle between the images of $V$ and $W$ in the embedding $\tilde{G}(X) \hookrightarrow S(\Lambda X)$.

\begin{example}\label{ex:oriented Theta}
Let $(e_1,e_2,e_3)$ be the canonical basis of $\C^3$, $v_1=(1,\im,0)$, $v_2=(\im,-1,-1)$, and $V$ and $X_{ij}$ be the subspaces oriented by $\nu=v_1\wedge v_2$ and $e_{ij}=e_i\wedge e_j$, respectively. Then:
\begin{itemize}
\item $\mathbf{\Theta}_{V,X_{12}} = \frac{\pi}{2}$, so that $V\pperp X_{12}$;
\item $\mathbf{\Theta}_{V,X_{13}} = \frac{3\pi}{4}$, so that $\Theta_{V,X_{13}} = \frac{\pi}{4}$  and $\phi_{V,X_{13}} = \pi$ (the projection of $\nu$ on $X_{13}$ has the orientation of $-e_{13}$);
\item $\mathbf{\Theta}_{V,X_{23}} = \arccos(\im\frac{\sqrt{2}}{2})$, so that $\Theta_{V,X_{23}} = \frac{\pi}{4}$ and $\phi_{V,X_{23}} = \frac{\pi}{2}$ (the orientation of $\im\nu$ is aligned with that of $e_{23}$).
\end{itemize}
\end{example}

%
%
%

\subsection{Projection factors}\label{sc:Projection Factors}

Let us take another look at \cref{pr:Theta Pnu}.
In the real case, $\|\nu\|$ and $\|\P\nu\|$ are $p$-dimensional volumes ($p=\dim V$) of a parallelotope and its orthogonal projection (\cref{fig:projecao}), so $\cos\Theta_{V,W}$ measures how volumes contract when projecting from $V$ to $W$.
If $p\leq \dim W$, this can be understood by noting that $\cos\Theta_{V,W}=\prod_i \cos\theta_i$ and each $\cos\theta_i$ is the factor by which lengths in a principal axis $\R e_i\subset V$ contract when projected.

\begin{figure}
	\centering
	\includegraphics[width=0.7\linewidth]{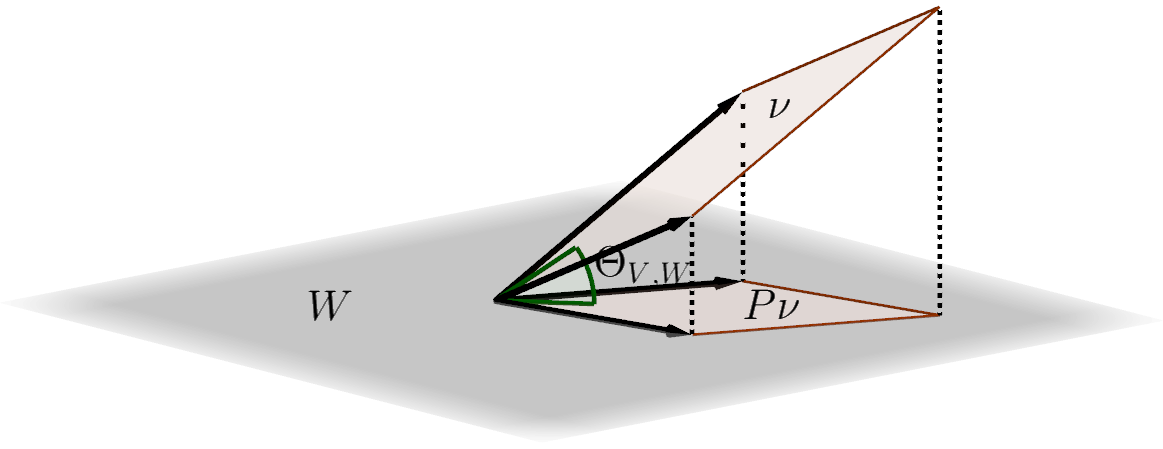}
	\caption{$\|\P\nu\| = \|\nu\| \cos\Theta_{V,W}$.}
	\label{fig:projecao}
\end{figure}

Some authors \cite{Gluck1967,Gunawan2005} take this as the defining property of the angle between subspaces, but the complex case requires an adjustment: 
top-dimensional volumes in $V$ contract by $\cos^2\Theta_{V,W}$, as each $\cos\theta_i$ describes the contraction of 2 axes, $\R e_i$ and $\R(\im e_i)$.

Let $V,W\subset X$ be subspaces, $P=\Proj_W$, and $|\!\cdot\!|_k$ be the $k$-dimensional Lebesgue measure (in the complex case, taken in the underlying real spaces $V_\R$ and $W_\R$, with twice the complex dimension).

\begin{theorem}\label{pr:Grassmann angle volume projection}
Let $S\subset V$ be a Lebesgue measurable set and $p=\dim V$.
\begin{enumerate}[i)]
\item $|P(S)|_p = |S|_p \cos\Theta_{V,W}$ in the real case. \label{it:real volume projection}
\item $|P(S)|_{2p} = |S|_{2p} \cos^2\Theta_{V,W}$ in the complex case. \label{it:complex volume projection}
\end{enumerate}
\end{theorem}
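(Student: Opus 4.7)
The plan is to reduce the statement to a Jacobian computation in principal bases, exploiting Proposition 2.5 which already diagonalizes $P|_V$ with singular values $\cos\theta_i$.

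First, I would dispose of the degenerate case. If $p > \dim W$ (in either the real or the complex case, measuring dimensions over the appropriate field), or more generally if $V \pperp W$, then $P(V)$ has strictly smaller dimension than $V$, so $P(S)$ sits inside a lower-dimensional subspace and has measure zero; meanwhile $\cos\Theta_{V,W}=0$, so both sides vanish. From now on assume $V\not\pperp W$, so $P|_V \colon V \to P(V)$ is a linear bijection.

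For part (i), pick a principal basis $(e_1,\ldots,e_p)$ of $V$ with associated principal basis extended inside $W$, so that Proposition~\ref{pr:Proj diagonal} gives $Pe_i = f_i\cos\theta_i$ for $1 \leq i \leq p$. With respect to the orthonormal bases $(e_1,\ldots,e_p)$ of $V$ and $(f_1,\ldots,f_p)$ of $P(V)$, the matrix of $P|_V$ is the diagonal matrix $\operatorname{diag}(\cos\theta_1,\ldots,\cos\theta_p)$. The standard change-of-variables formula for Lebesgue measure under a linear bijection between Euclidean spaces of the same dimension then gives
\[ |P(S)|_p = |S|_p \cdot \prod_{i=1}^{p}\cos\theta_i = |S|_p \cos\Theta_{V,W}. \]

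For part (ii), I would pass to the underlying real spaces $V_\R$ and $W_\R$ and apply part (i) to the real projection $P_\R = \Proj_{W_\R}$, which coincides with $P$ on $V_\R$ because $\inner{\cdot,\cdot}_\R = \operatorname{Re}\inner{\cdot,\cdot}$ and $W$ is closed under multiplication by $\im$. The key input is the observation already recorded in Example~\ref{ex:complex principal angles}: if $\theta_1,\ldots,\theta_p$ are the complex principal angles of $V,W$, then the real principal angles of $V_\R,W_\R$ are $\theta_1,\theta_1,\ldots,\theta_p,\theta_p$, because applying $\im$ to each complex principal vector produces a second real principal vector with the same angle. Hence
\[ \cos\Theta_{V_\R,W_\R} = \prod_{i=1}^{p}\cos^2\theta_i = \cos^2\Theta_{V,W}, \]
and part (i) applied in $V_\R$, with $2p$-dimensional Lebesgue measure, yields $|P(S)|_{2p} = |S|_{2p}\cos^2\Theta_{V,W}$.

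The only nonroutine point is justifying the doubling of principal angles when passing to the underlying real structure; I would isolate it as a short lemma (or cite Example~\ref{ex:complex principal angles}), proving that if $(e_i,f_i,\theta_i)$ is a principal triple for the Hermitian structure then $(\im e_i, \im f_i, \theta_i)$ is another, orthogonal to the first with respect to $\operatorname{Re}\inner{\cdot,\cdot}$, and that together these exhaust the principal vectors of $V_\R,W_\R$. Everything else is either linear algebra of diagonal matrices or the standard Jacobian formula.
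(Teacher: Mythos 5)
Your proposal is correct and follows essentially the same route as the paper: both arguments reduce to a principal basis in which $P|_V$ acts diagonally with singular values $\cos\theta_i$ (the paper evaluates the ratio on the unit cube spanned by principal vectors and invokes linearity, which is just your Jacobian formula in disguise), and both handle the complex case via the doubling of principal angles in $V_\R$, $W_\R$ --- a fact the paper already records in the discussion preceding Example~\ref{ex:complex principal angles}, so your proposed lemma is available and introduces no circularity. Your explicit treatment of the degenerate case $V\pperp W$ is a minor tidying of the paper's remark that the claim is trivial when $p>\dim W$.
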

\begin{proof}
Consider first the real case, and assume $\dim V\leq\dim W$ (otherwise the result is trivial, as $|\!\cdot\!|_k =0$ on $W$ for $k>\dim_\R W$).
As $P$ is linear, the ratio of $|P(S)|_p$ to $|S|_p$ is independent of $S$.
Take $S$ to be the unit cube spanned by principal vectors $e_1,\ldots,e_p$ of $V$ w.r.t. $W$. By \eqref{eq:Pei}, $P(S)$ is the orthogonal parallelotope spanned by $f_1\cos\theta_1,\ldots,f_p\cos\theta_p$, so that $|P(S)|_p=\cos\Theta_{V,W}$. 
The complex case is similar, with $S$ being the unit cube spanned by $e_1,\im e_1,\ldots,e_p,\im e_p$, so each $\cos\theta_i$ is multiplied twice.
\end{proof}

A comparison with \cref{pr:Theta Pnu} suggests that to interpret $\|\nu\|$ in the complex case we should consider the square root of some volume of twice the complex dimension. 
Indeed, one can check that, for a complex $p$-blade $\nu=v_1\wedge\ldots\wedge v_p$, $\|\nu\|^2$ gives the $2p$-dimensional volume of the parallelotope spanned by $v_1,\im v_1,\ldots,v_p, \im v_p$. 

The theorem links Grassmann angles to projection factors \cite{Mandolesi_Pythagorean}. 

\begin{definition}
	The \emph{projection factor} of $V$ on $W$ is $\pi_{V,W}=\frac{|P(S)|_k}{|S|_k}$, for any Lebesgue measurable set $S\subset V$ with $|S|_k\neq 0$, where $k=\dim V_\R$.
\end{definition}

\begin{corollary}\label{pr:projection factor Theta}
	$\pi_{V,W} = \begin{cases}
		\cos\Theta_{V,W} \hspace{4pt}\text{ in the real case}; \\
		\cos^2\Theta_{V,W} \text{ in the complex case.}
	\end{cases}$
\end{corollary}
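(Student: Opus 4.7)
The plan is to observe that this corollary is essentially a restatement of Theorem \ref{pr:Grassmann angle volume projection} in the language of projection factors, so the proof reduces to matching up the dimension parameter $k$ appearing in the definition of $\pi_{V,W}$ with the dimensional indices on the Lebesgue measures in the theorem.

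First I would unfold the definition: $\pi_{V,W} = |P(S)|_k / |S|_k$, where $k = \dim V_\R$. In the real case, $V$ is a real $p$-subspace and $\dim V_\R = p$, so the theorem's part \emph{(\ref{it:real volume projection})} applies and gives $|P(S)|_p = |S|_p \cos\Theta_{V,W}$; dividing by $|S|_p \neq 0$ yields $\pi_{V,W} = \cos\Theta_{V,W}$. In the complex case, $V$ has complex dimension $p$ but $\dim V_\R = 2p$, so part \emph{(\ref{it:complex volume projection})} applies with $k = 2p$, giving $|P(S)|_{2p} = |S|_{2p}\cos^2\Theta_{V,W}$ and hence $\pi_{V,W} = \cos^2\Theta_{V,W}$.

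The only point that deserves a brief remark is well-definedness: the definition of $\pi_{V,W}$ implicitly assumes the ratio $|P(S)|_k / |S|_k$ does not depend on the choice of $S$. This is not an obstacle, since it is exactly what the theorem delivers — the right-hand sides involve only $\Theta_{V,W}$ and no feature of $S$ — so independence of $S$ is automatic from the theorem and nothing further needs to be verified. There is no substantial obstacle here; the corollary is essentially a notational translation.
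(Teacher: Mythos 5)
Your proposal is correct and is exactly the paper's (implicit) argument: the corollary is an immediate consequence of Theorem \ref{pr:Grassmann angle volume projection} once one notes that $k=\dim V_\R$ equals $p$ in the real case and $2p$ in the complex case, and divides by $|S|_k\neq 0$. Your remark on well-definedness of $\pi_{V,W}$ is a sensible observation and is likewise settled by the theorem.
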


In the complex case $\pi_{V,W}=\pi_{V_\R,W_\R}$, as the Lebesgue measures are taken in the underlying real spaces. So we have:

\begin{corollary}\label{pr:cos underlying real Theta}
	In the complex case:
	\begin{enumerate}[i)]
		\item $\cos\Theta_{V_\R,W_\R} = \cos^2 \Theta_{V,W}$.
		\item $\Theta_{V_\R,W_\R} \geq \Theta_{V,W}$, with equality if, and only if, $V\subset W$ or $V\pperp W$. \label{it:Theta VR bigger}
	\end{enumerate}
\end{corollary}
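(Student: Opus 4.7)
The plan is to derive both parts directly from \cref{pr:projection factor Theta} and the observation, stated just before the corollary, that $\pi_{V,W}=\pi_{V_\R,W_\R}$ in the complex case (the Lebesgue measures defining the projection factor are already taken in the underlying real spaces).

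For part (i), I would apply \cref{pr:projection factor Theta} twice. Viewing $V_\R$ and $W_\R$ as real subspaces of $X_\R$, the real version gives $\pi_{V_\R,W_\R}=\cos\Theta_{V_\R,W_\R}$. Viewing $V$ and $W$ as complex subspaces, the complex version gives $\pi_{V,W}=\cos^2\Theta_{V,W}$. Equating the two (via $\pi_{V,W}=\pi_{V_\R,W_\R}$) yields the identity.

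For part (ii), since $\cos\Theta_{V,W}\in[0,1]$, we have $\cos^2\Theta_{V,W}\leq \cos\Theta_{V,W}$, hence $\cos\Theta_{V_\R,W_\R}\leq \cos\Theta_{V,W}$, and as both angles lie in $[0,\frac{\pi}{2}]$ this gives $\Theta_{V_\R,W_\R}\geq \Theta_{V,W}$. Equality holds exactly when $\cos\Theta_{V,W}\in\{0,1\}$, and by \cref{pr:properties Grassmann}\emph{(\ref{it:Theta zero})} and \emph{(\ref{it:Theta pi2})} this occurs precisely when $V\subset W$ or $V\pperp W$.

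There is no real obstacle here; the corollary is essentially a repackaging of \cref{pr:projection factor Theta} together with the elementary inequality $t^2\le t$ on $[0,1]$. The only minor subtlety is making explicit why $\pi_{V,W}=\pi_{V_\R,W_\R}$, but this is immediate from the definition of the projection factor, which uses the real-dimensional Lebesgue measure in both cases.
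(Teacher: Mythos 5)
Your proposal is correct and follows exactly the paper's own route: the corollary is derived from \cref{pr:projection factor Theta} together with the identity $\pi_{V,W}=\pi_{V_\R,W_\R}$ (stated in the paper immediately before the corollary), and part (ii) is the elementary consequence $t^2\le t$ on $[0,1]$ combined with \cref{pr:properties Grassmann}\emph{(\ref{it:Theta zero})},\emph{(\ref{it:Theta pi2})}. Nothing is missing.
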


This agrees with the results of \cref{ex:complex Grassmann angle}.
In \cref{sc:Underlying real spaces} we discuss what it means for $\Theta_{V,W}$ and $\Theta_{V_\R,W_\R}$ to be different.

\begin{example}
In \cref{ex:Theta>thetas}, areas in $V$ contract by half when orthogonally projected on $W$.
\end{example}

\begin{example}\label{ex:complex projection factors}
In \cref{ex:complex Grassmann angle}, $4$-dimensional volumes in $V$ contract by $\cos^2\Theta_{V,W}=\cos \Theta_{V_\R,W_\R}=\frac{1}{8}$ when projected on $W$.
\end{example}

\begin{example}\label{ex:sum projections}
	In \cref{ex:oriented Theta}, any $4$-dimensional volume in $V$ vanishes when projected on $X_{12}$, and shrinks by $\frac{1}{2}$ when projected on $X_{13}$ or $X_{23}$. Note that the sum of the projections equals the original volume. This corresponds to a complex Pythagorean theorem \cite{Mandolesi_Pythagorean}, which can be obtained by combining \cref{pr:Grassmann angle volume projection} with results of \cref{sc:Pythagorean trigonometric}.
\end{example}

\subsection{Exotic features}\label{sc:Exotic features}

Besides its nice properties, $\Theta_{V,W}$ also has strange ones. 
Some affect similar angles as well, but are not usually discussed, and can lead to error.
Still, if properly tapped they can be useful (see \cref{sc:simult complex}).

\subsubsection{Underdetermination of relative position}

By \cref{pr:relative position}, in general $\Theta_{V,W}$ does not fully describe the relative position of $V$ and $W$. But it was never meant to. Its purpose is just to capture, in a single number,  important properties of such position.

This is commonplace for angles between subspaces, but as it goes against most people's intuition regarding angles, a warning may help avoid misunderstandings:
even for pairs of subspaces with equal dimensions, having  equal Grassmann angles is no guarantee that an orthogonal or unitary transformation can take one pair to the other.

\subsubsection{Not an angle in $X$}\label{sc:not angle between lines}

Let $V,W\subset X$ be nonzero subspaces, with principal angles $\theta_1\leq\ldots\leq\theta_m$.
From the angle definition we get:

\begin{proposition}
	$\Theta_{V,W}\geq \theta_m$, with equality if, and only if, $\theta_m=\frac{\pi}{2}$, or $\dim V\leq\dim W$ and $\theta_i=0$ for all $i<m$.
\end{proposition}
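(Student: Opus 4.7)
The plan is to split on the dimensional comparison between $V$ and $W$, since the definition of $\Theta_{V,W}$ branches there.

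First I would handle the case $\dim V > \dim W$. Here $\Theta_{V,W}=\frac{\pi}{2}$ by definition, and since $\theta_m\in[0,\frac{\pi}{2}]$, the inequality $\Theta_{V,W}\geq\theta_m$ is immediate, with equality if and only if $\theta_m=\frac{\pi}{2}$. Note that in this case, the alternative equality condition ($\dim V\leq\dim W$ with $\theta_i=0$ for $i<m$) is excluded by hypothesis, so it matches the statement.

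Next I would handle the case $\dim V\leq\dim W$, where $\cos\Theta_{V,W}=\prod_{i=1}^{m}\cos\theta_i$. Since $0\leq\theta_1\leq\cdots\leq\theta_m\leq\frac{\pi}{2}$, each $\cos\theta_i\in[0,1]$ and $\cos\theta_m$ is the smallest factor. Thus
\[ \cos\Theta_{V,W}=\cos\theta_m\cdot\prod_{i=1}^{m-1}\cos\theta_i\leq\cos\theta_m, \]
which yields $\Theta_{V,W}\geq\theta_m$. For the equality analysis, split again: if $\theta_m=\frac{\pi}{2}$, both sides of $\cos\Theta_{V,W}\leq\cos\theta_m$ are $0$, giving equality. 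If $\theta_m<\frac{\pi}{2}$, then $\cos\theta_m>0$ and equality forces $\prod_{i=1}^{m-1}\cos\theta_i=1$, i.e.\ $\theta_i=0$ for all $i<m$.

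Combining both cases recovers exactly the two equality conditions in the statement (observing that the first, $\theta_m=\frac{\pi}{2}$, covers both dimensional regimes, while the second only applies when $\dim V\leq\dim W$). There is no real obstacle here: the argument is just a careful bookkeeping of when a product of cosines in $[0,1]$ coincides with its smallest factor, combined with the definitional split of $\Theta_{V,W}$.
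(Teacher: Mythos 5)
Your proof is correct and follows exactly the route the paper intends: the paper states this proposition with no written proof beyond the remark that it follows from the angle definition, and your case split on $\dim V$ versus $\dim W$ together with the observation about when a product of cosines in $[0,1]$ equals its smallest factor is precisely that argument made explicit.
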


This means $\Theta_{V,W}$ is not really an (ordinary) angle in $X$, in the sense of corresponding to an angle between a line in $V$ and its projection on $W$, as it is, in general, strictly greater than all such angles (as in \cref{ex:Theta>thetas}).
Indeed, the geometric interpretation of $\theta_m$, or \cref{pr:V' sub V}, gives:

\begin{corollary}
	There is a line $L\subset V$ with $\theta_{L,W}=\Theta_{V,W}$ if, and only if, $V\pperp W$ or $V\cap W$ has at most codimension 1 in $V$.
\end{corollary}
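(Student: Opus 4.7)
The plan is to apply \cref{pr:V' sub V} with $V'=L$. Since $L$ is a line, Prop \ref{pr:properties Grassmann}\emph{(iii)} gives $\Theta_{L,W}=\theta_{L,W}$, so for any line $L\subset V$ we have $\Theta_{V,W}\geq\theta_{L,W}$, with equality if and only if $L\pperp W$ or $L^\perp\cap V\subset W$. The corollary then reduces to translating these two equality conditions into conditions on $V$ and $W$ alone.

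For the forward direction, suppose some line $L\subset V$ achieves $\theta_{L,W}=\Theta_{V,W}$. In the first equality case, $L\pperp W$ means the line $L$ meets $W^\perp$ nontrivially, so $L\subset V\cap W^\perp$, giving $V\pperp W$. In the second case, $L^\perp\cap V$ has codimension $1$ in $V$, and the hypothesis $L^\perp\cap V\subset W$ forces $V\cap W$ to have codimension at most $1$ in $V$.

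For the converse, I would exhibit a witnessing line in each case. If $V\pperp W$, then $\Theta_{V,W}=\pi/2$ by Prop \ref{pr:properties Grassmann}\emph{(ii)}, and any line $L\subset V\cap W^\perp$ satisfies $\theta_{L,W}=\pi/2$. If $V\subset W$ (codimension $0$), then $\Theta_{V,W}=0$ and any line $L\subset V$ has $\theta_{L,W}=0$. If $V\cap W$ has codimension exactly $1$ in $V$, let $L$ be the orthogonal complement of $V\cap W$ inside $V$; then $L^\perp\cap V=V\cap W\subset W$, so \cref{pr:V' sub V} yields $\theta_{L,W}=\Theta_{L,W}=\Theta_{V,W}$.

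There is no real obstacle; the only care needed is checking that when $L$ is a line the condition $L\pperp W$ is equivalent to $L\subset W^\perp$ (immediate since $\dim L=1$), and that the degenerate cases $W=\{0\}$ or $V\cap W=V$ are handled correctly by the definition of $\Theta$ and of $\theta_{L,W}$.
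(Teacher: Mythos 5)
Your proof is correct and follows exactly one of the two routes the paper itself indicates (it cites \cref{pr:V' sub V} as giving the corollary): specializing that proposition to $V'=L$ a line and translating the equality conditions $L\pperp W$ and $L^\perp\cap V\subset W$ into the stated conditions on $V$ and $W$. The case analysis in your converse, including the choice $L=(V\cap W)^\perp\cap V$ in the codimension-one case, matches what the paper leaves implicit.
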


\subsubsection{Asymmetry}\label{sc:Asymmetry}

Grassmann angles are asymmetric by definition: if a line $V$ makes a $20^\circ$ angle with a plane $W$ then $\Theta_{V,W}=20^\circ$, but we chose to set $\Theta_{W,V}=90^\circ$.
This choice is unusual, as \cref{sc:Related angles} shows: for subspaces of different dimensions, it is customary to take the angle between the smaller one and its projection on the other, or some equivalent construction.

But we have good reasons for it.
For example, if $\dim V>\dim W$ we have $|P(S)|=0$ in \cref{pr:Grassmann angle volume projection}, and $\Lambda^p W=\{0\}$ in \cref{pr:angle external powers}. In both cases, consistency requires $\Theta_{V,W}=\frac{\pi}{2}$. 

Also, this asymmetry, far from being a problem, is quite beneficial. It reflects the asymmetry between subspaces of different dimensions, and allows the angle to carry some dimensional information ($\Theta_{V,W}\neq\frac{\pi}{2}$ implies $\dim \Proj_W V = \dim V \leq \dim W$), leading to simpler proofs and more general results.
For example, propositions \ref{pr:spherical Pythagorean theorem}, \ref{pr:Theta norm contraction} and others in next sections 
\SELF{\ref{pr:Theta Pnu}, \ref{pr:exterior product}, \ref{pr:formula any base dimension}, \ref{pr:formula complementary angle bases}}
only hold without restrictions thanks to it.

Still, symmetrized versions of $\Theta_{V,W}$ can be useful at times.

\begin{definition}
	The \emph{min-} and \emph{max-symmetrized Grassmann angles} are, respectively, $\check{\Theta}_{V,W} = \min\{\Theta_{V,W},\Theta_{W,V}\}$ and	$\hat{\Theta}_{V,W} = \max\{\Theta_{V,W},\Theta_{W,V}\}$. 
\end{definition}

The min-symmetrized one  agrees with how one usually talks about the angle between a plane and a line ($\check{\Theta}_{W,V}=20^\circ$ in the above example), and is often adopted implicitly (see \cref{sc:Related angles}).
But it has worse properties, as it loses information about principal vectors not corresponding to a principal angle. 
For example, it does not satisfy a triangle inequality (consider 2 lines and their plane).
Even so, it is related to the fat dot and Hestenes inner products of Clifford geometric algebra \cite{Mandolesi_Products}.

The max-symmetrized one may seem of little use, as $\hat{\Theta}_{V,W}=\frac \pi 2$ for any line and plane, or whenever dimensions are different. But with it we can extend \cref{pr:Theta inner blades} for distinct grades, and \cref{sc:metric} gives it other uses.
\SELF{\ref{pr:Theta Fubini any dim},\ref{pr:Hausdorff Theta},\ref{pr:Hausdorff Lambda}}
It is related to the scalar product of Clifford algebra \cite{Mandolesi_Products}.

\subsubsection{Orthogonal complement}

Another strange feature is that the Grassmann angle with the orthogonal complement of a subspace is not the usual complement of an angle.

\begin{example}\label{ex:Grassmann angle complement}
In \cref{ex:real principal angles}, $W^\perp=\Span(f_3,f_4)$ for $f_3=(0,0,1,0)$ and $f_4=(0,0,0,1)$, and both principal angles for $V$ and $W^\perp$ are also $45^\circ$. Thus $\Theta_{V,W^\perp}=60^\circ  \neq 90^\circ - \Theta_{V,W}$, since $\Theta_{V,W}=60^\circ$. 
\end{example}

As $\sin \Theta_{V,W} \neq \cos \Theta_{V,W^\perp}$, this sine does not measure projections on $W^\perp$, contrary to what one might expect from examples where $V$ is a line.
We discuss this in \cref{sc:Complementary Grassmann angle}, but for now note that $\Theta_{V,W}$ is an angle in $\Lambda X$, and in general $\Lambda (W^\perp) \neq \left(\Lambda W\right)^\perp$.

\subsubsection{Underlying real spaces}\label{sc:Underlying real spaces}

In \cref{ex:complex Grassmann angle}, $\Theta_{V,W}\neq\Theta_{V_\R,W_\R}$, which may seem strange, as metrically a complex space $X$ is not different from its underlying $X_\R$.
An explanation is that Grassmann algebras over $\R$ and $\C$ differ, with $\Lambda(X_\R)$ and $ (\Lambda X)_\R$ not even having the same dimension. 
Another is that $\Theta_{V,W}$ and $\Theta_{V_\R,W_\R}$ are different ways to encode information about the same projection factor $\pi_{V,W}=\pi_{V_\R,W_\R}$, as seen in \cref{pr:projection factor Theta} and \cref{ex:complex projection factors}.



One might say $\Theta_{V,W}$ ought to be defined as equal to $\Theta_{V_\R,W_\R}$, but this is inconvenient. Working in $X_\R$ doubles dimensions and squanders symmetries of the complex structure, causing the redundancy of principal angles appearing twice.
This would also conflict with other definitions  \cite{Galantai2006,Venticos1956} and make formulas differ in the complex case (e.g. \cref{pr:Theta inner blades} would become $|\langle v,w \rangle| = \|v\| \|w\|\sqrt{\cos \Theta_{V_\R,W_\R}}$).

\section{Metric properties}\label{sc:metric} 

We prove a triangle inequality for Grassmann angles, giving conditions for equality, show they give the Fubini-Study metric in Grassmannians $G_p(X)$, and analyze geodesics in $\PP(\Lambda^p X)$ connecting points of $G_p(X)$. 
They also give an asymmetric metric in the full Grassmannian $G(X)$, and Hausdorff distances between full sub-Grassmannians.

\subsection{Triangle inequality}\label{sc:Triangle inequality}

For $U,V,W\subset X$ of same dimension, \cref{pr:angle external powers} translates the spherical triangle inequality of elliptic geometry
\SELF{[p.38]{Reid2005}} 
(in the complex case, a triangle inequality for Hermitian angles) 
\SELF{[p.16]{Goldman1999}, em termos da métrica de Fubini}
into $\Theta_{U,W} \leq \Theta_{U,V} + \Theta_{V,W}$. 
The asymmetry extends it for distinct dimensions: if $P=\Proj_W$, $\Theta_{U,V}\neq\frac\pi 2$ and $\Theta_{V,W}\neq\frac\pi 2$ then $U$, $P_V(U)$ and $PP_V(U)$ have equal dimensions, 
and propositions \ref{pr:W' sub W}, \ref{pr:properties Grassmann}\emph{\ref{it:Theta P(V)}} and \ref{pr:V' sub V} give
$\Theta_{U,W} \leq \Theta_{U,PP_V(U)} \leq \Theta_{U,P_V(U)} + \Theta_{P_V(U),PP_V(U)} = \Theta_{U,V} + \Theta_{P_V(U),W} \leq \Theta_{U,V} + \Theta_{V,W}$.

We give a more detailed proof, so we can get conditions for equality.

\begin{theorem}\label{pr:spherical triangle inequality}
$\Theta_{U,W} \leq \Theta_{U,V} + \Theta_{V,W}$ for any subspaces $U,V,W\subset X$.
\end{theorem}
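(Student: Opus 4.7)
My plan is to split the argument into two cases. First I dispose of the trivial situations where $\Theta_{U,V}=\frac{\pi}{2}$ or $\Theta_{V,W}=\frac{\pi}{2}$: the right-hand side is then at least $\frac{\pi}{2}\geq\Theta_{U,W}$, so the inequality is immediate. From here on I assume both RHS angles are strictly less than $\frac{\pi}{2}$.

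Next I would establish the special case $\dim U=\dim V=\dim W=p$. By \cref{pr:angle external powers} the three Grassmann angles coincide with the Fubini--Study distances between the lines $\Lambda^p U,\Lambda^p V,\Lambda^p W$ in $\PP(\Lambda^p X)$. In the real case the inequality is the spherical triangle inequality of elliptic geometry; in the complex case it is the triangle inequality for the Fubini--Study metric (equivalently, for Hermitian angles between complex lines). Both are classical and immediate to invoke.

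For the general asymmetric case I would reduce to the equal-dimension one via orthogonal projections. Put $P_V=\Proj_V$, $P=\Proj_W$, $U'=P_V(U)$ and $U''=P(U')$. Since $\Theta_{U,V}<\frac{\pi}{2}$ gives $U\not\pperp V$, \cref{pr:partial orthogonality} forces $\dim U'=\dim U$; likewise \cref{pr:V' sub V} yields $\Theta_{U',W}\leq\Theta_{V,W}<\frac{\pi}{2}$, so $U'\not\pperp W$ and $\dim U''=\dim U'$. Thus $U,U',U''$ are equi-dimensional and I can chain
\[ \Theta_{U,W} \leq \Theta_{U,U''} \leq \Theta_{U,U'}+\Theta_{U',U''} = \Theta_{U,V}+\Theta_{U',W} \leq \Theta_{U,V}+\Theta_{V,W}, \]
where the first step uses \cref{pr:W' sub W} (since $U''\subset W$), the second is the equal-dimension case just proved, the equality applies \cref{pr:properties Grassmann}\emph{\ref{it:Theta P(V)}} to $U'=P_V(U)$ and $U''=P(U')$, and the last step is \cref{pr:V' sub V} again.

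The main obstacle is the equal-dimension case in the complex setting, where one needs the Hermitian/Fubini--Study triangle inequality in $\PP(\Lambda^p X)$; once this is granted, the asymmetric extension is purely bookkeeping, the only subtlety being verifying that the two projections preserve dimensions. To prepare for the promised characterization of equality cases, I would also track which step collapses: monotonicity in the first and fourth steps has explicit equality criteria from \cref{pr:W' sub W} and \cref{pr:V' sub V}, while the spherical step (ii) is tight exactly when $\Lambda^p U'$ lies on the Fubini--Study geodesic from $\Lambda^p U$ to $\Lambda^p U''$.
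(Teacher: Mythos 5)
Your argument is correct, and every step is justified by results available before the theorem: the dimension bookkeeping via \cref{pr:partial orthogonality}, the monotonicity steps via \cref{pr:W' sub W} and \cref{pr:V' sub V}, the identification $\Theta_{U,V}=\Theta_{U,P_V(U)}$ via \cref{pr:properties Grassmann}\emph{\ref{it:Theta P(V)}}, and the equal-dimension case via \cref{pr:angle external powers} together with the classical triangle inequality for the Fubini--Study metric on $\PP(\Lambda^p X)$. In fact, your chain is exactly the informal argument the authors sketch in the paragraph immediately preceding the theorem. What the paper then does differently is deliberately \emph{not} rely on the imported elliptic-geometry inequality: it reproves the key step from scratch by writing unit blades $\mu$ for $U$ and $\nu$ for $P_V(U)$ explicitly as $\cos\Theta\cdot\omega + \sin\Theta\cdot\omega^\perp$ relative to $\Lambda^r W\oplus(\Lambda^r W)^\perp$ and expanding $\inner{\mu,\nu}$, which yields $\cos\Theta_{U,P_V(U)}\leq\cos(\Theta_{U,W}-\Theta_{P_V(U),W})$ directly. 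The payoff of that extra work is that equality forces $\omega_\mu=\omega_\nu$ and $\omega_\mu^\perp=\omega_\nu^\perp$, which is precisely what drives the classification of the equality cases in \cref{pr:triangle equality}. Your approach buys a shorter and more conceptual proof of the inequality itself; your closing remark that the spherical step is tight exactly when $\Lambda^p U'$ lies on the geodesic from $\Lambda^p U$ to $\Lambda^p U''$ is true but would still need to be converted into concrete linear-algebraic conditions on $U$, $V$, $W$ (essentially via \cref{pr:linear combination blades}) to recover the paper's \cref{pr:triangle equality}, so the blade computation would reappear there in any case.
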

\begin{proof}
Let  $P=\Proj_W$, 
and assume $\Theta_{U,W}\neq 0$, $\Theta_{U,V}\neq \frac\pi 2$ and $\Theta_{V,W}\neq \frac\pi 2$, which implies $\Theta_{P_V(U),W}\neq\frac\pi 2$.
We can also assume $\Theta_{P_V(U),W}\neq 0$, for if $P_V(U)\subset W$ then $\Theta_{U,W} \leq \Theta_{U,P_V(U)} = \Theta_{U,V}$ by \cref{pr:W' sub W}.
These conditions give $r=\dim U=\dim P_V(U)\leq \dim V\leq \dim W$.
\SELF{e os denominadores abaixo não se anulam.}

Let $\mu$ and $\nu=P_V \mu/\|P_V \mu\|$ be unit blades representing $U$ and $P_V(U)$, respectively, and $\omega_\nu,\omega_\mu\in \Lambda^r W$, $\omega_\nu^\perp, \omega_\mu^\perp\in(\Lambda^r W)^\perp$ be given by
\SELF{$\omega_\mu=\omega_\nu$ if $U$ pperp $W$ é para ajudar na prova da \cref{pr:triangle equality}}
\begin{align*}
\omega_\nu&=\frac{P \nu}{\|P \nu\|},  & 
\omega_\mu&=\begin{cases}
			\frac{P \mu}{\|P \mu\|} \ \text{ if } \Theta_{U,W}\neq \frac\pi 2, \\ 
			\ \ \omega_\nu \ \ \text{ if } \Theta_{U,W}= \frac\pi 2, 
	\end{cases} \\ 
\omega_\nu^\perp &=\frac{\nu-P\nu}{\|\nu-P\nu\|},   &
\omega_\mu^\perp&=\frac{\mu-P\mu}{\|\mu-P\mu\|}.
\end{align*}
With \cref{pr:Theta Pnu} we obtain
\begin{align}
\mu &=\omega_\mu\cdot\cos\Theta_{U,W} + \omega_\mu^\perp\cdot\sin\Theta_{U,W}, \nonumber \\
\nu &=\omega_\nu\cdot\cos\Theta_{P_V(U),W} + \omega_\nu^\perp\cdot\sin\Theta_{P_V(U),W}. \label{eq:nu}
\end{align}
As $\inner{\mu,\nu}> 0$, \cref{pr:Theta inner blades} gives
\SELF{$\inner{\mu,\nu}\in\R \Rightarrow \inner{\mu,\nu} = \cos\Theta_{U,P_V(U)}$.}
\SELF{Se $\omega_\mu=e^{i\varphi}\omega_\nu$ dá uma lei dos cossenos elíptica hermitiana.}
\begin{align}
\cos\Theta_{U,P_V(U)}  
&= \inner{\omega_\mu,\omega_\nu} \cos\Theta_{U,W} \cos\Theta_{P_V(U),W}  \nonumber \\
& + \inner{\omega_\mu^\perp,\omega_\nu^\perp}  \sin\Theta_{U,W} \sin\Theta_{P_V(U),W} \nonumber \\
&\leq \cos(\Theta_{U,W}-\Theta_{P_V(U),W}), \label{eq:inequality}
\end{align}
and therefore $\Theta_{U,V} = \Theta_{U,P_V(U)} \geq \Theta_{U,W}-\Theta_{P_V(U),W} \geq \Theta_{U,W}-\Theta_{V,W}$, by \cref{pr:V' sub V}.
\SELF{As $\Theta_{U,P_V(U)}\in[0,\frac{\pi}{2}]$ and \\ $\Theta_{U,W}-\Theta_{P_V(U),W}\in[-\frac{\pi}{2},\frac{\pi}{2}]$}
\end{proof}

\begin{figure}
\centering
\includegraphics[width=0.9\linewidth]{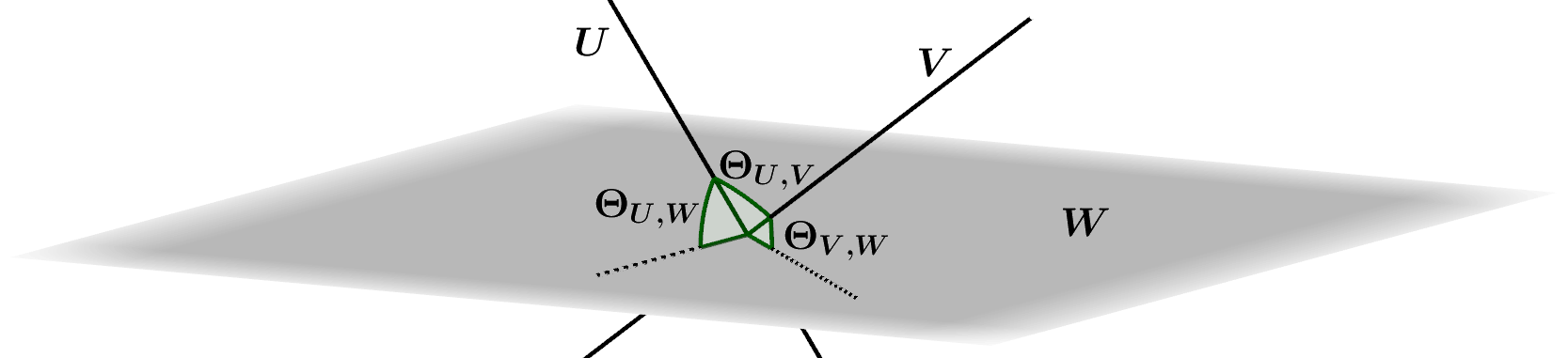}
\caption{Triangle inequality, $\Theta_{U,W} \leq \Theta_{U,V} + \Theta_{V,W}$.}
\label{fig:spherical triangle inequality}
\end{figure}

\Cref{fig:spherical triangle inequality} illustrates the inequality for lines $U$ and $V$ and a plane $W$.
The order of the subspaces in it is important, as moving the lines closer to the plane we can have $\Theta_{U,V} > \Theta_{U,W} + \Theta_{V,W}$, and $\Theta_{U,V} \leq \Theta_{U,W} + \Theta_{W,V}$ only holds due to the asymmetry.
On the other hand, the asymmetry makes it false that $\Theta_{U,W}\geq |\Theta_{U,V} - \Theta_{V,W}|$, preventing us from getting this, in the usual way, from the triangle inequality. Instead, we have:

\begin{corollary}
	$\Theta_{U,W}\geq \max\{ \Theta_{U,V} - \Theta_{W,V} \,,\, \Theta_{V,W} - \Theta_{V,U}\}$.
	\SELF{Aplica a desigualdade triangular a $\Theta_{U,V}$ and $\Theta_{V,W}$, tendo cuidado com a ordem.}
\end{corollary}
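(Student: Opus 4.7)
The plan is to derive both bounds by applying the triangle inequality (Theorem~\ref{pr:spherical triangle inequality}) twice, each time choosing the order of the three subspaces so that $\Theta_{U,W}$ sits on the right-hand side and can be isolated. Because Grassmann angles are asymmetric, the ordering in each application matters, and that is precisely what the corollary is exploiting.

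For the first bound, I would apply the triangle inequality to the triple $(U,W,V)$, obtaining $\Theta_{U,V}\leq \Theta_{U,W}+\Theta_{W,V}$. Rearranging gives $\Theta_{U,W}\geq \Theta_{U,V}-\Theta_{W,V}$. For the second bound, I would apply it to the triple $(V,U,W)$, obtaining $\Theta_{V,W}\leq \Theta_{V,U}+\Theta_{U,W}$, which rearranges to $\Theta_{U,W}\geq \Theta_{V,W}-\Theta_{V,U}$. Taking the maximum of the two lower bounds yields the claim.

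There is essentially no obstacle here — the only subtle point is to make sure that the second argument of each $\Theta$ in the triangle inequality is chosen so that, after rearrangement, the \emph{first} argument of the subtracted angle matches the subspace that plays the role of the ``opposite vertex'' in the triangle. In particular, one must resist the tempting but incorrect step of writing $\Theta_{U,W}\geq |\Theta_{U,V}-\Theta_{V,W}|$, which is precisely the symmetric estimate that fails in the asymmetric setting (as noted in the paragraph just above the statement). Since both bounds hold unconditionally (Theorem~\ref{pr:spherical triangle inequality} has no dimensional hypotheses), no case analysis is needed.
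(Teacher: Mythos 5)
Your proposal is correct and matches the paper's intended argument exactly: the corollary follows by applying \cref{pr:spherical triangle inequality} once with $\Theta_{U,V}$ on the left (via the intermediate subspace $W$) and once with $\Theta_{V,W}$ on the left (via the intermediate subspace $U$), then rearranging. The care you take with the ordering of arguments, and the remark about why the naive symmetric bound fails, are precisely the points the paper emphasizes.
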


\begin{corollary}
		$\Theta_{U,W}\geq |\Theta_{U,V} - \Theta_{V,W}|$, for subspaces of same dimension.
\end{corollary}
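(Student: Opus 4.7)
The plan is to derive this corollary directly from the preceding one together with the symmetry of $\Theta$ on equidimensional subspaces.

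First I would recall that, by the proposition stating $\Theta_{V,W}=\Theta_{W,V}$ whenever $\dim V=\dim W$, the hypothesis $\dim U=\dim V=\dim W$ gives us $\Theta_{U,V}=\Theta_{V,U}$ and $\Theta_{V,W}=\Theta_{W,V}$. Substituting these equalities into the preceding corollary, which asserts $\Theta_{U,W}\geq \max\{\Theta_{U,V}-\Theta_{W,V},\,\Theta_{V,W}-\Theta_{V,U}\}$, one finds that the bound becomes $\max\{\Theta_{U,V}-\Theta_{V,W},\,\Theta_{V,W}-\Theta_{U,V}\}$, which is exactly $|\Theta_{U,V}-\Theta_{V,W}|$.

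There is essentially no obstacle here, since the result is a one-line specialization of the previous corollary under the extra hypothesis. The only thing worth being careful about is that the preceding corollary had to be phrased in an asymmetric way precisely because, for subspaces of different dimensions, one cannot reverse the order of the arguments in $\Theta$; once equality of dimensions is assumed, both orderings coincide and the usual reverse triangle inequality is recovered.
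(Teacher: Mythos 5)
Your proof is correct and matches the paper's intended derivation: the corollary is an immediate specialization of the preceding one, using the symmetry $\Theta_{V,W}=\Theta_{W,V}$ for equidimensional subspaces to turn the asymmetric maximum into an absolute value. Nothing further is needed.
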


To get conditions for equality in \cref{pr:spherical triangle inequality}, we need some lemmas.
The first one is an easy result of Grassmann algebra. Note that $\nu$ and $\omega$ need not be blades nor homogeneous multivectors.

\begin{lemma}\label{pr:wedge disjoint multivectors}
	Let $V,W\subset X$ be disjoint\footnote{i.e. with intersection $\{0\}$.}\ subspaces, $\nu\in\Lambda V$ and $\omega\in\Lambda W$. Then $\nu\wedge\omega = 0\ \Leftrightarrow\ \nu=0$ or $\omega=0$.
	\SELF{$(v_i),(w_j)$ bases $V,W$ \\ $\Rightarrow$ $(v_i,w_j)$ basis $V\oplus W$. \\[3pt]
		$(\nu_I)_{I\in\cup \I_i^p}$ basis $\Lambda V$, \\ $(\omega_J)_{J\in\cup \I_j^q}$ basis $\Lambda W$, \\ $(\nu_I\wedge\omega_J)$ basis $\Lambda(V\oplus W)$. \\[3pt]
		$\nu=\sum a_I\nu_I$, $\omega=\sum b_J\omega_J$, \\$\nu\wedge\omega=\sum a_I b_J \nu_I\wedge\omega_J$. \\[3pt]
		$\nu\wedge\omega=0$  $\Rightarrow$ $a_Ib_J=0 \ \forall I,J$ \\ 
		Some $a_I\neq 0$ $\Rightarrow$ $b_J=0\ \forall J$, e vice-versa.} 
\end{lemma}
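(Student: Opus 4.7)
The plan is to reduce the statement to linear independence of wedge products of basis elements from $V$ and $W$ separately. The reverse implication is immediate, so only the forward direction requires argument.

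First I would fix bases $(v_1,\ldots,v_p)$ of $V$ and $(w_1,\ldots,w_q)$ of $W$. Because $V\cap W=\{0\}$, the concatenation $(v_1,\ldots,v_p,w_1,\ldots,w_q)$ is a basis of $V\oplus W$. The coordinate blades $\{\nu_I\}$ with $I\in\bigcup_{i=0}^p \I_i^p$ form a basis of $\Lambda V$, the analogous $\{\omega_J\}$ form a basis of $\Lambda W$, and the products $\{\nu_I\wedge\omega_J\}$ are, up to sign, the coordinate blades of the combined basis, hence a basis of $\Lambda(V\oplus W)$.

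Next I would expand $\nu=\sum_I a_I\,\nu_I$ and $\omega=\sum_J b_J\,\omega_J$, and use bilinearity of $\wedge$ to write
\[
\nu\wedge\omega \;=\; \sum_{I,J} a_I\, b_J\; \nu_I\wedge\omega_J.
\]
If $\nu\wedge\omega=0$, linear independence of the $\nu_I\wedge\omega_J$ forces $a_I b_J=0$ for all $I,J$. If some $a_{I_0}\neq 0$, then $b_J=0$ for every $J$, giving $\omega=0$; symmetrically, if some $b_{J_0}\neq 0$ then $\nu=0$. Hence $\nu=0$ or $\omega=0$.

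There is no serious obstacle here: the only subtle point is recognising that disjointness of $V$ and $W$ is exactly what is needed to concatenate their bases into a basis of $V\oplus W$, so that the wedges $\nu_I\wedge\omega_J$ remain linearly independent in $\Lambda(V\oplus W)$. Everything else is bilinearity of the exterior product and a standard basis argument.
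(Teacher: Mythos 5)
Your proposal is correct and follows essentially the same route as the paper's own (sketched) argument: concatenate bases of $V$ and $W$ using disjointness, observe that the products $\nu_I\wedge\omega_J$ of coordinate blades form a basis of $\Lambda(V\oplus W)$, expand by bilinearity, and conclude $a_I b_J=0$ for all $I,J$. No issues.
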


\begin{lemma}\label{pr:linear combination blades}
Let $\mu,\nu,\omega\in\Lambda^r X$ be blades representing distinct subspaces $U, V,W\subset X$, respectively, and $A=U\cap V\cap W$. 
If $\nu=a\mu+b\omega$ for nonzero $a,b\in\C$ then $\dim A=r-1$ and there are $u\in U$, $v\in V$, $w \in W$ and an unit blade $\xi\in\Lambda^{r-1} A$ such that $\mu=u\wedge\xi$, $\nu=v\wedge\xi$, $\omega=w\wedge\xi$ and $v=au+bw$. 
Moreover, $u$, $v$ and $w$ can be chosen to be in any given complement of $A$ in $X$. If they are in $A^\perp$ then $\inner{u,w}=\inner{\mu,\omega}$.
\end{lemma}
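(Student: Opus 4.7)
The plan is to execute three steps: first establish $\dim A = r-1$ via a Plücker-type obstruction, next factor all three blades through a common unit $(r-1)$-blade $\xi$ orienting $A$, and finally adjust the cofactors into any prescribed complement.

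For step one, I would set $s = \dim(U\cap W)<r$, choose an orthonormal basis $(e_1,\ldots,e_s)$ of $U\cap W$ extended to ONBs $(e_1,\ldots,e_r)$ of $U$ and $(e_1,\ldots,e_s,f_{s+1},\ldots,f_r)$ of $W$, and absorb unit scalars into $a,b$ so that $\mu = e_1\wedge\cdots\wedge e_r$ and $\omega = e_1\wedge\cdots\wedge e_s\wedge f_{s+1}\wedge\cdots\wedge f_r$. Writing $\xi_0=e_1\wedge\cdots\wedge e_s$, $E=\Span(e_{s+1},\ldots,e_r)$, $F=\Span(f_{s+1},\ldots,f_r)$ and $k=r-s$, one has $\nu=\xi_0\wedge\eta$ with $\eta = a\,e_{s+1}\wedge\cdots\wedge e_r + b\,f_{s+1}\wedge\cdots\wedge f_r\in\Lambda^k(E+F)$. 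Because $\nu$ is a nonzero blade with $\Span(\xi_0)\subset V\subset\Span(\xi_0)\oplus(E+F)$, the modular law forces $V=\Span(\xi_0)\oplus V'$ with $V':=V\cap(E+F)$ a $k$-subspace; any $k$-blade $\eta'$ representing $V'$ satisfies $\nu=\xi_0\wedge\eta'$, and \cref{pr:wedge disjoint multivectors} applied to $\xi_0\wedge(\eta-\eta')=0$ (with $\Span(\xi_0)\cap(E+F)=\{0\}$) forces $\eta=\eta'$ to be a blade. A direct computation then yields $\eta\wedge\eta = 2ab\,e_{s+1}\wedge\cdots\wedge e_r\wedge f_{s+1}\wedge\cdots\wedge f_r$, the nonzero top form of $E+F$ whenever $k\geq 2$, contradicting the blade identity $\eta\wedge\eta=0$. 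Hence $k=1$, $s=r-1$, and $\nu = e_1\wedge\cdots\wedge e_{r-1}\wedge(ae_r+bf_r)$ exhibits $U\cap W\subset V$, so $A=U\cap W$ and $\dim A=r-1$.

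For step two, I would pick any unit blade $\xi\in\Lambda^{r-1}A$; since $A$ has codimension $1$ in each of $U,V,W$, $\xi$ divides each $r$-blade, giving $u_0\in U$, $v_0\in V$, $w_0\in W$ with $\mu=u_0\wedge\xi$, $\nu=v_0\wedge\xi$, $\omega=w_0\wedge\xi$. The hypothesis then yields $(v_0-au_0-bw_0)\wedge\xi=0$, so $v_0-au_0-bw_0\in A$. For a chosen complement $C$ of $A$ in $X$, decompose $u_0=\alpha_u+u$, $v_0=\alpha_v+v$, $w_0=\alpha_w+w$ with $\alpha_\bullet\in A$ and $u,v,w\in C$; since $\alpha_\bullet\wedge\xi=0$, the cofactors still satisfy $\mu=u\wedge\xi$, $\nu=v\wedge\xi$, $\omega=w\wedge\xi$, and $v-au-bw\in A\cap C=\{0\}$ delivers $v=au+bw$. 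When $C=A^\perp$, a block-diagonal Gram computation for $\inner{u\wedge\xi,w\wedge\xi}$ (with blocks $[\inner{u,w}]$ and $\|\xi\|^2=1$) yields $\inner{u,w}=\inner{\mu,\omega}$.

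The main obstacle is step one: one must simultaneously argue that $\eta$ must be a blade (via the modular-law identification of $V\cap(E+F)$, together with \cref{pr:wedge disjoint multivectors}) and then invoke the Plücker-type obstruction $\eta\wedge\eta\neq 0$ to force $k=1$. Once that is in hand, steps two and three are routine bookkeeping with $A$ and its complements.
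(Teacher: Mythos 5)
Your steps two and three are sound and essentially reproduce the paper's bookkeeping, and your identification of $\eta$ as a blade representing $V\cap(E+F)$ (via the modular law and \cref{pr:wedge disjoint multivectors}) is correct. The genuine gap is in the Pl\"ucker-type obstruction that is supposed to force $k=1$. Writing $\alpha = e_{s+1}\wedge\cdots\wedge e_r$ and $\beta = f_{s+1}\wedge\cdots\wedge f_r$, you have
\[
\eta\wedge\eta \;=\; ab\left(\alpha\wedge\beta + \beta\wedge\alpha\right) \;=\; ab\left(1+(-1)^{k^2}\right)\alpha\wedge\beta ,
\]
which equals $2ab\,\alpha\wedge\beta$ only when $k$ is even. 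For odd $k$ the cross terms cancel and $\eta\wedge\eta=0$ identically --- as it does for \emph{every} element of odd grade --- so the relation $\eta\wedge\eta=0$ carries no information and your contradiction evaporates for $k=3,5,\dots$. The statement is still true in those cases, but your argument does not establish it.

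A repair that stays close to your setup: for $x\in V'=V\cap(E+F)$ write $x=y+z$ with $y\in E$, $z\in F$; then $x\wedge\eta = a\,z\wedge\alpha + b\,y\wedge\beta$, and for $k\geq 2$ the two summands lie in the distinct components $\Lambda^k E\wedge\Lambda^1 F$ and $\Lambda^1 E\wedge\Lambda^k F$ of $\Lambda^{k+1}(E\oplus F)$, so $x\wedge\eta=0$ forces $z\wedge\alpha=y\wedge\beta=0$; since $E\cap F=\{0\}$ this gives $y=z=0$, hence $V'=\{0\}$, contradicting $\dim V'=k\geq 2$. This is in substance what the paper does: it takes nonzero $u'\in U'$ and $w'\in W'$, notes $\nu'\wedge u'\wedge w'=0$ while $\nu'\wedge u'\neq 0$, concludes $w'\in V'\oplus\Span(u')$ for every such $u'$, and from the arbitrariness of $u'$ forces $\dim U'=1$. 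Either of these replaces your $\eta\wedge\eta$ step; the remainder of your proposal then goes through.
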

\begin{proof}
If $x\in U\cap W$ then $x\wedge \nu=x\wedge(a\mu+b\omega)=0$, so $x\in V$. As $\mu$ and $\omega$ are also linear combinations of the other blades, 
$A=U\cap W=U\cap V=V\cap W$. 
Let $s=\dim A<r$, and $X'$ be a complement of $A$ in $X$. Then $U'=U\cap X'$, $V'=V\cap X'$ and $W'=W\cap X'$ are disjoint $(r-s)$-dimensional complements of $A$ in $U,V$ and $W$. 
\SELF{Let $U''=$complem $A$ in $U$. $u\in U \Rightarrow u=a_1+u''$. Mas $u''=a_2+x'$, so $u=a_3+x'$. $x'=u''-a_2\in U \Rightarrow x'\in U'$.}

Given an unit $\xi\in\Lambda^s  A$, we get $\mu=\mu'\wedge\xi$, $\nu=\nu'\wedge\xi$ and $\omega=\omega'\wedge\xi$ for blades $\mu'\in\Lambda^{r-s}U'$, $\nu'\in\Lambda^{r-s}V'$ and $\omega'\in\Lambda^{r-s}W'$.
As $\nu'-a\mu'-b\omega'\in\Lambda X'$ and $(\nu'-a\mu'-b\omega')\wedge\xi=0$,
\SELF{$\nu=a\mu+b\omega$}
\cref{pr:wedge disjoint multivectors} gives $\nu'=a\mu'+b\omega'$.\SELF{Precisa do Lemma pois ainda não sabe se \\ $\nu'-a\mu'-b\omega'$ é blade.}

For any nonzero vectors $u'\in U'$ and $w'\in W'$ we have $\nu'\wedge u'\wedge w' = (a\mu'+b\omega')\wedge u'\wedge w' = 0$.
Since $U'$ and $V'$ are disjoint, $\nu'\wedge u'\neq 0$, thus $w'\in V'\oplus\Span(u')$. 
As $w'\not\in V'$ and $u'$ was arbitrary, this implies 
$\dim U'=1$, 
\SELF{If $w=v_1+c_1 u_1=v_2+c_2u_2$ for some $u_1,u_2\in U'$, $v_1,v_2\in V'$ and $c_1,c_2\neq 0$, then $c_1u_1-c_2u_2=v_2-v_1\in U'\cap V'$, so $c_1u_1=c_2u_2$.}\
so $s=r-1$.
Thus $\mu',\nu'$ and $\omega'$ are vectors $u\in U', v\in V'$ and $w\in W'$, respectively, with $v=au+bw$.

If $X'=A^\perp$, we have $\inner{\mu,\omega} = \inner{u\wedge\xi,w\wedge\xi} = \inner{u,w}\cdot\|\xi\|^2$.
\end{proof}

\begin{proposition}\label{pr:triangle equality}
Given subspaces $U,V,W\subset X$, 
\begin{equation}\label{eq:equality}
\Theta_{U,W} = \Theta_{U,V} + \Theta_{V,W}
\end{equation}
if, and only if, one of the following conditions is satisfied:
\begin{enumerate}[i)]
\item $U\subset V$ and either $U\pperp W$ or $U^\perp\cap V\subset W$; \label{it:UV}
\item $V\subset W$ and either $U\pperp W$ or $P_W(U)\subset V$; \label{it:VW} 
\item There are nonzero $u, w\in X$ with $\inner{u,w}\geq 0$, $v=au+bw$ with $a,b>0$,  and subspaces $A, B, C\subset X$ orthogonal to $\Span(u,w)$ and to each other, such that \label{it:UVW ABC}
\begin{align*}
U &= \Span(u)\oplus A, \\
V &= \Span(v)\oplus A\oplus B, \\
W&= \Span(w)\oplus A\oplus B\oplus C.
\end{align*}
\end{enumerate}
Moreover, in this last case 
$\theta_{u,v}=\Theta_{U,V}$, $\theta_{u,w}=\Theta_{U,W}$ and $\theta_{v,w}=\Theta_{V,W}$.
\end{proposition}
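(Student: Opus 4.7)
My plan is to prove the two implications separately, verifying sufficiency case-by-case and then obtaining necessity by tracing back through the proof of \cref{pr:spherical triangle inequality}.

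Sufficiency of (i) and (ii) reduces immediately to the earlier monotonicity results. In (i), $U\subset V$ gives $\Theta_{U,V}=0$ by \cref{pr:properties Grassmann}\emph{\ref{it:Theta zero}}, and applying \cref{pr:V' sub V} to $V'=U$ yields $\Theta_{V,W}=\Theta_{U,W}$ precisely under the stated subconditions. Case (ii) is dual: $V\subset W$ kills $\Theta_{V,W}$, and \cref{pr:W' sub W} applied to $W'=V$ hands us $\Theta_{U,V}=\Theta_{U,W}$ under the stated subconditions. For (iii), I would use \cref{pr:properties Grassmann}\emph{\ref{it:orth complem inter}} to peel off the pairwise intersections $U\cap V=A$, $U\cap W=A$, and $V\cap W=A\oplus B$, reducing each Grassmann angle to a line-versus-subspace angle in which the orthogonalities of $u,w$ to $A,B,C$ and the positivity of $a,b$ force each relevant projection to be a positive multiple of $v$ or $w$. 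This identifies $\Theta_{U,V}=\theta_{u,v}$, $\Theta_{V,W}=\theta_{v,w}$, and $\Theta_{U,W}=\theta_{u,w}$, establishing the moreover claim. Planar geometry in $\Span(u,w)$ — where $v=au+bw$ with $a,b>0$ and $\inner{u,w}\geq 0$ puts $v$ in the closed angular sector from $u$ to $w$ — then gives $\theta_{u,v}+\theta_{v,w}=\theta_{u,w}$, completing sufficiency.

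For necessity I would first clear away degenerate values. If $\Theta_{U,W}=0$, \eqref{eq:equality} forces $U\subset V\subset W$, placing us in both (i) and (ii). If $\Theta_{U,V}=\frac{\pi}{2}$, equality pins $\Theta_{V,W}=0$ and $\Theta_{U,W}=\frac{\pi}{2}$, i.e.\ $V\subset W$ with $U\pperp W$: case (ii); symmetrically, $\Theta_{V,W}=\frac{\pi}{2}$ gives case (i) with $U\pperp W$. In the remaining regime where all three angles lie strictly between $0$ and $\frac{\pi}{2}$, I would retrace the chain of inequalities in the proof of \cref{pr:spherical triangle inequality}. Equality there demands simultaneously (a) equality in \eqref{eq:inequality}, which by the Cauchy--Schwarz bounds $|\inner{\omega_\mu,\omega_\nu}|\leq 1$ and $|\inner{\omega_\mu^\perp,\omega_\nu^\perp}|\leq 1$ forces (up to compatible phases, resolved by positivity of the real part of the sum) $\omega_\mu=\omega_\nu$ and $\omega_\mu^\perp=\omega_\nu^\perp$; and (b) equality in the final application of \cref{pr:V' sub V} to $P_V(U)\subset V$, yielding $P_V(U)^\perp\cap V\subset W$.

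From (a), both $\mu$ and $\nu$ lie in the plane $\Span(\omega_\mu,\omega_\mu^\perp)\subset\Lambda^r X$, so $\nu=a\mu+b\omega_\mu$; a short trigonometric calculation gives $a=\sin\Theta_{V,W}/\sin\Theta_{U,W}>0$ and $b=\sin\Theta_{U,V}/\sin\Theta_{U,W}>0$. Since $\mu$, $\nu$, $\omega_\mu$ represent distinct subspaces $U$, $P_V(U)$, $P_W(U)=P_W(P_V(U))$ in the present regime (otherwise we are back in a degenerate case), \cref{pr:linear combination blades} supplies a codimension-one $A=U\cap P_V(U)\cap P_W(U)$ and vectors $u,v,w\in A^\perp$ with $v=au+bw$ and $\inner{u,w}\geq 0$ (the last inequality from the final clause of the lemma, since $\inner{\mu,\omega_\mu}\geq 0$). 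Taking $B$ as the orthogonal complement of $\Span(v)\oplus A$ inside $V$ and $C$ as the orthogonal complement of $V+U$ inside $W$, and using (b) to check that these pieces are orthogonal to $\Span(u,w)$ and to each other, assembles the structure of (iii). The main obstacle will be this last verification: rigorously showing that condition (b) together with the blade relation from \cref{pr:linear combination blades} really does force the clean orthogonal decomposition with $B$ and $C$ as described, particularly in the complex case where equality in \eqref{eq:inequality} only pins down the blade alignment up to unit phases.
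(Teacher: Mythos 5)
Your route is essentially the paper's: sufficiency of (i) and (ii) via \cref{pr:V' sub V} and \cref{pr:W' sub W}, sufficiency of (iii) by direct computation in the plane $\Span(u,w)$, and necessity by forcing equality through the chain in the proof of \cref{pr:spherical triangle inequality}, extracting $\nu=a\mu+b\,\omega_\mu$ with $a,b>0$, and feeding this to \cref{pr:linear combination blades} to produce $A$, then taking $B=P_V(U)^\perp\cap V$ (which lies in $W$ by the equality condition in \cref{pr:V' sub V}) and $C$ the orthogonal complement of the rest inside $W$. Your coefficients $a=\sin\Theta_{V,W}/\sin\Theta_{U,W}$ and $b=\sin\Theta_{U,V}/\sin\Theta_{U,W}$ agree with the paper's, and the orthogonality bookkeeping you flag as the main obstacle is indeed where the paper spends its effort, but it goes through with the ingredients you already have.

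The one genuine hole is in your case division for necessity. After disposing of $\Theta_{U,W}=0$, $\Theta_{U,V}=\frac{\pi}{2}$ and $\Theta_{V,W}=\frac{\pi}{2}$, you restrict to the regime where \emph{all three} angles lie strictly between $0$ and $\frac{\pi}{2}$. This omits the configuration $\Theta_{U,W}=\frac{\pi}{2}$ with $\Theta_{U,V},\Theta_{V,W}\in(0,\frac{\pi}{2})$ summing to $\frac{\pi}{2}$ (e.g.\ three lines in a plane with $u\perp w$ and $v$ strictly between them), which is not degenerate: it lands in case (iii) with $\inner{u,w}=0$. In exactly this situation $P\mu=0$, so $\omega_\mu$ cannot be defined as $P\mu/\|P\mu\|$, and your identification of $\omega_\mu$ with a blade representing $P_W(U)$ fails. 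The paper anticipates this in the proof of \cref{pr:spherical triangle inequality} by \emph{defining} $\omega_\mu:=\omega_\nu$ when $\Theta_{U,W}=\frac{\pi}{2}$; with that convention the equality analysis of \eqref{eq:inequality} still yields $\nu=a\mu+b\,\omega_\mu$, \cref{pr:linear combination blades} is applied with $K$ the subspace represented by $\omega_\mu$ (which equals $P_W(U)$ only when $\Theta_{U,W}\neq\frac{\pi}{2}$), and the orthogonality of $C$ to $u$ is then argued separately: either $K=P_W(U)$, or $u\perp W$ because $A\subset W$. You need the same device; as written, your argument simply does not reach this case.
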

\begin{proof}
(\!\emph{\ref{it:UV}}) and (\!\emph{\ref{it:VW}}) correspond, by propositions \ref{pr:V' sub V} and \ref{pr:W' sub W}, to when $\Theta_{U,V}=0$ or $\Theta_{V,W}=0$, and the other two angles are equal.

If \eqref{eq:equality} holds, but (\!\emph{\ref{it:UV}}) and (\!\emph{\ref{it:VW}}) do not, then $\Theta_{U,W}, \Theta_{U,V}, \Theta_{V,W}\neq 0$ and $\Theta_{U,V}, \Theta_{V,W}\neq \frac{\pi}{2}$, so $\Theta_{P_V(U),W}\neq \frac{\pi}{2}$.
As $\Theta_{U,W} \leq \Theta_{U,P_V(U)} + \Theta_{P_V(U),W} \leq \Theta_{U,V} + \Theta_{V,W}$, we get 
\begin{equation}\label{eq:triangle equality PV(U)}
\Theta_{U,W} = \Theta_{U,P_V(U)} + \Theta_{P_V(U),W},
\end{equation} 
so that $\Theta_{U,W}> \Theta_{P_V(U),W}$, since $\Theta_{U,P_V(U)}=\Theta_{U,V}> 0$.
We also get $\Theta_{P_V(U),W} = \Theta_{V,W}$, which, by \cref{pr:V' sub V} and since $P_V(U)\not\pperp W$, gives  $P_V(U)^\perp\cap V \subset W$.
As $V\not\subset W$, this implies $\Theta_{P_V(U),W} \neq 0$.
\SELF{$P_V(U)\not\subset W$}

Let $\mu,\nu,\omega_\mu,\omega_\nu,\omega_\mu^\perp,\omega_\nu^\perp$ be the unit $r$-blades in the proof of \cref{pr:spherical triangle inequality}.
As \eqref{eq:triangle equality PV(U)} implies equality in \eqref{eq:inequality}, $\Theta_{U,W}\neq 0$, and  $\Theta_{P_V(U),W}\neq 0$ or $\frac{\pi}{2}$, we get $\omega_\mu=\omega_\nu$ (by definition, if $\Theta_{U,W}=\frac{\pi}{2}$) and $\omega_\mu^\perp = \omega_\nu^\perp$.
So \eqref{eq:nu} becomes 
\[ \nu = \omega_\mu\cdot\cos\Theta_{P_V(U),W} + \frac{\mu-P\mu}{\|\mu-P\mu\|}\cdot\sin\Theta_{P_V(U),W}. \]
As $P\mu=\omega_\mu\cdot\cos\Theta_{U,W}$ and $\|\mu-P\mu\|=\sin\Theta_{U,W}$,
\SELF{$\inner{\mu-P\mu,\mu-P\mu}$ \\ $= 1+\|P\mu\|^2-2Re\inner{\mu,P\mu}$ \\ $=1-\|P\mu\|^2 = 1-\cos^2\theta$}
we get
\[ \nu = \omega_\mu \cdot \frac{\sin(\Theta_{U,W}-\Theta_{P_V(U),W})}{\sin \Theta_{U,W}} +\mu \cdot \frac{\sin\Theta_{P_V(U),W}}{\sin \Theta_{U,W}}, \]
so that $\nu=a\mu+b\,\omega_\mu$ with $a,b> 0$.

Let $A=U\cap P_V(U)\cap K$, where $K\subset W$ is represented by $\omega_\mu$.
As $U\not\subset V$, $U\not\subset W$ and $P_V(U)\not\subset W$, the subspaces $U$, $P_V(U)$ and $K$ are distinct. 
\Cref{pr:linear combination blades}\SELF{Apllied to $\mu$, $\nu$ and $\omega_\mu$}\  gives nonzero vectors $u\in U\cap  A^\perp$, $v\in P_V(U)\cap  A^\perp$ and $w \in K\cap  A^\perp$ such that $v=au+bw$, $\inner{u,w} = \inner{\mu,\omega_\mu} \geq 0$, 
$U=\Span(u)\oplus A$, $P_V(U)=\Span(v)\oplus A$ and $K=\Span(w)\oplus A$.

Let $B=P_V(U)^\perp\cap V\subset W$.
\SELF{$\subset W$ proven above}
Then $V=P_V(U)\oplus B$, with $B$ orthogonal to  $A$, $v$, $u$, \SELF{$P_V(U)\perp B\subset V\Rightarrow U\perp B$}
$w$ (as $w\in\Span(u,v)$) and $K$.

Let $C=(K\oplus B)^\perp\cap W$. Then $W=K\oplus B\oplus C$, with $C$ orthogonal to $B$, $K$, $w$ and $A$. 
If $\Theta_{U,W}\neq \frac\pi 2$ then $K=P(U)$,
\SELF{as $K=Ann(P\mu)$}
and if $\Theta_{U,W}= \frac\pi 2$ then $u\perp W$, as $A\subset W$.
In either case, $C$ is orthogonal to $u$.
\SELF{$P(U)\perp C \subset W \Rightarrow C \perp U$} 

So (\!\emph{\ref{it:UVW ABC}}) is satisfied. Under its conditions, it is immediate, in the real case, that $\Theta_{U,W}=\theta_{u,w}$, $\Theta_{U,V}=\theta_{u,v}$, $\Theta_{V,W}=\theta_{v,w}$ and $\theta_{u,w}=\theta_{u,v}+\theta_{v,w}$. In the complex case, we must use $\inner{u,w}\geq 0$ to get $\Theta_{U,W}=\gamma_{u,w}=\theta_{u,w}$, and also $a,b>0$ for the other angles and \eqref{eq:equality}.
\end{proof}

When dimensions are equal the conditions become simpler.

\begin{corollary}\label{pr:triangle equality same dim}
For subspaces $U,V,W\subset X$ of dimension $p$,
\eqref{eq:equality} holds if, and only if, $V=U$ or $W$, or $\dim(U\cap V\cap W)=p-1$ and there are nonzero $u\in U$, $v\in V$, $w \in W$ in an isotropic\footnote{A real subspace $R\subset X$ is \emph{isotropic} if $\inner{u,w}\in \R$ for all $u,w\in R$.}\
real plane orthogonal to $U\cap V\cap W$, with $\R v$ in the smaller pair of angles formed by $\R u$ and $\R w$.
\end{corollary}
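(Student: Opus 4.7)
The plan is to specialize \cref{pr:triangle equality} to equal dimensions and work out which of its three cases survive. When $\dim U = \dim V = \dim W = p$, condition (\emph{i}) forces $V = U$ (proper containment being impossible), and (\emph{ii}) forces $V = W$; all the substantive work lies in analysing (\emph{iii}).

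Substituting the equal-dimension hypothesis into
\begin{align*}
\dim U &= 1 + \dim A, \\
\dim V &= 1 + \dim A + \dim B, \\
\dim W &= 1 + \dim A + \dim B + \dim C
\end{align*}
immediately forces $B = C = \{0\}$ and $\dim A = p - 1$. So $U = \Span(u) \oplus A$, $V = \Span(v) \oplus A$, $W = \Span(w) \oplus A$, with $u, v, w \in A^\perp$ and $v = au + bw$ for some $a, b > 0$. Assuming $V \neq U$ and $V \neq W$, the three subspaces are distinct and each contains $A$ with codimension one, so $U \cap V \cap W = A$ has dimension exactly $p - 1$. I would next verify that $u$ and $w$ are $\R$-linearly independent: if $w \in \C u$ (or $\R u$ in the real case), then $v = au + bw$ also lies in that line over $A$, forcing $V = U$, a contradiction. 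Hence $\Span_\R(u,w)$ is a genuine real $2$-plane orthogonal to $A$, and its isotropy follows from $\|u\|^2$, $\|w\|^2$, and $\inner{u,w}$ all being real (the last by the ambient hypothesis $\inner{u,w} \geq 0$ in the complex case). Finally, $\inner{u,w} \geq 0$ gives $\theta_{u,w} \leq \tfrac{\pi}{2}$, and $v = au + bw$ with $a, b > 0$ places $v$ in the convex cone of $u$ and $w$, so $\R v$ lies in the pair of angles $\leq \tfrac{\pi}{2}$ formed by $\R u$ and $\R w$.

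For the converse, $V = U$ or $V = W$ yield \eqref{eq:equality} trivially via \cref{pr:properties Grassmann}\emph{\ref{it:Theta zero}}. In the remaining case, with $A := U \cap V \cap W$ of dimension $p - 1$, the hypothesis supplies $u, v, w$ in an isotropic real plane orthogonal to $A$ with $\R v$ in the smaller angular sector. Choosing the sign of $w$ so that $\inner{u,w} \geq 0$ (possible by isotropy, since $\inner{u,w} \in \R$), one can then write $v = \alpha u + \beta w$ with $\alpha, \beta \in \R$; flipping the sign of $v$ if necessary (which preserves $V$) makes $\alpha, \beta > 0$. This is precisely condition (\emph{iii}) of \cref{pr:triangle equality} with $B = C = \{0\}$, so \eqref{eq:equality} follows. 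The main obstacle I anticipate is the complex-case bookkeeping --- in particular, ensuring that the isotropy of the plane together with the sign condition $\inner{u,w} \geq 0$ really rules out an $i$-twist between $u$ and $w$ and delivers exactly the positivity constraints the ambient proposition requires.
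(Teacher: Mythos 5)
Your proposal is correct and follows the route the paper intends: the corollary is stated without a separate proof precisely because it is the specialization of \cref{pr:triangle equality} to equal dimensions, and your dimension count forcing $B=C=\{0\}$, $\dim A=p-1$, together with the sign/isotropy bookkeeping for $\inner{u,w}\geq 0$ and $v=au+bw$ with $a,b>0$, is exactly that specialization. The only point worth a half-sentence more care is the degenerate subcase in the converse where $\R v$ coincides with $\R u$ or $\R w$ (so one coefficient vanishes), which simply collapses to $V=U$ or $V=W$ and is already covered by the first branch.
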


Note that, in the complex case, such real plane must be orthogonal with respect to the Hermitian product, not the underlying real product.\SELF{The real plane lies in a 2-dim complex subspace orthogonal to $U\cap V\cap W$}

\begin{example}
If $\alpha, \beta$ and $\gamma$ are the dihedral angles between the faces of a nondegenerate
\SELF{i.e. the 3 faces planes intercept only at the vertex.} 
trihedral angle (\cref{fig:triangle inequality2}) then $\min\{\alpha,180^\circ-\alpha\}< \beta+\gamma$.
This can also be obtained from the fact that the sum of the angles of a nondegenerate spherical triangle is strictly greater than $180^\circ$.
\SELF{$\alpha+\beta+\gamma>180$. Se $\alpha\leq 90$, $\beta+\gamma>90\geq\alpha=\min$. Se $\alpha>90$, $\min=180-\alpha<\beta+\gamma$}

\begin{figure}
\centering
\includegraphics[width=0.5\linewidth]{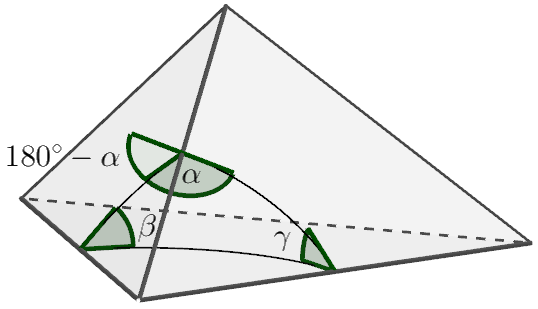}
\caption{$\min\{\alpha,180^\circ-\alpha\}< \beta+\gamma$.}
\label{fig:triangle inequality2}
\end{figure}
\end{example}

\subsection{Fubini-Study metric on Grassmannians}\label{sc:Fubini-Study distance}

The triangle inequality suggests $\Theta_{V,W}$ may give a metric. This is indeed the case in each Grassmannian $G_p(X)$, where, by \cref{pr:angle external powers}, it gives the Fubini-Study metric $d_{FS}$ (the real case appears in \cite{Edelman1999}).
\SELF{[p.337]}

\begin{theorem}\label{pr:Theta Fubini equal dim}
$d_{FS}(V,W)=\Theta_{V,W}$ for any $V,W\in G_p(X)$.
\end{theorem}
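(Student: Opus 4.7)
The plan is to reduce the statement directly to the Plücker embedding identification combined with \cref{pr:angle external powers} (or equivalently \cref{pr:Theta inner blades}). Since $d_{FS}$ on a projective space is, by the definition recalled in the excerpt, simply the angle between the lines representing the two points, most of the work has already been done.

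First I would pick nonzero $p$-blades $\nu, \omega \in \Lambda^p X$ representing $V$ and $W$, so that under the Plücker embedding $G_p(X) \hookrightarrow \PP(\Lambda^p X)$ the subspaces $V$ and $W$ correspond to the lines $\Lambda^p V = \Span(\nu)$ and $\Lambda^p W = \Span(\omega)$. By the defining formula of $d_{FS}$ between projective points,
\[ d_{FS}(V,W) \;=\; \arccos \frac{|\inner{\nu,\omega}|}{\|\nu\|\,\|\omega\|}. \]
Next, I would apply \cref{pr:Theta inner blades}, which gives $|\inner{\nu,\omega}| = \|\nu\|\,\|\omega\|\cos\Theta_{V,W}$ for same-grade blades, to conclude $\cos d_{FS}(V,W) = \cos\Theta_{V,W}$. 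Since both values lie in $[0,\frac{\pi}{2}]$ (the former because $d_{FS}$ on a projective space takes values in that range, the latter by definition of the Grassmann angle), equality of cosines gives equality of angles.

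Equivalently, and perhaps more conceptually, one can invoke \cref{pr:angle external powers} directly: it asserts $\Theta_{V,W} = \theta_{\Lambda^p V, \Lambda^p W}$, and since $\Lambda^p V$ and $\Lambda^p W$ are lines in $\Lambda^p X$, the right-hand side is by definition $d_{FS}$ between the corresponding points under the Plücker embedding.

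There is essentially no obstacle here beyond unwinding definitions; the substantive content has already been packaged into \cref{pr:angle external powers} and \cref{pr:Theta inner blades}. The only minor point worth noting is the distinction between the real and complex cases, which is automatically handled because the definition of $d_{FS}$ on $\PP(\Lambda^p X)$ uses $|\inner{\cdot,\cdot}|$ uniformly in both cases, and both sides of the claimed identity land in $[0,\frac{\pi}{2}]$.
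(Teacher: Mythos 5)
Your proof is correct and follows essentially the same route as the paper, which simply observes that the identity is immediate from \cref{pr:angle external powers} together with the definition of $d_{FS}$ as the angle between lines under the \Plucker\ embedding. The only content beyond unwinding definitions is indeed packaged in \cref{pr:angle external powers} (equivalently \cref{pr:Theta inner blades}), exactly as you note.
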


For $U,V,W\in G_p(X)$, \eqref{eq:equality} means that, in the \Plucker\  embedding, $V$ lies in a minimal geodesic
\SELF{Riemannian Geometry, do Carmo p.73 corol. 3.9 : shortest path (if exists) is always geodesic} 
of $\PP(\Lambda^p X)$ joining $U$ and $W$. 
So \cref{pr:triangle equality same dim} provides information about geodesics  connecting points  of $G_p(X)$.

\begin{proposition}\label{pr:geodesic Plucker}
Given distinct $U, W \in G_p(X)$, a minimal geodesic connecting them in $\PP(\Lambda^p X)$ intercepts $G_p(X)$ at another point if, and only if, $\dim(U\cap W)=p-1$. When this happens:
\begin{enumerate}[i)]
\item Any geodesic through $U$ and $W$ lies entirely in $G_p(X)$, and is given by $V(t)=(U\cap W)\oplus\Span(u\cos t + w\sin t)$, with $t\in[0,\pi)$, for some nonzero $u\in U$ and $w\in W$ such that $u,w\perp U\cap W$ and $\inner{u,w}\in\R$.
\item In the complex case, the complex projective line determined by $U$ and $W$ lies in $G_p(X)$, and its elements can be described by $V(t,\varphi)=(U\cap W)\oplus\Span(u\cos t+w e^{i\phi}\sin t)$, with $t\in[0,\pi)$ and $\phi\in[0,2\pi)$, for some nonzero $u\in U$ and $w\in W$ such that $u,w\perp U\cap W$.
\end{enumerate}
\end{proposition}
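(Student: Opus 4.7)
The plan is to translate the question into Grassmann-angle language via \cref{pr:Theta Fubini equal dim}: the condition that some $V \in G_p(X)$ lies on a minimal geodesic between $U$ and $W$ in $\PP(\Lambda^p X)$ becomes the triangle equality $\Theta_{U,V} + \Theta_{V,W} = \Theta_{U,W}$, so \cref{pr:triangle equality same dim} supplies the structural control.

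For the forward direction, if some $V \in G_p(X) \setminus \{U,W\}$ satisfies the triangle equality, \cref{pr:triangle equality same dim} forces $\dim(U \cap V \cap W) = p-1$, hence $\dim(U \cap W) = p-1$ (the reverse inequality $\dim(U\cap W) \leq p-1$ holds since $U \neq W$). For the converse, set $A = U \cap W$ and pick unit vectors $u \in U \cap A^\perp$ and $w \in W \cap A^\perp$; in the complex case, multiply $w$ by a phase so that $\inner{u,w} \in \R$. With a unit blade $\alpha \in \Lambda^{p-1} A$, the Pl\"ucker images of $U$ and $W$ are $[\alpha \wedge u]$ and $[\alpha \wedge w]$, lying in the 2-dimensional subspace $\alpha \wedge \Span(u,w) \subset \Lambda^p X$. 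Every nonzero element of this subspace has the form $\alpha \wedge (au + bw)$, hence is a decomposable $p$-blade representing $A \oplus \Span(au + bw) \in G_p(X)$; so the projective line determined by $U$ and $W$ in $\PP(\Lambda^p X)$ is contained entirely in the Pl\"ucker image of $G_p(X)$.

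In the real case this projective line is an $\R\PP^1$, i.e.\ the unique geodesic circle through $U$ and $W$, traced exactly once by $V(t) = A \oplus \Span(u \cos t + w \sin t)$ for $t \in [0,\pi)$. The condition $\inner{u,w} \in \R$ (automatic in the real case, arranged by the phase in the complex one) is what makes $t \mapsto \alpha \wedge (u \cos t + w \sin t)$ a great circle in the unit sphere of $\Lambda^p X$, consistent with the isotropy of the real plane $\Span_\R(u,w)$ required by \cref{pr:triangle equality same dim}. For (ii), the complex projective line through $U$ and $W$ is the $\C\PP^1$ spanned by $[\alpha \wedge u]$ and $[\alpha \wedge w]$; a general representative is $\alpha \wedge (u \cos t + w e^{i\phi} \sin t)$, again decomposable, and corresponds to $A \oplus \Span(u \cos t + w e^{i\phi} \sin t) \in G_p(X)$, so the whole $\C\PP^1 \cong S^2$ embeds into $G_p(X)$ and any two of its points are joined by a great-circle geodesic of this sphere.

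The main obstacle I anticipate is showing cleanly that the parameterizations cover their target projective lines exactly once as $t \in [0,\pi)$ (and, for (ii), $\phi \in [0,2\pi)$); this is a routine but slightly delicate check on the map $(t,\phi) \mapsto [\alpha \wedge (u\cos t + w e^{i\phi}\sin t)]$, requiring care with the normalization when $\inner{u,w} \neq 0$ and with identifying the arc of the $\R\PP^1$ traced as $t$ runs through a half-period.
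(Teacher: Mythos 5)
Your proposal is correct and follows the route the paper itself indicates: the paper states this proposition without a separate proof, deriving it from \cref{pr:Theta Fubini equal dim} and \cref{pr:triangle equality same dim} exactly as you do for the ``only if'' direction, while your ``if'' direction and the parametrizations in (i)--(ii) supply the natural complementary argument (every nonzero element of the plane $\alpha\wedge\Span(u,w)$ is a decomposable $p$-blade, so the whole projective line determined by $U$ and $W$ lies in the \Plucker{} image of $G_p(X)$). The only point worth tightening is the one you already flag: in the complex case a great circle of $S(\Lambda^p X)$ projects to a geodesic of $\PP(\Lambda^p X)$ only for suitably chosen (horizontal) circles, which is precisely what the condition $\inner{u,w}\in\R$ guarantees, consistent with the isotropy requirement in \cref{pr:triangle equality same dim}.
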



So, either $G_p(X)$ contains the whole geodesic through two of its points (what only happens for special pairs of points if $1<p<n-1$, where $n=\dim X$),
\SELF{Lines or hyperplanes always have intersection of codim 1}
or does not contain any other point of such geodesic.
This corrects a result of \cite{Jiang1996} stating that if $n>3$ and $p>1$ then $G_p(X)$ is not metrically convex (i.e. it has points with no between-point).
The right condition is $1<p<n-1$, as $G_{n-1}(X) = \PP(\Lambda^{n-1} X)$. 


%
%
%

\begin{example}
	To see why $u$ and $w$ must be in an isotropic plane, let $U,W,V\subset X=\C^2$ be spanned by $u=(1,0)$, $w=(1,\sqrt{3})/2$ and $v=u+w$, respectively. Then $\gamma_{u,w}=60^\circ$ and $\gamma_{u,v}=\gamma_{v,w}=30^\circ$, so $V$ lies in the geodesic segment between $U$ and $W$ in $\PP(X)=\C\PP^1$.
	
	Taking  $u=(i,0)$ instead, we get $\gamma_{u,w}=60^\circ$ and $\gamma_{u,v}=\gamma_{v,w}\cong 38^\circ$, so $U$, $V$ and $W$ are not in the same geodesic of $\PP(X)$ anymore. However, $\theta_{u,w}=90^\circ$ and $\theta_{u,v}=\theta_{v,w}=45^\circ$, so $\Span_\R(v)$ does lie in the geodesic segment between $\Span_\R(u)$ and $\Span_\R(w)$ in $\PP(X_\R)=\R\PP^3$.
\end{example}

\subsection{Asymmetric metric on the full Grassmannian}\label{sc:Pseudometric}

In the full Grassmannian $G(X)$, $\Theta_{V,W}$ falls short of being a metric, as for different dimensions it lacks symmetry and the identity of indiscernibles ($\Theta_{V,W}=0 \not\Rightarrow V=W$). 
But it gives a weaker kind of metric.

\begin{definition}
An \emph{asymmetric metric}  on a non-empty set $M$ is a function $d:X\times X\rightarrow [0,\infty)$ such that, for all $x,y,z\in M$,
\begin{enumerate}[i)]
\item $d(x,y)=d(y,x)=0 \Leftrightarrow x=y$; 
\item $d(x,z) \leq d(x,y)+d(y,z)$.
\end{enumerate}
\end{definition}

We adopt the terminology of \cite{Anguelov2016,Mennucci2014}. Other terms are quasi-metric \cite{Albert1941,Cobzas2012} or $T_0$-quasi-pseudometric \cite{Kazeem2014}.
Some authors require the stronger condition $d(x,y)=0 \Leftrightarrow x=y$.
\cite{Kunzi2001} has an extensive review of works on nonsymmetric metrics.

\begin{theorem}\label{pr:full Grassmannian}
$G(X)$ is an asymmetric metric space, with distances given by the Grassmann angle.
\end{theorem}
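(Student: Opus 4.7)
The plan is to verify the two axioms in the definition of asymmetric metric for $d(V,W) = \Theta_{V,W}$ on $G(X)$, since the range $[0,\tfrac{\pi}{2}] \subset [0,\infty)$ is automatic and we already have a triangle inequality in hand.

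First, I would establish the identity-of-indiscernibles condition: $\Theta_{V,W} = \Theta_{W,V} = 0$ if and only if $V=W$. The nontrivial direction uses \cref{pr:properties Grassmann}\emph{\ref{it:Theta zero}}, which gives $\Theta_{V,W}=0 \Leftrightarrow V\subset W$, together with the symmetric statement $\Theta_{W,V}=0 \Leftrightarrow W\subset V$; the conjunction forces $V=W$. The edge cases involving $\{0\}$ need a quick check directly from the definition: we have $\Theta_{\{0\},\{0\}}=0$, while if $V\neq\{0\}$ then $\Theta_{V,\{0\}} = \tfrac{\pi}{2}\neq 0$, so the biconditional holds even when one of the subspaces is trivial (and this is precisely where the asymmetry is essential, since $\Theta_{\{0\},V}=0$ alone would not determine $V$).

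Second, for the triangle inequality $\Theta_{U,W}\leq \Theta_{U,V}+\Theta_{V,W}$, nothing new is required: this is exactly \cref{pr:spherical triangle inequality}, which was proved without any dimensional hypothesis, again thanks to the asymmetry built into $\Theta$.

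I expect no real obstacle here, since the substance of the theorem has been absorbed into the earlier propositions; the only thing that deserves care is to check the $\{0\}$ cases by hand in condition (i), and to point out that symmetry is deliberately \emph{not} demanded, since $\Theta_{V,W}\neq\Theta_{W,V}$ in general when $\dim V\neq \dim W$ (which is precisely why the notion is asymmetric metric rather than metric).
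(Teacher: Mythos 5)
Your proposal is correct and matches the paper's own argument, which likewise derives the result directly from \cref{pr:properties Grassmann}\emph{\ref{it:Theta zero}} (giving $\Theta_{V,W}=\Theta_{W,V}=0 \Leftrightarrow V\subset W$ and $W\subset V \Leftrightarrow V=W$) together with \cref{pr:spherical triangle inequality}. Your explicit check of the $\{0\}$ cases is a harmless elaboration already covered by \cref{pr:properties Grassmann}\emph{\ref{it:Theta zero}}, since $V\subset\{0\}$ forces $V=\{0\}$.
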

\begin{proof}
	Follows from propositions \ref{pr:properties Grassmann}\emph{\ref{it:Theta zero}} and \ref{pr:spherical triangle inequality}.
\end{proof}

If $d$ is an asymmetric metric then $D(x,y)=\max\{d(x,y),d(y,x)\}$ is a metric.
Indeed, the max-symmetrized angle gives the Fubini-Study metric in $G(X)$, extending \cref{pr:Theta Fubini equal dim}: 

\begin{theorem}\label{pr:Theta Fubini any dim}
	$d_{FS}(V,W)=\hat{\Theta}_{V,W}$ for any $V,W\in G(X)$.
\end{theorem}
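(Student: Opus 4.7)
The plan is to split into two cases according to whether $V$ and $W$ have the same dimension, and reduce everything to the Plücker picture.

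First I would recall that under the Plücker embedding $G(X)\hookrightarrow\PP(\Lambda X)$, a $k$-subspace $U$ corresponds to the line $\Lambda^k U$, and $d_{FS}$ on $\PP(\Lambda X)$ is the angle between lines. Write $p=\dim V$ and $q=\dim W$.

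For the equal-dimension case $p=q$, the theorem is immediate: by Theorem \ref{pr:Theta Fubini equal dim} we have $d_{FS}(V,W)=\Theta_{V,W}$, and because dimensions agree $\Theta_{V,W}=\Theta_{W,V}$, so $\hat{\Theta}_{V,W}=\Theta_{V,W}$. (The edge case $V=W=\{0\}$ gives $0$ on both sides trivially.)

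For the unequal case $p\neq q$, say $p<q$, the Plücker images lie in the mutually orthogonal subspaces $\Lambda^p X$ and $\Lambda^q X$ of $\Lambda X$; hence the lines $\Lambda^p V$ and $\Lambda^q W$ are orthogonal in $\Lambda X$, giving $d_{FS}(V,W)=\tfrac{\pi}{2}$. On the other hand, by the definition of the Grassmann angle, $\dim V<\dim W$ forces $\Theta_{W,V}=\tfrac{\pi}{2}$, so $\hat{\Theta}_{V,W}=\tfrac{\pi}{2}$ as well. The case $p>q$ is symmetric, and the situation $V=\{0\}$, $W\neq\{0\}$ (or vice-versa) falls into this case since $\Lambda^0 X$ is orthogonal to every $\Lambda^k X$ with $k>0$, matching $\Theta_{W,\{0\}}=\tfrac{\pi}{2}$.

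There is no real obstacle here; the only thing to be careful about is to handle $\{0\}$ consistently with the conventions adopted for both $d_{FS}$ (via the Plücker embedding using $\Lambda^0 X$) and the Grassmann angle.
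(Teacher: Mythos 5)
Your proof is correct and matches the paper's (implicit) argument: the paper states this theorem without a separate proof, relying on exactly the two observations you make — Theorem~\ref{pr:Theta Fubini equal dim} for equal dimensions, and the mutual orthogonality of distinct $\Lambda^k X$'s (noted when the \Plucker\ embedding is introduced) forcing both sides to equal $\frac{\pi}{2}$ when dimensions differ. Your handling of the $\{0\}$ conventions is consistent with the paper's as well.
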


Despite being useful in applications \cite{Gruber2009,Renard2018}, $G(X)$ has received little attention from geometers.
A reason is that $d_{FS}$ reduces its geometry to a union of $G_p(X)$'s separated by a distance of $\frac{\pi}{2}$.
Also, this metric does not describe so well the separation between subspaces of different dimensions. For example, the distance between a line $L$ and a plane $W$ is always $\frac{\pi}{2}$, even if $L\subset W$.

The asymmetric metric given by Grassmann angles does a better job, with $\Theta_{L,W}\rightarrow 0$ as $L$ gets closer to being contained in $W$. The fact that $\Theta_{W,L} = \frac{\pi}{2}$ does not change is also meaningful, as $W$ is never any closer to being contained in $L$.
And, despite not being an usual metric, this angle has many useful properties, so we believe it should prove well suited for applications of $G(X)$. 
\SELF{A topologia do novo espaço ainda é de disjoint union, mas a métrica não.}

\subsection{Hausdorff distances}\label{sc:Hausdorff distance}

Haussdorff distances are important similarity measures (used, like the Grassmannians, in computer vision and pattern matching \cite{Huttenlocher1993,Knauer2011}), and give another example of asymmetric metric. 

\begin{definition}
For non-empty\SELF{Não precisa se usar $\sup$ e $\inf$, e tomar cuidado com as particularidades de $\sup\emptyset$ e $\inf\emptyset$.}\ compact sets $S$ and $T$ in a metric space $(M,d)$, the \emph{Hausdorff distance} is $ H(S,T) = \max\left\{ h(S,T), h(T,S)\right\}$, where $\displaystyle h(S,T) = \max_{s \in S} d(s,T)$
\SELF{$= \max_{s \in S} \min_{t\in T} d(s,t)$} 
is the \emph{directed Hausdorff distance}
from $S$ to $T$. 
\end{definition}

Some authors call $h$ the (one-sided) Hausdoff distance, and $H$ the bidirectional or two-sided Hausdorff distance.
$h(S,T)$ is the largest distance from points in $S$ to their closest points in $T$, or the smallest $\epsilon$ such that a closed $\epsilon$-neighborhood of $T$ contains $S$, and each set is contained within distance $H(S,T)$ of the other (\cref{fig:hausdorff}).
$H$ is a metric in the set of non-empty compact subsets of $M$, and $h$ is an asymmetric metric.
\SELF{$h(R,T) \leq h(R,S)+h(S,T)$ for compact $R,S,T$. \\[3pt]
Fix $r\in R$. For some $s,t$ \\ $d(r,S)=d(r,s)$, \\ $d(s,T)=d(s,t)$. \\[3pt]
$d(r,T)\leq d(r,t)$ \\ $\leq d(r,s)+d(s,t) $ \\ $=d(r,S)+d(s,T)$ \\ $\leq d_h(R,S)+d_h(S,T)$. \\[3pt] Use $d_h(R,T)=\max\limits_r d(r,T)$
}
For $x\in M$ and non-empty compact sets $S,T\subset M$, $d(x,T) \leq d(x,S) + h(S,T)$.
\SELF{$h(\{x\},T)=d(x,T)$}

\begin{figure}
	\centering
	\includegraphics[width=0.35\linewidth]{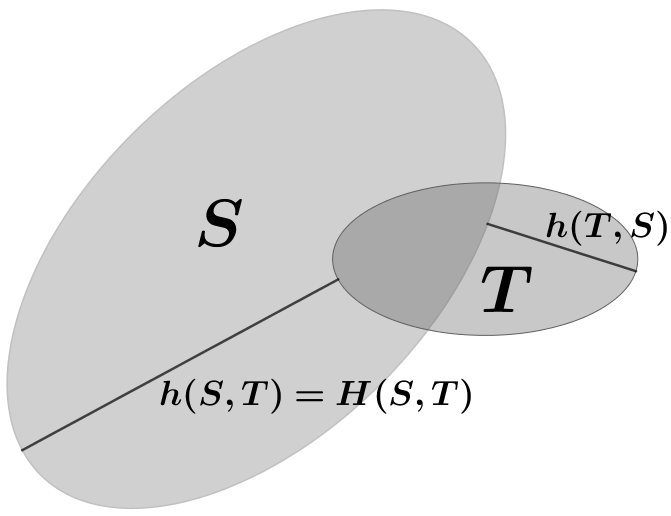}
	\caption{Haussdorff distances.}
	\label{fig:hausdorff}
\end{figure}

Grassmann angles give directed Haussdorff distances between full sub-Grassmannians.
For subspaces $V,W\subset X$, consider $G(V)$ and $G(W)$ embedded in $\PP(\Lambda X)$. Then:

\begin{proposition}\label{pr:Hausdorff Theta}
$h(G(V),G(W)) = \Theta_{V,W}$. 
\end{proposition}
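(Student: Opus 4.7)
The plan is to unpack the directed Hausdorff distance as
\[ h(G(V),G(W)) = \max_{U_1 \in G(V)} \min_{U_2 \in G(W)} d_{FS}(U_1,U_2), \]
replace $d_{FS}$ with the max-symmetrized Grassmann angle $\hat\Theta$ via \cref{pr:Theta Fubini any dim}, and prove both inequalities separately. The natural witness for the lower bound is $U_1 = V$ itself, and the natural nearest point for the upper bound is $U_2 = \P_W(U_1)$.

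For the lower bound $h(G(V),G(W)) \geq \Theta_{V,W}$, I take $U_1 = V$ and show $\hat\Theta_{V,U_2} \geq \Theta_{V,W}$ for every $U_2 \in G(W)$. The argument splits by dimension: if $\dim U_2 < \dim V$ then $\Theta_{V,U_2} = \tfrac{\pi}{2}$ by definition; if $\dim U_2 > \dim V$ then $\Theta_{U_2,V} = \tfrac{\pi}{2}$; in either case $\hat\Theta_{V,U_2} = \tfrac{\pi}{2} \geq \Theta_{V,W}$. If $\dim U_2 = \dim V$ (which forces $\dim V \leq \dim W$), the asymmetry disappears and $\hat\Theta_{V,U_2} = \Theta_{V,U_2}$; since $U_2 \subset W$, \cref{pr:W' sub W} yields $\Theta_{V,U_2} \geq \Theta_{V,W}$.

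For the upper bound $h(G(V),G(W)) \leq \Theta_{V,W}$, given $U_1 \in G(V)$ I exhibit a suitable $U_2 \in G(W)$. If $\dim V > \dim W$, then $\Theta_{V,W} = \tfrac{\pi}{2}$ and any $U_2$ works since $\hat\Theta$ is bounded above by $\tfrac{\pi}{2}$. Otherwise, set $P = \P_W$ and $U_2 = P(U_1) \in G(W)$. If $U_1 \not\pperp W$, then $\dim U_2 = \dim U_1$, so $\hat\Theta_{U_1,U_2} = \Theta_{U_1,U_2}$, which by \cref{pr:properties Grassmann}\emph{\ref{it:Theta P(V)}} equals $\Theta_{U_1,W}$, and this is at most $\Theta_{V,W}$ by \cref{pr:V' sub V}. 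If instead $U_1 \pperp W$, there is nonzero $u \in U_1 \subset V$ with $u \perp W$, so $V \pperp W$, hence $\Theta_{V,W} = \tfrac{\pi}{2}$ and the bound is again trivial. The main subtle point is precisely this treatment of the asymmetry: because $\Theta_{V',W} \leq \Theta_{V,W}$ for $V' \subset V$ but the reverse monotonicity $\Theta_{V,W'} \geq \Theta_{V,W}$ for $W' \subset W$ runs in the opposite direction, the bounds line up correctly only thanks to the chosen definition of $\Theta$ and to the fact that projecting $U_1$ preserves dimension precisely when $U_1$ is not partially orthogonal to $W$.
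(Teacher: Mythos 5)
Your proof is correct and follows essentially the same route as the paper: the nearest point in $G(W)$ to a given $U_1\in G(V)$ is its projection $P_W(U_1)$, the inner minimum is controlled via \cref{pr:W' sub W} and \cref{pr:Theta Fubini any dim}, and the outer maximum via \cref{pr:V' sub V}. The paper compresses your dimension case analysis into a single chain of (in)equalities, but the underlying argument is identical.
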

\begin{proof}
For $V'\in G(V)$ and $W'\in G(W)$, propositions \ref{pr:W' sub W} and \ref{pr:Theta Fubini any dim} give $d_{FS}(V',W') = \hat{\Theta}_{V',W'} \geq  \Theta_{V',W'} \geq \Theta_{V',W}$, with equalities for $W'=P_W(V')$.
\SELF{$\in G_p(W)$ if $V\not\pperp W$, otherwise the distances are $\frac{\pi}{2}$}
So $d_{FS}(V',G(W)) = \Theta_{V',W}$,
\SELF{$ = d_{FS}(V,G_p(W))$ if $p=\dim V\leq \dim W$}
and the result follows from \cref{pr:V' sub V}.
\SELF{Equality if $V'=V$}
\end{proof}

So, in $\PP(\Lambda X)$, $G(V)$ is contained within distance $\Theta_{V,W}$ from $G(W)$.
Also, Grassmann angles give an asymmetric metric in the set of full sub-Grassmannians of $G(X)$.
Of course, max-symmetrizing we get that $H(G(V),G(W)) = \hat{\Theta}_{V,W} = d_{FS}(V,W)$ gives a metric.

\begin{corollary}
$d_{FS}(p,G(W)) \leq d_{FS}(p,G(V)) + \Theta_{V,W}$, for  $p\in\PP(\Lambda X)$. 
\end{corollary}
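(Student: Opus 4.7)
The plan is to recognize this as a direct application of the general triangle-type inequality for the directed Hausdorff distance, combined with \cref{pr:Hausdorff Theta}. The paragraph immediately preceding \cref{pr:Hausdorff Theta} states that, for any point $x$ in a metric space $(M,d)$ and any non-empty compact $S,T\subset M$, one has $d(x,T)\leq d(x,S)+h(S,T)$. I would take $M = \PP(\Lambda X)$ with the Fubini-Study metric, $x = p$, $S = G(V)$ and $T = G(W)$.

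First I would justify that the hypotheses of this Hausdorff inequality are met: both $G(V)$ and $G(W)$ are compact in $\PP(\Lambda X)$, since each $G_k(V)$ and $G_k(W)$ is a compact manifold whose \Plucker\ embedding is a continuous injection, and $G(V)$ and $G(W)$ are finite unions of such images. Then, substituting into the Hausdorff triangle inequality gives
\[
d_{FS}(p,G(W))\ \leq\ d_{FS}(p,G(V)) + h(G(V),G(W)).
\]

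Next I would invoke \cref{pr:Hausdorff Theta}, which identifies $h(G(V),G(W)) = \Theta_{V,W}$, to replace the last term and conclude. Note that nothing about $p$ is used beyond it being a point of $\PP(\Lambda X)$; in particular $p$ need not lie in the image of a \Plucker\ embedding.

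There is essentially no obstacle here; the work has already been done in \cref{pr:Hausdorff Theta} and in the general Hausdorff-distance inequality recorded just before it. The only thing worth a line is the compactness check, and the fact that the inequality follows from a one-line chain rather than from a fresh argument about subspaces.
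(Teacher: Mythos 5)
Your proof is correct and is exactly the argument the paper intends: the corollary is stated without proof precisely because it follows from the inequality $d(x,T)\leq d(x,S)+h(S,T)$ recorded just before \cref{pr:Hausdorff Theta}, combined with that proposition's identification $h(G(V),G(W))=\Theta_{V,W}$. Your added compactness check is a reasonable (if routine) extra line.
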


For full sub-Grasmannians of oriented subspaces, embedded in the unit sphere $S(\Lambda X)$ (with distance $d$ given by the Euclidean angle), we also have:

\begin{proposition}\label{pr:Hausdorff Lambda}
$h(\tilde{G}(V),\tilde{G}(W)) =\Theta_{V,W}$.
\end{proposition}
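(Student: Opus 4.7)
The plan is to parallel the proof of \cref{pr:Hausdorff Theta}, but working in $S(\Lambda X)$ with Euclidean angle instead of in $\PP(\Lambda X)$ with $d_{FS}$, so that the richer orientation information of $\tilde G(V)$ can be exploited.

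First I would fix $V' \in \tilde{G}_p(V)$ oriented by a unit blade $\nu \in \Lambda^p V$ and compute $\min_{W' \in \tilde{G}(W)} \theta_{V',W'}$. Any $W' \in \tilde{G}_q(W)$ with $q \neq p$ corresponds to a unit blade in $\Lambda^q X$, which is orthogonal to $\Lambda^p X$, so $\theta_{V',W'} = \tfrac{\pi}{2} \geq \Theta_{V',W}$. Hence it suffices to consider $W' \in \tilde{G}_p(W)$, which is nonempty precisely when $p \leq \dim W$; if $p > \dim W$ then every $W'$ sits at angle $\tfrac{\pi}{2} = \Theta_{V',W}$ and we are done for this $V'$.

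Next, assume $p \leq \dim W$ and set $P = \Proj_W$. For any $W' \in \tilde G_p(W)$ oriented by a unit $\omega \in \Lambda^p W$, since $\omega \in \Lambda^p W$ I would write $\langle \nu,\omega\rangle = \langle P\nu,\omega\rangle$ and combine Cauchy--Schwarz with \cref{pr:Theta Pnu}:
\[
\cos\theta_{V',W'} = \operatorname{Re}\langle\nu,\omega\rangle \leq |\langle P\nu,\omega\rangle| \leq \|P\nu\| = \cos\Theta_{V',W},
\]
giving $\theta_{V',W'} \geq \Theta_{V',W}$. Equality is realized: if $V' \not\pperp W$, take $\omega = P\nu/\|P\nu\|$, a unit blade in $\Lambda^p W$ (hence a legitimate oriented subspace in $\tilde G_p(W)$) for which $\operatorname{Re}\langle\nu,\omega\rangle = \|P\nu\|$; if $V' \pperp W$, then $P\nu = 0$ by \cref{pr:partial orth Lambda orth} and $\Theta_{V',W} = \tfrac{\pi}{2}$, which is matched by every $W' \in \tilde G_p(W)$. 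In all cases, $\min_{W'\in\tilde G(W)} \theta_{V',W'} = \Theta_{V',W}$.

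Finally I would maximize over $V' \in \tilde{G}(V)$. Since $\Theta_{V',W}$ depends only on the underlying subspace, \cref{pr:V' sub V} yields $\Theta_{V',W} \leq \Theta_{V,W}$ for every $V' \subset V$, with equality attained at $V' = V$ (with any orientation). Compactness of $\tilde G(V)$ (as a finite union of unit spheres of blades in the $\Lambda^k V$'s) guarantees that this supremum is a maximum, so $h(\tilde G(V),\tilde G(W)) = \Theta_{V,W}$.

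The main obstacle is that in the complex case $\tilde{G}(V)$ carries a continuous circle of orientations, so the minimization over $W'$ must range over all unit phases of blades in $\Lambda^p W$; this is exactly where the sharper Cauchy--Schwarz bound $\operatorname{Re}\langle\nu,\omega\rangle \leq |\langle\nu,\omega\rangle|$ is needed (and automatically saturated by choosing $\omega$ with the right phase). The remaining care is purely bookkeeping of the degenerate cases $V = \{0\}$, $p > \dim W$, and $V' \pperp W$, each of which is consistent with the claimed formula.
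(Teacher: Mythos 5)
Your proposal is correct and follows essentially the same route as the paper: the paper's proof likewise observes that for a unit blade $\nu$ orienting $V'\subset V$ the closest point of $\tilde{G}(W)$ is the normalized projection $\Proj_W\nu$, identifies that minimal angle with $\Theta_{V',W}$ via \cref{pr:Theta Pnu}/\cref{pr:angle external powers}, and then maximizes using \cref{pr:V' sub V}. You merely spell out the Cauchy--Schwarz and degenerate-case details that the paper leaves implicit.
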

\begin{proof}
Any $\nu\in \tilde{G}(V)$ represents a subspace $V'\subset V$, and $d(\nu,\tilde{G}(W)) = \min\limits_{\omega\in \tilde{G}(W)} \theta_{\nu,\omega} =  \theta_{\nu,\Proj_{W}\nu} =  \Theta_{V',W} \leq  \Theta_{V,W}$. 
\SELF{\ref{pr:angle external powers}, \ref{pr:V' sub V}}
\end{proof}

One might expect the result to be $\mathbf{\Theta}_{V,W}$, but consider \cref{fig:angulos vetores complexos} again: minimal distances occur when orientations align.

\begin{corollary}
$d(p,\tilde{G}(W)) \leq d(p,\tilde{G}(V)) + \Theta_{V,W}$ for any $p\in S(\Lambda X)$. 
\SELF{não precisa ser blade}
\end{corollary}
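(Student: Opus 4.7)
The plan is very short: this corollary is an immediate consequence of the general asymmetric triangle inequality for directed Hausdorff distance combined with Proposition \ref{pr:Hausdorff Lambda}. Indeed, the excerpt already records the fact that for any point $x$ and non-empty compact sets $S,T$ in a metric space,
\[ d(x,T) \leq d(x,S) + h(S,T). \]
This is the only nontrivial ingredient, and it follows (as sketched in a side note in the text) by picking $s\in S$ with $d(x,s)=d(x,S)$ and $t\in T$ with $d(s,t)=d(s,T)\leq h(S,T)$, then invoking the ordinary triangle inequality $d(x,T)\leq d(x,t)\leq d(x,s)+d(s,t)$.

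I would apply this with $M=S(\Lambda X)$ (metrized by the Euclidean angle), $x=p$, $S=\tilde{G}(V)$ and $T=\tilde{G}(W)$; all the compactness hypotheses needed are already in place since the paper notes that $\tilde{G}(V)$ and $\tilde{G}(W)$ are compact. This yields
\[ d(p,\tilde{G}(W)) \leq d(p,\tilde{G}(V)) + h(\tilde{G}(V),\tilde{G}(W)). \]
Then Proposition \ref{pr:Hausdorff Lambda} identifies $h(\tilde{G}(V),\tilde{G}(W)) = \Theta_{V,W}$, and the stated inequality follows at once.

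There is essentially no obstacle: the real work has already been done in establishing Proposition \ref{pr:Hausdorff Lambda}, whose proof uses Proposition \ref{pr:angle external powers} to realize the minimizing $\omega\in\tilde{G}(W)$ as $\Proj_W\nu$ together with the monotonicity result \ref{pr:V' sub V}. Once $h(\tilde{G}(V),\tilde{G}(W))=\Theta_{V,W}$ is known, the corollary is just a one-line application of the general Hausdorff triangle inequality.
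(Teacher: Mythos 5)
Your proposal is correct and follows exactly the route the paper intends: the corollary is an immediate combination of the general fact $d(x,T)\leq d(x,S)+h(S,T)$ (recorded in the Hausdorff-distance subsection, and valid here since $\tilde{G}(V)$ and $\tilde{G}(W)$ are compact in $S(\Lambda X)$) with \cref{pr:Hausdorff Lambda}. Nothing is missing.
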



\section{Complementary Grassmann angle}\label{sc:Complementary Grassmann angle}

The Grassmann angle of a subspace with the orthogonal complement of another has special properties that grant it a new name and notation. 
Its importance has been overlooked for similar angles, possibly because the simplicity and generality of our results depend on the asymmetry.

\begin{definition}
The \emph{complementary Grassmann angle} $\Theta_{V,W}^\perp \in [0,\frac{\pi}{2}]$ of subspaces $V,W\subset X$ is $\Theta_{V,W}^\perp=\Theta_{V,W^\perp}$.
\end{definition}

The asymmetry gives results, like $\Theta_{V,W}^\perp=\frac{\pi}{2}$ for any two planes in $\R^3$, which seem wrong until we learn how to interpret this angle. But it also allows the following properties to hold without any restrictions.
\SELF{Sem assimetria, \ref{it:Theta perp pi 2} não vale p/planos em $\R^3$, nem \ref{it:Theta perp 0} se forem perpend, e \ref{it:complementary line} não vale se $W=0$ ou $X$
}


\begin{proposition}\label{pr:complementary simple cases}
Let $V,W\subset X$ be any subspaces.
\begin{enumerate}[i)]
\item $\Theta_{V,W}^\perp=0 \ \Leftrightarrow\ $ $V\perp W$.\label{it:Theta perp 0}
\item $\Theta_{V,W}^\perp=\frac{\pi}{2} \ \Leftrightarrow\ V\cap W\neq\{0\}$.\label{it:Theta perp pi 2}
\item $\Theta_{L,W}+\Theta_{L,W}^\perp=\frac{\pi}{2}$ for any line $L\subset X$. \label{it:complementary line}
\end{enumerate}
\end{proposition}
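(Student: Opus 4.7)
\medskip
\noindent\textbf{Proof plan.}
All three items should follow quickly from earlier basic results once we rewrite $\Theta_{V,W}^\perp$ as the ordinary Grassmann angle $\Theta_{V,W^\perp}$ and use $(W^\perp)^\perp = W$.

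For (i), I would apply \cref{pr:properties Grassmann}\emph{\ref{it:Theta zero}} to the pair $(V,W^\perp)$: $\Theta_{V,W}^\perp = \Theta_{V,W^\perp}=0$ iff $V\subset W^\perp$, which is just the definition of $V\perp W$. For (ii), I would use \cref{pr:properties Grassmann}\emph{\ref{it:Theta pi2}} analogously: $\Theta_{V,W^\perp}=\tfrac{\pi}{2}$ iff $V\pperp W^\perp$, which by definition means the existence of a nonzero $v\in V$ with $v\perp W^\perp$, i.e. $v\in(W^\perp)^\perp=W$, and hence is equivalent to $V\cap W\neq\{0\}$.

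For (iii), the plan is to compare projections onto $W$ and $W^\perp$ via the Pythagorean identity for orthogonal projections in $X$. Pick any nonzero $v\in L$; then the orthogonal decomposition $v=\Proj_W v + \Proj_{W^\perp} v$ gives
\begin{equation*}
\|\Proj_W v\|^2 + \|\Proj_{W^\perp} v\|^2 = \|v\|^2.
\end{equation*}
Applying \cref{pr:properties Grassmann}\emph{\ref{it:Pv}} twice (once with $W$ and once with $W^\perp$) rewrites this as
\begin{equation*}
\cos^2\Theta_{L,W} + \cos^2\Theta_{L,W}^\perp = 1.
\end{equation*}
Since both angles lie in $[0,\tfrac{\pi}{2}]$, this is equivalent to $\cos\Theta_{L,W}^\perp = \sin\Theta_{L,W}$, i.e.\ $\Theta_{L,W}+\Theta_{L,W}^\perp=\tfrac{\pi}{2}$.

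I do not anticipate any real obstacle; the only point to be careful about is the degenerate cases $W=\{0\}$ and $W=X$ in (iii). These are handled automatically because \cref{pr:properties Grassmann}\emph{\ref{it:Pv}} is stated without restriction on $W$ and the conventions $\Theta_{L,\{0\}}=\tfrac{\pi}{2}$, $\Theta_{\{0\},\{0\}}=0$ make both sides of the identity match. Likewise, (i) and (ii) require no separate treatment of $V=\{0\}$, since then $V\perp W$ and $V\cap W=\{0\}$ both hold, in agreement with $\Theta_{\{0\},W^\perp}=0\neq\tfrac{\pi}{2}$.
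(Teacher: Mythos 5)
Your proof is correct, and since the paper states this proposition without proof (treating it as an immediate consequence of \cref{pr:properties Grassmann} applied to the pair $(V,W^\perp)$), your argument via items \emph{i)}, \emph{ii)} and \emph{iv)} of that proposition, together with $(W^\perp)^\perp=W$ and the Pythagorean identity for the decomposition $X=W\oplus W^\perp$, is exactly the intended one. The attention to the degenerate cases $V=\{0\}$, $W=\{0\}$ and $W=X$ is appropriate and matches the paper's point that these identities hold without restriction.
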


Though for a line (even a complex one) $\Theta_{L,W}^\perp$ is the usual angle complement, and $\cos\Theta_{L,W}^\perp=\sin\Theta_{L,W}$, \cref{ex:Grassmann angle complement} shows this is not always valid. In general, $\cos\Theta_{V,W}^\perp$ will be a product of sines of principal angles. We prove this first by getting principal bases and angles for $V$ and $W^\perp$ (both nonzero), as this construction can be instructive.

Let $p=\dim V$, $q=\dim W$, $n=\dim X$, $r=\dim V\cap W$, $m=\min\{p,q\}$, $m'=\min\{p,n-q\}$ and $P^\perp=\Proj_{W^\perp}$.
From principal bases $(e_1,\ldots,e_p)$ of $V$ and $(f_1,\ldots,f_q)$ of $W$, with principal angles $\theta_1\leq\ldots\leq\theta_m$, we get principal bases $(\tilde{e}_1,\ldots,\tilde{e}_p)$ of $V$ and $(g_1,\ldots,g_{n-q})$ of $W^\perp$, with principal angles $\theta_1^\perp,\ldots,\theta_{m'}^\perp$, as follows: 
\begin{enumerate}[1)]
\item The $\tilde{e}_i$'s are the same as the $e_i$'s, in reverse order: $\tilde{e}_{p+1-i}=e_i$.  \label{it:step 1}
\item For $m<i\leq p$ (if any), let $g_{p+1-i}=e_i\in W^\perp$, so $\theta^\perp_{p+1-i}=0$.  \label{it:step 2}
\item For $r<i\leq m$,  let $g_{p+1-i}=\frac{P^\perp e_i}{\|P^\perp e_i\|}$, so $\theta^\perp_{p+1-i} =\frac{\pi}{2}-\theta_i$. \label{it:step 3}
\SELF{$= \theta_{P^\perp e_i,e_i}$}
\item If $p-r<n-q$,
\SELF{So far we have $p-r$ principal vectors for $W^\perp$} 
add new $g$'s to form an orthonormal basis for $W^\perp$. 
Any $e_i$ with $i\leq r$ is orthogonal to $W^\perp$, so pairing as many of them as possible with the new $g$'s we get principal angles $\theta^\perp_{p+1-i}=\frac{\pi}{2}$. \label{it:step 4}
\end{enumerate}
Since $g_{p+1-i}\in\Span(e_i,f_i)$ in step \ref{it:step 3}, one can check that $\inner{g_i,g_j} = \delta_{ij}$ and $\inner{\tilde{e}_i,g_j} = \delta_{ij}\cos\theta^\perp_i$, so that \eqref{eq:inner ei fj} is satisfied.

\begin{example}\label{ex:complementary}
In $\R^5$, the following are principal vectors and angles for $V=\Span(e_1,e_2,e_3)$ and $W=\Span(f_1,f_2)$:
\begin{equation*}
\begin{aligned}
e_1&=(1,0,0,0,0), &\qquad f_1&=(1,0,0,0,0),  &\qquad \theta_1&=0,\\
e_2&=(0,1,0,0,0), & f_2&=(0,\sqrt{3},1,0,0)/2, & \theta_2&=30^\circ, \\
e_3&=(0,0,0,1,0), 
\end{aligned}
\end{equation*}
and  $\Theta_{V,W}=90^\circ$, as $\dim V>\dim W$, while $\Theta_{W,V}=30^\circ$. Applying the procedure described above, we get 
\begin{equation*}
\begin{aligned}
\tilde{e}_1&=(0,0,0,1,0), &\qquad g_1&=(0,0,0,1,0), &\qquad \theta_1^\perp&=0, \\
\tilde{e}_2&=(0,1,0,0,0), & g_2&=(0,1,-\sqrt{3},0,0)/2, & \theta_2^\perp&=60^\circ,\\
\tilde{e}_3&=(1,0,0,0,0),  & g_3&=(0,0,0,0,1), & \theta_3^\perp&=90^\circ,
\end{aligned}
\end{equation*}
as principal vectors and angles for $V$ and $W^\perp$, so $\Theta_{V,W}^\perp= \Theta_{W,V}^\perp= 90^\circ$.
\end{example}

We can now compute $\Theta^\perp_{V,W}$ in terms of principal angles of $V$ and $W$, providing an interpretation for the product of their sines, studied in \cite{Afriat1957,Miao1992} but never linked to a particular angle.

\begin{theorem}\label{pr:complementary product sines}
If $V,W\subset X$ are nonzero subspaces, with principal angles $\theta_1,\ldots,\theta_m$, then
\begin{equation}\label{eq:complementary product sines}
\cos\Theta_{V,W}^\perp=\prod_{i=1}^m \sin\theta_i.
\end{equation}
\end{theorem}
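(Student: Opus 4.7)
The plan is to use the construction of principal bases for $V$ and $W^\perp$ already laid out in the paragraph preceding the theorem, then feed the resulting principal angles into the definition of the Grassmann angle. I would not try to reason intrinsically through blades; the step-by-step construction is what makes the result transparent.

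First I would verify that the $\tilde e_i$'s and $g_j$'s genuinely form associated principal bases in the sense of \eqref{eq:inner ei fj}. The $\tilde e_{p+1-i}=e_i$ are orthonormal by hypothesis. The nontrivial checks are for step 3: $\|P^\perp e_i\|^2 = 1-\cos^2\theta_i = \sin^2\theta_i$, so $g_{p+1-i}=P^\perp e_i/\sin\theta_i$ is a unit vector; for $r<i\neq j\leq m$ one has $\inner{P^\perp e_i,P^\perp e_j}=\inner{e_i,e_j}-\inner{Pe_i,Pe_j}=0$ by \eqref{eq:inner ei fj} and \eqref{eq:Pei}; these $g$'s are orthogonal to the $e_k$ with $k>m$ from step 2 (which lie in $W^\perp$ already), and orthogonal to the step 4 vectors by the Gram--Schmidt completion. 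The diagonal pairing gives $\inner{e_i,P^\perp e_i/\sin\theta_i}=\sin\theta_i=\cos(\pi/2-\theta_i)$, while off-diagonal pairings vanish because $g_{p+1-i}\in\Span(e_i,f_i)\perp e_j$ for $j\neq i$, and $e_k\perp e_j$ for $k>m$. So the principal angles of $V$ and $W^\perp$ are exactly those listed, namely $p-m$ zeros (step 2), the values $\pi/2-\theta_m\leq\ldots\leq\pi/2-\theta_{r+1}$ (step 3), and (when room permits) $r$ copies of $\pi/2$ (step 4).

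With the principal angles in hand I would split into two cases. If $p\leq n-q$, the Grassmann angle formula yields
\[ \cos\Theta_{V,W}^\perp = 1^{\,p-m}\cdot\prod_{i=r+1}^{m}\sin\theta_i\cdot 0^{\,r}. \]
For $r=0$ this is exactly $\prod_{i=1}^m\sin\theta_i$. For $r\geq 1$ both sides vanish: the left-hand side by the factor $0^r$, the right-hand side because $\theta_1=\cdots=\theta_r=0$ implies $\sin\theta_1=0$. If instead $p>n-q$, then by definition $\Theta_{V,W}^\perp=\pi/2$, so $\cos\Theta_{V,W}^\perp=0$; but $p+q>n$ forces $\dim(V\cap W)\geq p+q-n>0$, so $r\geq 1$ and $\prod_{i=1}^m\sin\theta_i=0$ as well.

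The main technical obstacle I expect is the orthonormality and pairing verification in step 3, carried out uniformly across the four disjoint index regimes of the construction --- especially checking that the step 3 vectors and the step 4 completion are mutually compatible with the diagonal form \eqref{eq:inner ei fj}. Once this is handled, the rest is bookkeeping on principal angles, and the asymmetric handling of the case $p>n-q$ (where one cannot expect the product formula to hold on its own terms) is precisely what the dimensional count on $V\cap W$ takes care of.
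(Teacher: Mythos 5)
Your proposal is correct and follows essentially the same route as the paper: both rest on the explicit construction of principal bases for $V$ and $W^\perp$ given just before the theorem, and then read off the product of cosines of the resulting principal angles. The only differences are organizational --- you fill in the ``one can check'' verification of \eqref{eq:inner ei fj} that the paper defers, and you split cases by dimension ($p\leq n-q$ versus $p>n-q$) where the paper splits by whether $\Theta_{V,W}^\perp=\frac{\pi}{2}$ (equivalently $\theta_1=0$), which lets it avoid step 4 entirely; both treatments of the degenerate cases are sound.
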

\begin{proof}
By \cref{pr:complementary simple cases}\emph{\ref{it:Theta perp pi 2}}, $\Theta_{V,W}^\perp=\frac{\pi}{2} \Leftrightarrow \theta_1=0$.
If $\Theta_{V,W^\perp}\neq \frac{\pi}{2}$ then $p=\dim V\leq\dim W^\perp$, and as $\theta_1\neq 0$ no pair is formed in step \ref{it:step 4} above. Then $\cos\Theta_{V,W^\perp}=\prod_{j=1}^{p}\cos\theta^\perp_j =\prod_{i=1}^{p}\cos\theta^\perp_{p+1-i}$, with the $\theta^\perp$'s  as above, and each $\cos\theta^\perp_{p+1-i}$ is either $1$ (step \ref{it:step 2}, if $i>m$) or $\sin \theta_i$ (step \ref{it:step 3}).
\end{proof}

In \cite{Mandolesi_Products} we give an easier proof using the following formula for the exterior product of blades, obtained by projecting one blade on the orthogonal complement of the other and using \cref{pr:Theta Pnu}.

\begin{proposition}\label{pr:exterior product} 
	$\|\nu\wedge\omega\|=\|\nu\|\|\omega\| \cos\Theta^\perp_{V,W}$ for any blades $\nu,\omega\in\Lambda X$ representing subspaces $V,W\subset X$.
\end{proposition}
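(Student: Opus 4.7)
The plan is to factor $\nu\wedge\omega$ by projecting $\omega$ onto $V^\perp$, exploit orthogonality inside the Grassmann algebra to split the norm, and recognize the remaining factor via \cref{pr:Theta Pnu}.

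First I would dispose of degenerate cases. If $V=\{0\}$ or $W=\{0\}$ the identity is trivial. If $V\cap W\neq\{0\}$, then $\nu\wedge\omega=0$ (a standard fact about blades representing non-disjoint subspaces), while \cref{pr:complementary simple cases}\emph{(\ref{it:Theta perp pi 2})} gives $\cos\Theta^\perp_{V,W}=0$, so both sides vanish.

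Assume then $V\cap W=\{0\}$ and write $\omega=w_1\wedge\cdots\wedge w_q$. Decomposing each $w_i=P_V w_i+P^\perp_V w_i$, where $P^\perp_V$ denotes orthogonal projection onto $V^\perp$, and expanding the wedge product, any term containing a factor from $V$ wedges with $\nu\in\Lambda^p V$ to zero, leaving
\[
\nu\wedge\omega \;=\; \nu\wedge\bigl(P^\perp_V w_1\wedge\cdots\wedge P^\perp_V w_q\bigr) \;=\; \nu\wedge P^\perp_V\omega,
\]
where $P^\perp_V\omega$ is the orthogonal projection of $\omega$ onto $\Lambda^q V^\perp$. Since $\nu\in\Lambda^p V$ and $P^\perp_V\omega\in\Lambda^q V^\perp$ live in mutually orthogonal subspaces, picking orthonormal bases of $V$ and $V^\perp$ and expanding shows $\|\nu\wedge P^\perp_V\omega\|=\|\nu\|\,\|P^\perp_V\omega\|$.

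For the final step I would apply \cref{pr:Theta Pnu} to $\omega$ and the subspace $V^\perp$, obtaining $\|P^\perp_V\omega\|=\|\omega\|\cos\Theta_{W,V^\perp}=\|\omega\|\cos\Theta^\perp_{W,V}$, and then invoke \cref{pr:complementary product sines} to conclude $\Theta^\perp_{W,V}=\Theta^\perp_{V,W}$, since both equal $\arccos\prod_i\sin\theta_i$ over the common list of principal angles of $V$ and $W$. Combining the pieces yields the desired formula. I expect the main (though mild) obstacle to be justifying the cross-term vanishing and the norm-factorization on orthogonal Grassmann factors; in the complex case one should verify these interact properly with the sesquilinear inner product, but since projection extends complex-linearly to $\Lambda X$ and distinct orthogonal pieces of $\Lambda X$ remain orthogonal, this is routine.
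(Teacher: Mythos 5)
Your proof is correct and follows essentially the route the paper indicates (it only sketches the argument, deferring details to \cite{Mandolesi_Products}): project one blade onto the orthogonal complement of the other's subspace, split the norm across the orthogonal Grassmann factors, and apply \cref{pr:Theta Pnu}. The only cosmetic difference is that by projecting $\nu$ onto $W^\perp$ instead of $\omega$ onto $V^\perp$ you would land directly on $\Theta_{V,W^\perp}=\Theta^\perp_{V,W}$ and could skip the appeal to the symmetry $\Theta^\perp_{W,V}=\Theta^\perp_{V,W}$ from \cref{pr:complementary product sines}.
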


This formula and \eqref{eq:complementary product sines} only hold unconditionally due to the Grassmann angle asymmetry, without which we would have to require $V\cap W=\{0\}$.
On the other hand, the formula shows $\Theta_{V,W}^\perp$ is symmetric, what can also be obtained from \eqref{eq:complementary product sines}, as principal angles do not depend on the order of $V$ and $W$ (and $\Theta^\perp_{V,W} =  0$ if either space is $\{0\}$).

\begin{corollary}\label{pr:symmetry}
Let $V,W\subset X$ be  any subspaces.
\begin{enumerate}[i)]
	\item $\Theta_{V,W}^\perp = \Theta_{W,V}^\perp$. \label{it:symmetry complementary}
	\item $\Theta_{V,W} = \Theta_{W^\perp,V^\perp}$. \label{it:Theta perp perp}
\end{enumerate}
\end{corollary}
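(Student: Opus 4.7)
The plan is to derive both parts quickly from results already established. Part (i) will follow from the manifest symmetry of the formula in \cref{pr:complementary product sines} (or equivalently from \cref{pr:exterior product}), and part (ii) will be a one-line consequence of part (i) via the involution $W\mapsto W^\perp$.

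For (i), \cref{pr:complementary product sines} gives, for $V,W\neq\{0\}$, $\cos\Theta^\perp_{V,W} = \prod_{i=1}^m \sin\theta_i$ with $m=\min\{\dim V,\dim W\}$ and $\theta_1,\ldots,\theta_m$ the principal angles of $V$ and $W$; since both the list of principal angles and $m$ are unchanged under swapping $V$ and $W$, the right-hand side is symmetric in $(V,W)$, forcing $\Theta^\perp_{V,W}=\Theta^\perp_{W,V}$. (An even cleaner argument uses \cref{pr:exterior product}: $\|\nu\wedge\omega\|=\|\omega\wedge\nu\|$ gives the identity directly.) When $V$ or $W$ equals $\{0\}$, $\Theta^\perp_{V,W}=0$ on both sides by the angle convention, so symmetry still holds.

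For (ii), the plan is to rephrase each side as a complementary angle and then invoke (i). By the definition $\Theta^\perp_{A,B}=\Theta_{A,B^\perp}$ and the identity $(W^\perp)^\perp=W$,
\[ \Theta_{V,W} \;=\; \Theta^\perp_{V,W^\perp} \qquad\text{and}\qquad \Theta_{W^\perp,V^\perp} \;=\; \Theta^\perp_{W^\perp,V}. \]
Applying part (i) to the pair $(V,W^\perp)$ yields $\Theta^\perp_{V,W^\perp}=\Theta^\perp_{W^\perp,V}$, so the two expressions agree, giving $\Theta_{V,W}=\Theta_{W^\perp,V^\perp}$. I anticipate no substantive obstacle; the only bookkeeping concern is verifying the trivial cases $V,W\in\{\{0\},X\}$ are handled consistently, which is immediate from the angle conventions together with $\Theta_{U,X}=0$ for any $U\subset X$ (\cref{pr:properties Grassmann}\emph{\ref{it:Theta zero}}).
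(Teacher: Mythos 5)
Your proposal is correct and follows essentially the same route as the paper: part (i) is read off from the symmetry of \cref{pr:exterior product} (equivalently from the product-of-sines formula \eqref{eq:complementary product sines}, since principal angles are symmetric in $V$ and $W$), with the zero-subspace cases handled by the conventions, and part (ii) is then the immediate consequence of (i) via $(W^\perp)^\perp=W$. Nothing further is needed.
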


Symmetry \emph{(\ref{it:symmetry complementary})} is a little surprising, for if $\dim V>\dim W$ these angles describe projections of volumes of distinct dimensions.
The explanation is simple: extra dimensions of $V$, already in $W^\perp$, do not affect  the projection.
\emph{(\ref{it:Theta perp perp})} reflects the fact that nonzero principal angles of $V^\perp$ and $W^\perp$ are the same as those of $V$ and $W$ \cite{Gluck1967}.
Note the reversal of $V^\perp$ and $W^\perp$.

The angle $\Theta_{V^\perp,W}$ is a little different from $\Theta_{V,W^\perp}$: its cosine is a product of sines of \emph{nonzero} principal angles, and only if $V+W$ spans $X$. 

\begin{proposition}
If $V,W\subset X$ are nonzero subspaces, with principal angles $\theta_1\leq\ldots\leq\theta_m$, and $\dim V\cap W=r$, then\footnote{If $r=m$ the product of sines is absent, so we have only $\arccos 1$.}
\begin{equation*}
\Theta_{V^\perp,W} = \Theta_{W^\perp,V} = \begin{cases}
	\arccos(1\cdot \prod_{i=r+1}^m \sin\theta_i) \ \text{ if } V+W=X, \\
	\frac{\pi}{2} \hspace{92pt} \text{ if } V+W\neq X.
\end{cases}
\end{equation*}
\end{proposition}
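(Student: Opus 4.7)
The plan is to split into two cases based on whether $V+W=X$, after first using Corollary~\ref{pr:symmetry}\emph{(\ref{it:symmetry complementary})} to reduce the symmetry assertion to one side. Since $\Theta_{V^\perp,W}=\Theta^\perp_{V^\perp,W^\perp}$ by the definition of the complementary angle, the identity $\Theta_{V^\perp,W}=\Theta_{W^\perp,V}$ will follow immediately from $\Theta^\perp_{V^\perp,W^\perp}=\Theta^\perp_{W^\perp,V^\perp}$. When $V+W\neq X$, a nonzero $x\in V^\perp\cap W^\perp$ witnesses $V^\perp\pperp W$, so Proposition~\ref{pr:properties Grassmann}\emph{(\ref{it:Theta pi2})} gives $\Theta_{V^\perp,W}=\tfrac{\pi}{2}$, matching the formula.

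When $V+W=X$, I plan to apply Theorem~\ref{pr:complementary product sines} to $V^\perp,W^\perp$, reducing the problem to showing that their principal angles are exactly $\theta_{r+1},\ldots,\theta_m$. To identify these, I will build associated principal bases directly from given ones $(e_1,\ldots,e_p)$, $(f_1,\ldots,f_q)$ for $V,W$: for $r<i\leq m$ set
\begin{equation*}
g_i=\begin{cases}(f_i-e_i\cos\theta_i)/\sin\theta_i & \text{if }\theta_i\neq\tfrac{\pi}{2},\\ f_i & \text{if }\theta_i=\tfrac{\pi}{2},\end{cases}\qquad h_i=\begin{cases}(e_i-f_i\cos\theta_i)/\sin\theta_i & \text{if }\theta_i\neq\tfrac{\pi}{2},\\ e_i & \text{if }\theta_i=\tfrac{\pi}{2}.\end{cases}
\end{equation*}
Using $\inner{e_i,f_j}=\delta_{ij}\cos\theta_i$, routine computations should give $g_i\in V^\perp$, $h_i\in W^\perp$, orthonormality of each family, and $|\inner{g_i,h_j}|=\delta_{ij}\cos\theta_i$, so after suitable sign flips the pairs $(g_i,h_i)$ realize principal angles $\theta_{r+1},\ldots,\theta_m$. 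A dimension count, using that $V+W=X$ forces $\dim V^\perp=q-r$ and $\dim W^\perp=p-r$, shows $\min\{\dim V^\perp,\dim W^\perp\}=m-r$, equal to the number of pairs constructed, so no additional principal angles appear. Theorem~\ref{pr:complementary product sines} then yields $\cos\Theta_{V^\perp,W}=\prod_{i=r+1}^m\sin\theta_i$, with the $\sin\tfrac{\pi}{2}=1$ factors from right-angle indices collapsing harmlessly.

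The main obstacle I foresee is the degeneracy of the construction at indices where $\theta_i=\tfrac{\pi}{2}$: the quotient formula must be replaced by $g_i=f_i$, $h_i=e_i$, and one then has to verify orthogonality between these altered vectors and the ones coming from the non-degenerate range. A related subtlety is excluding spurious zero principal angles of $V^\perp,W^\perp$, which is precisely where the hypothesis $V+W=X$ (equivalently $V^\perp\cap W^\perp=\{0\}$) intervenes to close the argument.
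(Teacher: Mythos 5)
Your proposal is correct, but it routes the computation differently from the paper. The paper proves the symmetry via \cref{pr:symmetry}\emph{\ref{it:Theta perp perp}} ($\Theta_{V^\perp,W}=\Theta_{W^\perp,(V^\perp)^\perp}$) and then, for $V+W=X$, simply reuses the explicit principal-basis construction for the pair $(V,W^\perp)$ given before \cref{pr:complementary product sines}: since $p+q-r=n$ the step that would produce right-angle pairs is vacuous, the angles from step \ref{it:step 2} contribute factors $1$, and those from step \ref{it:step 3} contribute $\sin\theta_i$ for $r<i\leq m$, giving $\cos\Theta_{W^\perp,V}=\prod_{i=r+1}^m\sin\theta_i$ directly as a product of cosines of principal angles of $(W^\perp,V)$. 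You instead pass to the pair $(V^\perp,W^\perp)$, identify its principal angles as $\theta_{r+1},\ldots,\theta_m$, and then invoke \cref{pr:complementary product sines} to get the product of sines; in effect you re-derive the fact (which the paper cites from Gluck) that the nonzero principal angles of $V^\perp,W^\perp$ coincide with those of $V,W$, with the hypothesis $V+W=X$ killing the extra zero angles. The two constructions are close cousins: your $h_i=(e_i-f_i\cos\theta_i)/\sin\theta_i$ is exactly the paper's normalized $P^\perp e_i$. Your route costs a little more bookkeeping --- besides the sign flip ($\inner{g_i,h_i}=-\cos\theta_i$), you must check that the vectors completing $(g_{r+1},\ldots,g_m)$ to an orthonormal basis of $V^\perp$ are automatically orthogonal to $W^\perp$ (this follows since $f_j\in\Span(e_j,g_j)$, so a vector of $V^\perp$ orthogonal to all $g_j$ is orthogonal to all $f_j$, hence lies in $V^\perp\cap W=\{0\}^{\perp}$-side of $W^\perp$); you flag this as routine and it is, but it should be stated. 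What your approach buys is independence from the specific four-step construction in \cref{sc:Complementary Grassmann angle}: it leans only on the already-proved product-of-sines formula plus a self-contained principal-basis computation for the complements.
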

\begin{proof}
$\Theta_{V^\perp,W} = \Theta_{W^\perp,V}$ by \cref{pr:symmetry}\emph{\ref{it:Theta perp perp}}, and we can assume $W\neq X$.
\SELF{Senão $\Theta_{V^\perp,W} =0$ e $r=m$. Precisa para $W^\perp\neq 0$ ter principal values}
If $V+W\neq X$ then $W^\perp\pperp V$.
\SELF{$W^\perp\cap V^\perp\neq\{0\}$}
If $V+W=X$ then $\dim W^\perp\leq\dim V$, and there is no step \ref{it:step 4} above, as $p+q-r=n$.
Any $\theta^\perp$ from step \ref{it:step 2} gives $\cos\theta^\perp=1$.
And step \ref{it:step 3} gives a $\sin\theta_i$ for each $r+1\leq i\leq m$.
\end{proof}

\begin{corollary}
$\Theta_{V^\perp,W} = \Theta_{V,W^\perp}$ if, and only if, $V\oplus W=X$ (in which case they are not $\frac{\pi}{2}$), or $V+W\neq X$ and $V\cap W\neq\{0\}$ (they are $\frac{\pi}{2}$).
\end{corollary}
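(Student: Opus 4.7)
The plan is to combine the two formulas just established and do a short case split on whether $V+W=X$ and whether $V\cap W=\{0\}$. Throughout, let $\theta_1\leq\ldots\leq\theta_m$ denote the principal angles of $V$ and $W$, and $r=\dim V\cap W$, so $\theta_1=\ldots=\theta_r=0$ and $\theta_{r+1},\ldots,\theta_m>0$ by the definition of $r$.

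First I would record what each side looks like. By \cref{pr:complementary product sines}, $\cos\Theta_{V,W^\perp}=\prod_{i=1}^m\sin\theta_i$, so $\Theta_{V,W^\perp}=\frac{\pi}{2}$ if and only if some $\sin\theta_i$ vanishes, which happens exactly when $r\geq 1$, i.e.\ $V\cap W\neq\{0\}$. By the preceding proposition, $\Theta_{V^\perp,W}=\frac{\pi}{2}$ when $V+W\neq X$, while if $V+W=X$ then $\cos\Theta_{V^\perp,W}=\prod_{i=r+1}^m\sin\theta_i$ (empty product equal to $1$ if $r=m$), a strictly positive number, so $\Theta_{V^\perp,W}<\frac{\pi}{2}$ in that case.

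Next I would work through the four cases. If $V\oplus W=X$ then $r=0$, both formulas reduce to $\arccos\prod_{i=1}^m\sin\theta_i$, hence they coincide; since all $\theta_i$ are nonzero the common value is strictly less than $\frac{\pi}{2}$. If $V+W\neq X$ and $V\cap W\neq\{0\}$, the observations above give $\Theta_{V^\perp,W}=\Theta_{V,W^\perp}=\frac{\pi}{2}$. In the remaining case $V+W=X$ with $V\cap W\neq\{0\}$, we have $\Theta_{V,W^\perp}=\frac{\pi}{2}$ but $\Theta_{V^\perp,W}<\frac{\pi}{2}$ by the previous paragraph, so the angles differ; and the symmetric case $V+W\neq X$ with $V\cap W=\{0\}$ is handled the same way (now $\Theta_{V^\perp,W}=\frac{\pi}{2}$ while $\cos\Theta_{V,W^\perp}=\prod_{i=1}^m\sin\theta_i>0$ since every $\theta_i>0$).

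There is no real obstacle here: the argument is essentially bookkeeping, relying entirely on \cref{pr:complementary product sines} and the preceding proposition. The only thing to be slightly careful about is the degenerate subcase $r=m$ in the third case (where the product for $\Theta_{V^\perp,W}$ is empty and equals $1$, giving $\Theta_{V^\perp,W}=0$), and the trivial-subspace edge cases $V=\{0\}$ or $V=X$, which one checks directly fall under $V\oplus W=X$ exactly when the two angles coincide. These edge cases do not affect the statement.
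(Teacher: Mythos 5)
Your proof is correct and follows exactly the route the paper intends: the corollary is stated without an explicit proof as an immediate consequence of \cref{pr:complementary product sines} and the preceding proposition, and your case split on $V+W=X$ versus not and $V\cap W=\{0\}$ versus not, using the two product-of-sines formulas, is precisely that derivation. The attention to the empty-product subcase and the trivial subspaces is appropriate but, as you note, does not change anything.
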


\Cref{fig:projection-symmetries} exhibits the symmetries of the Grassmann angle.

\begin{figure}
	\centering
	\includegraphics[width=0.3\linewidth]{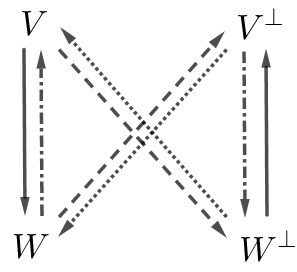}
	\caption{Symmetries. Pairs of same kind arrows are projections with equal Grassmann angles and projection factors $\pi$. Vertical pairs become equal if $\dim V=\dim W$, otherwise a pair has $\pi=0$, both if a principal angle is $\frac{\pi}{2}$. Diagonal pairs are equal if $V\oplus W=X$, otherwise a pair has $\pi=0$, both if $V+W\neq X$ and $V\cap W\neq\{0\}$.}
	\label{fig:projection-symmetries}
\end{figure}

\begin{example}
In \cref{ex:complementary}, $V+W\neq\R^5$, so $\Theta_{W^\perp,V} = 90^\circ$.
Or we can get $\Theta_{W^\perp,V}$ from principal angles of $V$ and $W^\perp$, and deduce $V+W\neq\R^5$.
\end{example}

\begin{example}
	In \cref{ex:complementary} again, let $U=\Span(f_1,f_2,f_3)$, where $f_3=(1,0,0,0,1)/\sqrt{2}$.
	$V$ and $U$ have principal angles $(0,30^\circ,45^\circ)$, while $V$ and $U^\perp$ have only $(45^\circ,60^\circ)$, as $\dim U^\perp=2$.
	Then:
	\begin{itemize}
	\item $\Theta_{V,U} = \Theta_{U,V} = \Theta_{V^\perp,U^\perp} = \Theta_{U^\perp,V^\perp} = \arccos (1\cdot \frac{\sqrt{3}}{2}\cdot \frac{\sqrt{2}}{2}) \cong 52.2^\circ$, so volumes shrink by a factor $\frac{\sqrt{6}}{4}$ when orthogonally projected between $V$ and $U$, as do areas between $V^\perp$ and $U^\perp$.
	\item $\Theta_{V,U}^\perp = \Theta_{U,V}^\perp  = \arccos (0\cdot \frac{1}{2}\cdot \frac{\sqrt{2}}{2}) = 90^\circ$, so volumes vanish when orthogonally projected from $V$ to $U^\perp$, or from $U$ to $V^\perp$, as $V\cap U\neq\{0\}$. Another reason is that $U^\perp$ and $V^\perp$ have smaller dimensions, but the result would be the same for complements in $\R^6$.
	\item As $V+U=\R^5$, $\Theta_{V^\perp,U} = \Theta_{U^\perp,V} = \arccos (\frac{1}{2}\cdot \frac{\sqrt{2}}{2}) \cong 69.3^\circ$, so areas shrink by a factor $\frac{\sqrt{2}}{4}$ when orthogonally projected from $V^\perp$ to $U$, or from $U^\perp$ to $V$. We can also get this from the principal angles of $V$ and $U^\perp$, and then conclude that $V+U=\R^5$.
	\end{itemize} 
\end{example}


Complementary Grassmann angles have properties resembling some from \cref{pr:properties Grassmann}, but with a different meaning.

\begin{proposition}
Let $U,V,W,Y\subset X$ be subspaces, and $P=\Proj_W$.
\begin{enumerate}[i)]
\item $\Theta^\perp_{V,W} = \Theta^\perp_{V,P(V)}$.\label{it:complem P(V)}
\item If $U\perp V+W$ then $\Theta^\perp_{V,W}=\Theta^\perp_{V,W\oplus U}$.\label{it:complementary W+U}
\item If $V,W\subset Y$ then $\Theta^\perp_{V,W}$  is the same whether the complement of $W$ is taken in $Y$ or $X$.\label{it:complementary in U}
\end{enumerate}
\end{proposition}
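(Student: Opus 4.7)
The plan is to derive all three items from \cref{pr:properties Grassmann}\emph{\ref{it:Theta W+U}} (hereafter (vi)), which says that enlarging $W$ by a summand orthogonal to $V+W$ leaves $\Theta_{V,W}$ unchanged. In each item, the two complementary angles being compared correspond to orthogonal complements that differ by an additional summand, and the task reduces to verifying that this summand is orthogonal to $V$ and to the smaller complement, so (vi) can absorb it.

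Item (\emph{\ref{it:complementary in U}}) is the most direct: for $V,W\subset Y\subset X$, one has $W^{\perp_X}=W^{\perp_Y}\oplus Y^{\perp_X}$, and since $V,W^{\perp_Y}\subset Y$, the summand $Y^{\perp_X}$ is orthogonal to $V+W^{\perp_Y}$. A single application of (vi) gives $\Theta_{V,W^{\perp_Y}}=\Theta_{V,W^{\perp_X}}$.

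For (\emph{\ref{it:complementary W+U}}), with $U\perp V+W$ I would decompose $W^\perp=(W\oplus U)^\perp\oplus U$ (the two summands lie in $W^\perp$, are orthogonal, and their dimensions add to $\dim W^\perp$). Then $U\perp V$ by hypothesis and $U\perp(W\oplus U)^\perp$ by construction, so (vi) yields $\Theta^\perp_{V,W\oplus U}=\Theta_{V,(W\oplus U)^\perp}=\Theta_{V,(W\oplus U)^\perp\oplus U}=\Theta_{V,W^\perp}=\Theta^\perp_{V,W}$.

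For (\emph{\ref{it:complem P(V)}}), let $P=\Proj_W$ and let $N$ be the orthogonal complement of $P(V)$ in $W$, so $P(V)^\perp=W^\perp\oplus N$. The main step is $N\perp V$: for $n\in N$ and $v\in V$, writing $\langle n,v\rangle=\langle n,Pv\rangle+\langle n,v-Pv\rangle$, the first term vanishes since $Pv\in P(V)$ and $n\perp P(V)$, and the second since $n\in W$ while $v-Pv\in W^\perp$. Combined with $N\subset W\perp W^\perp$, this gives $N\perp V+W^\perp$, and (vi) delivers $\Theta_{V,W^\perp}=\Theta_{V,W^\perp\oplus N}=\Theta_{V,P(V)^\perp}$. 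The only mild obstacle is spotting this decomposition and verifying $N\perp V$; an alternative proof could invoke the product-of-sines formula \eqref{eq:complementary product sines} with \cref{pr:P(V)}, noting that $V$ and $P(V)$ share all non-$\frac{\pi}{2}$ principal angles of $V$ and $W$ while the discarded $\frac{\pi}{2}$ angles contribute the trivial factor $\sin\frac{\pi}{2}=1$.
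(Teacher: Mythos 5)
Your proof is correct, and for items (ii) and (iii) of the first kind it diverges from the paper only in item (i) and (ii); item (iii) is argued exactly as in the paper, via \cref{pr:properties Grassmann}\emph{\ref{it:Theta W+U}} applied to the summand $Y^\perp$. For (i) and (ii), however, you take a genuinely different and more uniform route: in each case you exhibit the two orthogonal complements as differing by an explicit orthogonal summand ($N=P(V)^\perp\cap W$ in (i), $U$ itself in (ii)), verify that this summand is orthogonal to $V$ plus the smaller complement, and absorb it with the same property \emph{\ref{it:Theta W+U}}. The paper instead proves (i) by combining the product-of-sines formula \eqref{eq:complementary product sines} with \cref{pr:P(V)} (the discarded principal angles are all $\frac{\pi}{2}$ and contribute factors $\sin\frac{\pi}{2}=1$), treating the degenerate case $V\perp W$ separately, and then deduces (ii) from (i) via the observation that $\Proj_{W\oplus U}$ and $\Proj_W$ agree on $V$. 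Your version buys uniformity and independence from the principal-angle machinery: all three items follow from a single mechanism with no case split, and your verification that $N\perp V$ (splitting $\langle n,v\rangle=\langle n,Pv\rangle+\langle n,v-Pv\rangle$) is complete. The paper's version buys brevity and makes visible the principal-angle content of the statement; you correctly note it as an alternative at the end. Both are valid.
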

\begin{proof}
(\!\emph{\ref{it:complem P(V)}}) Follows from \eqref{eq:complementary product sines} and \cref{pr:P(V)}, if $V\not\perp W$, otherwise both angles are $0$.  
(\!\emph{\ref{it:complementary W+U}})  $\Proj_{W\oplus U}(V) = P(V)$.  
(\!\emph{\ref{it:complementary in U}}) \Cref{pr:properties Grassmann}\emph{\ref{it:Theta W+U}} with $U=W^\perp\cap Y^\perp$ gives $\Theta_{V,W^\perp\cap Y} = \Theta_{V,W^\perp}$.
\SELF{$U \perp V+(W^\perp\cap Y)$ and $W^\perp=(W^\perp\cap Y)\oplus U$}
\end{proof}

(\!\emph{\ref{it:complem P(V)}}) is not the same as \cref{pr:properties Grassmann}\emph{\ref{it:Theta P(V)}}, which gives $\Theta^\perp_{V,W} = \Theta_{V,P^\perp(V)}$ for $P^\perp=\Proj_{W^\perp}$. Combining them we get $\Theta_{V,P^\perp(V)} = \Theta_{V,P(V)^\perp}$. 
Likewise, (\!\emph{\ref{it:complementary W+U}}) is not the same as \cref{pr:properties Grassmann}\emph{\ref{it:Theta W+U}}.

In the complex case, $(W_\R)^\perp = (W^\perp)_\R$
\SELF{$x\perp_\R W \Rightarrow$\\ $\operatorname{Re}\inner{x,w}=0 \ \forall w \Rightarrow$\\ $\operatorname{Im}\inner{x,w}\!=\!-\operatorname{Re}\inner{x,i w}\!=\!0$ \\ $\Rightarrow \ x\perp_\C W$}
for any subspace $W\subset X$, even though the first is a $\R$-orthogonal complement and the second is $\C$-orthogonal.
So \cref{pr:cos underlying real Theta} gives:

\begin{proposition}\label{pr:cos underlying real Theta perp}
For complex subspaces $V,W\subset X$:
\begin{enumerate}[i)]
\item $\cos\Theta^\perp_{V_\R,W_\R} = \cos^2 \Theta^\perp_{V,W}$.
\item $\Theta^\perp_{V_\R,W_\R} \geq \Theta^\perp_{V,W}$, with equality if, and only if, $V\perp W$ or $V\cap W\neq\{0\}$.
\end{enumerate}
\end{proposition}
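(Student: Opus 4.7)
The plan is to apply \cref{pr:cos underlying real Theta} directly, using the substitution $W \to W^\perp$, after invoking the key identification $(W_\R)^\perp = (W^\perp)_\R$ that was noted just before the statement. This is precisely the setup the paper has arranged.

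First I would write $\Theta^\perp_{V_\R,W_\R} = \Theta_{V_\R,(W_\R)^\perp} = \Theta_{V_\R,(W^\perp)_\R}$, where the first equality is the definition of the complementary Grassmann angle and the second uses $(W_\R)^\perp = (W^\perp)_\R$. Then part \emph{(i)} follows by applying \cref{pr:cos underlying real Theta}\emph{(i)} to the pair $(V,W^\perp)$:
\[
\cos\Theta^\perp_{V_\R,W_\R} = \cos\Theta_{V_\R,(W^\perp)_\R} = \cos^2\Theta_{V,W^\perp} = \cos^2\Theta^\perp_{V,W}.
\]

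For part \emph{(ii)}, the inequality is immediate from \cref{pr:cos underlying real Theta}\emph{(ii)} applied to $(V,W^\perp)$: $\Theta_{V_\R,(W^\perp)_\R} \geq \Theta_{V,W^\perp}$, which is exactly $\Theta^\perp_{V_\R,W_\R} \geq \Theta^\perp_{V,W}$. The only step requiring a bit of care is translating the equality conditions from \cref{pr:cos underlying real Theta}\emph{(ii)}, which read ``$V\subset W^\perp$ or $V\pperp W^\perp$''. The first condition, $V\subset W^\perp$, is equivalent to $V\perp W$. The second, $V\pperp W^\perp$, means $V\cap (W^\perp)^\perp \neq \{0\}$, i.e. $V\cap W\neq\{0\}$. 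These are exactly the two equality conditions stated.

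The proof is essentially bookkeeping; no real obstacle arises, since all the work has been done in \cref{pr:cos underlying real Theta}. The only thing to keep straight is that ``$\perp$'' has two meanings here (real vs. complex orthogonal complement), and the identification $(W_\R)^\perp = (W^\perp)_\R$ is what allows them to be freely exchanged in this argument.
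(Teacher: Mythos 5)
Your proposal is correct and matches the paper's own argument exactly: the paper also notes $(W_\R)^\perp = (W^\perp)_\R$ immediately before the statement and then derives the proposition by applying \cref{pr:cos underlying real Theta} to the pair $(V,W^\perp)$, with the same translation of the equality conditions ($V\subset W^\perp \Leftrightarrow V\perp W$ and $V\pperp W^\perp \Leftrightarrow V\cap W\neq\{0\}$). Nothing further is needed.
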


\begin{example}
In \cref{ex:complex principal angles}, $\Theta^\perp_{V,W}=\arccos(\frac{\sqrt{2}}{2}\cdot\frac{\sqrt{3}}{2})\cong 52.2^\circ$ and $\Theta^\perp_{V_\R,W_\R}=\arccos(\frac{\sqrt{2}}{2}\cdot\frac{\sqrt{2}}{2}\cdot\frac{\sqrt{3}}{2}\cdot\frac{\sqrt{3}}{2}) \cong 68^\circ$.
So $4$-dimensional volumes in $V$ (resp. $W$) contract by $\frac{3}{8}$ when orthogonally projected on $W^\perp$ (resp. $V^\perp$).
\end{example}

\subsection{Relation between $\Theta_{V,W}$ and $\Theta^\perp_{V,W}$}\label{sc:Theta Theta perp}

The relation between $\Theta_{V,W}$ and $\Theta^\perp_{V,W}$ is more flexible than the usual one between an angle and its complement, but they are not totally independent, and there are restrictions on the values they can assume.

\begin{proposition}\label{pr:zeta}
Let $V,W\subset X$ be subspaces, with $V\neq\{0\}$. Then:
\begin{enumerate}[i)]
\item $0\leq \cos^2\Theta_{V,W} + \cos^2\Theta_{V,W}^\perp \leq 1$. \label{it:range zeta}
\item $\frac{\pi}{2} \leq \Theta_{V,W} + \Theta_{V,W}^\perp \leq \pi$; \label{it:range Theta}
\item $\Theta_{V,W} + \Theta_{V,W}^\perp = \frac{\pi}{2}\ \Leftrightarrow\ \dim V=1$, or $V\perp W$, or $V\subset W$. \label{it:zeta 1}
\item $\Theta_{V,W} + \Theta_{V,W}^\perp = \pi\ \Leftrightarrow\ V\cap W\neq\{0\}$ and $V\cap W^\perp\neq\{0\}$. \label{it:zeta 0}
\end{enumerate}
\end{proposition}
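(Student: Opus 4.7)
The plan is to rewrite everything in terms of the principal angles $\theta_1\le\ldots\le\theta_m$ of $V$ and $W$, setting $a_i=\cos^2\theta_i\in[0,1]$. By \cref{pr:complementary product sines} we always have $\cos^2\Theta^\perp_{V,W}=\prod_i(1-a_i)$; by the definition of the Grassmann angle, $\cos^2\Theta_{V,W}=\prod_i a_i$ when $\dim V\le\dim W$, and equals $0$ otherwise. Part (i) then reduces to the algebraic inequality $\prod a_i+\prod(1-a_i)\le 1$ for $a_i\in[0,1]$, which I would derive from the pointwise estimates $\prod_{i=1}^m a_i\le a_1$ and $\prod_{i=1}^m(1-a_i)\le 1-a_1$ obtained by dropping the remaining sub-unit factors; when $\dim V>\dim W$ only the second product survives, and is still $\le 1$. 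Part (ii) then drops out at once: writing $\alpha=\Theta_{V,W}$ and $\beta=\Theta^\perp_{V,W}\in[0,\pi/2]$, the bound $\cos^2\alpha+\cos^2\beta\le 1$ gives $\cos\beta\le\cos(\pi/2-\alpha)$, whence $\alpha+\beta\ge\pi/2$; the upper bound $\alpha+\beta\le\pi$ is automatic.

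For (iii), equality $\alpha+\beta=\pi/2$ is equivalent to equality in (i), and I would case-split on dimensions. If $\dim V>\dim W$, equality forces every $\sin\theta_i=1$, i.e.\ every $\cos\theta_i=0$; combining this with \cref{pr:Proj diagonal}, which already gives $Pe_i=0$ for indices $i>m$, shows that the entire principal basis of $V$ is orthogonal to $W$, so $V\perp W$. If $\dim V\le\dim W$ and $\dim V=1$, then $m=1$ and $a_1+(1-a_1)=1$ is automatic, accounting for the ``$\dim V=1$'' clause. If $\dim V\ge 2$, I would use a short probabilistic interpretation: taking independent Bernoulli variables $X_i$ with $P(X_i=1)=a_i$, the two products are the probabilities of the disjoint events ``all $X_i=1$'' and ``all $X_i=0$''; they sum to $1$ iff almost surely the $X_i$ are constant, which under independence forces every $a_i\in\{0,1\}$ and all of them to agree. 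All $\theta_i=0$ gives $V\subset W$, while all $\theta_i=\pi/2$ together with $\dim V\le\dim W$ gives $V\perp W$. The converses of the three clauses are direct substitutions into the product formulas.

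Part (iv) is the cleanest: since $\alpha,\beta\in[0,\pi/2]$, the equation $\alpha+\beta=\pi$ forces $\alpha=\beta=\pi/2$. By \cref{pr:properties Grassmann}\emph{\ref{it:Theta pi2}} this is $V\pperp W$, i.e.\ $V\cap W^\perp\ne\{0\}$, and by \cref{pr:complementary simple cases}\emph{\ref{it:Theta perp pi 2}} the condition $\Theta^\perp_{V,W}=\pi/2$ is $V\cap W\ne\{0\}$; conjoining produces the stated equivalence. The main obstacle I expect is the tight equality case in (iii) when $\dim V\ge 2$: one must rule out any mixed pattern of $0$'s and $1$'s among the $a_i$'s, and this is exactly where the Bernoulli viewpoint is useful because it turns the algebraic rigidity into the transparent statement that two independent coin tosses can only agree almost surely when each is deterministic and they match.
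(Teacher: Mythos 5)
Your proof is correct and follows essentially the same route as the paper's: reduce to principal angles, bound $\prod_i\cos^2\theta_i+\prod_i\sin^2\theta_i$ by $\cos^2\theta_1+\sin^2\theta_1$ by dropping factors, derive (ii) from (i) by comparing $\cos\Theta^\perp_{V,W}$ with $\sin\Theta_{V,W}$, characterize equality for (iii), and read (iv) off propositions \ref{pr:properties Grassmann} and \ref{pr:complementary simple cases}. The only substantive difference is your Bernoulli argument for the rigidity statement that $\prod_i a_i+\prod_i(1-a_i)=1$ (with $m\geq 2$) forces all $a_i$ equal and in $\{0,1\}$ --- the paper instead traces equality through its chain of inequalities, which amounts to the same computation --- and you should dispose separately of the degenerate case $W=\{0\}$ (no principal angles, but $\cos^2\Theta_{V,W}+\cos^2\Theta^\perp_{V,W}=0+1$ and $V\perp W$ vacuously), as the paper does in its first line.
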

\begin{proof}
Let $\xi_{V,W} = \cos^2\Theta_{V,W} + \cos^2\Theta_{V,W}^\perp$.
\emph{(\ref{it:range zeta})} If $W=\{0\}$ then $\xi_{V,W}=1$, and otherwise $V$ and $W$ have principal angles $\theta_1\leq\ldots\leq\theta_m$  and $\xi_{V,W} \leq \prod_{i=1}^m \cos^2\theta_i + \prod_{i=1}^m \sin^2\theta_i \leq \cos^2\theta_1 + \sin^2\theta_1  =1$. 
\emph{(\ref{it:range Theta})} If $\Theta_{V,W}+\Theta_{V,W}^\perp < \frac{\pi}{2}$ then $\cos\Theta_{V,W}^\perp > \sin \Theta_{V,W}$ and $\xi_{V,W} > \cos^2\Theta_{V,W} + \sin^2\Theta_{V,W} = 1$.
\emph{(\ref{it:zeta 1})} $\Theta_{V,W}^\perp = \frac{\pi}{2} - \Theta_{V,W} \Leftrightarrow \xi_{V,W}=1$, and the inequalities used to prove \emph{\ref{it:range zeta}} are equalities if, and only if, all $\theta_i$'s are $\frac{\pi}{2}$, or $\dim V\leq \dim W$ and either $m=1$ or all $\theta_i$'s are 0. 
\emph{(\ref{it:zeta 0})} $\Theta_{V,W}=\Theta_{V,W}^\perp=\frac{\pi}{2} \Leftrightarrow V\pperp W$ and $V\cap W\neq\{0\}$.\SELF{Usa \ref{pr:properties Grassmann}\ref{it:Theta pi2} and \ref{pr:complementary simple cases}\ref{it:Theta perp pi 2}} 
\end{proof}

Note that all possibilities happen even in $\R^3$, as planes $V$ and $W$ can form any angle $0\leq\Theta_{V,W}\leq\frac{\pi}{2}$, and $\Theta_{V,W}^\perp=\frac{\pi}{2}$.
If $V=\{0\}$ then $\Theta_{\{0\},W} = \Theta_{\{0\},W}^\perp = 0$ and $\cos^2\Theta_{\{0\},W} + \cos^2\Theta_{\{0\},W}^\perp =2$.

A reason why we can have $\cos^2\Theta_{V,W} + \cos^2\Theta_{V,W}^\perp<1$ is that it equals $\|P\nu\|^2 + \|P^\perp\nu\|^2$, where $\nu$ is an unit blade representing $V$, $P=\Proj_W$ and $P^\perp=\Proj_{W^\perp}$.
If the conditions in \emph{(\ref{it:zeta 1})} are not satisfied, $\nu$ written in terms of bases of $W$ and $W^\perp$ will have mixed components, which are neither in $P\nu$ nor in $P^\perp\nu$, so that $\|P\nu\|^2 + \|P^\perp\nu\|^2<\|\nu\|^2$.
In \cref{sc:Pythagorean trigonometric} we show which squared cosines to add up to get 1.

\begin{figure}
	\centering
	\includegraphics[width=0.7\linewidth]{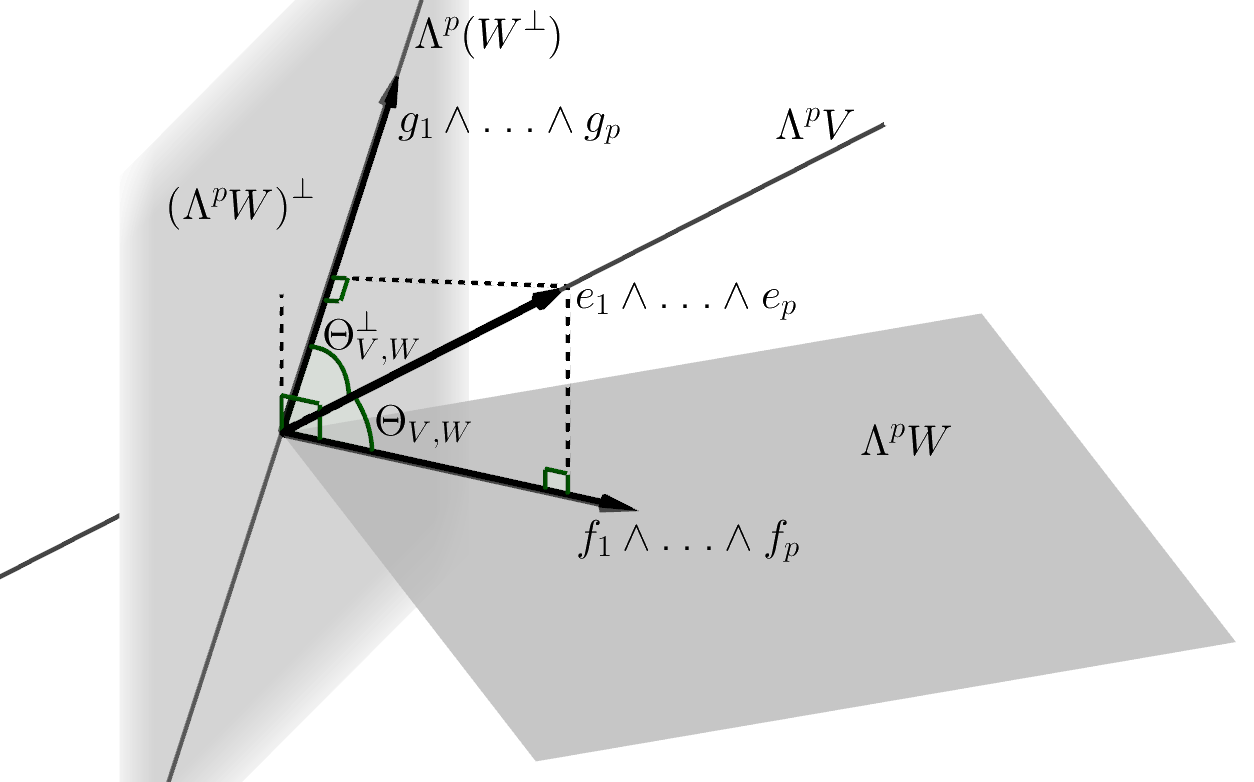}
	\caption{$\Theta_{V,W}$ and $\Theta_{V,W}^\perp$ as angles in $\Lambda^p X$ ($p=\dim V$).}
	\label{fig:complementary-total-angle}
\end{figure}

\Cref{fig:complementary-total-angle}, representing $\Theta_{V,W}$ and $\Theta_{V,W}^\perp$ in $\Lambda^p X$ ($p=\dim V$), may help understand their relation.
For simplicity, it shows $\Lambda^p(W^\perp)$ as a line spanned by $g_1\wedge\ldots\wedge g_p$, where the $g$'s form a principal basis of $W^\perp$, but we can have $\dim \Lambda^p(W^\perp)>1$ or  $\Lambda^p(W^\perp)=\{0\}$.

Results might look more natural in terms of $\Phi_{V,W} = \frac{\pi}{2}-\Theta_{V,W}^\perp$. For example, $\Phi_{V,W}=0 \Leftrightarrow V\cap W\neq\{0\}$, $\Phi_{V,W}=\frac{\pi}{2} \Leftrightarrow V\perp W$, and $\sin\Phi_{V,W} = \prod_{i=1}^m \sin\theta_i$.
But the geometric interpretation of  $\Phi_{V,W}$ in $\Lambda^p X$ would not be as natural as that of $\Theta_{V,W}^\perp$ in the figure.

With some information on the minimal and directed maximal angles we can further restrict the admissible values of $\Theta_{V,W}$ and $\Theta_{V,W}^\perp$.

\begin{definition}
Given nonzero subspaces $V,W\subset X$, the \emph{angular range} of $V$ w.r.t. $W$ is the length $\Delta\theta_{V,W}= \theta^{\max}_{V,W} - \theta^{\min}_{V,W} \in [0,\frac{\pi}{2}]$ of the interval $\left[\theta^{\min}_{V,W},\,\theta^{\max}_{V,W}\right]$ of possible angles between a nonzero $v\in V$ and $W$. 
\SELF{$\Delta\theta_{V,W}\neq\Delta\theta_{W,V}$.}
\end{definition}

\begin{proposition}\label{pr:cos+cos}
	Let $V,W\subset X$ be nonzero subspaces, and $p=\dim V$. 
	\begin{enumerate}[i)]
		\item If $p=1$ then \label{it:i}
		$\begin{cases}
			\Theta_{V,W} + \Theta_{V,W}^\perp = \frac{\pi}{2}, \\
			\cos\Theta_{V,W} + \cos\Theta_{V,W}^\perp \geq 1.
		\end{cases}$
		\item If $p=2$ then \label{it:ii}
		$\begin{cases}
			\Theta_{V,W} + \Theta_{V,W}^\perp \geq \frac{\pi}{2} + \Delta\theta_{V,W}, \\ 
			\cos\Theta_{V,W} + \cos\Theta_{V,W}^\perp = \cos \Delta\theta_{V,W}. 
		\end{cases}$
		\item If $p>2$ then \label{it:iii}
		$\begin{cases}
			\Theta_{V,W} + \Theta_{V,W}^\perp \geq \frac{\pi}{2} + \Delta\theta_{V,W}, \\
			\cos\Theta_{V,W} + \cos\Theta_{V,W}^\perp \leq \cos \Delta\theta_{V,W}.
		\end{cases}$
	\end{enumerate} 
	Equalities in {(\ref{it:i})} or {(\ref{it:ii})} happen if, and only if, $\Theta_{V,W}=\frac{\pi}{2}$ or $\Theta^\perp_{V,W}=\frac{\pi}{2}$.
	In {(\ref{it:iii})}, if and only if one of the following conditions is satisfied:
	\begin{enumerate}[A)]
		\item $\dim V\cap W^\perp\geq p-1$, in which case $\Theta_{V,W}=\frac{\pi}{2}$. \label{it:A}
		\item $\dim V\cap W\geq p-1$, in which case $\Theta_{V,W}^\perp=\frac{\pi}{2}$. \label{it:B}
		\item $\Delta\theta_{V,W}=\frac{\pi}{2}$, which happens if, and only if, $\Theta_{V,W}=\Theta_{V,W}^\perp=\frac{\pi}{2}$. \label{it:C}
	\end{enumerate}
\end{proposition}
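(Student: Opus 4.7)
Throughout, I would use the explicit formulas $\cos\Theta_{V,W}=\prod_{i=1}^m\cos\theta_i$ (if $p\le q$, else $0$) and $\cos\Theta_{V,W}^\perp=\prod_{i=1}^m\sin\theta_i$ from \cref{pr:complementary product sines}, where $\theta_1\le\ldots\le\theta_m$ are the principal angles and $m=\min\{p,q\}$, $q=\dim W$. Recall also that $\theta_{V,W}^{\min}=\theta_1$, while $\theta_{V,W}^{\max}=\theta_m$ if $p\le q$ and $\theta_{V,W}^{\max}=\frac\pi 2$ otherwise.

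\textbf{Case $p=1$.} This is immediate from \cref{pr:complementary simple cases}\emph{\ref{it:complementary line}}: setting $\alpha=\Theta_{V,W}$, the inequality reduces to $\cos\alpha+\sin\alpha\ge 1$ on $[0,\frac\pi 2]$, which follows from squaring ($(\cos\alpha+\sin\alpha)^2=1+\sin 2\alpha\ge 1$), with equality only at the endpoints $\alpha=0,\frac\pi 2$, i.e.\ when one of the two angles is $\frac\pi 2$.

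\textbf{Case $p=2$.} If $p\le q$, then $m=2$ and the product-to-sum identity gives
\[
\cos\Theta_{V,W}+\cos\Theta_{V,W}^\perp=\cos\theta_1\cos\theta_2+\sin\theta_1\sin\theta_2=\cos(\theta_2-\theta_1)=\cos\Delta\theta_{V,W}.
\]
For the first inequality, I would bound each factor separately: $\cos\Theta_{V,W}\le\cos\theta_2$ gives $\Theta_{V,W}\ge\theta_2$, and $\cos\Theta_{V,W}^\perp\le\sin\theta_1$ gives $\Theta_{V,W}^\perp\ge\frac\pi 2-\theta_1$; adding these yields $\Theta_{V,W}+\Theta_{V,W}^\perp\ge\frac\pi 2+\Delta\theta_{V,W}$. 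Equality in either bound requires $\cos\theta_1=1$ or $\cos\theta_2=0$, so $\theta_1=0$ (i.e.\ $\Theta_{V,W}^\perp=\frac\pi 2$) or $\theta_2=\frac\pi 2$ (i.e.\ $\Theta_{V,W}=\frac\pi 2$). The subcase $p>q=1$ is handled directly: $\Theta_{V,W}=\frac\pi 2$, $\Theta_{V,W}^\perp=\frac\pi 2-\theta_1$, $\Delta\theta_{V,W}=\frac\pi 2-\theta_1$, so both inequalities become equalities, consistent with $\Theta_{V,W}=\frac\pi 2$.

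\textbf{Case $p>2$.} The key algebraic step is the factorization
\[
\cos\Theta_{V,W}+\cos\Theta_{V,W}^\perp=\cos\theta_1\cos\theta_m\cdot A+\sin\theta_1\sin\theta_m\cdot B,
\]
where $A=\prod_{1<i<m}\cos\theta_i\le 1$ and $B=\prod_{1<i<m}\sin\theta_i\le 1$ (when $p\le q$; if $p>q$, the first term is $0$ and I use $\cos\Theta^\perp\le\sin\theta_1$ directly). Hence this sum is at most $\cos\theta_1\cos\theta_m+\sin\theta_1\sin\theta_m=\cos(\theta_m-\theta_1)=\cos\Delta\theta_{V,W}$ (in the $p>q$ case, $\cos\Delta\theta=\sin\theta_1$). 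The inequality $\Theta_{V,W}+\Theta_{V,W}^\perp\ge\frac\pi 2+\Delta\theta_{V,W}$ follows as in case (ii) from $\cos\Theta_{V,W}\le\cos\theta_m$ (trivially true if $p>q$) and $\cos\Theta_{V,W}^\perp\le\sin\theta_1$.

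\textbf{Equality in (iii) — the main obstacle.} This requires a careful case analysis: equality in the sum inequality demands $\cos\theta_1\cos\theta_m(1-A)=0$ and $\sin\theta_1\sin\theta_m(1-B)=0$. Since $A$ and $B$ cannot both equal $1$ (no $\theta_i$ satisfies $\cos\theta_i=\sin\theta_i=1$), the options are: $\theta_m=\frac\pi 2$ together with ($\theta_1=0$ or $B=1$); or $\theta_1=0$ together with ($\theta_m=\frac\pi 2$ or $A=1$); or $\theta_1=\frac\pi 2$ (forcing all $\theta_i=\frac\pi 2$). I would then translate each option into the language of (A), (B), (C). For instance, $B=1$ forces $\theta_2=\ldots=\theta_{m-1}=\frac\pi 2$, so combined with $\theta_m=\frac\pi 2$ and $p\le q$ (hence $m=p$), this gives $\#\{\theta_i=\frac\pi 2\}\ge p-1$, i.e.\ $\dim V\cap W^\perp\ge p-1$, which is (A); similarly $A=1$ with $\theta_1=0$ yields (B). The mixed case $\theta_1=0,\theta_m=\frac\pi 2$ is exactly $\Delta\theta_{V,W}=\frac\pi 2$, i.e.\ (C), and then $\Theta_{V,W}=\Theta_{V,W}^\perp=\frac\pi 2$ by \cref{pr:properties Grassmann}\emph{\ref{it:Theta pi2}} and \cref{pr:complementary simple cases}\emph{\ref{it:Theta perp pi 2}}. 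For $p>q$, the extra dimensions automatically lie in $V\cap W^\perp$, which I would fold into the bookkeeping for (A) and (B). Checking the converse (each of (A), (B), (C) yields equality) is a routine direct substitution in the two closed-form products.
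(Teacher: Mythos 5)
Your proof is correct and follows essentially the same route as the paper: both reduce everything to the product formulas $\cos\Theta_{V,W}=\prod_i\cos\theta_i$ and $\cos\Theta^\perp_{V,W}=\prod_i\sin\theta_i$, bound each product by its extreme factor to get $\cos\Theta_{V,W}+\cos\Theta^\perp_{V,W}\leq\cos(\theta_m-\theta_1)=\cos\Delta\theta_{V,W}$, treat $p>\dim W$ separately via $\cos\Theta^\perp_{V,W}\leq\sin\theta_1$, and read off conditions (A), (B), (C) from when the discarded factors are trivial. The only (harmless) divergence is the angle-sum inequality: the paper deduces it from the cosine inequality via $\sin(\Theta_{V,W}+\Theta^\perp_{V,W})\leq\cos\Theta^\perp_{V,W}+\cos\Theta_{V,W}$ together with $\Theta_{V,W}+\Theta^\perp_{V,W}\in[\frac{\pi}{2},\pi]$, whereas you add the individual bounds $\Theta_{V,W}\geq\theta^{\max}_{V,W}$ and $\Theta^\perp_{V,W}\geq\frac{\pi}{2}-\theta^{\min}_{V,W}$, which is equally valid and makes the equality analysis for that line slightly more transparent.
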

\begin{proof}
Let $\theta_1\leq\ldots\leq\theta_m$ be all principal angles. Then $\theta^{\min}_{V,W} = \theta_1$.

If $p=1$ then $\cos\Theta_{V,W} + \cos\Theta_{V,W}^\perp = \cos\theta_1+\sin\theta_1\geq 1$, with equality if, and only if, $\theta_1= 0$ or $\frac{\pi}{2}$.

If $2\leq p=m\leq\dim W$ then $\theta^{\max}_{V,W}=\theta_m$ and $\cos\Theta_{V,W} + \cos\Theta_{V,W}^\perp =\prod_{i=1}^m \cos\theta_i + \prod_{i=1}^m \sin\theta_i \leq \cos\theta_1\cos\theta_m+\sin\theta_1\sin\theta_m = \cos(\theta_m-\theta_1)$.
Equality happens if, and only if, $m=2$, or \emph{(\ref{it:A})} $\theta_2=\ldots=\theta_m=\frac{\pi}{2}$, or \emph{(\ref{it:B})} $\theta_1=\ldots=\theta_{m-1}=0$, or \emph{(\ref{it:C})} $\theta_1=0$ and $\theta_m=\frac{\pi}{2}$.

If $p>\dim W=m$ then $\Theta_{V,W}=\theta^{\max}_{V,W}=\frac{\pi}{2}$, and $\cos\Theta_{V,W} + \cos\Theta_{V,W}^\perp = \prod_{i=1}^m \sin\theta_i \leq \sin \theta_1 = \cos(\frac{\pi}{2}-\theta_1)$, with equality if, and only if, \emph{(\ref{it:A})} $m=1$ or $\theta_2=\ldots=\theta_m=\frac{\pi}{2}$, or \emph{(\ref{it:C})} $\theta_1=0$.
In this case, (\ref{it:B}) implies (\ref{it:C}).

As $\Theta_{V,W} + \Theta_{V,W}^\perp \in[\frac{\pi}{2},\pi]$,
\SELF{by \cref{pr:zeta}\emph{\ref{it:range Theta}}}
its inequality for $p\geq 2$ follows from $\sin(\Theta_{V,W} + \Theta_{V,W}^\perp) = \sin\Theta_{V,W} \cos\Theta_{V,W}^\perp + \cos\Theta_{V,W} \sin\Theta_{V,W}^\perp \leq \cos\Theta^\perp_{V,W} + \cos\Theta_{V,W} \leq \cos \Delta\theta_{V,W} = \sin(\frac{\pi}{2} + \Delta\theta_{V,W})$. 
\end{proof}

\begin{figure}
	\centering
	\begin{subfigure}[b]{0.32\textwidth}
		\includegraphics[width=\textwidth]{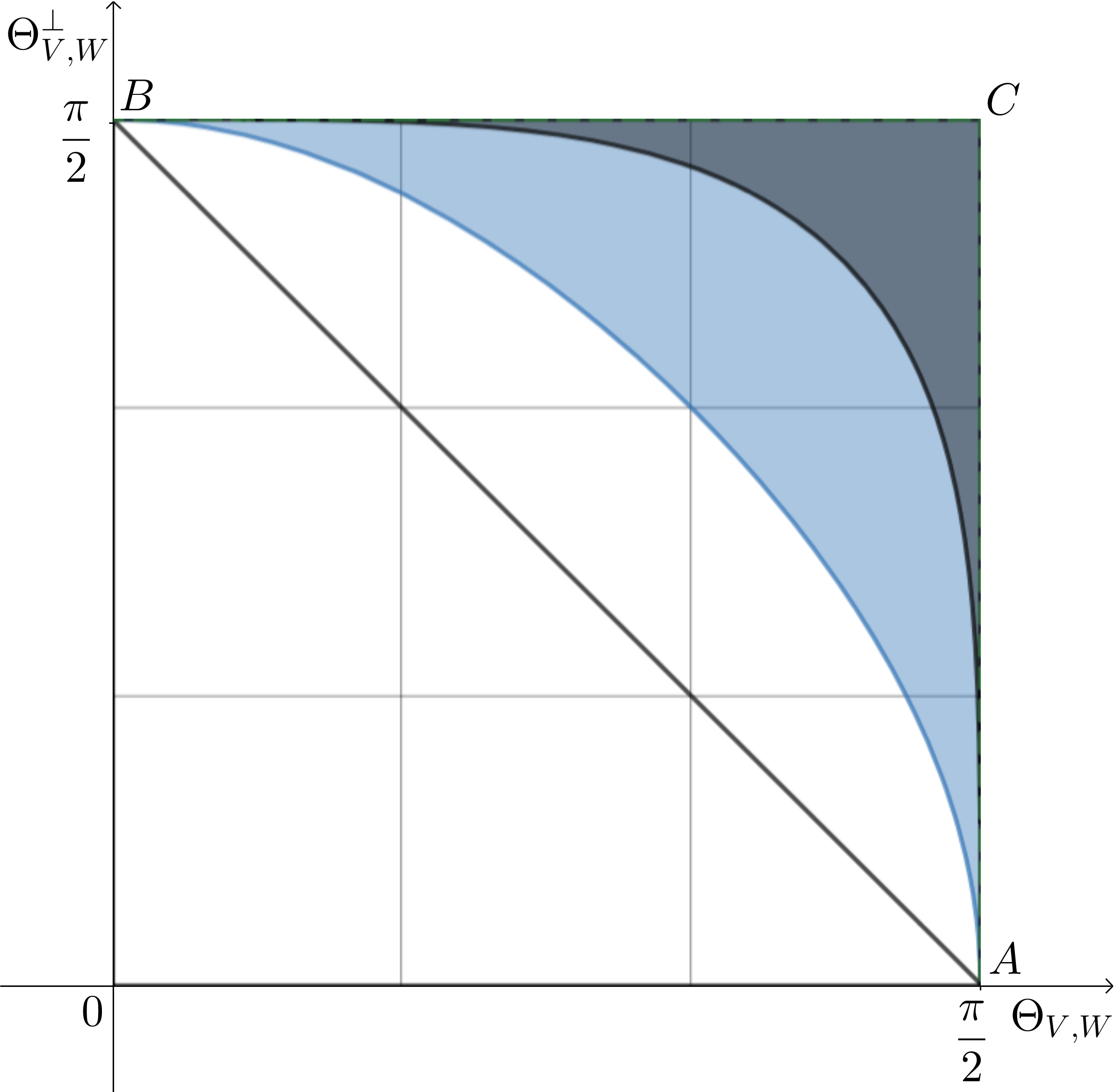}
		\caption{$\Delta\theta_{V,W}=0$}
		\label{fig:Theta Theta perp 1}
	\end{subfigure}
	\begin{subfigure}[b]{0.32\textwidth}
		\includegraphics[width=\textwidth]{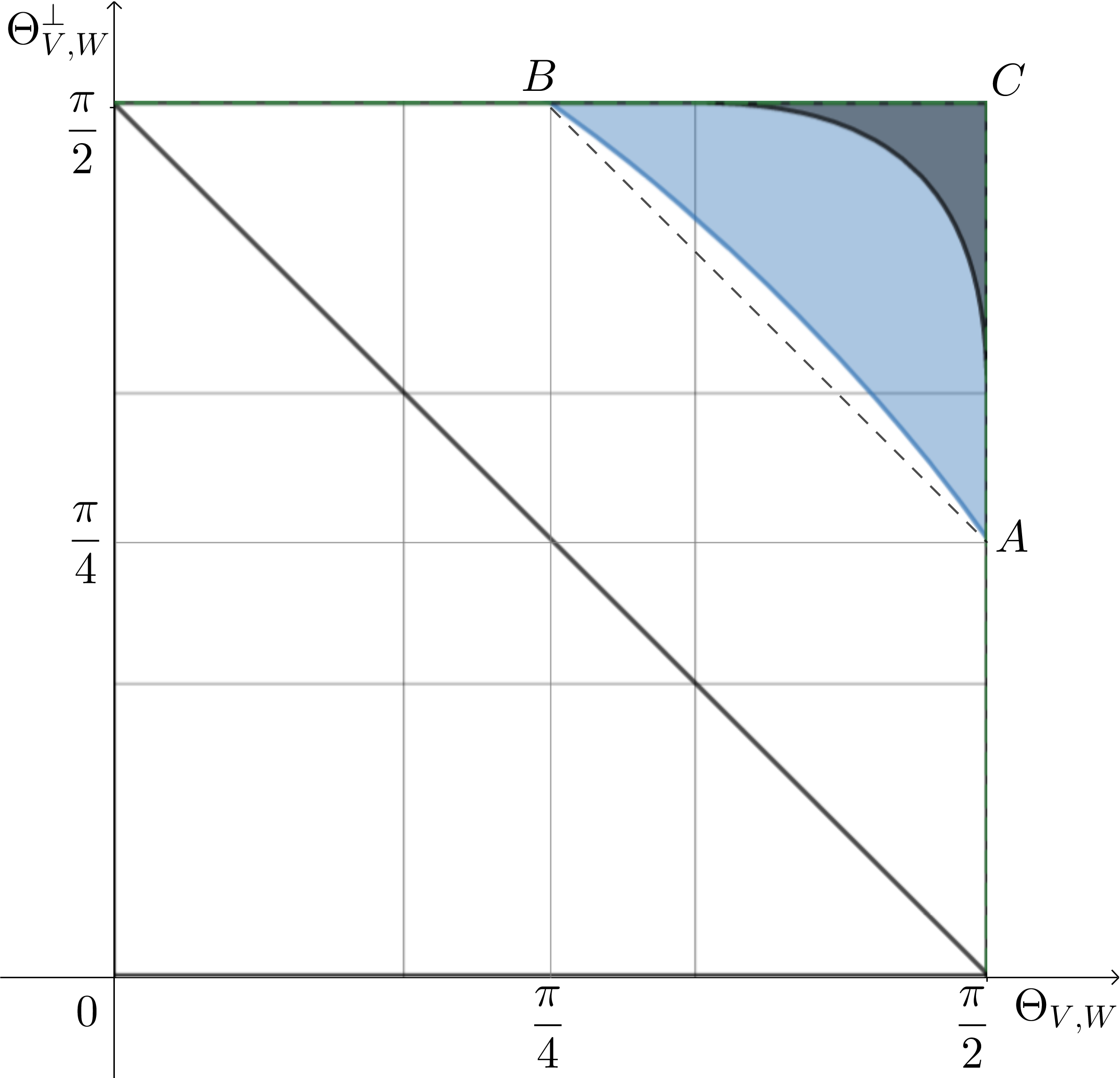}
		\caption{$\Delta\theta_{V,W}=\frac{\pi}{4}$}
		\label{fig:Theta Theta perp 2}
	\end{subfigure}
	\begin{subfigure}[b]{0.32\textwidth}
		\includegraphics[width=\textwidth]{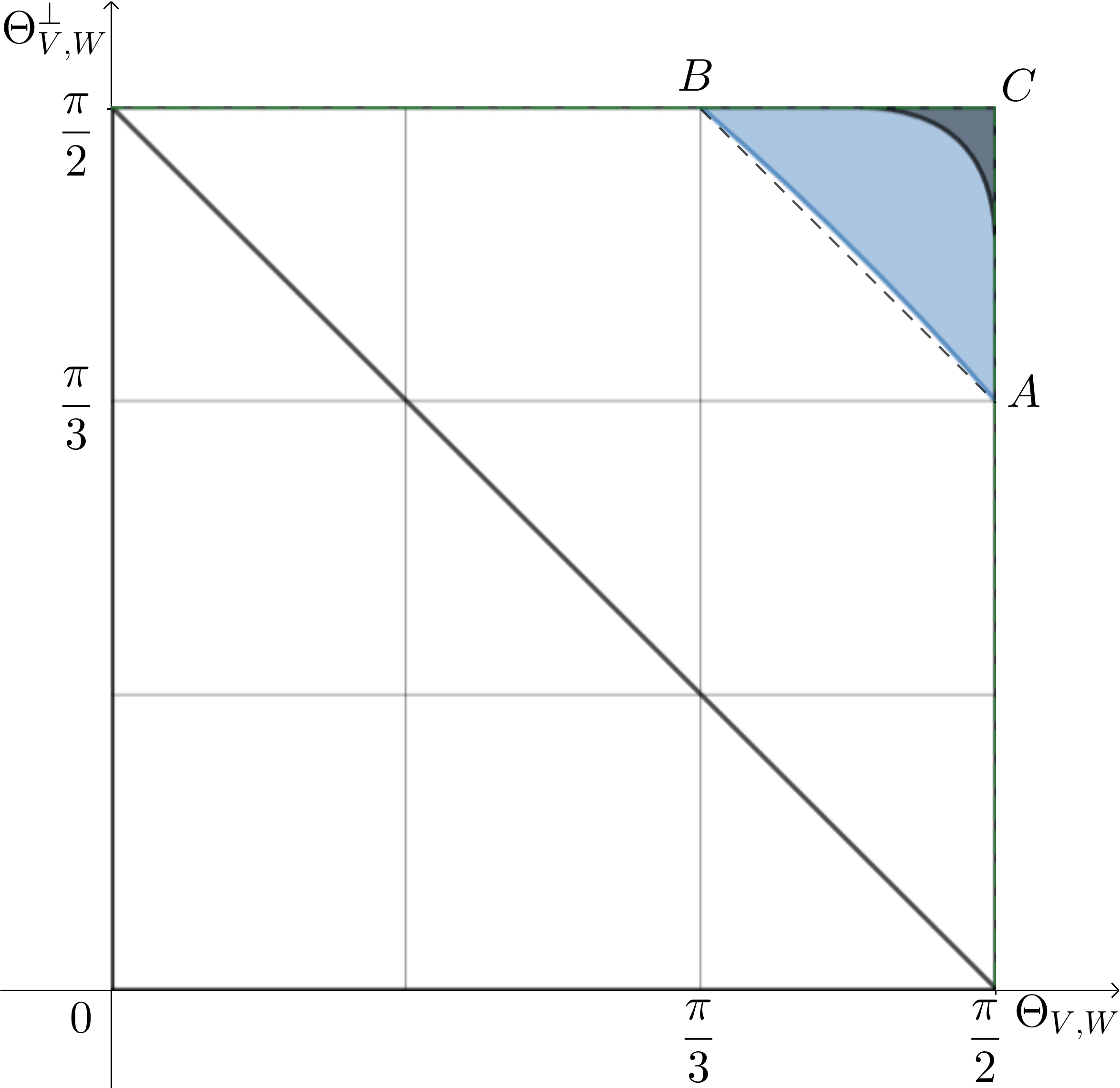}
		\caption{$\Delta\theta_{V,W}=\frac{\pi}{3}$}
		\label{fig:Theta Theta perp 3}
	\end{subfigure}
	\caption{\small Restrictions on  $\Theta_{V,W}$ and $\Theta^\perp_{V,W}$, depending on	$\Delta\theta_{V,W}$. 
	Shaded regions represent $\cos \Theta_{V,W} +\cos \Theta^\perp_{V,W}\leq\cos\Delta\theta_{V,W}$, with $(\cos \Theta_{V,W})^\frac{1}{2} +(\cos \Theta^\perp_{V,W})^\frac{1}{2}\leq\cos\Delta\theta_{V,W}$ in the darker one.
	The dashed line is $\Theta_{V,W} + \Theta_{V,W}^\perp = \frac{\pi}{2} + \Delta\theta_{V,W}$.
	Points $A$, $B$ and $C$ correspond to cases \emph{(\ref{it:A})}, \emph{(\ref{it:B})} and \emph{(\ref{it:C})} of \cref{pr:cos+cos}.
	}
	\label{fig:Theta Theta perp}
\end{figure}

\Cref{fig:Theta Theta perp} illustrates these results.
If $\dim V=1$, $(\Theta_{V,W},\Theta_{V,W}^\perp)$ is on the solid diagonal line, and if $\dim V= 2$ it is on the boundary curve of the shaded region (for now, ignore the distinction between its light and dark parts).
If $\dim V> 2$ it is in the interior of the region (which shrinks to $C$ as $\Delta\theta_{V,W}\rightarrow\frac{\pi}{2}$) or on the segments $AC$ or $BC$.
If $\dim V>\dim W$ it is on  $AC$.
It is at $C$ if, and only if, $\Delta\theta_{V,W}=\frac{\pi}{2}$.
\SELF{$\Theta_{V,W}=\frac\pi2$ with $\Theta_{V,W}^\perp=\frac\pi2$ implies C}

There are certainly other restrictions. For example, if $\Delta\theta_{V,W}=0$ and $p=\dim V$, the point will be on the curve $(\cos \Theta_{V,W})^\frac{2}{p} +(\cos \Theta^\perp_{V,W})^\frac{2}{p} = 1$.
Also, the admissible region retracts to a small neighborhood of $AC$ and $BC$ when $V$ and $W$ have high dimensions: as there are lots of principal angles, $\Theta_{V,W}\cong\frac{\pi}{2}$ unless $\theta_i\cong 0$ for almost all $i$, and $\Theta^\perp_{V,W}\cong\frac{\pi}{2}$ unless $\theta_i\cong \frac{\pi}{2}$ for almost all $i$.
A detailed analysis of how dimensions or the distribution of principal angles affect the relation between $\Theta_{V,W}$ and $\Theta_{V,W}^\perp$ would be interesting.

We note that, combining these results with propositions \ref{pr:Theta norm contraction} and \ref{pr:exterior product}, we can obtain, for subspaces $V,W\subset X$ represented by $\nu,\omega\in\Lambda X$, an upper bound for $\Delta\theta_{V,W}$ in terms of $\|\nu\lcontr\omega\| + \|\nu\wedge\omega\|$, or vice-versa.

%

\subsection{Simultaneously complexifiable subspaces}\label{sc:simult complex}

The exotic features of Grassmann angles can lead to unexpected results. As an example, we get an obstruction to having, in the real case, some complex structure turn two given subspaces into complex ones.
All spaces here are assumed to have even real dimension.


In the real case, an automorphism $J:X\rightarrow X$ such that $J^2=-I$ is a \emph{complex structure}.
It turns $X$ into a complex space, if we define $\im v=Jv$ for $v\in X$. 
A real subspace $V\subset X$ becomes complex if, and only if, it is invariant under $J$, i.e. $J(V)=V$. 
$J$ is \emph{compatible} (with the inner product) if $\inner{Ju,Jv} = \inner{u,v}$ for any $u,v\in X$, in which case $h(u,v) = \inner{u,v} + i\inner{Ju,v}$ is a Hermitian product.

For any $J$ most real subspaces do not become complex, but given a (even dimensional) real subspace there is a compatible $J$ that makes it complex.
This no longer holds if we set two subspaces to become complex.

\begin{definition}
Real subspaces $V,W\subset X$ are \emph{simultaneously complexifiable} if some compatible complex structure makes both complex.
\end{definition}

As the underlying real spaces of complex subspaces $\tilde{V}$ and $\tilde{W}$ have the same principal angles, but twice repeated, a necessary condition for $V$ and $W$ to be simultaneously complexifiable is that their principal angles be \emph{pairwise equal}, i.e. $\theta_{2i-1}=\theta_{2i}$ for $i=1,2,\ldots$. 
This is a strong requirement, which most pairs of subspaces do not satisfy.

By propositions \ref{pr:cos underlying real Theta} and \ref{pr:cos underlying real Theta perp}, if $V=(\tilde{V})_\R$ and $W=(\tilde{W})_\R$ then $\cos\Theta_{V,W} = \cos^2\Theta_{\tilde{V},\tilde{W}}$, $\cos\Theta^\perp_{V,W} = \cos^2\Theta^\perp_{\tilde{V},\tilde{W}}$ and $\Delta\theta_{V,W}=\Delta\theta_{\tilde{V},\tilde{W}}$.
\Cref{pr:cos+cos} gives the following obstruction.

\begin{proposition}
Nonzero real subspaces $V,W\subset X$ are not simultaneously complexifiable in the following cases:
\begin{enumerate}[i)]
\item $\dim V=4$ and $(\cos\Theta_{V,W})^\frac{1}{2} + (\cos\Theta_{V,W}^\perp)^\frac{1}{2} \neq \cos \Delta\theta_{V,W}$.
\item $\dim V> 4$ and $(\cos\Theta_{V,W})^\frac{1}{2} + (\cos\Theta_{V,W}^\perp)^\frac{1}{2}>\cos \Delta\theta_{V,W}$.
\end{enumerate}
\end{proposition}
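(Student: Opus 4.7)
The plan is to argue by contrapositive and to push the problem to the complex Grassmann angle. Assume $V,W\subset X$ \emph{are} simultaneously complexifiable, so that some compatible complex structure $J$ turns them into complex subspaces $\tilde V,\tilde W$ with $V=(\tilde V)_\R$, $W=(\tilde W)_\R$ and $\dim_\C\tilde V=\tfrac12\dim V$. I will then derive exactly the (in)equality that each hypothesis excludes.

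The translation step is essentially mechanical. By \cref{pr:cos underlying real Theta,pr:cos underlying real Theta perp}, $\cos\Theta_{V,W}=\cos^2\Theta_{\tilde V,\tilde W}$ and $\cos\Theta^\perp_{V,W}=\cos^2\Theta^\perp_{\tilde V,\tilde W}$; and, as noted when principal vectors in a complex space are discussed and illustrated in \cref{ex:complex principal angles}, the principal angles of $V$ and $W$ are those of $\tilde V$ and $\tilde W$ with each angle repeated twice, so $\theta^{\min}$ and $\theta^{\max}$ (hence $\Delta\theta$) agree for the two pairs. Since cosines of Grassmann angles lie in $[0,1]$, the square roots $(\cos\Theta_{V,W})^{1/2}=\cos\Theta_{\tilde V,\tilde W}$ and $(\cos\Theta^\perp_{V,W})^{1/2}=\cos\Theta^\perp_{\tilde V,\tilde W}$ are well defined and nonnegative.

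With this dictionary in hand, I would feed $\tilde V,\tilde W$ into \cref{pr:cos+cos}. For case (i), $\dim V=4$ forces $\dim_\C\tilde V=2$, so \cref{pr:cos+cos}\emph{(\ref{it:ii})} yields the \emph{equality} $\cos\Theta_{\tilde V,\tilde W}+\cos\Theta^\perp_{\tilde V,\tilde W}=\cos\Delta\theta_{\tilde V,\tilde W}$, which under the translation is exactly $(\cos\Theta_{V,W})^{1/2}+(\cos\Theta^\perp_{V,W})^{1/2}=\cos\Delta\theta_{V,W}$, contradicting the hypothesis of (i). For case (ii), $\dim V>4$ gives $\dim_\C\tilde V>2$, and \cref{pr:cos+cos}\emph{(\ref{it:iii})} yields $\cos\Theta_{\tilde V,\tilde W}+\cos\Theta^\perp_{\tilde V,\tilde W}\le\cos\Delta\theta_{\tilde V,\tilde W}$, i.e.\ $(\cos\Theta_{V,W})^{1/2}+(\cos\Theta^\perp_{V,W})^{1/2}\le\cos\Delta\theta_{V,W}$, again contradicting the hypothesis.

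I do not anticipate a genuine obstacle. The only thing to watch is the handful of degenerate configurations for the complex pair (e.g.\ $\dim\tilde V>\dim\tilde W$, forcing $\Theta_{\tilde V,\tilde W}=\tfrac\pi 2$, or some principal angle equal to $\tfrac\pi2$), but those cases are already internal to \cref{pr:cos+cos}, so no extra bookkeeping is needed. The whole argument rests on the cleanness of the underlying-real dictionary for $\cos\Theta$ and $\cos\Theta^\perp$, together with selecting the correct branch of \cref{pr:cos+cos} according to whether $\dim_\C\tilde V$ equals or exceeds $2$.
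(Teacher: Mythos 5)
Your proposal is correct and follows essentially the same route as the paper: the paper likewise passes to the complex pair $\tilde V,\tilde W$ via $\cos\Theta_{V,W}=\cos^2\Theta_{\tilde V,\tilde W}$, $\cos\Theta^\perp_{V,W}=\cos^2\Theta^\perp_{\tilde V,\tilde W}$ and $\Delta\theta_{V,W}=\Delta\theta_{\tilde V,\tilde W}$, then invokes the equality/inequality cases of the earlier proposition on $\cos\Theta+\cos\Theta^\perp$ according to whether $\dim_\C\tilde V$ equals or exceeds $2$. No gaps.
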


The proof of \cref{pr:cos+cos} can be adapted to show that if $\dim V\geq 4$ and principal angles are pairwise equal then $(\cos\Theta_{V,W})^\frac{1}{2} + (\cos\Theta_{V,W}^\perp)^\frac{1}{2} \leq \cos \Delta\theta_{V,W}$, so this is in fact an obstruction on pairwise equality.

This result shows the difficulty of being simultaneously complexifiable. If $\dim V> 4$, it is necessary (but not sufficient) that $(\Theta_{V,W},\Theta_{V,W}^\perp)$ be in the darker region of \cref{fig:Theta Theta perp}, and if $\dim V=4$ it must be in its boundary curve. This region shrinks faster than the lighter one as $\Delta\theta_{V,W}$ increases, so the larger the angular range of one subspace with respect to the other, the harder it is that they can be simultaneously complexifiable.

\section{Formulas for Grassmann angles}\label{sc:Formulas}

We now obtain formulas for $\Theta_{V,W}$ and $\Theta_{V,W}^\perp$ in terms of arbitrary bases.
They also work for other volume projection angles, whose symmetry however imposes dimensional restrictions, like $\dim V\leq \dim W$ in \cref{pr:formula any base dimension} and $\dim V\leq \dim W^\perp$ in \cref{pr:formula complementary angle bases}.

\begin{theorem}\label{pr:formula any base dimension}
	Given bases $(v_1,\ldots,v_p)$ of $V$ and $(w_1,\ldots,w_q)$ of $W$, let $A=\big(\inner{w_i,w_j}\big), B=\big(\inner{w_i,v_j}\big)$, and $D=\big(\inner{v_i,v_j}\big)$. Then
	\begin{equation}\label{eq:formula Theta any base any dimension}
	\cos^2 \Theta_{V,W} = \frac{\det(\bar{B}^T \! A^{-1}B)}{\det D}.
	\end{equation}
	If $p=q$ this reduces to 
	\begin{equation}\label{eq:formula Theta equal dim}
		\cos^2 \Theta_{V,W} = \frac{|\det B\,|^2}{\det A \cdot\det D}.
	\end{equation}
\end{theorem}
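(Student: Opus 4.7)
The plan is to reduce the claim to the identity $\|P\nu\|^2 = \|\nu\|^2\cos^2\Theta_{V,W}$ from \cref{pr:Theta Pnu} (applied to $P=\Proj_W$), where $\nu=v_1\wedge\ldots\wedge v_p$, and then compute both sides in terms of the given Gram matrices. Since $\langle\nu,\nu\rangle = \det(\langle v_i,v_j\rangle) = \det D$, it remains to show $\|P\nu\|^2 = \det(\bar B^T A^{-1}B)$.

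First I would express $Pv_j$ in the basis $(w_1,\ldots,w_q)$ of $W$. Writing $Pv_j = \sum_k c_{kj}w_k$, the defining property $v_j - Pv_j \perp W$ gives $\langle w_i, Pv_j\rangle = \langle w_i, v_j\rangle$ for every $i$, i.e.\ $AC = B$, so $C = A^{-1}B$ (note $A$ is invertible, being the Gram matrix of a basis). Next I would compute the entries of the Gram matrix of the projected vectors. Using self-adjointness of $P$ together with $\langle w_i,v_j\rangle = \overline{\langle v_j,w_i\rangle}$, one gets
\begin{equation*}
\langle Pv_i, Pv_j\rangle = \langle v_i, Pv_j\rangle = \sum_k c_{kj}\langle v_i,w_k\rangle = \sum_k \bar B_{ki}\, c_{kj} = (\bar B^T A^{-1}B)_{ij},
\end{equation*}
so $\|P\nu\|^2 = \|Pv_1\wedge\ldots\wedge Pv_p\|^2 = \det(\langle Pv_i,Pv_j\rangle) = \det(\bar B^T A^{-1}B)$. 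Dividing by $\det D$ yields \eqref{eq:formula Theta any base any dimension}.

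For the specialisation to $p=q$, the matrix $B$ is square, so by multiplicativity of the determinant
\begin{equation*}
\det(\bar B^T A^{-1}B) = \overline{\det B}\cdot \det B\cdot (\det A)^{-1} = \frac{|\det B|^2}{\det A},
\end{equation*}
which combined with the previous formula gives \eqref{eq:formula Theta equal dim}.

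The only delicate point is keeping track of the conjugate-linearity convention (our Hermitian product is conjugate-linear in the first slot), which is what produces $\bar B^T$ rather than $B^T$ on the right-hand side; once this is handled carefully, the whole argument is bookkeeping with Gram matrices and the elementary fact that $P$ is the oblique-coordinate map $C = A^{-1}B$ when written in the (generally non-orthonormal) basis $(w_i)$. In the real case, $\bar B^T$ becomes $B^T$ and the formula \eqref{eq:formula Theta equal dim} recovers the familiar $(\det B)^2/(\det A\det D)$.
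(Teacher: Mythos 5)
Your proof is correct, but it takes a genuinely different route from the paper's. You reduce everything to \cref{pr:Theta Pnu} ($\|P\nu\|=\|\nu\|\cos\Theta_{V,W}$) and compute $\|P\nu\|^2$ as the Gram determinant of the projected vectors, after observing that the coordinate matrix of $(Pv_1,\ldots,Pv_p)$ in the basis $(w_i)$ is $C=A^{-1}B$ and hence $\big(\inner{Pv_i,Pv_j}\big)=\bar B^T A^{-1}B$; the bookkeeping with the left--conjugate-linear convention is handled correctly, and your argument covers $p>q$ uniformly (the Gram matrix then has rank at most $q<p$, so both sides vanish) without the separate rank remark the paper needs. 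The paper instead goes through the contraction: it applies a Laplace expansion to the block matrix $M=\begin{psmallmatrix} A & B \\ \bar B^T & 0\end{psmallmatrix}$, identifies $(-1)^p\det M$ with $\|\nu\lcontr\omega\|^2$ via \eqref{eq:contraction coordinate decomposition}, invokes \cref{pr:Theta norm contraction}, and finishes with Schur's determinant identity. What the paper's route buys is the intermediate identity $\|\nu\lcontr\omega\|^2=(-1)^p\det M$ and a template reused verbatim for \cref{pr:formula complementary angle bases} (same Schur identity with $D$ in the lower-right block); what your route buys is a shorter, more elementary argument relying only on Gram matrices and the single fact \cref{pr:Theta Pnu}, at the cost of not exhibiting the link to the contraction. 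The $p=q$ specialisation via multiplicativity of determinants is the same in both.
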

\begin{proof}
	If $p>q$ then $\Theta_{V,W} = \frac \pi 2$, and the determinant of $\bar{B}^T \! A^{-1}B$ vanishes as it is a $p\times p$ matrix with rank at most $q$. 
	
	If $p\leq q$, Laplace expansion \cite{Muir2003}
	\SELF{p.80} 
	w.r.t. columns $q+1,\ldots,q+p$ of the $(q+p)\times(q+p)$ block matrix $M=\begin{psmallmatrix}
		A & B\  \\ 
		\bar{B}^T & 0_{p\times p}
	\end{psmallmatrix}$ gives
	\SELF{Como $J$ dá as últimas colunas, das $q+p$ linhas de $M$ só as $q$ primeiras não dão 0, por isso pode usar $\I_p^q$ ao invés de $\I_p^{q+p}$}
	\[ \det M = \sum_{\ii\in\I_p^q} (-1)^{\|\ii\|+pq+\frac{p(p+1)}{2}}\cdot \det B_\ii \cdot\det N_{\ii'}, \]
	where $B_\ii$ is the $p\times p$ submatrix of $M$ formed by lines of $B$ with indices in $\ii$, and $N_{\ii'}=\begin{psmallmatrix}
		A_{\ii'} \\[1pt] \bar{B}^T
	\end{psmallmatrix}$ is its $q\times q$ complementary submatrix, formed by lines of $A$ with indices not in $\ii$ and all of $\bar{B}^T$.
	
	For $\nu=v_1\wedge\ldots \wedge v_p$ and $\omega=w_1\wedge\ldots\wedge w_q$ we have, by \eqref{eq:contraction} and \eqref{eq:contraction coordinate decomposition},
	\begin{equation*}
		\|\nu\lcontr\omega\|^2 = \inner{\nu\wedge(\nu\lcontr\omega),\omega} = \sum_{\ii\in\I_p^q} \varepsilon_\ii \,\inner{\omega_\ii,\nu}\, \inner{\nu\wedge\omega_{\ii'},\omega}.
	\end{equation*}
	As $\det B_\ii=\inner{\omega_\ii,\nu}$ and $\det N_{\ii'} = \inner{\omega_{\ii'}\wedge\nu,\omega} = (-1)^{pq+p}\inner{\nu\wedge\omega_{\ii'},\omega}$,
	\SELF{$(-1)^{-p^2}=(-1)^p$ \\ $\varepsilon_\ii = (-1)^{\|\ii\|+\frac{p(p+1)}{2}}$}
	we get $\|\nu\lcontr\omega\|^2 = (-1)^p \det M$.
	\Cref{pr:Theta norm contraction} gives $\cos^2 \Theta_{V,W} = \frac{(-1)^p \det M}{\det A\det D}$, and the result follows from Schur's determinant identity \cite{Brualdi1983}. \SELF{$\det(-BA^{-1}\bar{B}^T) = (-1)^p \det(BA^{-1}\bar{B}^T)$}
\end{proof}

We note that \eqref{eq:formula Theta any base any dimension} is simpler than a formula given in \cite{Gunawan2005} for a similar angle (which corrects another from \cite{Risteski2001}).
And \eqref{eq:formula Theta equal dim} can be obtained directly from \cref{pr:Theta inner blades}. 

\begin{example}\label{ex:formula bases}
	In $\C^3$, let  $V=\Span(v_1,v_2)$ and $W=\Span(w_1,w_2)$ with $v_1=(1,-\xi,0), v_2=(0,\xi,-\xi^2)$, $w_1=(1,0,0)$ and $w_2=(0,\xi,0)$,  where $\xi=e^{\im\frac{2\pi}{3}}$. By \eqref{eq:formula Theta equal dim},	$\Theta_{V,W}=\arccos\frac{\sqrt{3}}{3}$. 
	Since $v=(\xi,\xi^2,-2)\in V$ and $w=(1,\xi,0)\in W$ are orthogonal to $V\cap W=\Span(v_1)$,  \cref{pr:properties Grassmann}\emph{\ref{it:orth complem inter}}
\SELF{also \ref{pr:properties Grassmann}\emph{\ref{it:Theta theta}} and \ref{pr:the angle lines}\emph{\ref{it:angle complex lines}}
} 
	gives $\Theta_{V,W}=\Theta_{\C v,\C w}=\gamma_{v,w}$, and \eqref{eq:angles vectors} confirms the result.
\end{example}

As the next example shows, \eqref{eq:formula Theta any base any dimension} only holds without any dimensional conditions thanks to the Grassmann angle asymmetry.

\begin{example}\label{ex:formula distinct dim}
	In $\R^4$, if $v=(1,0,1,0)$, $w_1=(0,1,1,0)$, $w_2=(1,2,2,-1)$, $V=\Span(v)$ and $W=\Span(w_1,w_2)$ then 
	$A=\begin{psmallmatrix}
		2 & 4 \\ 4 & 10
	\end{psmallmatrix}$,
	$B=\begin{psmallmatrix}
		1 \\ 3
	\end{psmallmatrix}$, 
	$D=(2)$, and \eqref{eq:formula Theta any base any dimension} gives $\Theta_{V,W}= 45^\circ$, as one can verify by projecting $v$ on $W$. 
	Switching the roles of $V$ and $W$, we now have $A=(2)$,
	$B=(1 \ 3)$, $D=\begin{psmallmatrix}
		2 & 4 \\ 4 & 10
	\end{psmallmatrix}$ and $\Theta_{W,V}= 90^\circ$, as expected since $\dim W>\dim V$.
\end{example}

\begin{example}
	In $\C^3$, let $v=(1,0,i)$, $w_1=(1,0,0)$, $w_2=(i,1,0)$, $V=\Span(v)$ and $W=\Span(w_1,w_2)$. 
	Then $A=\begin{psmallmatrix}
		1 & i \\ -i & 2
	\end{psmallmatrix}$,
	$B=\begin{psmallmatrix}
		1 \\ -i
	\end{psmallmatrix}$, 
	$D=(2)$, and we get  $\Theta_{V,W}= 45^\circ$.
	Identifying $\C^3$ with $\R^6$, we have $V_\R=\Span_\R(v,iv)$ and $W_\R=\Span_\R(w_1,iw_1,w_2,iw_2)$, with
	\begin{align*}
		v&=(1,0,0,0,0,1), & w_1&=(1,0,0,0,0,0), & w_2&=(0,1,1,0,0,0), \\
		iv&=(0,1,0,0,-1,0), & iw_1&=(0,1,0,0,0,0), & iw_2&=(-1,0,0,1,0,0).
	\end{align*}
	Now $A=\begin{psmallmatrix}
		1 & 0 & 0 & -1 \\
		0 & 1 & 1 & 0 \\
		0 & 1 & 2 & 0 \\
		-1 & 0 & 0 & 2
	\end{psmallmatrix}$,
	$B=\begin{psmallmatrix}
		1 & 0 \\
		0 & 1 \\
		0 & 1 \\
		-1 & 0
	\end{psmallmatrix}$,
	$D=\begin{psmallmatrix}
		2 & 0 \\
		0 & 2
	\end{psmallmatrix}$
	and $\Theta_{V_\R,W_\R}= 60^\circ$, in agreement with \cref{pr:cos underlying real Theta}.
\end{example}

\begin{theorem}\label{pr:formula complementary angle bases}
	Given bases $(v_1,\ldots,v_p)$ of $V$ and $(w_1,\ldots,w_q)$ of $W$, let $A=\big(\inner{w_i,w_j}\big), B=\big(\inner{w_i,v_j}\big)$, and $D=\big(\inner{v_i,v_j}\big)$. Then
	\begin{equation}\label{eq:complementary bases}
		\cos^2 \Theta^\perp_{V,W} = \frac{\det(A-BD^{-1}\bar{B}^T )}{\det A}.
	\end{equation}
\end{theorem}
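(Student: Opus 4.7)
The plan is to reduce the identity to a block Gram determinant computation via Proposition \ref{pr:exterior product}. Let $\nu = v_1\wedge\ldots\wedge v_p$ and $\omega = w_1\wedge\ldots\wedge w_q$. Since these blades represent $V$ and $W$, Proposition \ref{pr:exterior product} gives
\[ \|\nu\wedge\omega\|^2 = \|\nu\|^2\|\omega\|^2 \cos^2\Theta^\perp_{V,W}. \]
The norms of $\nu$ and $\omega$ are immediately expressible as Gram determinants: $\|\nu\|^2 = \det D$ and $\|\omega\|^2 = \det A$. So it suffices to evaluate $\|\nu\wedge\omega\|^2$.

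Next I would note that $\nu\wedge\omega = v_1\wedge\ldots\wedge v_p\wedge w_1\wedge\ldots\wedge w_q$, whose squared norm equals the Gram determinant of the tuple $(v_1,\ldots,v_p,w_1,\ldots,w_q)$. This Gram matrix splits into four blocks; writing it with $D$ and $A$ on the diagonal, the off-diagonal block is $C=(\inner{v_i,w_j})$ and its conjugate transpose. Since $\inner{w_i,v_j}=\overline{\inner{v_j,w_i}}$, we have $C=\bar{B}^T$, giving
\[ \|\nu\wedge\omega\|^2 = \det\begin{pmatrix} D & \bar{B}^T \\ B & A \end{pmatrix}. \]

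Then I would apply Schur's determinant identity, using $D$ as the (invertible, since the $v_i$ form a basis) pivot block, to obtain
\[ \det\begin{pmatrix} D & \bar{B}^T \\ B & A \end{pmatrix} = \det D \cdot \det(A - B D^{-1}\bar{B}^T). \]
Substituting into the displayed equation for $\cos^2\Theta^\perp_{V,W}$ and cancelling $\det D$ yields \eqref{eq:complementary bases}.

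I do not expect a real obstacle: the only point of care is the sign/conjugation bookkeeping showing the off-diagonal block is $\bar{B}^T$, and confirming that the formula behaves correctly in the degenerate case $V\cap W\neq\{0\}$ — there $\nu\wedge\omega=0$, the block Gram matrix is singular, and $\det(A - BD^{-1}\bar{B}^T)=0$, which matches $\Theta^\perp_{V,W}=\frac{\pi}{2}$ from Proposition \ref{pr:complementary simple cases}\emph{(\ref{it:Theta perp pi 2})}. Thus no dimensional hypothesis on $V$ and $W$ is needed, exactly because the asymmetry-free formula in Proposition \ref{pr:exterior product} holds unconditionally.
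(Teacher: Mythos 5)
Your proof is correct and is essentially the paper's own argument: apply Proposition \ref{pr:exterior product}, identify $\|\nu\wedge\omega\|^2$ with the block Gram determinant of the combined basis, and finish with Schur's identity (the paper writes the blocks as $\begin{psmallmatrix} A & B \\ \bar{B}^T & D \end{psmallmatrix}$ via $\omega\wedge\nu$, which is the same matrix up to a block permutation and changes nothing in the norm). No gaps.
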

\begin{proof}
	Follows by applying \cref{pr:exterior product} to $\omega\wedge\nu$, where $\nu=v_1\wedge\ldots\wedge v_p$ and $\omega=w_1\wedge\ldots\wedge w_q$,
	and using Schur's identity with 
	$M=\begin{psmallmatrix}
		A & B \\ 
		\bar{B}^T & D
	\end{psmallmatrix}$.
\end{proof}


\begin{corollary}\label{pr:complementary orthonormal bases}
	If $P$ is a matrix representing $\Proj^V_W$ in orthonormal bases of $V$ and $W$ then $\cos^2\Theta_{V,W}^\perp=\det(\mathds{1}_{q\times q}-P\bar{P}^T )$.
\end{corollary}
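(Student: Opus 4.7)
The plan is to derive this directly from Theorem \ref{pr:formula complementary angle bases} by specializing the bases to be orthonormal. First I would observe that if $(v_1,\ldots,v_p)$ and $(w_1,\ldots,w_q)$ are orthonormal bases of $V$ and $W$, then the Gram matrices become $A = \big(\inner{w_i,w_j}\big) = \mathds{1}_{q\times q}$ and $D = \big(\inner{v_i,v_j}\big) = \mathds{1}_{p\times p}$, so the formula \eqref{eq:complementary bases} simplifies to
\begin{equation*}
\cos^2 \Theta^\perp_{V,W} = \det(\mathds{1}_{q\times q} - B\bar{B}^T).
\end{equation*}

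Next I would identify $B$ with $P$. Since $\Proj^V_W v_j \in W$ and $(w_1,\ldots,w_q)$ is orthonormal, the expansion $\Proj^V_W v_j = \sum_i \inner{w_i, v_j} w_i$ shows that the matrix of $\Proj^V_W$ in these bases has entries $P_{ij} = \inner{w_i,v_j}$, which is precisely $B$. Substituting $B = P$ yields the claimed identity.

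There is essentially no obstacle here; the only thing to double-check is the shape consistency ($P$ is $q\times p$ so $P\bar{P}^T$ is $q\times q$, matching $\mathds{1}_{q\times q}$) and the correct placement of conjugation (the Gram/projection conventions in the excerpt put conjugate linearity in the left entry of $\inner{\cdot,\cdot}$, which is consistent with $B_{ij} = \inner{w_i,v_j}$ and hence with the appearance of $\bar{B}^T$ rather than $B^T$). So the proof is a two-line specialization of the preceding theorem.
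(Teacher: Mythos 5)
Your proof is correct and is exactly the intended argument: the paper states this as an immediate corollary of Theorem \ref{pr:formula complementary angle bases}, obtained by setting $A=\mathds{1}_{q\times q}$, $D=\mathds{1}_{p\times p}$ for orthonormal bases and identifying $B=\big(\inner{w_i,v_j}\big)$ with the matrix $P$ of $\Proj^V_W$ (consistent with the convention already used in \cref{pr:determinant projection}). Your checks on the shape of $P\bar{P}^T$ and on the conjugation convention are the right things to verify, and they go through.
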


With \cref{pr:Proj diagonal}, this gives another easy proof for \cref{pr:complementary product sines}.

\begin{example}
	In \cref{ex:formula bases}, using \eqref{eq:complementary bases} with bases $(v_1,v_2)$ and $(w_1,w_2)$, or \cref{pr:complementary orthonormal bases} with orthonormal bases $(\frac{v_1}{\sqrt{2}},\frac{v}{\sqrt{6}})$ and $(\frac{v_1}{\sqrt{2}},\frac{w}{\sqrt{2}})$, we get $\Theta^\perp_{V,W}=90^\circ$, as expected since $V\cap W\neq\{0\}$.
	\SELF{\ref{pr:complementary simple cases}\emph{\ref{it:Theta perp pi 2}}}
\end{example}

\begin{example}
	In \cref{ex:formula distinct dim}, \eqref{eq:complementary bases} gives $ 45^\circ$ for both $\Theta^\perp_{V,W}$ and $\Theta^\perp_{W,V}$, in agreement with \cref{pr:symmetry}\emph{\ref{it:symmetry complementary}}.
	Principal angles confirm the results: $V$ and $W^\perp$ have only $45^\circ$, while $W$ and $V^\perp$ have $0^\circ$ and $45^\circ$.
\end{example}

\section{Generalized Pythagorean identities}\label{sc:Pythagorean trigonometric}

The Pythagorean trigonometric identity $\cos^2\theta+\sin^2\theta=1$ can be written as 
$\cos^2\theta_x+\cos^2\theta_y=1$, with $\theta_x$ and $\theta_y$ being angles a line in $\R^2$ makes with the axes. 
It has known generalizations for higher dimensional real spaces, which we extend to complex ones.
This is important because, with \cref{pr:Grassmann angle volume projection}, they lead to Pythagorean theorems for volumes \cite{Mandolesi_Pythagorean}, which in the complex case are simpler and have important implications for quantum theory \cite{Mandolesi_Born}. We also get a geometric interpretation for a property of the Clifford product \cite{Hestenes1984clifford,Hitzer2010a}.

The first identity relates the Grassmann angles of a (real or complex) line with all subspaces of an orthogonal partition  of $X$.

\begin{theorem}\label{pr:pythagorean line}
	Given an orthogonal partition $X=W_1\oplus\cdots\oplus W_k$ and a line $L\subset X$, we have $	\sum_{i=1}^k \cos^2\Theta_{L,W_i} = 1$.
\end{theorem}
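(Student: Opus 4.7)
The plan is to reduce the identity to the ordinary Pythagorean theorem in $X$ applied to the orthogonal decomposition of a unit vector spanning $L$.

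First, pick a unit vector $v\in L$, so that $\|v\|=1$. By \cref{pr:properties Grassmann}\emph{\ref{it:Pv}}, for each $i$,
\[
\|\Proj_{W_i}v\| = \|v\|\cos\Theta_{L,W_i} = \cos\Theta_{L,W_i}.
\]
Since $X=W_1\oplus\cdots\oplus W_k$ is an orthogonal partition, we have the decomposition $v=\sum_{i=1}^k \Proj_{W_i}v$ with the summands pairwise orthogonal (this holds in both the real and complex cases, as the $W_i$'s remain orthogonal w.r.t.\ the Hermitian product). Applying the usual Pythagorean theorem to this orthogonal sum,
\[
1 = \|v\|^2 = \sum_{i=1}^k \|\Proj_{W_i}v\|^2 = \sum_{i=1}^k \cos^2\Theta_{L,W_i}.
\]

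No step seems to present a serious obstacle: the argument only requires that the projection onto $W_i$ contracts the norm of a line by $\cos\Theta_{L,W_i}$ (already proved) and that an orthogonal decomposition obeys Pythagoras (standard). The only subtlety worth flagging explicitly is that, although in the complex case $\cos^2\Theta_{V,W}$ is the projection factor for $2$-dimensional real volumes rather than $1$-dimensional ones when $\dim V=1$, the formula $\|\Proj_W v\|=\|v\|\cos\Theta_{L,W}$ still holds as a statement about norms of vectors, which is what we need here. Thus the proof is just two short lines.
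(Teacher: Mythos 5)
Your proof is correct and is essentially identical to the paper's: both pick a nonzero $v\in L$, invoke $\|\Proj_{W_i}v\|=\|v\|\cos\Theta_{L,W_i}$ from \cref{pr:properties Grassmann}\emph{\ref{it:Pv}}, and apply the Pythagorean identity $\|v\|^2=\sum_i\|\Proj_{W_i}v\|^2$ for the orthogonal partition. Your extra remark distinguishing the norm statement from the projection-factor interpretation in the complex case is a reasonable clarification but not needed for the argument.
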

\begin{proof}
	Given a nonzero $v\in L$, as $\|v\|^2 = \sum_{i} \left\|\Proj_{W_i} v \right\|^2$ the result follows from \cref{pr:properties Grassmann}\emph{\ref{it:Pv}}.
\end{proof}

\begin{figure}
	\centering
	\subfloat[$\cos^2\theta_x+\cos^2\theta_y+\cos^2\theta_z=1$]{
		\resizebox*{0.38\textwidth}{!}{\includegraphics{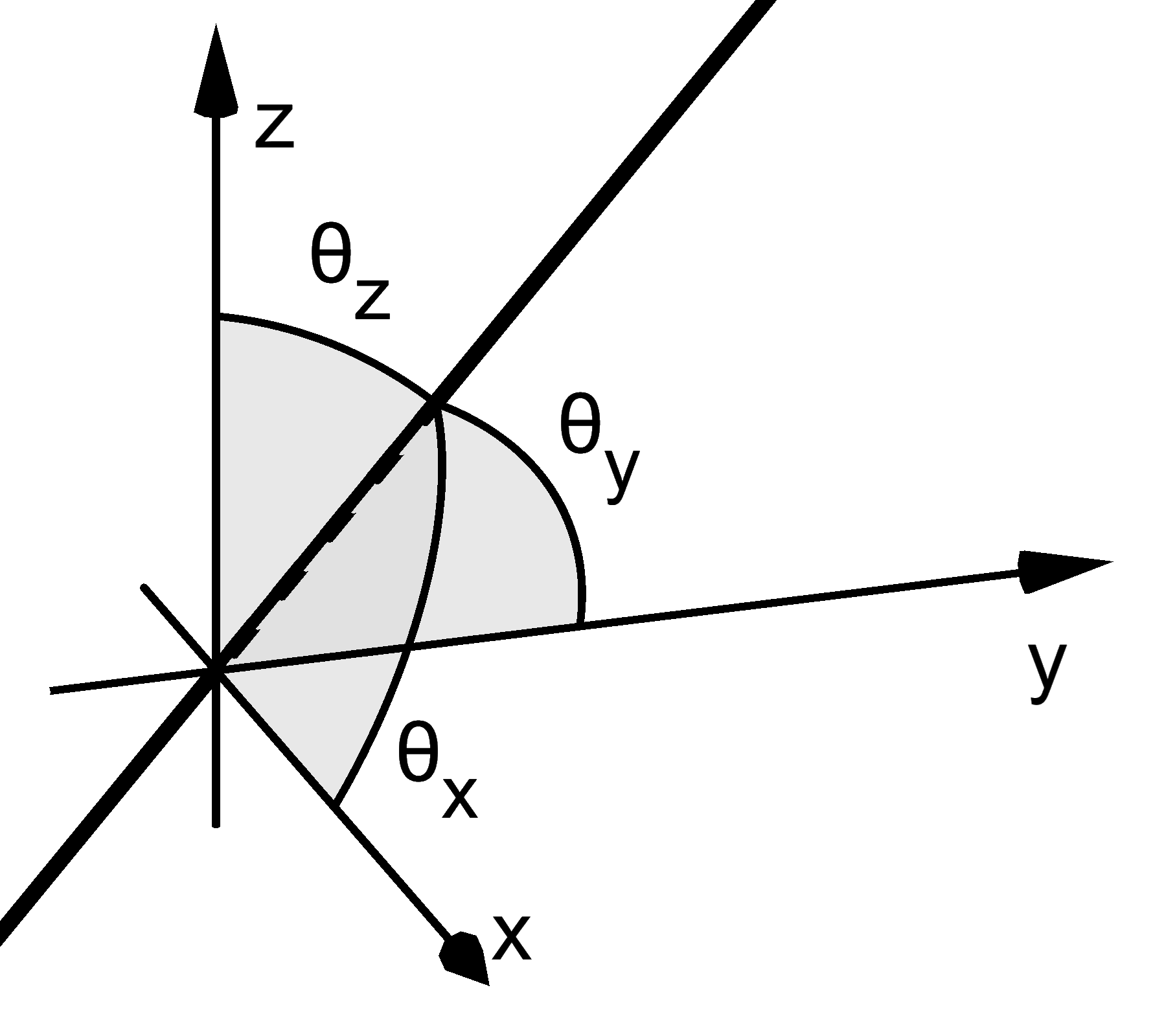}}\label{fig:angulos-eixos-edit}}
	\hfill
	\subfloat[$\cos^2\theta_{xy}+\cos^2\theta_{xz}+\cos^2\theta_{yz}=1$]{
		\resizebox*{0.41\textwidth}{!}{\includegraphics{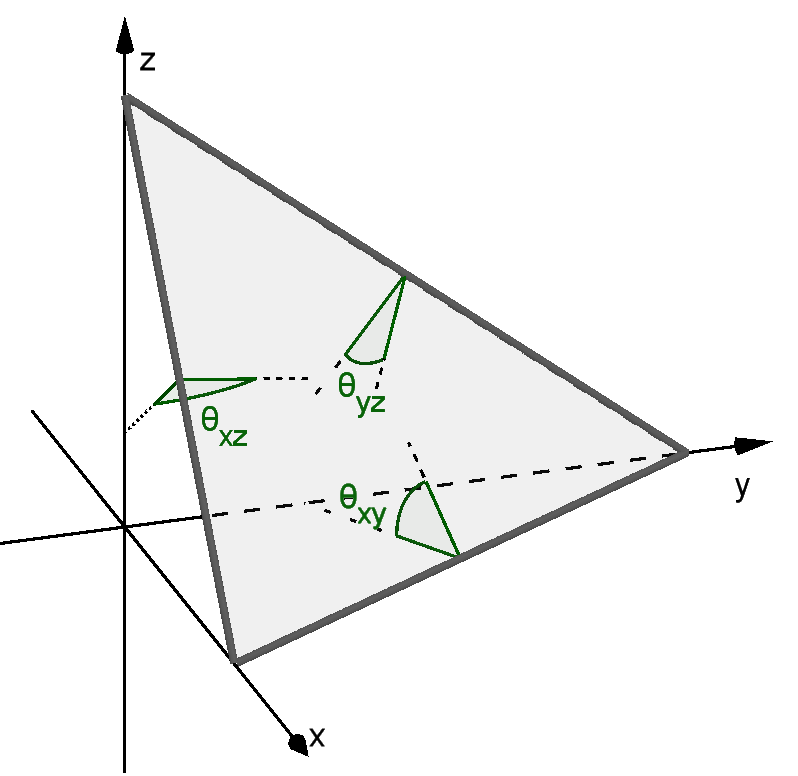}}\label{fig:angulos-faces-edit}}
	\caption{Pythagorean identities for subspaces of equal dimensions.}
	\label{fig:equal dimensions}
\end{figure}

The following example is a known identity for direction cosines and, like others we give, is only meant to illustrate the theorem in $\R^3$. 

\begin{example}\label{ex:direction cosines}
	If $\theta_x, \theta_y$ and $\theta_z$ are the angles between a line in $\R^3$ and the axes (\cref{fig:angulos-eixos-edit}), then $\cos^2\theta_x+\cos^2\theta_y+\cos^2\theta_z=1$. 
\end{example}

The relevance of our result lies mainly in the complex case, where \cref{pr:projection factor Theta} turns it into $\sum_{i=1}^k \pi_{L,W_i} = 1$.
The meaning of this is that, as seen in \cref{ex:sum projections}, the sum of the (non-squared) volumes of the projections equals the original volume.

\begin{example}
	If $X$ is the complex Hilbert space of a quantum system \cite{CohenTannoudji2019}, $L=\C\psi$ for a quantum state vector $\psi$, and the $W_i$'s are eigenspaces of a quantum observable,
	the probability of getting result $i$ when measuring $\psi$ is  $p_i=\|\Proj_{W_i}\psi\|^2/\|\psi\|^2 =\cos^2 \Theta_{L,W_i}$,
	\SELF{\ref{pr:properties Grassmann}\emph{\ref{it:Pv}}}
	so \cref{pr:pythagorean line} simply means the total probability is 1.
	But the fact that
	projection areas add up to the original area leads in \cite{Mandolesi_Born} to a new interpretation for quantum probabilities, a proof of the Born rule, and an explanation for why the quantum Hilbert space must be complex.
\end{example}

The next identities relate Grassmann angles of a (real or complex) subspace with coordinate subspaces of an orthogonal basis of $X$.

\begin{theorem}\label{pr:Grassmann coordinate any dim}
	Let $V\subset X$ be a $p$-subspace, $0\leq q\leq n=\dim X$, and the $W_\ii$'s be  coordinate $q$-subspaces of an orthogonal basis of $X$.
	\begin{enumerate}[i)]
		\item If $p\leq q$ then $\sum_{\ii\in\I_q^n} \cos^2\Theta_{V,W_\ii} =\binom{n-p}{n-q}$. \label{it:V Wi}
		\item If $p>q$ then $\sum_{\ii\in\I_q^n} \cos^2\Theta_{W_\ii,V} = \binom{p}{q}$. \label{it:Wi V}
	\end{enumerate}
\end{theorem}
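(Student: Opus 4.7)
The plan is to reduce both parts to Parseval's identity in appropriate exterior powers of $X$, via the embedding of Grassmannians through Theorem \ref{pr:angle external powers}. Without loss of generality I would rescale the orthogonal basis $(e_1,\ldots,e_n)$ of $X$ to be orthonormal (this does not alter the coordinate subspaces $W_\ii$ nor the Grassmann angles). Then, for every $k$, the normalized coordinate $k$-blades $\{e_\kk\}_{\kk\in\I_k^n}$ form an orthonormal basis of $\Lambda^k X$.

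For part (i), let $\nu$ be a unit $p$-blade representing $V$. Theorem \ref{pr:angle external powers} gives $\cos\Theta_{V,W_\ii}=\|\Proj_{\Lambda^p W_\ii}\nu\|$, and since $\Lambda^p W_\ii$ has orthonormal basis $\{e_\kk:\kk\in\I_p^n,\ \kk\subset\ii\}$, I get
\[
\cos^2\Theta_{V,W_\ii}=\sum_{\substack{\kk\in\I_p^n\\ \kk\subset\ii}}|\inner{\nu,e_\kk}|^2.
\]
Summing over $\ii\in\I_q^n$ and swapping the order of summation, each $\kk\in\I_p^n$ appears exactly $\binom{n-p}{q-p}=\binom{n-p}{n-q}$ times (the number of ways to extend $\kk$ to a $q$-multi-index by picking $q-p$ indices outside $\kk$). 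Parseval then yields
\[
\sum_{\ii\in\I_q^n}\cos^2\Theta_{V,W_\ii}=\binom{n-p}{n-q}\sum_{\kk\in\I_p^n}|\inner{\nu,e_\kk}|^2=\binom{n-p}{n-q}\|\nu\|^2=\binom{n-p}{n-q}.
\]

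For part (ii), I reverse the roles: now $\omega_\ii$ is a unit $q$-blade and $V$ is the $p$-subspace with $p>q$, so $\dim\Lambda^q V=\binom{p}{q}$. Choose an orthonormal basis $(f_1,\ldots,f_p)$ of $V$, so that $\{f_\jj\}_{\jj\in\I_q^p}$ is an orthonormal basis of $\Lambda^q V$. By Theorem \ref{pr:angle external powers},
\[
\cos^2\Theta_{W_\ii,V}=\|\Proj_{\Lambda^q V}\omega_\ii\|^2=\sum_{\jj\in\I_q^p}|\inner{\omega_\ii,f_\jj}|^2.
\]
Summing over $\ii$ and interchanging the summations, for each fixed $\jj$ Parseval in $\Lambda^q X$ (with orthonormal basis $\{\omega_\ii\}_{\ii\in\I_q^n}$) gives $\sum_\ii|\inner{\omega_\ii,f_\jj}|^2=\|f_\jj\|^2=1$, so the total is $\binom{p}{q}$.

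There is no real obstacle here once the two sides are recognized as Parseval sums in different exterior powers; the only subtlety is bookkeeping the combinatorial factor $\binom{n-p}{q-p}$ in (i), which arises because $\Lambda^p W_\ii$ is a higher-dimensional (rather than one-dimensional) subspace of $\Lambda^p X$, and choosing the right orthonormal basis on each side so that the double sums collapse cleanly.
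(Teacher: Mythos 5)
Your proof is correct. Part (i) is essentially the paper's argument: the paper expands $\nu\lcontr\omega_\ii$ via \eqref{eq:contraction coordinate decomposition} and applies \cref{pr:Theta norm contraction} to get $\cos^2\Theta_{V,W_\ii}=\sum_{\jj}|\inner{\nu,(\omega_\ii)_\jj}|^2$, which is exactly your sum of $|\inner{\nu,e_\kk}|^2$ over $\kk\subset\ii$ obtained from the projection onto $\Lambda^p W_\ii$; the double count (the paper writes the multiplicity as $\binom{n}{q}\binom{q}{p}/\binom{n}{p}$, equal to your $\binom{n-p}{q-p}$) and the final Parseval step coincide. Part (ii) is where you genuinely diverge: the paper deduces it from part (i) by duality, using \cref{pr:symmetry}\emph{\ref{it:Theta perp perp}} to write $\Theta_{W_\ii,V}=\Theta_{V^\perp,W_{\ii'}}$ and noting $\dim V^\perp=n-p<n-q$, whereas you argue directly by expanding $\Proj_{\Lambda^q V}\omega_\ii$ in an orthonormal basis $\{f_\jj\}$ of $\Lambda^q V$ and applying Parseval in $\Lambda^q X$. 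Your route is more self-contained (it does not need the orthogonal-complement symmetry, hence no appeal to \cref{pr:complementary product sines} and its chain of prerequisites) and it makes the constant transparent as $\binom{p}{q}=\dim\Lambda^q V$; the paper's route is shorter on the page and illustrates the $V\mapsto V^\perp$ duality. The only loose end in both treatments is the trivial degenerate cases $p=0$ or $q=0$, which the paper also sets aside and which follow at once from $\Theta_{\{0\},\cdot}=0$.
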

\begin{proof}
	Assume $p,q\neq 0$ and that the basis $(w_1,\ldots,w_n)$ is orthonormal.
	Its coordinate blades \eqref{eq:coordinate blades} form, for any $0\leq r\leq n$, orthonormal bases $\{\omega_\ii\}_{\ii\in\I_r^n}$ of $\Lambda^r X$ and $\{\omega_{\ii'}\}_{\ii\in\I_r^n}$ of $\Lambda^{n-r} X$. 
	
	\emph{(\ref{it:V Wi})} For an unit blade $\nu\in\Lambda^p V$ and  $\ii=(i_1,\ldots,i_q)\in\I_q^n$, \eqref{eq:contraction coordinate decomposition} gives
		$ \nu \lcontr  \omega_\ii = \sum_{\jj\in\I_p^q} \varepsilon_\jj \,\inner{\nu,(\omega_\ii)_\jj}\, (\omega_\ii)_{\jj'}$,
		where $(\omega_\ii)_\jj = w_{ i_{j_1}}\wedge\ldots\wedge w_{ i_{j_p}}$ for $\jj=(j_1,\ldots,j_p)$, and likewise for $(\omega_\ii)_{\jj'}$.
		As the $(\omega_\ii)_{\jj'}$'s are orthonormal, \cref{pr:Theta norm contraction} gives
		$ \cos^2\Theta_{V,W_\ii} = \|\nu \lcontr  \omega_\ii\|^2 = \sum_{\jj\in\I_p^q} \left|\inner{\nu,(\omega_\ii)_\jj}\right|^2$.
		So
		\begin{align*}
			\sum_{\ii\in\I_q^n} \cos^2\Theta_{V,W_\ii} 
			&= \sum_{\ii\in\I_q^n} \sum_{\jj\in\I_p^q} \left|\inner{\nu,(\omega_\ii)_\jj}\right|^2 \\
			&= \frac{\binom{n}{q} \binom{q}{p}}{\binom{n}{p}} \sum_{\kk\in\I_p^n} \left|\inner{\nu,\omega_\kk}\right|^2 = \binom{n-p}{n-q} \|\nu\|^2,
		\end{align*}
		where the binomial coefficients account for the number of times each $\omega_\kk$ appears as a $(\omega_\ii)_\jj$ in the double summation.
		
		\emph{(\ref{it:Wi V})} For $\ii\in\I_q^n$, \cref{pr:symmetry}\emph{\ref{it:Theta perp perp}} gives $\Theta_{W_\ii,V} = \Theta_{V^\perp,{W_\ii}^\perp}$. 
		As ${W_\ii}^\perp = W_{\ii'}$ for $\ii'\in\I_{n-q}^n$, and $\dim V^\perp = n-p < n-q = \dim {W_\ii}^\perp$, the result follows from the previous case. \qedhere
\end{proof}

For $p=q$, $\sum_{\ii\in\I_p^n} \cos^2\Theta_{V,W_\ii} = 1$ follows directly from \cref{pr:Theta inner blades}.
In the complex case, for $V_\R$ we have $\sum_{\ii\in\I_{2p}^{2n}} \cos^2\Theta_{V_\R,W_\ii} = 1$, where the $W_\ii$'s are now all $\binom{2n}{2p}$ coordinate $2p$-subspaces of an orthogonal basis of $X_\R$. 
\Cref{pr:cos underlying real Theta}\emph{\ref{it:Theta VR bigger}} compensates the larger number of coordinate subspaces.\SELF{$\cos\Theta_{V_\R,W_\R} \leq \cos\Theta_{V,W}$}


The next example is a dual of \cref{ex:direction cosines} via \cref{pr:symmetry}\emph{\ref{it:Theta perp perp}}, and the following two are duals of each other.

\begin{example}
If $\theta_{xy}, \theta_{xz}$ and $\theta_{yz}$ are the angles a plane in $\R^3$ makes with the coordinate planes (\cref{fig:angulos-faces-edit}) then
\SELF{Another way to express this result is that the sum of the squared cosines of all angles 
	between the faces of a trirectangular tetrahedron equals 1. By \cref{pr:Grassmann coordinate any dim}, this generalizes for simplices of any dimension {Cho1992}.
}
$\cos^2\theta_{xy}+\cos^2\theta_{xz}+\cos^2\theta_{yz}=1$.
\end{example}


\begin{figure}
	\centering
	\subfloat[$\cos^2\theta_{xy}+\cos^2\theta_{xz}+\cos^2\theta_{yz}=2$]{
		\resizebox*{0.395\textwidth}{!}{\includegraphics{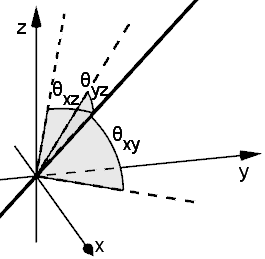}}\label{fig:angulos-linha-planos}}
	\hfill
	\subfloat[$\cos^2\theta_{x}+\cos^2\theta_{y}+\cos^2\theta_{z}=2$]{
		\resizebox*{0.42\textwidth}{!}{\includegraphics{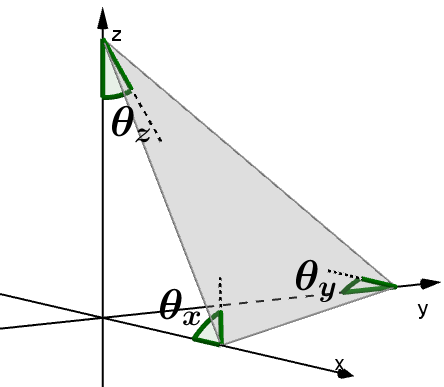}}\label{fig:angulos plano eixos}}
	\caption{Pythagorean identities for subspaces of different dimensions.}
	\label{fig:diferent dimensions}
\end{figure}

\begin{example}
	If $\theta_{xy}, \theta_{xz}$ and $\theta_{yz}$ are the angles a line in $\R^3$ makes with the coordinate planes (\cref{fig:angulos-linha-planos}) then   $\cos^2\theta_{xy}+\cos^2\theta_{xz}+\cos^2\theta_{yz}=2$.
\end{example}

\begin{example}
	If $\theta_{x}, \theta_{y}$ and $\theta_{z}$ are the angles between the axes and a plane in $\R^3$ (\cref{fig:angulos plano eixos}) then $\cos^2\theta_{x}+\cos^2\theta_{y}+\cos^2\theta_{z}=2$.
\end{example}

\begin{example}
	In \cref{ex:formula bases}, let $w_3=(0,0,\xi^2)$ and $W_{ij}=\Span(w_i,w_j)$. 
	The unitary transformation given by $T=\left(\begin{smallmatrix}
		0 & 0 & \xi \\ 
		\xi & 0 & 0 \\ 
		0 & \xi & 0
	\end{smallmatrix}\right)$ maps $W_{12}\mapsto W_{23}$, $W_{23}\mapsto W_{13}$, and preserves $V$, so \cref{pr:properties Grassmann}\emph{\ref{it:transformation}} gives $\Theta_{V,W_{12}} = \Theta_{V,W_{23}} = \Theta_{V,W_{13}}$.
	\SELF{$T:w_1\mapsto w_2\mapsto w_3\mapsto w_1$, $Tv_1=v_2$,\\ $Tv_2= -(v_1+v_2)$}
	As $W_{12}$, $W_{13}$ and $W_{23}$ are the coordinate $2$-subspaces of the orthogonal basis $(w_1,w_2,w_3)$  of $\C^3$, \cref{pr:Grassmann coordinate any dim} gives $\cos\Theta_{V,W_{ij}}=\frac{\sqrt{3}}{3}$, in agreement with that example.
\end{example}
  
In \cite{Mandolesi_Products} we show that the Clifford product of blades $\ganu,\gaomega\in\Lambda X$ (of same grade $p$, for simplicity), representing $V,W\subset X$, satisfies $\|\nu\omega\|^2 = \|\nu\|^2\|\omega\|^2 \sum_{\ii} \cos^2\Theta_{V,Y_\ii}$, where the $Y_\ii$'s are all coordinate $p$-subspaces of a certain orthogonal basis of $Y=V+W$.
\Cref{pr:Grassmann coordinate any dim} gives a geometric interpretation for the property $\|\nu\omega\|= \|\nu\|\|\omega\|$: it only holds because the product involves projections on all $Y_\ii$'s.
Other products in Grassmann or Clifford algebras are submultiplicative for blades because, as propositions \ref{pr:Theta inner blades}, \ref{pr:Theta norm contraction} and \ref{pr:exterior product} show, they involve projections on single subspaces, which contract volumes and reduce norms.

\section{Other identities}\label{sc:other identities}

The next identity relates the angle between oriented subspaces of same dimension to angles they make with coordinate subspaces of $X$. 

\begin{theorem}\label{pr:identity oriented}
	Given oriented $p$-subspaces $V,W\subset X$,
	\begin{equation*}
		\cos\mathbf{\Theta}_{V,W} = \sum_{\ii\in\I_p^n} \cos\mathbf{\Theta}_{V,X_\ii} \cos\mathbf{\Theta}_{X_\ii,W},
	\end{equation*}
	where the $X_\ii$'s are the coordinate $p$-subspaces of an orthogonal basis of $X$, with orientations given by the corresponding coordinate $p$-blades.
\end{theorem}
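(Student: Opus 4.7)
The plan is to interpret the identity as Parseval's expansion of $\langle \nu,\omega\rangle$ in the orthonormal basis of $\Lambda^p X$ induced by the orthogonal basis of $X$, applied to unit blades orienting $V$ and $W$.

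First, I would let $\nu,\omega\in \Lambda^p X$ be the unit blades orienting $V$ and $W$, so by definition $\cos\mathbf{\Theta}_{V,W}=\langle\nu,\omega\rangle$. Write the given orthogonal basis of $X$ as $(w_1,\ldots,w_n)$ and form the coordinate $p$-blades $\omega_\ii=w_{i_1}\wedge\cdots\wedge w_{i_p}$ for $\ii\in\I_p^n$. Since $(w_1,\ldots,w_n)$ is orthogonal in $X$, the family $\{\omega_\ii\}_{\ii\in\I_p^n}$ is orthogonal in $\Lambda^p X$, and normalizing gives an orthonormal basis $\hat{\omega}_\ii=\omega_\ii/\|\omega_\ii\|$ of $\Lambda^p X$. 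Note that $\hat{\omega}_\ii$ is the unit blade orienting $X_\ii$ with the chosen orientation.

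Next, I would apply Parseval's identity for the sesquilinear inner product (conjugate linear in the left entry):
\begin{equation*}
\langle \nu,\omega\rangle \;=\; \Bigl\langle \sum_{\ii}\langle \hat{\omega}_\ii,\nu\rangle\,\hat{\omega}_\ii,\;\omega\Bigr\rangle \;=\; \sum_{\ii\in\I_p^n}\langle \nu,\hat{\omega}_\ii\rangle\,\langle \hat{\omega}_\ii,\omega\rangle,
\end{equation*}
using $\overline{\langle \hat{\omega}_\ii,\nu\rangle}=\langle\nu,\hat{\omega}_\ii\rangle$. By the defining formula \eqref{eq:oriented Theta inner blades}, each factor $\langle \nu,\hat{\omega}_\ii\rangle$ equals $\cos\mathbf{\Theta}_{V,X_\ii}$, and likewise $\langle \hat{\omega}_\ii,\omega\rangle=\cos\mathbf{\Theta}_{X_\ii,W}$. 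Combining yields the stated identity.

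There is essentially no obstacle here: the whole content is that $\Lambda^p X$ inherits an orthonormal basis from an orthogonal basis of $X$, and the Grassmann angle for oriented subspaces is literally the Hermitian inner product of the orienting unit blades. The only small point to be careful about is the order of entries in the Hermitian product when invoking Parseval, so that each summand is a genuine product of two oriented cosines rather than one of them being conjugated.
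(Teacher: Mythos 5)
Your proof is correct and is essentially the paper's own argument: expand the orienting unit blades in the orthonormal basis of $\Lambda^p X$ obtained by normalizing the coordinate $p$-blades, and read off each coefficient via the defining formula \eqref{eq:oriented Theta inner blades}. Your explicit care with the order of entries in the sesquilinear product is exactly the point the paper flags by noting that the order of the subspaces matters in the complex case.
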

\begin{proof}
	Follows by decomposing unit blades $\nu\in\Lambda^p V$ and $\omega\in\Lambda^p W$ (with the orientations of $V$ and $W$) in the orthonormal basis of $\Lambda^p X$ formed with the normalized coordinate $p$-blades, and applying \eqref{eq:oriented Theta inner blades}.
\end{proof}

Note the order of the subspaces, which is relevant in the complex case.
This identity improves upon the following inequality, given in \cite{Miao1996} n terms of products of cosines of principal angles.

\begin{corollary}\label{pr:cor Miao}
	$\cos\Theta_{V,W} \leq \sum_\ii \cos\Theta_{V,X_\ii} \cos\Theta_{W,X_\ii}$.
\end{corollary}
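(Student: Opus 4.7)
The plan is to derive this inequality directly from \cref{pr:identity oriented} by taking absolute values. First I would fix arbitrary orientations for $V$ and $W$, together with the natural orientations for the coordinate $p$-subspaces $X_\ii$ given by the coordinate blades. Then \cref{pr:identity oriented} gives the equality
\[
	\cos\mathbf{\Theta}_{V,W} \;=\; \sum_{\ii\in\I_p^n} \cos\mathbf{\Theta}_{V,X_\ii}\,\cos\mathbf{\Theta}_{X_\ii,W}.
\]

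Next I would pass from oriented Grassmann angles to ordinary ones. Since $V$ and $W$ have the same dimension, combining the definition \eqref{eq:oriented Theta inner blades} with \cref{pr:Theta inner blades} yields $|\cos\mathbf{\Theta}_{V,W}| = |\inner{\nu,\omega}|/(\|\nu\|\|\omega\|) = \cos\Theta_{V,W}$, and analogous identities hold for each pair $(V,X_\ii)$ and $(X_\ii,W)$. Applying the triangle inequality for the modulus (in the real case this is the absolute value; in the complex case the Hermitian-type modulus) to the right-hand side gives
\[
	\cos\Theta_{V,W} \;=\; |\cos\mathbf{\Theta}_{V,W}| \;\leq\; \sum_{\ii\in\I_p^n} |\cos\mathbf{\Theta}_{V,X_\ii}|\cdot |\cos\mathbf{\Theta}_{X_\ii,W}|.
\]

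Finally, I would use the symmetry of the Grassmann angle for subspaces of equal dimension to replace $\cos\Theta_{X_\ii,W}$ by $\cos\Theta_{W,X_\ii}$, yielding exactly the stated bound. There is no real obstacle here: the corollary is an immediate consequence of \cref{pr:identity oriented} plus the triangle inequality, and the only subtlety worth flagging is that in the complex case one must rely on the identity $|\cos\mathbf{\Theta}| = \cos\Theta$ (same-dimension case) rather than any direct inequality between oriented and unoriented angles, since $\mathbf{\Theta}$ is complex-valued in general.
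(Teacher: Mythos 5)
Your proof is correct and is precisely the argument the paper intends: the corollary is stated as an immediate consequence of \cref{pr:identity oriented}, obtained by taking moduli, applying the triangle inequality, and using $|\cos\mathbf{\Theta}_{V,W}|=\cos\Theta_{V,W}$ (from \eqref{eq:oriented Theta inner blades} and \cref{pr:Theta inner blades}) together with the symmetry $\Theta_{X_\ii,W}=\Theta_{W,X_\ii}$ for equal dimensions. Your remark about handling the complex-valued $\mathbf{\Theta}$ via the modulus identity rather than a direct angle comparison is exactly the right subtlety to flag.
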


\begin{example}
	$\cos\theta = \cos\alpha_x\cos\beta_x + \cos\alpha_y\cos\beta_y + \cos\alpha_z\cos\beta_z$ for the angle $\theta\in[0,\pi]$ between 2 oriented lines in $\R^3$ forming angles $\alpha_x,\alpha_y,\alpha_z$ and $\beta_x,\beta_y,\beta_z$ (all in $[0,\pi]$) with the positive axes.
\end{example}

\begin{example}
In \cref{ex:oriented Theta}, if $W$ is an oriented 2-subspace having $\Theta_{W,X_{13}} = \Theta_{W,X_{23}} = \frac{\pi}{3}$, $\phi_{W,X_{13}}=\frac{5\pi}{6}$ and $\phi_{W,X_{23}}=0$
\SELF{Ex.: $\omega = (e^{\im\frac{5\pi}{6}} e_{1} - \frac{\sqrt{2}}{2} e_{3}) \wedge (\frac{\sqrt{2}}{2} e_{2} + \frac{1}{2} e_{3})$}
then $\cos\mathbf{\Theta}_{V,W} = 0 - \frac{\sqrt{2}}{2}\cdot\frac{1}{2}e^{-i\frac{5\pi}{6}} + i \frac{\sqrt{2}}{2}\cdot\frac{1}{2} = \frac{\sqrt{6}}{4} e^{i\frac{\pi}{3}}$,
\SELF{\ref{pr:identity oriented}}
so that $\Theta_{V,W} = \arccos(\frac{\sqrt{6}}{4}) \cong 52.2^\circ$ (volumes orthogonally projected between $V$ and $W$ shrink by $\frac{3}{8}$) and $\phi_{V,W} = \frac{\pi}{3}$  (orientation of $V$ rotated by $\frac{\pi}{3}$ aligns with that of $W$).
\end{example}

The following identity gives the angle between $U\subset V$ and $W$ in terms of angles with principal coordinate subspaces $V_\ii$ of $V$.

\begin{theorem}\label{pr:identity Theta principal subspaces}
	Given nonzero
	\SELF{Só porque não defini principal basis com $\{0\}$} 
	subspaces $V,W\subset X$ and $U\subset V$, let $r=\dim U$, $p=\dim V$, and the $V_\ii$'s be the coordinate $r$-subspaces of a principal basis of $V$ w.r.t. $W$. Then
	\[ \cos^2\Theta_{U,W} = \sum_{\ii\in\I^p_r} \cos^2\Theta_{U,V_\ii} \cdot \cos^2\Theta_{V_\ii,W}. \]
\end{theorem}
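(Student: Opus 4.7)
The plan is to expand a unit blade representing $U$ in an orthonormal basis of $\Lambda^r V$ built from a principal basis of $V$ w.r.t.\ $W$, and then recognize the coefficients and their projected norms as the two factors appearing in the identity.

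Let $(e_1,\ldots,e_p)$ be a principal basis of $V$ w.r.t.\ $W$, with associated principal basis $(f_1,\ldots,f_q)$ of $W$ and principal angles $\theta_1,\ldots,\theta_m$ (so $Pe_i=f_i\cos\theta_i$ for $i\leq m$ and $Pe_i=0$ for $i>m$, by \eqref{eq:Pei}). The coordinate $r$-blades $\omega_\ii = e_{i_1}\wedge\ldots\wedge e_{i_r}$, for $\ii\in\I^p_r$, form an orthonormal basis of $\Lambda^r V$. Let $\mu\in\Lambda^r U$ be a unit blade and write
\[ \mu = \sum_{\ii\in\I^p_r} c_\ii\,\omega_\ii, \qquad \sum_\ii |c_\ii|^2 = 1. \]

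First I will compute $\cos^2\Theta_{U,V_\ii}$. Since $\Proj_{V_\ii}e_k = e_k$ if $k\in\ii$ and $0$ otherwise, we have $\Proj_{V_\ii}\omega_\jj = \delta_{\ii\jj}\,\omega_\ii$, so $\Proj_{V_\ii}\mu = c_\ii\,\omega_\ii$. By \cref{pr:Theta Pnu}, $\cos^2\Theta_{U,V_\ii} = \|\Proj_{V_\ii}\mu\|^2 = |c_\ii|^2$.

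Next I will compute $\cos^2\Theta_{V_\ii,W}$. Since $e_{i_1},\ldots,e_{i_r}$ is a principal basis of $V_\ii$ w.r.t.\ $W$ (with the $f_{i_k}$'s as associated principal vectors, and angles $\theta_{i_k}$, where $\theta_{i_k}=\pi/2$ is the effective convention for $i_k>m$), the definition of the Grassmann angle gives $\cos^2\Theta_{V_\ii,W} = \prod_{k=1}^r \cos^2\theta_{i_k}$ (with the product equal to $0$ whenever some $i_k>m$).

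Finally I will evaluate $\cos^2\Theta_{U,W} = \|P\mu\|^2$, again by \cref{pr:Theta Pnu}. Using $P\omega_\ii = Pe_{i_1}\wedge\ldots\wedge Pe_{i_r} = \bigl(\prod_k\cos\theta_{i_k}\bigr)\,f_{i_1}\wedge\ldots\wedge f_{i_r}$, together with the orthonormality of the blades $f_{i_1}\wedge\ldots\wedge f_{i_r}$ for distinct $\ii$ (which also follows from \cref{pr:principal blades proj orth} applied to the coprincipal subspaces $V_\ii$), we obtain
\[ \|P\mu\|^2 = \sum_{\ii\in\I^p_r} |c_\ii|^2 \prod_{k=1}^r\cos^2\theta_{i_k} = \sum_{\ii\in\I^p_r} \cos^2\Theta_{U,V_\ii}\cdot\cos^2\Theta_{V_\ii,W}, \]
which is the claimed identity.

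The only delicate point is handling indices $i_k>m$ uniformly (when $p>\dim W$): the corresponding $\omega_\ii$ satisfies $P\omega_\ii=0$ and, consistently, $\cos\Theta_{V_\ii,W}=0$, so these terms simply drop out of both sides. Apart from this bookkeeping, the argument reduces to Parseval's identity for $\mu$ in the orthonormal basis $\{\omega_\ii\}$, weighted by the orthogonal projections of that basis onto $\Lambda^r W$.
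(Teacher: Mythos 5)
Your proposal is correct and follows essentially the same route as the paper: expand a unit blade of $U$ in the orthonormal basis of coordinate $r$-blades of the principal basis, project onto $W$, and use the mutual orthogonality of the projected blades to split $\|P\mu\|^2$ into the weighted sum. The only cosmetic difference is that you compute the three cosines explicitly from $Pe_i=f_i\cos\theta_i$, where the paper invokes \cref{pr:principal blades proj orth}, \cref{pr:Theta Pnu} and \cref{pr:Theta inner blades} to the same effect.
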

\begin{proof}
	Let $P=P_W$.
	The coordinate $r$-blades $\nu_\ii\in\Lambda^r V_\ii$ form an orthonormal basis of $\Lambda^r V$, so $P\mu = \sum_\ii \inner{\nu_\ii,\mu} P\nu_\ii$ for an unit $\mu\in\Lambda^r U$.
	By \cref{pr:principal blades proj orth}  the $P\nu_\ii$'s are mutually orthogonal, so $\|P\mu\|^2 = \sum_\ii |\inner{\mu,\nu_\ii}|^2 \|P\nu_\ii\|^2$. 
	The result follows from propositions \ref{pr:Theta Pnu} and \ref{pr:Theta inner blades}. 
\end{proof}

As $\sum_{\ii} \cos^2\Theta_{U,V_\ii} =1$, by \cref{pr:Grassmann coordinate any dim},   $\cos^2\Theta_{U,W}$ is a weighted average of the $\cos^2\Theta_{V_\ii,W}$'s, with weights given by the $\cos^2\Theta_{U,V_\ii}$'s.

\begin{figure}
	\centering
	\includegraphics[width=0.7\linewidth]{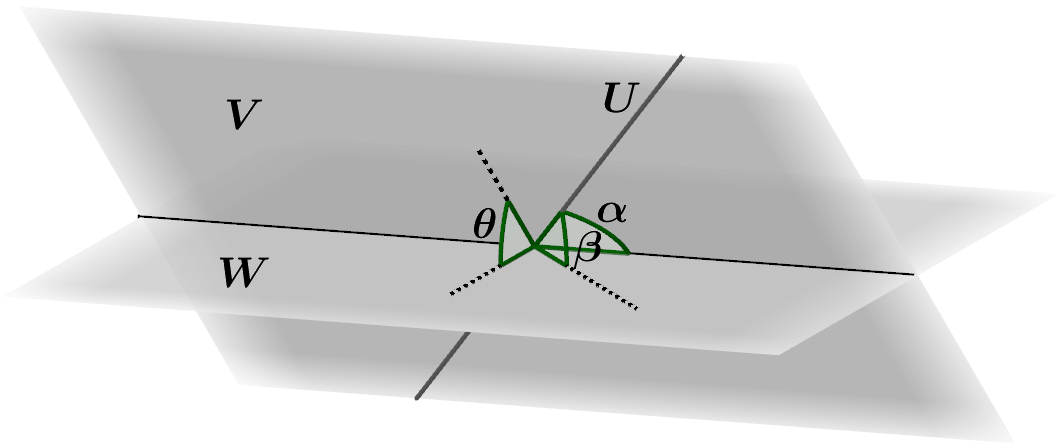}
	\caption{$\cos^2\beta = \cos^2\alpha +\sin^2\alpha\cos^2\theta$}
	\label{fig:angles principal lines}
\end{figure}

\begin{example}
	Given planes $V,W\subset\R^3$ and a line $U\subset V$, let $\alpha=\Theta_{U,V\cap W}$, $\beta=\Theta_{U,W}$ and $\theta=\Theta_{V,W}$ (\cref{fig:angles principal lines}). 
	As $V\cap W$ and $(V\cap W)^\perp\cap V$ are principal lines of $V$ w.r.t. $W$,  $\cos^2\beta = \cos^2\alpha\cdot 1 +\sin^2\alpha\cdot\cos^2\theta$. 
\end{example}



The final identities relate the angle between a direct sum and $W$ to the angles the subspaces  make with $W$ and the complementary Grassmann angles between their projections.

\begin{theorem}\label{pr:Theta direct sum}
	Let $V_1,V_2,W\subset X$ be subspaces, with $V_1\cap V_2=\{0\}$, and $P=\Proj_W$. Then
	\begin{equation*}
	\cos \Theta_{V_1\oplus V_2,W} = \cos \Theta_{V_1,W}\cdot \cos \Theta_{V_2,W}\cdot \frac{\cos \Theta^\perp_{P(V_1),P(V_2)}}{\cos \Theta^\perp_{V_1,V_2}}.
	\end{equation*}
\end{theorem}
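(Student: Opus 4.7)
The plan is to combine the two main norm-identities of Section~\ref{sc:Grassmann angle}---namely \cref{pr:Theta Pnu} (orthogonal projection of a blade contracts its norm by $\cos\Theta$) and \cref{pr:exterior product} (the norm of a wedge of two blades introduces the factor $\cos\Theta^\perp$)---evaluating the norm of $P(\nu_1\wedge\nu_2)$ in two different ways.

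Concretely, let $\nu_1,\nu_2\in\Lambda X$ be nonzero blades representing $V_1,V_2$. Since $V_1\cap V_2=\{0\}$, the blade $\nu_1\wedge\nu_2$ is nonzero and represents $V_1\oplus V_2$. The projection extends multiplicatively, so $P(\nu_1\wedge\nu_2)=P\nu_1\wedge P\nu_2$. On one hand, \cref{pr:Theta Pnu} and \cref{pr:exterior product} give
\begin{equation*}
\|P(\nu_1\wedge\nu_2)\|
= \|\nu_1\wedge\nu_2\|\cos\Theta_{V_1\oplus V_2,W}
= \|\nu_1\|\|\nu_2\|\cos\Theta^\perp_{V_1,V_2}\cos\Theta_{V_1\oplus V_2,W}.
\end{equation*}
On the other hand, applying \cref{pr:exterior product} to the blades $P\nu_1$ and $P\nu_2$ (which represent $P(V_1)$ and $P(V_2)$ by \cref{pr:P(V)}) and then \cref{pr:Theta Pnu} to each factor,
\begin{equation*}
\|P\nu_1\wedge P\nu_2\|
= \|P\nu_1\|\|P\nu_2\|\cos\Theta^\perp_{P(V_1),P(V_2)}
= \|\nu_1\|\|\nu_2\|\cos\Theta_{V_1,W}\cos\Theta_{V_2,W}\cos\Theta^\perp_{P(V_1),P(V_2)}.
\end{equation*}
Equating the two expressions and dividing by $\|\nu_1\|\|\nu_2\|\cos\Theta^\perp_{V_1,V_2}$ yields the identity.

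The only subtlety is justifying the division: by \cref{pr:complementary simple cases}\emph{\ref{it:Theta perp pi 2}}, $\cos\Theta^\perp_{V_1,V_2}=0$ would force $V_1\cap V_2\neq\{0\}$, contradicting the hypothesis, so the denominator is strictly positive. A similar sanity check handles degenerate right-hand-side factors: if $V_i\pperp W$ then $\cos\Theta_{V_i,W}=0$ and $P\nu_i=0$, which makes $P(V_1\oplus V_2)\pperp W$ (so $\cos\Theta_{V_1\oplus V_2,W}=0$ as well), and both sides vanish; if some $V_i=\{0\}$ the conventions $\Theta_{\{0\},\,\cdot}=0$ and $\Theta^\perp_{\{0\},\,\cdot}=0$ reduce the formula to a tautology. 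The main work is therefore packaged into the Grassmann-algebraic identities already available, and I expect no serious obstacle---the asymmetry of the Grassmann angle is precisely what makes \cref{pr:exterior product} hold without any dimensional restriction on $V_1,V_2$, which is what allows this proof to run in full generality.
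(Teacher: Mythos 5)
Your proof is correct and follows essentially the same route as the paper's: both evaluate $\|P(\nu_1\wedge\nu_2)\|=\|P\nu_1\wedge P\nu_2\|$ two ways using \cref{pr:Theta Pnu}, \cref{pr:exterior product} and \cref{pr:P(V)}, with the hypothesis $V_1\cap V_2=\{0\}$ guaranteeing $\cos\Theta^\perp_{V_1,V_2}\neq 0$ and the case $V_i\pperp W$ handled separately. (Only a cosmetic slip: in the degenerate case you should write $V_1\oplus V_2\pperp W$ rather than $P(V_1\oplus V_2)\pperp W$, since a nonzero subspace of $W$ is never partially orthogonal to $W$.)
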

\begin{proof}
	Assume $V_1\not\pperp W$ and $V_2\not\pperp W$, otherwise $V_1\oplus V_2\pperp W$ and both sides vanish.
	\SELF{\cref{pr:properties Grassmann}\emph{\ref{it:Theta pi2}}}
	Let $\nu_1$ and $\nu_2$ be unit blades representing $V_1$ and $V_2$. As $\nu_1\wedge\nu_2$ represents $V_1\oplus V_2$, 
	\SELF{as $V_1\perp V_2$}
	and by \cref{pr:P(V)} $P\nu_1$ and $P\nu_2$ represent $P(V_1)$ and $P(V_2)$, the result follows from propositions \ref{pr:Theta Pnu} and \ref{pr:exterior product}.
	\SELF{$\cos \Theta_{V_1\oplus V_2,W} = \|P(\nu_1\wedge\nu_2)\| = \|(P\nu_1)\wedge(P\nu_2)\|$}
\end{proof}


\begin{corollary}\label{pr:Theta orthog partition}
	Let $V,W\subset X$ be subspaces, $V=\bigoplus_{i=1}^k V_i$ be a partition, and $P=\Proj_W$. Then
	\begin{equation}\label{eq:Theta orthog partition}
		\cos \Theta_{V,W} = \prod_{i=1}^k \cos \Theta_{V_i,W} \cdot \prod_{i=1}^{k-1} \frac{\cos \Theta^\perp_{P(V_i),P(V_{i+1}\oplus\ldots\oplus V_k)}}{\cos \Theta^\perp_{V_i,V_{i+1}\oplus\ldots\oplus V_k}}.
	\end{equation}
\end{corollary}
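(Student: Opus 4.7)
The plan is to prove this by induction on $k$, using \cref{pr:Theta direct sum} as the engine of the inductive step. The base case $k=1$ is immediate: both products over $i$ are trivial (the first reduces to the single factor $\cos\Theta_{V_1,W}=\cos\Theta_{V,W}$, the second is empty). The case $k=2$ is exactly \cref{pr:Theta direct sum}, which applies because the direct-sum hypothesis forces $V_1\cap V_2=\{0\}$.

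For the inductive step, assume the identity holds for any partition into $k-1$ summands, and consider $V=\bigoplus_{i=1}^k V_i$. Set $V'=V_2\oplus\cdots\oplus V_k$, so that $V=V_1\oplus V'$ with $V_1\cap V'=\{0\}$ (again from the direct-sum assumption). Applying \cref{pr:Theta direct sum} to the pair $(V_1,V')$ yields
\[
\cos\Theta_{V,W} = \cos\Theta_{V_1,W}\cdot\cos\Theta_{V',W}\cdot \frac{\cos\Theta^\perp_{P(V_1),P(V')}}{\cos\Theta^\perp_{V_1,V'}}.
\]
The last factor is precisely the $i=1$ term of the second product in \eqref{eq:Theta orthog partition}.

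Now the inductive hypothesis applies to $V'=\bigoplus_{i=2}^k V_i$ (a $(k-1)$-fold direct sum), giving
\[
\cos\Theta_{V',W} = \prod_{i=2}^k \cos\Theta_{V_i,W}\cdot \prod_{i=2}^{k-1} \frac{\cos\Theta^\perp_{P(V_i),P(V_{i+1}\oplus\cdots\oplus V_k)}}{\cos\Theta^\perp_{V_i,V_{i+1}\oplus\cdots\oplus V_k}}.
\]
Substituting this expression back and combining with the $i=1$ factor already extracted produces \eqref{eq:Theta orthog partition} exactly, completing the induction.

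The main obstacle is essentially bookkeeping: one must check that the index ranges line up so that the $i=1$ factor pulled out by the first application of \cref{pr:Theta direct sum} fits seamlessly with the range $i=2,\dots,k-1$ supplied by the inductive hypothesis. Since $V_{i+1}\oplus\cdots\oplus V_k$ has the same meaning in both products, no reindexing is needed. No additional geometric input beyond \cref{pr:Theta direct sum} is required; in particular, orthogonality of the $V_i$'s is not assumed, only that they form a direct sum, which is what \cref{pr:Theta direct sum} demands at each stage.
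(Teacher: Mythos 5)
Your induction on $k$ via \cref{pr:Theta direct sum}, peeling off $V_1$ against $V'=V_2\oplus\cdots\oplus V_k$, is correct and is exactly the argument the paper intends (the corollary is stated without proof, as the immediate iterated application of the theorem). The index bookkeeping checks out, and the denominators $\cos\Theta^\perp_{V_i,V_{i+1}\oplus\cdots\oplus V_k}$ are nonzero precisely because the sum is direct, so the formula is well defined at every stage.
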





This allows us to use angles to characterize principal partitions.

\begin{proposition}\label{pr:converse Theta partition}
	Let $V,W\subset X$ be nonzero subspaces, with $V\not\pperp W$. A partition $V=\bigoplus_i V_i$ is principal w.r.t. $\!W$ if, and only if, it is orthogonal and $\cos \Theta_{V,W} \!= \prod_i\! \cos\Theta_{V_i,W}$.
\end{proposition}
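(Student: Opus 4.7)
The plan is to prove both implications by applying the direct-sum formula \eqref{eq:Theta orthog partition} from \cref{pr:Theta orthog partition}, using the characterization $\Theta^\perp_{U,U'}=0 \Leftrightarrow U\perp U'$ from \cref{pr:complementary simple cases}\emph{\ref{it:Theta perp 0}} and the equivalence between principal and orthogonally projected partitions in \cref{pr:principal partition}.

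For the forward direction, suppose $V=\bigoplus_i V_i$ is principal w.r.t.\ $W$. By definition the $V_i$'s are coprincipal, hence pairwise orthogonal, so $V_i\perp V_{i+1}\oplus\cdots\oplus V_k$ and $\cos\Theta^\perp_{V_i,V_{i+1}\oplus\cdots\oplus V_k}=1$. Moreover, by \cref{pr:principal partition}\emph{(i$\Rightarrow$iii)} the projected partition $P(V)=\bigoplus_i P(V_i)$ is also orthogonal, so $\cos\Theta^\perp_{P(V_i),P(V_{i+1}\oplus\cdots\oplus V_k)}=1$ as well. Plugging these ones into \eqref{eq:Theta orthog partition} immediately gives $\cos\Theta_{V,W}=\prod_i\cos\Theta_{V_i,W}$.

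For the converse, assume the partition is orthogonal and $\cos\Theta_{V,W}=\prod_i\cos\Theta_{V_i,W}$. Since $V\not\pperp W$, any nonzero $v\in V_i\subset V$ orthogonal to $W$ would contradict this, so each $V_i\not\pperp W$, and therefore $\cos\Theta_{V_i,W}\neq 0$. Orthogonality of the partition again makes every $\cos\Theta^\perp_{V_i,V_{i+1}\oplus\cdots\oplus V_k}=1$, so substituting into \eqref{eq:Theta orthog partition} and dividing both sides by the (nonzero) product $\prod_i\cos\Theta_{V_i,W}$ yields
\[
\prod_{i=1}^{k-1}\cos\Theta^\perp_{P(V_i),P(V_{i+1}\oplus\cdots\oplus V_k)}=1.
\]
Each factor lies in $[0,1]$, so all equal $1$, i.e.\ $P(V_i)\perp P(V_{i+1}\oplus\cdots\oplus V_k)$ for every $i$. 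This makes the $P(V_i)$'s pairwise orthogonal, so $P(V)=\bigoplus_i P(V_i)$ is an orthogonal partition, and \cref{pr:principal partition}\emph{(iii$\Rightarrow$i)} concludes that $V=\bigoplus_i V_i$ is principal.

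The only non-routine observation is the passage $V\not\pperp W\Rightarrow V_i\not\pperp W$, which ensures the product of cosines one divides by is nonzero; everything else is a direct bookkeeping application of the identities already in place.
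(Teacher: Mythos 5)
Your proof is correct and follows essentially the same route as the paper: the converse is exactly the paper's argument (orthogonality kills the denominators in \eqref{eq:Theta orthog partition}, the hypothesis forces the remaining complementary-angle factors to equal $1$, hence the $P(V_i)$'s are orthogonal by \cref{pr:complementary simple cases}\emph{\ref{it:Theta perp 0}}, and \cref{pr:principal partition} concludes). The only cosmetic difference is in the forward direction, which the paper dispatches directly from the definition of a principal partition (the principal angles of $V$ with $W$ distribute among the $V_i$'s), whereas you route it through \eqref{eq:Theta orthog partition} and \cref{pr:principal partition}\emph{(i$\Rightarrow$iii)}; both are valid.
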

\begin{proof}
	If the partition is principal, the result is immediate from the definition. For the converse, \cref{pr:complementary simple cases}\emph{\ref{it:Theta perp 0}} and \eqref{eq:Theta orthog partition} imply the $P(V_i)$'s are orthogonal, so it follows from \cref{pr:principal partition}.
\end{proof}


\section{Final remarks}\label{sc:conclusion}

Grassmann angles have other interesting properties and applications which we develop in \cite{Mandolesi_Products} and other articles.
Results found in the literature for other volume projection angles should also apply to them.
For example, a theorem of \cite{Gluck1967} implies the generalized Frenet curvature $k_p$ of a curve measures the rate of change of the Grassmann angle for the osculating subspaces spanned by the first $p$ Frenet vectors.

As indicated in \cref{sc:Related angles}, some of our results correspond to known facts from other formalisms (e.g. matrices and determinants in \cite{Miao1992,Miao1996}). The rest can certainly be translated into properties of matrices or other structures, and our formalism can also allow other results about determinants, for example, to be more easily expressed and obtained. 

For example, Miao and Ben-Israel \cite{Miao1992} show that, if $B_1$ and $B_2$ are $2r\times r$ matrices of rank $r$ then, in their notation,
\begin{multline*}
\det\begin{pmatrix}
	B_1^T B_1 & B_1^T B_2 \\
	B_2^T B_1 & B_2^T B_2
\end{pmatrix} =
\det\begin{pmatrix}
	\det(B_1^T B_1) & \det(B_1^T B_2) \\
	\det(B_2^T B_1) & \det(B_2^T B_2)
\end{pmatrix} \\
 \times\frac{\sin^2\{R(B_1),R(B_2)\}}{1-\cos^2\angle\{C_r(B_1),C_r(B_2)\}}.
\end{multline*}
Translating this into our formalism, it means that, if $V_i$ is spanned by  the columns of $B_i$, and $\nu_i$ is their exterior product, then
\begin{equation*}
\|\nu_1\wedge\nu_2\|^2 = 
\det\begin{pmatrix}
	\inner{\nu_1,\nu_1} & \inner{\nu_1,\nu_2} \\
	\inner{\nu_2,\nu_1} & \inner{\nu_2,\nu_2}
\end{pmatrix}
\cdot \frac{\cos^2\Theta_{V_1,V_2}^\perp}{\sin^2\Theta_{V_1,V_2}},
\end{equation*}
what follows immediately from \cref{pr:Theta inner blades,pr:exterior product}.
They also show the first determinant is less or equal than the other, by proving that $\frac{\sin^2\{R(B_1),R(B_2)\}}{1-\cos^2\angle\{C_r(B_1),C_r(B_2)\}}\leq 1$.
In our case, this means $\frac{\cos^2\Theta_{V_1,V_2}^\perp}{\sin^2\Theta_{V_1,V_2}}\leq 1$, what is immediate from \cref{pr:zeta}\emph{\ref{it:range zeta}}.

\appendix

\section{Related angles}\label{sc:Related angles}

We review related angles found in the literature, including some that do not correspond so directly to ours, and works which do not really define an angle, but whose results are closely related to ours, even if expressed in different formalisms.

Most works deal only with real spaces. Those that consider subspaces of different dimensions usually take the angle between the smaller one and its projection, or some equivalent construction, corresponding to our min-symmetrized angle $\check{\Theta}_{V,W}$, which, as seen, has worse properties.
Results  are, mostly, particular cases of ours, with extra conditions.
By \emph{product cosine formula} we mean the statement that the cosine of the angle is the product of cosines of principal angles.

Venticos \cite{Venticos1956} defines the angle between complex subspaces of same dimension as the Hermitian angle between their blades, and obtains the product cosine formula. If $\dim V<\dim W$ the angle is defined as that between $V$ and $\Proj_W V$, if these have equal dimensions, otherwise as $\frac \pi 2$.

Afriat \cite{Afriat1957} uses, for real subspaces, symbols like $\cos(V,W)$ and $\sin(V,W)$ for products of cosines or sines of principal angles. But $(V,W)$ is not meant as an angle (it is not even defined by itself), and the symbols do not satisfy the usual trigonometric relations. He gives a result analogous to \cref{pr:exterior product}, but assuming $V\cap W=\{0\}$ and expressed in terms of volumes of parallelotopes and $\sin(V,W)$. His \emph{coefficients of inclusion and separation} correspond, respectivelly, to $\cos^2\Theta_{V,W}$ and $\cos^2\Theta_{V,W}^\perp$. 

Hawidi \cite{Hawidi1966,Hawidi1980} defines an asymmetric angle operator by $\sphericalangle(V,W) = \Proj^W_V\Proj^V_W$. It carries all information about principal angles (its eigenvalues are their squared cosines), but it is not as easy to use as a scalar angle.
It relates to our angle by $\cos^2\Theta_{V,W}=\det \sphericalangle(V,W)$.

Gluck \cite{Gluck1967} defines the angle for real subspaces of same dimension in terms of volume contraction, as in \cref{pr:Grassmann angle volume projection}\emph{\ref{it:real volume projection}}, and obtains the product cosine formula, \eqref{eq:formula Theta equal dim} and particular cases of \cref{pr:angle external powers,pr:spherical triangle inequality}. 

Górski \cite{Gorski1968} defines the angle for real subspaces $V$ and $W$ of same dimension as in \cref{pr:Theta inner blades}, and if $\dim V<\dim W$ he uses a construction with blades which, ultimately, corresponds to \cref{fig:angle VWperp}. He obtains the product cosine formula and a particular case of \cref{pr:Theta norm contraction}. 

Degen \cite{Degen1976} defines, for real subspaces of dimension $p$, a \emph{projection angle} $\psi$, using volume projection,
and an \emph{aperture angle} $\varphi$, by comparing volumes of parallelepipeds, 
in such a way that $\cos\psi$ (resp. $\sin\varphi$) is a geometric mean of  cosines (resp. sines) of principal angles.
\SELF{$\psi$ (resp. $\varphi$) is between the aritmetic mean of the $\theta_i$'s and the largest (resp. smallest) one}
But $p$-powers in the definitions make these angles harder to use, and they do not even coincide with the dihedral angle for planes in $\R^3$.

Wedin \cite{Wedin1983} defines an angle for subspaces of same dimension as in \cref{pr:Theta inner blades}, saying it was introduced by Lu \cite{Lu1963},
\SELF{\cite[p.277]{Wedin1983}} 
and states the product cosine formula. 

Miao and Ben-Israel \cite{Miao1992,Miao1996} use, for real subspaces, $\cos$- and $\sin$-symbols like Afriat's, not interpreting them in terms of angles either. They get results analogous to \cref{pr:Theta inner blades}, but in terms of determinants, matrix volumes (products of singular values) and the $\cos$-symbol, to a particular case of \cref{pr:exterior product}, in terms of matrix volumes and the $\sin$-symbol, to \cref{pr:angle external powers}, in terms of compound matrices and the $\cos$-symbol, and to propositions \ref{pr:Grassmann coordinate any dim} and \ref{pr:cor Miao} for the $\cos$-symbol.

Jiang \cite{Jiang1996} defines  a \emph{$p$-dimensional angle} for real subspaces $V$ and $W$, with $\dim V\leq\dim W$, in a way similar to \cref{pr:Theta Pnu}, and gets the product cosine formula. 
He gives, for the case of equal dimensions, propositions \ref{pr:angle external powers}, \ref{pr:Theta inner blades}, \ref{pr:spherical triangle inequality}  and \ref{pr:triangle equality same dim}, and with other dimensional conditions obtains proposition \ref{pr:spherical Pythagorean theorem} and parts of \ref{pr:zeta}. 

Risteski and Tren\v{c}evski \cite{Risteski2001} gave a formula for the angle between real subspaces of distinct dimensions in terms of arbitrary bases, but it was later \cite{Gunawan2005} shown to be valid only under certain conditions. 

Gunawan et al. \cite{Gunawan2005} use volume contraction of parallelepipeds to define the angle for real subspaces $V$ and $W$, with $\dim V\leq \dim W$.
They get a particular case of \eqref{eq:formula Theta any base any dimension}, assuming the basis of $W$ is orthonormal, and a more complicated formula without this condition.

Galántai and Heged\H{u}s \cite{Galantai2006} define a \emph{product angle} between complex subspaces in terms of the product cosine formula, but do not explore it. In \cite{Galantai2008}, Afriat's coefficients of inclusion and separation are related to Davis' separation and closeness operators \cite{Davis1958}.

Dorst et al. \cite{Dorst2007}
\SELF{p. 69--71,75} define the angle for real subspaces $V$ and $W$ of same dimension using a Clifford algebra version of \cref{pr:Theta inner blades}. They erroneously interpret it as a sort of dihedral angle and say its cosine is $0$ if $V\cap W$ has codimension 2 or greater \cite[p.\,70]{Dorst2007}. If $\dim V<\dim W$ they take the angle between $V$ and $\Proj_W V$, and obtain for this case a Clifford algebra version of \cref{pr:Theta norm contraction}.

Hitzer \cite{Hitzer2010a} uses the product cosine formula to define the angle for real subspaces of same dimension. He gives a Clifford algebra version of \cref{pr:Theta inner blades}, and a formula similar to one we give in \cite{Mandolesi_Products} for the Clifford product, but in terms of products mixing cosines and sines of principal angles.
Later he changes the angle definition to exclude any $\theta_i=\frac \pi 2$, and also applies it to subspaces of different dimensions.

In \cite{Mandolesi_Products} we use Clifford algebra to define, for real subspaces of dimension $p$, an \emph{angle bivector} $\bm{\theta}$ which codifies all data about their relative position. In particular, the norms of the components of grades $0$ and $2p$ in $\exp(\bm{\theta})$ give, respectively, $\cos\Theta_{V,W}$ and $\cos\Theta_{V,W}^\perp$.

\section*{Acknowledgment}

I am grateful to Dr. Klaus Scharnhorst for his helpful comments and for bringing to my attention many interesting references on the subject.


\begin{thebibliography}{10}
	
	\bibitem{Afriat1957}
	S.~Afriat, \emph{Orthogonal and oblique projectors and the characteristics of
		pairs of vector spaces}, Math. Proc. Cambridge Philos. Soc. \textbf{53}
	(1957), no.~4, 800--816.
	
	\bibitem{Albert1941}
	G.~E. Albert, \emph{A note on quasi-metric spaces}, Bull. Amer. Math. Soc.
	\textbf{47} (1941), no.~6, 479--482.
	
	\bibitem{Anguelov2016}
	R.~Anguelov and M.~Mabula, \emph{Asymmetric metric: An application to dealing
		with uncertainty}, AIP Conference Proceedings, vol. 1773, AIP Publishing LLC,
	2016, p.~050001.
	
	\bibitem{Baksalary2009}
	O.~M. Baksalary and G.~Trenkler, \emph{On angles and distances between
		subspaces}, Linear Algebra Appl. \textbf{431} (2009), no.~11, 2243 -- 2260.
	
	\bibitem{Bengtsson2017}
	I.~Bengtsson and K.~{\.Z}yczkowski, \emph{Geometry of quantum states: an
		introduction to quantum entanglement}, Cambridge University Press, 2017.
	
	\bibitem{Brualdi1983}
	R.~A. Brualdi and H.~Schneider, \emph{Determinantal identities: {G}auss,
		{S}chur, {C}auchy, {S}ylvester, {K}ronecker, {J}acobi, {B}inet, {L}aplace,
		{M}uir, and {C}ayley}, Linear Algebra Appl. \textbf{52} (1983), 769--791.
	
	\bibitem{Cobzas2012}
	S.~Cobzas, \emph{Functional analysis in asymmetric normed spaces}, Springer,
	2012.
	
	\bibitem{CohenTannoudji2019}
	C.~Cohen-Tannoudji, B.~Diu, and F.~Laloe, \emph{Quantum mechanics}, vol.~1,
	Wiley-VCH, 2019.
	
	\bibitem{Davis1958}
	C.~Davis, \emph{Separation of two linear subspaces}, Acta Sci. Math.
	\textbf{19} (1958), 172--187.
	
	\bibitem{Degen1976}
	W.~Degen, \emph{\"{U}ber die {W}inkel zwischen {U}nterr\"{a}umen in
		mehrdimensionalen euklidischen {R}\"{a}umen [{O}n the angles between
		subspaces in multidimensional {E}uclidean spaces]}, Rev. Roum. Math. Pures
	Appl. \textbf{21} (1976), no.~5, 503--509.
	
	\bibitem{Deutsch1995}
	F.~Deutsch, \emph{The angle between subspaces of a {H}ilbert space},
	Approximation Theory, Wavelets and Applications, Springer Netherlands, 1995,
	pp.~107--130.
	
	\bibitem{Dixmier1949}
	J.~Dixmier, \emph{{\'E}tude sur les vari\'et\'es et les op\'erateurs de
		{J}ulia, avec quelques applications}, Bull. Soc. Math. France \textbf{77}
	(1949), 11--101.
	
	\bibitem{Dorst2007}
	L.~Dorst, D.~Fontijne, and S.~Mann, \emph{Geometric algebra for computer
		science: an object-oriented approach to geometry}, Elsevier, 2007.
	
	\bibitem{Edelman1999}
	A.~Edelman, T.~A. Arias, and S.~T. Smith, \emph{The geometry of algorithms with
		orthogonality constraints}, SIAM J. Matrix Anal. Appl. \textbf{20} (1999),
	no.~2, 303--353.
	
	\bibitem{Friedrichs1937}
	K.~Friedrichs, \emph{On certain inequalities and characteristic value problems
		for analytic functions and for functions of two variables}, Trans. Amer.
	Math. Soc. \textbf{41} (1937), no.~3, 321--364.
	
	\bibitem{Galantai2008}
	A.~Gal{\'{a}}ntai, \emph{Subspaces, angles and pairs of orthogonal
		projections}, Linear Multilinear Algebra \textbf{56} (2008), no.~3, 227--260.
	
	\bibitem{Galantai2006}
	A.~Gal{\'{a}}ntai and C.~J. Heged{\H{u}}s, \emph{Jordan's principal angles in
		complex vector spaces}, Numer. Linear Algebra Appl. \textbf{13} (2006),
	no.~7, 589--598.
	
	\bibitem{Gluck1967}
	H.~Gluck, \emph{Higher curvatures of curves in {E}uclidean space, {II}}, Amer.
	Math. Monthly \textbf{74} (1967), no.~9, 1049--1056.
	
	\bibitem{Goldman1999}
	W.~M. Goldman, \emph{Complex hyperbolic geometry}, Oxford University Press,
	1999.
	
	\bibitem{Golub2013}
	G.~H. Golub and C.~F. Van~Loan, \emph{Matrix computations}, Johns Hopkins
	University Press, 2013.
	
	\bibitem{Gorski1968}
	W.~G{\'o}rski, \emph{Ug{\'o}lnienie poj\k{e}cia k\k{a}ta dla multiwektor{\'o}w
		[{G}eneralization of the concept of the angle between multivectors]}, Zeszyty
	Naukowe Politechniki Warszawskiej \textbf{183} (1968), no.~14 (Matematyka),
	113--129.
	
	\bibitem{Griffiths1994}
	P.~Griffiths and J.~Harris, \emph{Principles of algebraic geometry}, John Wiley
	\& Sons, 1994.
	
	\bibitem{Gruber2009}
	P.~Gruber, H.~W. Gutch, and F.~J. Theis, \emph{Hierarchical extraction of
		independent subspaces of unknown dimensions}, International Conference on
	Independent Component Analysis and Signal Separation, Springer, 2009,
	pp.~259--266.
	
	\bibitem{Gunawan2005}
	H.~Gunawan, O.~Neswan, and W.~Setya-Budhi, \emph{A formula for angles between
		subspaces of inner product spaces}, Beitr. Algebra Geom. \textbf{46} (2005),
	no.~2, 311--320.
	
	\bibitem{Hamm2008}
	J.~Hamm and D.~Lee, \emph{Grassmann discriminant analysis: A unifying view on
		subspace-based learning}, Proc. Int. Conf. Machine Learning, ICML '08, ACM,
	2008, pp.~376--383.
	
	\bibitem{Hawidi1966}
	H.~M. Hawidi, \emph{Vzaimnoe raspolozhenie podprostranstv v konechnomernom
		unitarnom prostranstve [{T}he relative position of subspaces in a
		finite-dimensional unitary space]}, Ukra{\"i}n. Mat. Zh. \textbf{18} (1966),
	no.~6, 130--134.
	
	\bibitem{Hawidi1980}
	\bysame, \emph{Angles between subspaces in {N}-dimensional vector space}, Girls
	College Yearbook (1980), no.~10, 97--107, Ain Shams University, Cairo.
	
	\bibitem{Hestenes1984clifford}
	D.~Hestenes and G.~Sobczyk, \emph{Clifford algebra to geometric calculus}, D.
	Reidel, Dordrecht, 1984.
	
	\bibitem{Hitzer2010a}
	E.~Hitzer, \emph{Angles between subspaces computed in {C}lifford algebra}, AIP
	Conference Proceedings, vol. 1281, AIP, 2010, pp.~1476--1479.
	
	\bibitem{Hotelling1936}
	H.~Hotelling, \emph{Relations between two sets of variates}, Biometrika
	\textbf{28} (1936), no.~3/4, 321--377.
	
	\bibitem{Huttenlocher1993}
	D.~P. Huttenlocher, G.~A. Klanderman, and W.~A. Rucklidge, \emph{Comparing
		images using the {H}ausdorff distance}, IEEE Trans. Pattern Anal. Mach.
	Intell. \textbf{15} (1993), no.~9, 850--863.
	
	\bibitem{Jiang1996}
	S.~Jiang, \emph{Angles between {E}uclidean subspaces}, Geom. Dedicata
	\textbf{63} (1996), 113--121.
	
	\bibitem{Jiao2018}
	Y.~Jiao, X.~Shen, G.~Li, and Y.~Gu, \emph{Subspace principal angle preserving
		property of {G}aussian random projection}, 2018 IEEE Data Science Workshop
	(DSW), IEEE, 2018, pp.~115--119.
	
	\bibitem{Jordan1875}
	C.~Jordan, \emph{Essai sur la g{\'e}om{\'e}trie {\`a} n dimensions}, Bull. Soc.
	Math. France \textbf{3} (1875), 103--174.
	
	\bibitem{Kato1995}
	T.~Kato, \emph{Perturbation theory for linear operators}, Springer-Verlag,
	Berlin, 1995.
	
	\bibitem{Kazeem2014}
	E.~F. Kazeem, C.~A. Agyingi, and Y.~U. Gaba, \emph{On quasi-pseudometric type
		spaces}, Chin. J. Math. \textbf{2014} (2014), Article ID 198685.
	
	\bibitem{Knauer2011}
	C.~Knauer, M.~L{\"o}ffler, M.~Scherfenberg, and T.~Wolle, \emph{The directed
		{H}ausdorff distance between imprecise point sets}, Theoret. Comput. Sci.
	\textbf{412} (2011), no.~32, 4173--4186.
	
	\bibitem{Knyazev2010}
	A.~Knyazev, A.~Jujunashvili, and M.~Argentati, \emph{Angles between infinite
		dimensional subspaces with applications to the {R}ayleigh--{R}itz and
		alternating projectors methods}, J. Funct. Anal. \textbf{259} (2010), no.~6,
	1323 -- 1345.
	
	\bibitem{Kobayashi1996}
	S.~Kobayashi and K.~Nomizu, \emph{Foundations of differential geometry},
	vol.~2, Wiley, 1996.
	
	\bibitem{Krein1948}
	M.~Krein, M.~Krasnoselskii, and D.~Milman, \emph{On the defect numbers of
		linear operators in a {B}anach space and on some geometric questions}, Sb.
	Trudy Inst. Mat. Akad. Nauk Ukrain. SSR (Collection of Papers of the
	Institute of Mathematics of the Academy of Sciences of Ukraine) \textbf{11}
	(1948), 97--112.
	
	\bibitem{Kunzi2001}
	H.~P. K{\"u}nzi, \emph{Nonsymmetric distances and their associated topologies:
		about the origins of basic ideas in the area of asymmetric topology},
	Handbook of the history of general topology (C.~E. Aull and R.~Lowen, eds.),
	vol.~3, Springer, 2001, pp.~853--968.
	
	\bibitem{Lu1963}
	Q.~K. Lu, \emph{The elliptic geometry of extended spaces}, Chinese Math.
	\textbf{4} (1963), 54--69, translation of Acta Math. Sinica \textbf{13}
	(1963), 49--62.
	
	\bibitem{Mandolesi_Products}
	A.~L.~G. Mandolesi, \emph{Blade products in {G}rassmann and {C}lifford
		algebras}, arXiv:1910.07327 [math.GM] (2020).
	
	\bibitem{Mandolesi_Pythagorean}
	\bysame, \emph{Projection factors and generalized real and complex
		{P}ythagorean theorems}, Adv. Appl. Clifford Al. \textbf{30} (2020), no.~43.
	
	\bibitem{Mandolesi_Born}
	\bysame, \emph{Quantum fractionalism: the {B}orn rule as a consequence of the
		complex {P}ythagorean theorem}, Phys. Lett. A \textbf{384} (2020), no.~28,
	126725.
	
	\bibitem{Marcus1975}
	M.~Marcus, \emph{Finite dimensional multilinear algebra}, vol.~2, Marcel Dekker
	Inc., New York, 1975.
	
	\bibitem{Mennucci2014}
	A.~C.~G. Mennucci, \emph{Geodesics in asymmetric metric spaces}, Anal. Geom.
	Metr. Spaces \textbf{2} (2014), no.~1, 115--153.
	
	\bibitem{Miao1992}
	J.~Miao and A.~Ben-Israel, \emph{On principal angles between subspaces in
		${R}^n$}, Linear Algebra Appl. \textbf{171} (1992), 81--98.
	
	\bibitem{Miao1996}
	\bysame, \emph{Product cosines of angles between subspaces}, Linear Algebra
	Appl. \textbf{237/238} (1996), 71--81.
	
	\bibitem{Muir2003}
	T.~Muir and W.~H. Metzler, \emph{A treatise on the theory of determinants},
	Dover {P}hoenix editions, Dover Publications, 2003.
	
	\bibitem{Nielsen2006quantum}
	M.A. Nielsen, M.R. Dowling, M.~Gu, and A.C. Doherty, \emph{Quantum computation
		as geometry}, Science \textbf{311} (2006), no.~5764, 1133--1135.
	
	\bibitem{Ortega2002}
	R.~Ortega and M.~Santander, \emph{Trigonometry of the quantum state space,
		geometric phases and relative phases}, J. Phys. A Math. Theor. \textbf{36}
	(2002), no.~2, 459--485.
	
	\bibitem{Renard2018}
	E.~Renard, K.~A. Gallivan, and P.~A. Absil, \emph{A {G}rassmannian minimum
		enclosing ball approach for common subspace extraction}, International
	Conference on Latent Variable Analysis and Signal Separation, Springer, 2018,
	pp.~69--78.
	
	\bibitem{Risteski2001}
	I.~B. Risteski and K.~G. Trencevski, \emph{Principal values and principal
		subspaces of two subspaces of vector spaces with inner product}, Beitr.
	Algebra Geom. \textbf{42} (2001), no.~1, 289--300.
	
	\bibitem{Scharnhorst2001}
	K.~Scharnhorst, \emph{Angles in complex vector spaces}, Acta Appl. Math.
	\textbf{69} (2001), no.~1, 95--103.
	
	\bibitem{Venticos1956}
	G.~Venticos, \emph{Elachistikai goniai dyo grammikon ypochoron [{T}he minimum
		angles of two linear subspaces]}, Bull. Greek Math. Soc. \textbf{30} (1956),
	85--93.
	
	\bibitem{Wedin1983}
	P.~Wedin, \emph{On angles between subspaces of a finite dimensional inner
		product space}, Matrix Pencils (B.~K{\aa}gstr{\"o}m and A.~Ruhe, eds.),
	Lecture Notes in Mathematics, vol. 973, Springer, 1983, pp.~263--285.
	
	\bibitem{Wong1967}
	Y.~Wong, \emph{Differential geometry of {G}rassmann manifolds}, Proc. Natl.
	Acad. Sci. USA \textbf{57} (1967), no.~3, 589--594.
	
	\bibitem{Ye2016}
	K.~Ye and L.H. Lim, \emph{Schubert varieties and distances between subspaces of
		different dimensions}, SIAM J. Matrix Anal. Appl. \textbf{37} (2016), no.~3,
	1176--1197.
	
	\bibitem{Yokonuma1992}
	T.~Yokonuma, \emph{Tensor spaces and exterior algebra}, Translations of
	Mathematical Monographs, American Mathematical Society, 1992.
	
\end{thebibliography}

\providecommand{\bysame}{\leavevmode\hbox to3em{\hrulefill}\thinspace}
\providecommand{\MR}{\relax\ifhmode\unskip\space\fi MR }
\providecommand{\MRhref}[2]{%
	\href{http://www.ams.org/mathscinet-getitem?mr=#1}{#2}
}
\providecommand{\href}[2]{#2}

\end{document}